\let\oldtocsection=\tocsection
\let\oldtocsubsection=\tocsubsection
\renewcommand{\tocsection}[2]{\hspace{0em}\oldtocsection{#1}{#2}}
\renewcommand{\tocsubsection}[2]{\hspace{2em}\oldtocsubsection{#1}{#2}}
\tikzset{every picture/.style={line width=0.75pt}} 
\newtheorem{thm}{Theorem}[section]
\newtheorem{prop}[thm]{Proposition}
\newtheorem{lemma}[thm]{Lemma}
\newtheorem{cor}[thm]{Corollary}
\newtheorem{dfn}[thm]{Definition}
\theoremstyle{definition} \newtheorem{ex}[thm]{Example}
\newtheorem{hyp}[thm]{Hypothesis}
\theoremstyle{definition} \newtheorem{rmk}[thm]{Remark}
\title{Clusters, toric ranks, and $2$-ranks of hyperelliptic curves in the wild case}
\author{Leonardo Fiore}
\address{\parbox{\linewidth}{Department of Mathematics ``Federigo Enriques", The University of Milan \newline Via Cesare Saldini, 50, 20133 Milano MI, Italy}}
\email{leonardo@leonardofiore.it}
\author{Jeffrey Yelton} 
\address{\parbox{\linewidth}{Department of Mathematics and Computer Science, Wesleyan University \\ 265 Church Street, Middletown, CT 06459-0128 \textbf{(corresponding author)}}}
\email{jyelton@wesleyan.edu}
\begin{document}

\maketitle

\begin{abstract}

    Given a Galois cover $Y \to X$ of smooth projective geometrically connected curves over a complete discrete valuation field $K$ with algebraically closed residue field, we define a semistable model of $Y$ over the ring of integers of a finite extension of $K$ which we call the \emph{relatively stable model} $\Yrst$ of $Y$, and we discuss its properties, focusing on the case when $Y : y^2 = f(x)$ is a hyperelliptic curve viewed as a degree-$2$ cover of the projective line $X := \proj_K^1$.  Over residue characteristic different from $2$, it follows from known results that the toric rank (i.e.\ the number of loops in the graph of components) of the special fiber of $\Yrst$ can be computed directly from the knowledge of the even-cardinality clusters of roots of the defining polynomial $f$.  We instead consider the ``wild" case of residue characteristic $2$ and demonstrate an analog to this result, showing that each even-cardinality cluster of roots of $f$ gives rise to a loop in the graph of components of the special fiber of $\Yrst$ if and only if the depth of the cluster exceeds some threshold, and we provide a computational description of and bounds for that threshold.  As a bonus, our framework also allows us to provide a formula for the $2$-rank of the special fiber of $\Yrst$.

\end{abstract}

\enlargethispage{\baselineskip}

\tableofcontents

\section{Introduction} \label{sec introduction}

Our goal is to investigate hyperelliptic curves over discrete valuation fields in the case of residue characteristic $2$ and certain arithmetic information coming from semistable reductions of these curves.  Given a complete discrete valuation field $K$ of characteristic different from $2$ with algebraically closed residue field, our starting point is to consider a \textit{hyperelliptic curve} $Y$ over $K$; that is, $Y / K$ is a smooth projective curve of positive genus admitting a degree-$2$ morphism onto the projective line $\proj_K^1$.  It is well known that an affine chart for a hyperelliptic curve $Y / K$ of genus $g \geq 1$ is given by an equation of the form 
\begin{equation} \label{eq hyperelliptic p not 2}
    y^2 = f(x) = c\prod_{i = 1}^d (x - a_i),
\end{equation}
where $f(x) \in K[x]$ is a polynomial of degree $d \in \{2g+1, 2g+2\}$ that does not have multiple roots, $c \in K^\times$ is the leading coefficient of $f(x)$, and the elements $a_i \in K^\alg$ are the roots of $f$.  We call $f$ the \emph{defining polynomial} of (this chart of) the hyperelliptic curve $Y$.  The degree-$2$ morphism of $Y$ onto the projective line is given simply by the coordinate function $x$; this morphism is branched precisely at each of the roots of $f$ as well as, in the case that $d = 2g + 1$ (in other words, when $f$ has odd degree), at the point $\infty$.  After applying an appropriate automorphism of the projective line (i.e.\ a suitable change of coordinate) which moves one of the branch points to $\infty$, we obtain an equation of the form in (\ref{eq hyperelliptic p not 2}) with $d = 2g + 1$; we will adhere to this assumption about $f$ throughout most of the paper (see \S\ref{sec models hyperelliptic} for more details).

We are broadly interested in explicitly constructing a semistable model of a given hyperelliptic curve $Y / K$ and understanding the structure of the special fiber of a semistable model of $Y$, specifically its \emph{toric rank}.  We will start by defining a particular semistable model of a hyperelliptic curve: the \textit{relatively stable model} of $Y$ (see \Cref{dfn relatively stable} below).  In this paper, we will not fully construct the relatively stable model, but we will construct enough of it to determine the toric rank of its special fiber.  As this problem is already entirely understood in the case that the residue characteristic is not $2$ and the procedure in that case can be described entirely in terms of the distances between the branch points with respect to the $p$-adic metric on $K$, we will assume that $K$ has mixed characteristic $(0, 2)$ and restrict ourselves to this case (although much of our work is easily adaptable to the case of residue characteristic different from $2$ so that the already well-known results in that situation may be recovered).  The increased complexity of the problem for this case arises from the fact that a hyperelliptic curve comes with a degree-$2$ map to the projective line: the fact that this degree is the same as the residue characteristic implies that we are in a ``wild setting".  Problems involving reduction of curves in the ``wild case", in which one studies semistable models of curves with a degree-$p$ map to the projective line over residue characteristic $p$, have been investigated in a number of works in recent decades (see \S\ref{sec introduction comparison} below), but mainly in the narrow situation where the branch points of the map $Y \to \proj_K^1$ are $p$-adically equidistant (in this case, it is already well known that the toric rank is $0$: see \cite[Lemme 3.2.1]{matignon2003vers}).  In this article, we will drop this equidistance hypothesis and instead focus on the relationship between the combinatorial data of how the branch points are ``clustered" and toric rank of the special fiber of a semistable model.

\subsection{Our main problem} \label{sec introduction main problem}

The questions we are setting out to answer ultimately arise from the following groundbreaking theorem; it was proved first by Deligne and Mumford in \cite{deligne1969irreducibility}, then through independent arguments in \cite{artin1971degenerate} (see also \cite[\S 10.4]{liu2002algebraic} for a detailed explanation of the arguments used in \cite{artin1971degenerate}), and then relatively recently in \cite{arzdorf2012another} using more constructive methods.

\begin{thm} \label{thm semistable reduction}
    Every smooth projective geometrically connected curve $C$ over $K$ achieves \emph{semistable reduction} over a finite extension $K' \supseteq K$, i.e.\ $C$ admits a model $\CC^{\sst}$ over $R'$, where $R'$ is the ring of integers in $K'$ whose special fiber is a reduced curve with at worst nodes as singularities.
\end{thm}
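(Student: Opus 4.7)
The plan is to reduce to Grothendieck's semistable reduction theorem for abelian varieties, applied to the Jacobian of $C$, following Deligne--Mumford. If $g = \mathrm{genus}(C) = 0$, then $C$ becomes isomorphic to $\proj^1$ after at most a quadratic extension and has smooth reduction. So assume $g \geq 1$ and, after a further finite extension ensuring $C(K) \neq \emptyset$, form $J = \Pic^0(C)$, an abelian variety of dimension $g$ over $K$.

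By Grothendieck's theorem, there is a finite extension $K'/K$ such that the N\'eron model of $J_{K'}$ over $R' = \OO_{K'}$ has semistable identity component (an extension of an abelian variety by a torus). Equivalently, by the Grothendieck--Serre--Tate criterion, $I_{K'}$ acts unipotently on the $\ell$-adic Tate module $T_\ell J$ for any prime $\ell$ invertible in the residue field. The existence of such a $K'$ comes from the fact that the image of the inertia representation on $T_\ell J$ is quasi-unipotent, so a suitable finite extension trivializes the non-unipotent part.

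The main task is to deduce semistable reduction of $C$ from that of $J$. The bridge is the Galois-equivariant isomorphism $H^1_\et(C_{\bar K}, \qq_\ell) \cong T_\ell J \otimes_{\zz_\ell} \qq_\ell$ (up to a Tate twist), which translates unipotence of inertia on $T_\ell J$ into unipotence on $\ell$-adic cohomology of $C$. Starting from a regular proper model $\CC$ of $C_{K'}$ over $R'$, produced by embedded resolution of singularities for excellent arithmetic surfaces, one analyzes the special fiber $\CC_s$: the monodromy filtration on $H^1$ encodes the multiplicities of the components and the combinatorics of their dual graph. Unipotence of inertia constrains these multiplicities so that, after a further tame extension to make them equal to $1$, one can contract $(-1)$-curves and other superfluous components to obtain a model whose special fiber is reduced with at worst nodal singularities.

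The principal obstacle lies in this final passage. Converting a regular model with a potentially wild, non-reduced special fiber into a semistable one requires careful intersection-theoretic bookkeeping on arithmetic surfaces, controlling the self-intersection numbers, multiplicities, and singularity types of each component, as executed by Artin--Winters and later by Liu; this step is the technical heart of the argument, and in the wild case of residue characteristic $2$ (as in the present paper) is precisely where constructive methods must take over from the abstract Galois-theoretic input.
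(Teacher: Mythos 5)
The paper does not prove \Cref{thm semistable reduction}; it cites it as a known foundational result, with references to Deligne--Mumford \cite{deligne1969irreducibility}, Artin \cite{artin1971degenerate} (whose argument \cite[Section 10.4]{liu2002algebraic} expounds), and Arzdorf--Wewers \cite{arzdorf2012another}. There is thus no proof in the paper to compare yours against; the question is only whether your sketch is sound on its own terms.

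Your opening reductions are correct: genus $0$ is immediate here (since $k$ is algebraically closed, $\mathrm{Br}(K)=0$ and $C \cong \proj^1_K$ already), and for $g\ge 1$ reducing to Grothendieck's semi-abelian reduction theorem for $J=\Pic^0(C)$ is indeed how the Deligne--Mumford route opens. The gap lies exactly where you flag it. Passing from semi-abelian reduction of $J$ back to an actual semistable model of $C$ is not achieved by ``reading the multiplicities of components off the monodromy filtration on $H^1_{\et}$'' and then contracting $(-1)$-curves: unipotence of inertia on $\ell$-adic cohomology does not directly bound the multiplicities of the components of a regular model, and producing a reduced nodal special fiber is precisely what the theorem asserts. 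What Deligne--Mumford actually use at this point is Raynaud's theorem identifying $\Pic^0$ of the special fiber of the minimal regular model of $C$ with the identity component of the special fiber of the N\'eron model of $J$, combined with rationality of enough torsion and an analysis of Picard schemes of singular projective curves; that is the lemma your sketch needs and does not supply. You also fold the ``intersection-theoretic bookkeeping'' of Artin--Winters and Liu into this last step, but that is a genuinely independent proof of the whole theorem (the one cited as \cite{artin1971degenerate}), working with the numerics of degenerations on arithmetic surfaces and avoiding the Jacobian altogether; it is not the tail of the Jacobian argument. As written, your proposal names the right theorem to reduce to, but it misattributes and omits the bridge back to the curve, which is the technical heart of the matter.
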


The above result by itself does not tell us how to find a semistable model $\CC^{\sst}$ or exactly how large an extension $K' \supseteq K$ is needed in order to define it.  It moreover does not specify, for a given curve $C / K$, anything about the structure of the special fiber $(\mathcal{C}^{\sst})_s$.  It is therefore natural to ask whether there is any general method by which we may construct a semistable model $\YY^{\sst}$ of a hyperelliptic curve $Y / K$ defined by an equation of the form in (\ref{eq hyperelliptic p not 2}).

A na\"{i}ve attempt to produce a semistable model for $Y$ would be to perform simple changes of variables (if necessary) over a low-degree field extension $K' \supset K$ so that the coefficients appearing in the equation in (\ref{eq hyperelliptic p not 2}) are all integral and then to simply use this equation to define a scheme over the corresponding ring of integers $R'$. More precisely, it is clear that after possibly scaling $x$ and $y$ by appropriate elements of $(K^\alg)^\times$, we may assume that $f$ is monic (i.e. $c=1$), and that the roots $a_i$ are all integral, with $\min_{i,j}v(a_i-a_j)=0$.  In particular, the polynomial $f$ now has integral coefficients, and so we may extend $Y$ to the scheme $\YY / R'$ whose generic fiber is $Y$ and whose special fiber $\YY_s$ is given (over the affine chart $x\neq \infty$ of $\mathbb{P}^1_k$) by the equation 
\begin{equation} \label{eq hyperelliptic p not 2 reduction}
    y^2 = \bar{f}(x) := \prod_{i = 1}^{2g + 1} (x - \bar{a}_i),
\end{equation}
where each element $\bar{a}_i$ is the reduction of $a_i \in \mathcal{O}_{K^\alg}$ in the residue field $k$.  In the situation of residue characteristic $\neq 2$, one can detect the presence of nodes and of non-nodal singularities in the special fiber of $\YY$ directly from the multiplicity of the roots of the reduced polynomial $\bar{f}$, or equivalently, from the data of the valuations of differences between roots of $f$.  By contrast, in our setting of residue characteristic $2$, the model $\YY$ is never semistable regardless of the multiplicities of the roots of $\bar{f}$: in fact, it is elementary to show that the special fiber $\YY_s$ has a non-nodal singularity at each point whose $x$-coordinate is a root of the derivative polynomial $\bar{f}'$.

A more sophisticated approach to constructing semistable models of a hyperelliptic curve $Y$ begins with finding an appropriate model $\XX$ of the projective line $X := \proj_K^1$ and (after possibly replacing the ground field with a finite extension) defining the model $\YY$ to be the normalization of $X$ in the function field of $Y$.  It is well known that there is a one-to-one correspondence between closed discs $D \subset K^\alg$ (with respect to the induced valuation on $K^\alg$) and smooth models of $\proj_K^1$ over finite extensions of $R$, and each model of $\proj_K^1$ over a finite extension of $R$ with reduced special fiber is the compositum of a finite number of smooth models and thus corresponds to a finite collection $\mathcal{D}$ of (closed) discs $D \subset K^\alg$ (see \S\ref{sec models hyperelliptic line} for more details); our ultimate goal will therefore be to find an appropriate collection $\mathcal{D}^\sst$ whose corresponding model $\XX^{(\sst)}$ of the projective line produces a semistable model $\YY^\sst$ of $Y$.

Moreover, we are interested not only in how to construct a semistable model $\YY^\sst$ of a hyperelliptic curve $Y$, but also in how certain characteristics of the defining polynomial may determine the \textit{structure} of the special fiber of such a semistable model.  The special fiber $\SF{\YY^\sst}$ of a semistable model $\YY^\sst$ of a curve $Y / K$ by definition consists of reduced components which meet each other only at nodes.  Each node, viewed as a point in $\YY^\sst$, has a \emph{thickness} (see \S\ref{sec semistable preliminaries contracting}) which is a positive integer.  The structure of the special fiber $\SF{\YY^\sst}$ can be described entirely in terms of the set of its irreducible components, the genus of the normalization of each of these components, the data of which components intersect and at at how many nodes, and the thicknesses of the nodes.  The number of loops in the configuration of components and their intersections (i.e.\ the number of loops in the dual graph of $\SF{\YY^\sst}$) is known as the \emph{toric rank} of $\SF{\YY^\sst}$, and replacing the semistable model $\YY^\sst$ of $Y$ over $R'$ with another semistable model of $Y$ over $R''$ (where $R'$ and $R''$ are the ring of integers of possibly different extensions of $K$) does not affect the toric rank.  This rank are therefore intrinsic to the curve $Y$ itself and particularly interesting to determine (meanwhile, the thicknesses of the nodes change in a predictable manner between semistable models over different extensions of $R$; see the discussion in \S\ref{sec semistable preliminaries contracting} below).

The fact that the reduction of the ``na\"{i}ve model" $\YY$ over residue characteristic $\neq 2$ depends directly on the data of the valuations of differences between roots of $f$, as discussed above, suggests that such combinatorial data may be directly crucial for constructing a semistable model of $Y$ and for understanding the structure of the special fiber of such a semistable model, both in the tame setting and in the setting of residue characteristic $2$ that we are interested in.  The combinatorial notion involved is made precise in \cite{dokchitser2022arithmetic} by defining the \emph{cluster data} associated to a hyperelliptic curve $Y$ over a discrete valuation field $K$: roughly speaking, if $Y$ is defined by an equation of the form in (\ref{eq hyperelliptic p not 2}), its associated cluster data consists of subsets $\mathfrak{s}$ of roots of the defining polynomial $f$ (called \textit{clusters}) which are closer to each other with respect to the discrete valuation of $K$ than they are to the roots of $f$ which are not contained in $\mathfrak{s}$, along with, for each non-singleton cluster $\mathfrak{s}$, the minimum valuation of differences between roots in $\mathfrak{s}$ (called the \textit{depth} of $\mathfrak{s}$).  For precise definitions, see Definition \ref{dfn cluster} below or \cite[Definition 1.1]{dokchitser2022arithmetic}.

In fact, over residue characteristic $\neq 2$, the process of constructing a semistable model $\YY^{\sst}$ of $Y$ as well as the structure of its special fiber is governed entirely by the cluster data associated to $Y$: this can be deduced from the explicit constructions given in \cite[\S4,5]{dokchitser2022arithmetic}.  In particular, under the simplifying assumption that the defining polynomial $f$ has odd degree, one may take the collection $\mathcal{D}^{\sst}$ of closed discs used to construct a semistable model $\YY^\sst$ as above to be the set of all closed discs $D \subset K^\alg$ which minimally cut out some subset of roots of the defining polynomial $f$; in the language of clusters, each of these discs corresponds to a non-singleton cluster of roots of $f$ and is the smallest disc containing that cluster.  Therefore, in this situation, there is a one-to-one correspondence between non-singleton clusters of roots of $f$ and discs in $\mathcal{D}^\sst$.  Moreover, the results cited above show that the thicknesses of the nodes of the special fiber $\SF{\YY^\sst}$ are determined by the so-called \emph{relative depths} of the clusters and that the toric rank of $\SF{\YY^\sst}$ equals the number of even-cardinality clusters which themselves are not the union of even-cardinality clusters.  Our broader goal is to seek some analog of these results in our wild setting, and our particular focus for this paper is to demonstrate an analog of the last result about toric rank.

\subsection{A summary of our main results} \label{sec introduction main results}

In our situation, where the residue characteristic of $K$ is $2$, it is natural to ask whether a semistable model of $Y$ can be constructed by a procedure governed entirely by the associated cluster data as in the setting of residue characteristic $\neq 2$.  In short, the answer is ``no", but we will show that some properties of semistable models can still be determined in certain cases by the cluster data and that there is a method of constructing a semistable model which begins with analyzing the even-cardinality clusters, which alone allows us to compute toric rank.  More precisely, in \S\ref{sec semistable relatively stable}, we define a particularly nice (unique up to unique isomorphism) semistable model $\Yrst$ of a given hyperelliptic curve $Y$ which we call the \emph{relatively stable model} (see Definition \ref{dfn relatively stable} below).  We will define a \emph{valid disc} (\Cref{dfn valid disc} below) to be a closed disc $D \subset K^\alg$ among the collection of discs used the manner discussed above to construct the semistable model $\Yrst$ (excluding such discs which correspond to components of $\SF{\Xrst}$ over which the cover $\SF{\Yrst}\to \SF{\Xrst}$ is inseparable).  In this paper we will describe a criterion for a given closed disc $D \subset K^\alg$ to be valid, and we will demonstrate a method for finding all valid discs which contain an \textit{even} number of roots of $f$ and for using these to compute the toric rank associated to $Y$.

The main results of this paper may be summarized by the following theorem, which is a combination of a (sometimes simplified version of) statements presented and proved in \S\ref{sec depths},\ref{sec toric rank}.

\begin{thm} \label{thm introduction main}
    Assume all of the above set-up for a hyperelliptic curve $Y / K$ of genus $g$ given by an equation of the form $y^2 = f(x) \in K[x]$, where the polynomial $f$ has degree $2g + 1$.  Let $\Yrst/R'$ be the relatively stable model of $Y$, where $R'$ is the ring of integers of an appropriate finite field extension $K'\supseteq K$. Let $\RR \subset K^\alg$ denote the set of roots of $f$. For any cluster of roots $\mathfrak{s} \subsetneq \RR$, we write $\mathfrak{s}'$ for the minimal cluster which properly contains $\mathfrak{s}$.
    
    The clusters of roots in $\mathcal{R}$ and the valid discs associated to $Y$ are related in the following manner.
    \begin{enumerate}[(a)]
        \item Given a valid disc $D \subseteq K^\alg$, the cardinality of $D \cap \mathcal{R}$ is even (and we may have $D \cap \mathcal{R} = \varnothing$).
        \item If a cluster $\mathfrak{s}$ has even cardinality, there are either $0$, $1$, or $2$ valid discs $D \subseteq R'$ satisfying the following property: either we have $D \cap \mathcal{R} = \mathfrak{s}$ or $D$ is the smallest disc containing $\mathfrak{s}'$.
        \item Let $\mathfrak{s}$ be an even-cardinality cluster of \emph{relative depth} 
        \begin{equation*}
            m := \min\{v(a - a') \ | \ a, a' \in \mathfrak{s}\} - \min\{v(a - a') \ | \ a, a' \in \mathfrak{s}'\}
        \end{equation*}
        (see Definition \ref{dfn cluster}), and write $f_0(x) = \prod_{a \in \mathfrak{s}} (x - a)$ and $f_\infty(x) = f(x) / f_0(x)$.  There exists a rational number $B_{f, \mathfrak{s}} \in \qq_{\geq 0}$ which is independent of the relative depth of $\mathfrak{s}$ in the sense of Remark \ref{rmk B independence}, such that 
        \begin{enumerate}[(i)]
            \item if $m > B_{f,\mathfrak{s}}$, the number of valid discs as in part (b) is ``2";
            \item if $m = B_{f,\mathfrak{s}}$, the number of valid discs as in part (b) is ``1"; and 
            \item if $m < B_{f,\mathfrak{s}}$, the number of valid discs as in part (b) is ``0".
        \end{enumerate}
        Moreover, in the case of (i), the $2$ guaranteed valid discs containing $\mathfrak{s}$ each give rise to $1$ component or to $2$ non-intersecting components of $\SF{\Yrst}$.  In the case that each gives rise to a single component of $\SF{\Yrst}$, the resulting pair of components intersects at $2$ nodes, whereas in the case that one of the valid discs gives rise to $2$ (non-intersecting) components $V_1$ and $V_2$ of $\SF{\Yrst}$, the other valid disc gives rise to a single component of $\SF{\Yrst}$ which intersects each of $V_1$ and $V_2$ at a single node.  In either case, each of these nodes has thickness equal to $(m - B_{f,\mathfrak{s}})/v(\pi)$, where $\pi$ is a uniformizer of $K'$.
        \item Given an even-cardinality cluster $\mathfrak{s}$, the bound $B_{f,\mathfrak{s}}$ from part (c) satisfies $B_{f,\mathfrak{s}} \leq 4v(2)$.  If we furthermore assume that $\mathfrak{s}$ and $\mathfrak{s}'$ each have a maximal subcluster of odd cardinality (e.g.\ a maximal subcluster which is a singleton), we have the inequality
        \begin{equation}
            B_{f,\mathfrak{s}} \geq \Big( \frac{2}{|\mathfrak{s}| - 1} + \frac{2}{2g + 1 - |\mathfrak{s}|} \Big) v(2).
        \end{equation}
        \item The toric rank of some (any) semistable model of $Y$ is equal to the number of those even-cardinality clusters satisfying item (i) above which themselves cannot be written as a disjoint union of other even-cardinality clusters satisfying item (i) above.
    \end{enumerate}
\end{thm}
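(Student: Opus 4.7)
The plan is to analyze the relatively stable model $\Yrst$ through the explicit description of its affine charts. Each chart arises from a disc $D \subseteq \bar{K}$ with center $\alpha$ and radius $\rho$: one forms the smooth model of $\proj_K^1$ with parameter $x' = (x-\alpha)/\pi^\rho$, pulls back $y^2 = f(x)$, and performs the Artin--Schreier completion $y = \tilde{y} - q(x')/2$ for an optimally chosen $q \in R'[x']$, producing an equation of the form $\tilde{y}^2 + q(x')\tilde{y} = r(x')$. The disc $D$ is \emph{valid} exactly when this equation defines a reduced affine scheme whose reduction modulo $\pi$ is separable over $\proj^1_k$. For (a), a parity computation shows that if $|D \cap \mathcal{R}|$ is odd, then every admissible $q$ has odd-degree reduction, which forces the resulting cover to become inseparable modulo the uniformizer and contradicts validity. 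For (b), once an even cluster $\mathfrak{s}$ is fixed, combinatorial constraints restrict the radii of potentially valid discs containing $\mathfrak{s}$ to at most two values — one giving a disc that cuts out $\mathfrak{s}$ itself, the other giving the minimal disc enclosing $\mathfrak{s}'$ — which yields the upper bound of two.

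The principal technical work, and the main obstacle, supports parts (c) and (d). I will parametrize the family of candidate charts centered at $\mathfrak{s}$ by the relative depth $m$ of $\mathfrak{s}$ inside $\mathfrak{s}'$, and exhibit the threshold $B_{f,\mathfrak{s}}$ as the infimum of $m$ for which an admissible Artin--Schreier completion exists. Writing $f = f_0 f_\infty$ as in (c), expanding $f_\infty$ about the center of $\mathfrak{s}$, and rescaling $x$ by a power of a uniformizer determined by $m$ and by the depth of $\mathfrak{s}'$, one obtains a one-parameter family of equations $\tilde{y}^2 + q(x')\tilde{y} = r(x')$ whose coefficients depend on $m$ in a controlled way. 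The threshold $B_{f,\mathfrak{s}}$ is then defined as the minimal $m$ for which a $q$ of degree at most $g+1$ exists making this equation integral and reduced; the three cases (i), (ii), (iii) correspond respectively to whether the discriminant of the completed quadratic is a non-square unit, a square unit, or non-integral at the critical value. The independence of $B_{f,\mathfrak{s}}$ from $m$ follows because $q$ depends only on the approximate square root of $f_\infty$ at the center of $\mathfrak{s}$, not on the relative depth itself. The universal bound $B_{f,\mathfrak{s}} \leq 4v(2)$ is immediate from the fact that any $q$ with $v(q^2 - f) \geq 4v(2)$ suffices for Artin--Schreier convergence, while the lower bound under the odd-maximal-subcluster hypothesis is extracted from a Newton-polygon computation on $f'$ that controls how close $f$ can be to a square modulo powers of $2$. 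The hard part will be verifying that $B_{f,\mathfrak{s}}$ is well-defined across all admissible choices and extensions and that the three case distinctions transition cleanly across the threshold, as this requires a delicate interplay between $2$-adic convergence of the completions and the combinatorics of the cluster tree.

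Part (e) on toric rank reduces to counting loops in the dual graph of $\SF{\Yrst}$. By the structural conclusion at the end of (c)(i), each even-cardinality cluster $\mathfrak{s}$ with $m > B_{f,\mathfrak{s}}$ that is not itself a disjoint union of such clusters contributes exactly one independent loop: the two valid discs associated to $\mathfrak{s}$ yield either two components meeting at two nodes or a chain of three components closing into a single loop, each of which adds $1$ to $h^1$ of the dual graph. An induction on the cluster tree shows that the loop contribution of a ``loop-producing'' cluster is absorbed into the contributions of its loop-producing subclusters whenever it decomposes as a disjoint union of such, which explains the exclusion in the statement; summing these contributions yields the formula for the toric rank.
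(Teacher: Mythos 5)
Your proposal correctly identifies the core computational engine: the substitution $y = \tilde{y} - q(x')/2$ is exactly the paper's notion of a \emph{part-square decomposition} $f = q^2 + \rho$, and the integrality and reducedness conditions on the completed equation correspond to the paper's ``good'' decompositions and the quantity $t_{q,\rho}$. So the approach is substantively the same idea. However, there are several concrete gaps.

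First, your characterization of a valid disc --- ``$D$ is valid exactly when this equation defines a reduced affine scheme whose reduction mod $\pi$ is separable'' --- omits half of the actual definition. A valid disc must satisfy both that the cover $\SF{\YY_D} \to \SF{\XX_D}$ is separable \emph{and} that $\XX_D \leq \Xrst$, i.e.\ the chart genuinely persists in the relatively stable model. A separable cover whose component is a $(-1)$-line or horizontal $(-2)$-curve gets contracted when passing to $\Yrst$ and is therefore not valid. This distinction matters directly in your argument for part (b): for the crucial case of a disc $D_{\alpha,b}$ with $b$ in the interior of the interval $J(\mathfrak{s})$, the cover is still separable, so inseparability cannot rule $D$ out; the paper must instead check (via the invariants $a(V)$, $w(V)$, $\underline{w}(V)$ and \Cref{prop part of rst}) that the resulting special fiber fails the criterion $\XX_D \leq \Xrst$. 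Without this step the upper bound of two in (b) does not follow. More broadly, your proposal takes for granted the existence and uniqueness of $\Yrst$ and the criterion for when a given chart belongs to it; these require the machinery of vanishing/persistent nodes, horizontal vs.\ vertical $(-2)$-curves, and the contraction argument of \S\ref{sec semistable relatively stable} and \Cref{thm part of rst separable}.

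Second, the arguments for (c) and (d) as you state them are not quite right. You write that ``any $q$ with $v(q^2 - f) \geq 4v(2)$ suffices for Artin--Schreier convergence'' and conclude $B_{f,\mathfrak{s}} \leq 4v(2)$, but this conflates a condition on the quality of the decomposition with a condition on the relative depth $m$ of the cluster, which are measured in different variables. The actual bound comes from a slope analysis: the piecewise-linear functions $\mathfrak{t}^{\mathfrak{s}}_\pm$ have positive integer slopes until they reach $2v(2)$, so each of $b_0(\mathfrak{t}^{\mathfrak{s}}_\pm)$ is at most $2v(2)$, giving $B_{f,\mathfrak{s}} = b_0(\mathfrak{t}^{\mathfrak{s}}_+) + b_0(\mathfrak{t}^{\mathfrak{s}}_-) \leq 4v(2)$. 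Likewise your lower bound ``via Newton polygon of $f'$'' is not what is needed: the paper's lower bound hinges on showing that when $\mathfrak{s}$ has an odd-cardinality child, the factor $f_+^{\mathfrak{s}}$ has a non-square normalized reduction, forcing $\mathfrak{t}_+^{\mathfrak{s}}(0) = 0$, after which the slope bound $\lambda_+ \leq |\mathfrak{s}| - 1$ gives $b_0(\mathfrak{t}_+^{\mathfrak{s}}) \geq 2v(2)/(|\mathfrak{s}|-1)$; the claim about $f'$ is a different (and unverified) argument. Your ``discriminant is a non-square unit / square unit / non-integral'' trichotomy in (c) also does not match any condition in the paper's proof and is not obviously equivalent to whether $J(\mathfrak{s})$ has two, one, or zero endpoints.

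Finally, for part (e), your loop-counting heuristic is in the right direction, but the claim that ``the loop contribution is absorbed into contributions of loop-producing subclusters'' and should be handled by an induction on the cluster tree is not what the paper does and would need substantial work to make rigorous. The paper computes the toric rank directly as the difference $[\Nnodes(\SF{\Yrst}) - \Nnodes(\SF{\Xrst})] - [\Nirreducible(\SF{\Yrst}) - \Nirreducible(\SF{\Xrst})]$, then identifies the first bracket with the count of viable clusters (via \Cref{prop viable correspondence}) and the second with the count of \"{u}bereven clusters (via \Cref{prop ubereven correspondence}). This direct bijective argument is cleaner than the inductive absorption you sketch and avoids having to track where loops ``go'' as the cluster tree is traversed.
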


The above result can be viewed as a vast generalization of the results in \cite{yelton2021semistable}, where the second author explicitly constructed semistable models of elliptic curves with a cluster of cardinality $2$ and depth $m$ (as well as elliptic curves with no even-cardinality clusters).  The threshold for $m$ above which there are $1$ or $2$ valid discs containing that cardinality-$2$ cluster which is found in \cite{yelton2021semistable} comes as the following easy corollary to the above theorem; we remark that this corollary can be deduced also from standard formulas for the $j$-invariant of an elliptic curve (specifically, the particular choice of power of $2$ multiplied to the rest of the formula, which influences the valuation of the $j$-invariant in residue characteristic $2$; see for instance the $j$-invariant formula for a Legendre curve as given in \cite[Proposition III.1.7]{silverman2009arithmetic}).

\begin{cor} \label{cor g=1 B=4v(2)}
    Suppose that we are in the $g = 1$ case of the situation in \Cref{thm introduction main} and that $\mathfrak{s}$ is a cluster of cardinality $2$.  Then we have $B_{f,\mathfrak{s}} = 4v(2)$.
\end{cor}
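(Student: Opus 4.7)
The plan is simply to apply part (e) of \Cref{thm introduction main} to the specific case $g = 1$ and $|\mathfrak{s}| = 2$, and check that the hypotheses there force the lower and upper bounds on $B_{f,\mathfrak{s}}$ to coincide.

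First I would identify the cluster $\mathfrak{s}'$: since $f$ has degree $2g + 1 = 3$, the set of roots $\mathcal{R}$ has cardinality $3$ and any cluster properly containing $\mathfrak{s}$ must equal $\mathcal{R}$, so $\mathfrak{s}' = \mathcal{R}$. Next I would verify the odd-maximal-subcluster hypothesis of part (e): the cluster $\mathfrak{s}$ of cardinality $2$ has two singleton maximal proper subclusters, each of odd cardinality; and $\mathfrak{s}' = \mathcal{R}$ of cardinality $3$ contains, among its maximal proper subclusters, the singleton consisting of the unique root not in $\mathfrak{s}$, which is again of odd cardinality. Hence both $\mathfrak{s}$ and $\mathfrak{s}'$ satisfy the assumption needed to invoke the lower bound in part (e).

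Substituting $|\mathfrak{s}| = 2$ and $g = 1$ into the lower bound gives
\begin{equation*}
    B_{f,\mathfrak{s}} \ \geq\ \Big(\tfrac{2}{|\mathfrak{s}|-1} + \tfrac{2}{2g+1-|\mathfrak{s}|}\Big) v(2) \ =\ \Big(\tfrac{2}{1} + \tfrac{2}{1}\Big)v(2) \ =\ 4v(2).
\end{equation*}
Combining this with the unconditional upper bound $B_{f,\mathfrak{s}} \leq 4v(2)$ from the same part (e) yields $B_{f,\mathfrak{s}} = 4v(2)$, as claimed.

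There is no substantive obstacle here: the corollary is a one-line numerical specialization of the general bounds in \Cref{thm introduction main}(e), and the only thing worth spelling out carefully is the verification that the hypothesis ``both $\mathfrak{s}$ and $\mathfrak{s}'$ admit maximal subclusters of odd cardinality'' is automatic when $g = 1$ and $|\mathfrak{s}| = 2$, which I have done above.
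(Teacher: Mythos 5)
Your proof is correct and takes the same route as the paper: both invoke the unconditional upper bound $B_{f,\mathfrak{s}} \leq 4v(2)$ together with the conditional lower bound $B_{f,\mathfrak{s}} \geq \bigl(\tfrac{2}{|\mathfrak{s}|-1}+\tfrac{2}{2g+1-|\mathfrak{s}|}\bigr)v(2)$, and verify that both $\mathfrak{s}$ and $\mathfrak{s}'=\mathcal{R}$ have a singleton (hence odd-cardinality) child cluster so that the lower bound applies and forces equality. One small correction: these bounds live in part (d) of \Cref{thm introduction main}, not part (e) as you cite (part (e) concerns the toric rank).
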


\begin{proof}
    The parent cluster of $\mathfrak{s}$ (i.e., the minimal cluster strictly containing $\mathfrak{s}$) is $\mathfrak{s}'=\RR$, which has cardinality 3. It is clear that both $\mathfrak{s}$ and $\mathfrak{s}'$ have a singleton child cluster (i.e., a maximal subcluster consisting of only one root). Now \Cref{thm introduction main}(d) provides the inequalities $B_{f,\mathfrak{s}}\leq 4v(2)$ and 
    \begin{equation}
        B_{f,\mathfrak{s}} \geq \big( \frac{2}{1} + \frac{2}{1} \big)v(2) = 4v(2).
    \end{equation}
    The equality $B_{f,\mathfrak{s}} = 4v(2)$ follows.
\end{proof}

\subsection{Comparison to other works} \label{sec introduction comparison}

A hyperelliptic curve is a special case of an $n$-cyclic cover of the projective line for some $n \geq 2$.  There have been a number of works discussing semistable models of such curves.  When the degree $n$ is not divisible by the residue characteristic, the process of constructing a semistable model is relatively straightforward and is provided in \cite[\S3]{bouw2015semistable}, \cite[\S4]{bouw2017computing}, \cite[\S4, 5]{dokchitser2022arithmetic} (for hyperelliptic curves, using the language of clusters), and \cite{gehrunger2021reduction} (for hyperelliptic curves, using the language of stable marked curves), as well as in earlier works.

The existing results for the wild case of semistable reduction of superelliptic curves, such as when the defining equation is of the form $y^p = f(x)$ where $p$ is the residue characteristic, have been far more limited.  To the best of our knowledge, investigations into this case began with Coleman, who in \cite{coleman1987computing} outlined an algorithm for changing coordinates in such a way that the defining equation is converted to a form whose reduction over the residue field does not describe a curve which is an inseparable degree-$p$ cover of the line; when $p = 2$, this is more or less equivalent to our notion of \emph{part-square decompositions} which will be introduced in \S\ref{sec models hyperelliptic part-square}.  This idea is further developed by Lehr and Matignon in \cite{matignon2003vers} and later in \cite{lehr2006wild} (among several other works).  Their results apply only to the very particular case of \textit{equidistant geometry}, meaning that the valuations of differences between each pair of distinct roots of the defining polynomial $f$ are all equal, which in the language of clusters means that there are no proper, non-singleton clusters of roots.  The ideas of Lehr and Matignon are generalized in \cite{arzdorf2012another}, which interprets the relevant techniques in terms of rigid analytic geometry.  The wild case is further discussed using such language in \cite[\S4]{bouw2017computing}, in which several examples are computed; the working of these examples is mainly done through clever guessing rather than a direct algorithm, however.

Our work differs from the prior research discussed above in that our major focus is on the relationship between clusters of roots and the structure of the special fiber of a semistable model of a hyperelliptic curve when the residue characteristic is $2$.  To the best of our knowledge, apart from this work, the only other works investigating any specific case in terms of cluster data which has been investigated where equidistant geometry is not assumed are the recent article \cite{dokchitser2023note}, which treats a case involving an even number of roots clustering in pairs, and the recent article \cite{gehrunger2025reduction}, in which the authors independently arrived at some similar results to ours in order to demonstrate a method of constructing the \emph{stable marked model} of a hyperelliptic curve using the associated cluster data, which those authors applied to classify the structures of such models in the case of genus $2$.

\subsection{Outline of the paper} \label{sec introduction outline}

The rest of our paper is organized as follows. First, we establish the algebro-geometric setting that we need in \S\ref{sec semistable models}, which begins with briefly providing the basic background definitions and facts relating to models of curves over local rings, and then proceeds to look more closely at the properties of the special fiber of such a model and how to compare two models of the same curve by considering (-1)-lines and (-2)-curves (see Definitions \ref{MinusLines2} and \ref{dfn minus two curve} below).  All of this set-up allows us in the following section to define a particular ``nice" semistable model of a curve $Y$ which is a Galois cover of another curve $X$, which we call the \textit{relatively stable model} $\Yrst$ of $Y$ (see \Cref{dfn relatively stable} below) and which is the main topic of \S\ref{sec semistable relatively stable}.  Viewing a hyperelliptic curve $Y / K$ as a degree-$2$ (Galois) cover of the projective line $\proj_K^1 =: X$, the relatively stable model of $Y$ is the one directly treated in the rest of this paper.

In \S\ref{sec models hyperelliptic}, we consider models of hyperelliptic curves over discrete valuation rings.  As a hyperelliptic curve is (by definition) a double cover of a projective line, we first look at models of projective lines over discrete valuation rings; the well-known characterization of such models is summarized in \S\ref{sec models hyperelliptic equations}.  Then in the rest of \S\ref{sec models hyperelliptic}, we look at models of hyperelliptic curves from the point of view of algebraic equations which define them.  More precisely, we derive equations which define normalizations of smooth models of the projective line (possibly looking over finite extensions of $K$) in the function field of the hyperelliptic curve $Y$.  We describe how we use \emph{part-square decompositions} (see \Cref{dfn qrho} below) of the defining polynomial of $Y$ to find these normalizations.

We next turn our attention to clusters in \S\ref{sec clusters}, laying out the definitions of \emph{clusters} and \emph{cluster data} as in \cite{dokchitser2022arithmetic} (and other subsequent works) as well as introducing \emph{valid discs} (see \Cref{dfn valid disc} below), which by definition correspond more or less to the smooth models of the projective line comprising the model $\Xrst$ of the projective line of which the relatively stable model $\Yrst$ is the normalization in $K(Y)$.  Our goal now is to describe a relationship between clusters and valid discs in order to prove \Cref{thm introduction main}(a)-(d); this is done in \S\ref{sec depths}, which focuses on developing a method of finding valid discs for any particular hyperelliptic curve $Y$.  The approach of \S\ref{sec depths} is to develop methods of determining the existence and find the depth of a valid disc containing a given cluster of roots.  In the course of developing these methods, given a polynomial $f$, we define lower-degree polynomials $f^{\mathfrak{s}}_+$ and $f^{\mathfrak{s}}_-$ determined by a particular even-cardinality cluster $\mathfrak{s}$ of roots of $f$ such that part-square decompositions of $f^{\mathfrak{s}}_\pm$ can be used to determine the existence and depths of valid discs containing $\mathfrak{s}$.  One of the main findings is that an even-cardinality cluster $\mathfrak{s}$ has $2$ associated valid discs if and only if the \emph{depth} of $\mathfrak{s}$ exceeds a certain ``threshold" $B_{f,\mathfrak{s}} \in \qq$ as in \Cref{thm introduction main}(c).  This threshold is constructed in \S\ref{sec depths threshold}, and in that subsection we also derive an estimate of $B_{f,\mathfrak{s}}$ which proves the inequalities in \Cref{thm introduction main}(d).

Then in \S\ref{sec toric rank}, we proceed to examine the structure of the special fiber of our desired semistable model $\Yrst$ given knowledge of the valid discs containing particular clusters of roots.  In this section, we show that in the situation of \Cref{thm introduction main}(c)(i) above, the guaranteed pair of valid discs, under certain circumstances, produces a loop in the graph of components of the special fiber of $\Yrst$, or in other words, increases the toric rank of the hyperelliptic curve by $1$.  This allows us to present (as \Cref{thm toric rank}) and prove a formula for the toric rank in terms of viable discs, as seen in \Cref{thm introduction main}(e).

In \S\ref{sec 2-rank}, we use our general set-up and results on valid discs containing clusters to derive a formula for the $2$-rank of the special fiber of the relatively stable model $\Yrst$ (\Cref{thm 2-rank}), which comes mainly from those clusters $\mathfrak{s}$ which are contained in a unique valid disc.

We finish the paper by applying our results to some examples and classes of examples in \S\ref{sec examples}.

\subsection{Notations and conventions} \label{sec introduction notation}

Below we outline our notation and conventions for this paper.

Firstly, whenever we use interval notation (e.g.\ $[a, b]$, $(a, b)$, $(a, +\infty)$, etc.), the bounds will always be elements of $\qq \cup \{\pm\infty\}$, and the interval will be understood to consist of all \textit{rational} numbers (rather than all real numbers) between the bounds; i.e.\ we have $[a, b] = [a, b] \cap \qq$; we have $[a, +\infty] = [a, +\infty) = [a, +\infty) \cap \qq$; etc.

\subsubsection{Rings, fields, and valuations}

We will adhere to the following assuptions:
\begin{itemize}
    \item $K$ is a field endowed with a discrete valuation $v : K \to \qq \cup \lbrace +\infty \rbrace$, complete with respect to $v$; when studying hyperelliptic curves over $K$ (i.e., from \S\ref{sec models hyperelliptic} on), we will always assume that $K$ has mixed characteristic $(0, 2)$;
    \item $R = \OO_K = \{z \in K \ | \ v(z) \geq 0\}$ is the ring of integers of $K$;
    \item $k$ is the residue field of $R$ (and of $K$), which we assume to be algebraically closed and of characteristic $2$;
    \item thanks to the completeness of $K$, given any algebraic extension $K' \supseteq K$, the valuation $v: K\to \qq  \cup \lbrace +\infty \rbrace$ extends uniquely to a valuation on $K'$ which we also denote by $v$: this turns $K'$ into a non-archimedean field with residue field $k$, whose ring of integers will be denoted $R':=\OO_{K'}$; when the extension $K'/K$ is finite, $K'$ is actually a complete discretely-valued field, and $R'$ is hence a complete DVR; and 
    \item $K^\alg$ is an algebraic closure of $K$.
\end{itemize}

\subsubsection{Lines, hyperelliptic curves, and models} \label{sec notation he}

Beginning in \S\ref{sec models hyperelliptic}, the symbol $X$ will normally denote the projective line $\mathbb{P}^1_K$, and $x$ will be its standard coordinate.  Similarly, beginning in \S\ref{sec models hyperelliptic}, the symbol $Y$ will in general be used to denote a hyperelliptic curve of any genus $g \ge 1$ over $K$ and ramified over $\infty\in X(K)$ and endowed with a 2-to-1 ramified cover map $Y\to X$; over the affine chart $x\neq \infty$, $Y$ can be described by an equation of the form $y^2=f(x)$, with $f(x)\in K[x]$ a polynomial of odd degree $2g+1$. The set of the $2g+1$ roots of $f(x)$ will be denoted $\RR\subseteq K^\alg$.  We  will use the notation $\Rinfty$ to mean the set of \emph{all} $2g+2$ branch points of $Y\to X$, including $\infty$.

In \S\ref{sec semistable galois covers},\ref{sec semistable relatively stable}, we work with Galois covers in greater generality, and in those sections $Y\to X$ indicates any Galois cover of smooth projective geometrically connected $K$-curves.

For convenience, we list the notation we will use relating to curves and models in Table \ref{table1} below.

\begin{longtable}{ p{3.25cm} p{10.5cm} p{1.25cm} }
\caption{Notation relating to a given curve $C / K$} \label{table1} \\
  Notation & Description & Section \\ \hline\hline
  $\CC / R$, ($\XX / R$, $\YY / R$) & a model of $C$ (or $X$ or $Y$) over the ring of integers of $K$ & \S\ref{sec semistable preliminaries} \\ \hline
  $\CC \leq \CC'$ & the model $\CC'$ \emph{dominates} the model $\CC$ & \S\ref{sec semistable preliminaries} \\ \hline
  $g(C)$ & the genus of $C$ & \S\ref{sec semistable preliminaries} \\ \hline
  $\CC_s / k$ & the special fiber of a model $\CC / R$ & \S\ref{sec semistable preliminaries} \\ \hline
  $a(V), m(V), w(V)$ & several integers attached to a component $V$ of $\CC_s$ & \S\ref{sec semistable preliminaries invariants} \\ \hline
  $a(\CC_s), t(\CC_s)$ & abelian and toric ranks of the special fiber of a semistable model & \S\ref{sec semistable preliminaries semistable models} \\ \hline
  $\Ctr(\CC, \CC')$ & the set of points of $\CC_s$ to which the irreducible components of $\CC'_s$ that do not appear in $\CC_s$ are contracted & \S\ref{sec semistable preliminaries contracting} \\ \hline
  $\Irred(\CC_s)$ & set of irreducible components of the special fiber $\CC_s / k$ & \S\ref{sec semistable preliminaries} \\ \hline
  $\Sing(\CC_s)$ & set of singular points of the special fiber $\CC_s / k$ & \S\ref{sec semistable preliminaries invariants} \\ \hline
  $\Cmin$, $\Cst$ & the minimal regular model and the stable model of $C$ & \S\ref{sec semistable preliminaries contracting} \\ \hline
  $\CC^{\mathrm{rst}}$ & the relatively stable model of $C$, given in \Cref{dfn relatively stable} & \S\ref{sec semistable relatively stable} \\ \hline
  $\Xmini$ & the quotient $\Ymini / G$ given a $G$-Galois cover $Y \to X$ & \S\ref{sec semistable galois covers} \\ \hline
  $\Xrst$ & the quotient $\Yrst / G$ given a $G$-Galois cover $Y \to X$ & \S\ref{sec semistable relatively stable} \\ \hline
  $\Gamma(\CC_s)$ & the dual graph of the special fiber & \S\ref{sec semistable preliminaries semistable models}
\end{longtable}

\subsubsection{Polynomials, discs, and clusters}

Let $h \in K^\alg[z]$ be a polynomial; we denote its degree by $\deg(h)$.  We extend the valuation $v : K^\alg \to \qq \cup \lbrace +\infty\rbrace$ to the \emph{Gauss valuation} $v: K^\alg[z]\to \qq \cup \lbrace +\infty\rbrace$; that is, for any polynomial $h(z) := \sum_{i = 0}^{\deg(h)} H_i z^i \in K^\alg[z]$, we set 
$$v(h) = v\left(\sum_{i = 0}^{\deg(h)} H_i z^i\right) = \min_{0 \leq i \leq {\deg(h)}} \{v(H_i)\}.$$

\begin{dfn} \label{dfn normalized reduction}
    A \emph{normalized reduction} of a nonzero polynomial $h(z) \in K^\alg[z]$ is the reduction in $k[z]$ of $\gamma^{-1}h$, where $\gamma \in (K^\alg)^{\times}$ is some scalar satisfying $v(\gamma) = v(h)$.
\end{dfn}

\begin{rmk} \label{rmk normalized reduction}

Clearly a normalized reduction of a polynomial $h(z)$ is a nonzero polynomial in $k[x]$ and is unique up to scaling; thus, the degrees of the terms appearing in the normalized reduction (which is what we will be chiefly interested in for our purposes) do not depend on the particular choice of $\gamma \in K^{\times}$ in \Cref{dfn normalized reduction}.

\end{rmk}

By a \emph{disc} (of $K^\alg$), we mean any subset of $K^\alg$ of the form $D_{\alpha,b}:=\{x\in K^\alg: v(x-\alpha)\ge b\}$ for some \emph{center} $\alpha\in K^\alg$ and \emph{depth} $b\in \qq$ (thus, our discs are always closed discs).  In this article, all depths (of clusters and of discs) will be rational numbers so that there will always exist an element of $K^\alg$ whose valuation is equal to any given depth.  Note that the depth of a disc is essentially minus a logarithm of its radius under the $p$-adic metric, and so a \textit{greater} depth corresponds to a \textit{smaller} disc.

For convenience, we list the special notation for this paper that we will use relating to polynomials, discs, and clusters in Table \ref{table2} below.

\begin{longtable}[htb]{ p{3cm} p{10cm} p{1.25cm} }
\caption{Notation relating to polynomials, discs and clusters} \label{table2} \\
Notation & Description & Section \\
\hline\hline
$x_{\alpha, \beta}$ & the coordinate obtained from $x$ through translation by $\alpha \in K^\alg$ and scaling by $\beta \in (K^\alg)^\times$, i.e.\ $x_{\alpha, \beta} = \beta^{-1} (x - \alpha)$ & \S\ref{sec models hyperelliptic line} \\ \hline
$h_{\alpha, \beta}$ & the polynomial obtained from $h$ such that $h_{\alpha, \beta}(x_{\alpha, \beta}) = h(x)$, i.e.\ $h_{\alpha, \beta}(z) = h(\beta z + \alpha)$ & \S\ref{sec models hyperelliptic line} \\ \hline
$\mathcal{X}_{\alpha, \beta}$ & the model of $X = \proj_K^1$ with coordinate $x_{\alpha, \beta}$ & \S\ref{sec models hyperelliptic line} \\ \hline
$D_{\alpha, b}$, $D_{\mathfrak{s}, b}$ & the disc of depth $b$ centered at $\alpha$ or at a point in $\mathfrak{s}$ & \S\ref{sec models hyperelliptic line} \\ \hline
$\mathcal{X}_D$ & the model of $X = \proj_K^1$ corresponding to a disc $D$ & \S\ref{sec models hyperelliptic line} \\ \hline
$\mathcal{X}_{\mathcal{D}}$ & the minimal model of $X = \proj_K^1$ dominating $\mathcal{X}_D$ for all discs $D$ in a collection $\mathcal{D}$ & \S\ref{sec models hyperelliptic line} \\ \hline
$\ell(\mathcal{X}_D, P)$ & given in \Cref{dfn ell ramification index} & \S\ref{sec models hyperelliptic separable} \\ \hline
$\underline{v}_h(D)$ & the valuation of the polynomial $h_{\alpha, \beta}$ for any $\alpha$ and $\beta$ such that $D = D_{\alpha, v(\beta)}$ & \S\ref{sec depths piecewise-linear} \\ \hline
$t_{q, \rho}$ & a rational number associated to a part-square decomposition $h = q^2 + \rho$ & \S\ref{sec models hyperelliptic part-square} \\ \hline
$\underline{t}_{q, \rho}(D)$ & the difference $\underline{v}_\rho(D) - \underline{v}_f(D)$ given a part-square decomposition $f = q^2 + \rho$ & \S\ref{sec depths piecewise-linear} \\ \hline
$\mathfrak{t}^{\mathfrak{s}}(D)$, $\mathfrak{t}^{\mathcal{R}}(D)$ & given in \Cref{dfn t fun}, applied to a cluster $\mathfrak{s}$ or to $\mathcal{R}$ & \S\ref{sec depths piecewise-linear} \\ \hline
$\mathfrak{s}'$ & the parent cluster of a cluster $\mathfrak{s}$, given in \Cref{dfn cluster} & \S\ref{sec clusters} \\ \hline
$d_\pm(\mathfrak{s}), \delta(\mathfrak{s})$ & rational numbers relating to the depth of a cluster $\mathfrak{s}$, given in \Cref{dfn cluster} & \S\ref{sec clusters} \\ \hline
$I(\mathfrak{s})$ & a subset of $\qq$ associated to a cluster $\mathfrak{s}$, given in \Cref{dfn cluster} & \S\ref{sec clusters} \\ \hline
$J(\mathfrak{s})$ & a certain sub-interval of $I(\mathfrak{s})$ containing rational numbers $b$ such that $\mathfrak{t}^{\mathcal{R}}(D_{\alpha, b}) = 2v(2)$ & \S\ref{sec depths construction valid discs} \\ \hline
$b_\pm(\mathfrak{s})$ & the endpoints of the interval $J(\mathfrak{s})$ & \S\ref{sec depths construction valid discs} \\ \hline
$\partial^\pm \mathfrak{t}^\mathcal{R}(b)$ & given in \Cref{lemma ell and t function} & \S\ref{sec depths construction valid discs} \\ \hline
$\lambda_\pm(\mathfrak{s})$ & the slopes $\mp\partial^\pm \mathfrak{t}^{\mathcal{R}}(b_\pm)$ & \S\ref{sec depths construction valid discs} \\ \hline
$f^{\mathfrak{s}}$, $f^{\mathcal{R} \smallsetminus \mathfrak{s}}$ & given by the formulas in (\ref{eq factorization}) & \S\ref{sec depths separating factorizing} \\ \hline
$f_\pm^{\mathfrak{s}}$ & given by the formulas in (\ref{eq standard form}) & \S\ref{sec depths separating std form} \\ \hline
$\mathfrak{t}_\pm^{\mathfrak{s}}(b)$ & given by the formulas in (\ref{eq mathfrak t pm}) & \S\ref{sec depths separating factorizing} \\ \hline
$b_0(\mathfrak{t}_\pm^{\mathfrak{s}})$ & the least value of $b$ at which $\mathfrak{t}_\pm^{\mathfrak{s}}$ attains $2v(2)$ & \S\ref{sec depths separating reconstructing invariants} \\ \hline
$B_{f, \mathfrak{s}}$ & the ``threshold depth" given in \Cref{prop depth threshold} & \S\ref{sec depths threshold}
\end{longtable}

\subsection{Acknowledgements} \label{sec introduction acknowledgements}

The authors would like to thank Fabrizio Andreatta for proposing that the first author, as work for his Masters thesis, join the early stages of the research project of the second author, as well as for providing guidance and helpful discussions to the first author throughout his research work in the Masters program.  The authors are also grateful to an anonymous referee for a number of helpful suggestions to improve the exposition.

\section{The relatively stable model of a Galois covering} \label{sec semistable models}

The main purpose of this section is to define the relatively stable model of a Galois covering of curves and to establish some useful properties that will allow us to construct them.  We begin by recalling a number of background results on models of curves.

\subsection{Preliminaries on models of curves} \label{sec semistable preliminaries}

In this subsection, we briefly recall and develop definitions and results on semistable models of general curves over discretely-valued fields, which will later be applied to Galois coverings and hyperelliptic curves.  Our main reference for these background results is \cite{liu2002algebraic}. In this section, $C$ is a smooth, geometrically connected, projective curve over a complete discretely-valued field $K$, whose ring of integers is denoted $R$, and whose residue field $k$ is assumed to be algebraically closed (see \S\ref{sec introduction notation}).

A \emph{model} of $C$ (over $R$) is a normal, flat, projective $R$-scheme $\CC$ whose generic fiber is identified with $C$. The models of $C$ form a preordered set $\Models(C)$, the order relation being given by \emph{dominance}: given two models $\CC$ and $\CC'$ of $C$, we will write $\CC\le \CC'$ to mean that $\CC'$ dominates $\CC$, i.e.\ that the identity map $C\to C$ extends to a birational morphism $\CC'\to \CC$.

Suppose we are given a finite extension $K'/K$ (which, under our assumptions, will necessarily be totally ramified, since the residue field $k$ of $K$ is algebraically closed); let $e_{K'/K}$ be the ramification index (which coincides, in our setting, with the degree of the extension), and let $R'\supseteq R$ denote the ring of integers of $K'$. We will freely say a \emph{model of $C$ over $R'$} to mean a model of $C':=C\otimes_K K'$ over $R'$. Given a model $\CC$ of $C$ over $R$, it is possible to construct a corresponding model $\CC'$ of $C$ over $R'$, which is defined as the normalization of the base-change $\CC\otimes_R R'$; in the case that we will be interested in, namely when our model $\CC$ is a semistable model of $C$, the model has reduced special fiber and thus the scheme $\CC\otimes_R R'$ is already normal (for example, by Serre's criterion for normality).  It follows in this situation that we have $\CC'=\CC\otimes_R R'$ and that the special fibers $\CC'_s$ and $\CC_s$ are canonically isomorphic.

The special fiber $\CC_s$ of a model $\CC$ of $C$ is (geometrically) connected and consists of a number of irreducible components $V_1, \ldots, V_n$; these components are projective, possibly singular curves over the residue field $k$, each one appearing in $\CC_s$ with a certain multiplicity (which is defined as the length of the local ring of $\CC_s$ at the generic point of the component). We denote by $\Irred(\CC_s)=\{ V_1,\ldots, V_n\}$ the set of such components. It follows from the properness and flatness of the morphism $\CC \to \Spec(R)$ that the genus $g(C)$ of the smooth $K$-curve $C$ coincides with the arithmetic genus of the $k$-curve $\CC_s$.

\subsubsection{Semistable models}
\label{sec semistable preliminaries semistable models}

A model $\CC$ of $C$ is said to be \emph{semistable} if its special fiber is reduced and its singularities (if there are any) are all nodes (i.e.\ ordinary double points). More generally, we say that a model $\CC$ of $C$ is \emph{semistable} at a point $P\in \CC_s$ if $\CC_s$ is reduced at $P$ and if $P$ is either a smooth point or a node of $\CC_s$; in the latter case, the completed local ring at $P$ has the form $R[[t_1,t_2]]/(t_1 t_2 - a)$ for some $a\in R$ with $v(a)>0$. The integer $v(a)/v(\pi)\ge 1$, where $\pi$ is a uniformizer of $R$, is known as the \emph{thickness} of the node. A semistable model is regular precisely when all of its nodes have thickness equal to 1.  We remark that while regularity of a model $\CC$ is generally not preserved over an extension of $R$, the property of semistability is preserved: whenever $\CC / R$ is semistable, $\CC' / R'$ is semistable too; however, the thickness of each node of $\CC$ gets multiplied by the ramification index $e_{K'/K}$ in $\CC'$.

To describe the combinatorics of a semistable model $\CC$ of a curve $C$, one can form the \emph{dual graph} $\Gamma(\CC_s)$ of its special fiber, whose set of vertices is $\Irred(\CC_s)$ and whose edges correspond to the nodes connecting them.

When a semistable model exists, we say that $C$ has \emph{semistable reduction} over $R$. By Theorem \ref{thm semistable reduction} above, any curve $C$ is guaranteed to have semistable reduction after replacing $R$ with a large enough finite extension.

Given a semistable model $\CC$ of $C$, the (generalized) Jacobian $\Pic^0(\CC_s)$ of the (possibly singular) $k$-curve $\CC_s$ is an extension of an abelian variety $A$ by a torus $T$. The ranks of $A$ and $T$ are known respectively as the \emph{abelian rank} and the \emph{toric rank}, and we denote them respectively by $a(\CC_s)$ and $t(\CC_s)$; they are non-negative integers adding up to the genus $g(C)$ (see, for example, \cite[\S 7.5]{liu2002algebraic} or \cite[Chapters 8 and 9]{bosch2012neron}).  The following facts about these ranks are well known.
\begin{enumerate}[(a)]
    \item For all models $\CC$, the abelian rank $a(\CC_s)$ coincides with the sum $a(\CC_s)=\sum_{V\in \Irred(\CC_s)} a(V)$, where $a(V):=g(\widetilde{V})$ is the genus of the normalization $\widetilde{V}$ of $V$.
    \item If $\CC$ is a semistable model, the toric rank $t(\CC_s)$ can be computed as $\mathrm{rank}_\zz H^1(\Gamma(\CC_s), \zz)$ (see \cite[Example 9.2.8]{bosch2012neron} for a proof); this is the number of edges minus the number of vertices plus $1$.
\end{enumerate}

It is also well known that the abelian and toric ranks do not depend on the choice of semistable model of a curve $C$.

We now define the notions of (-1)-line and (-2)-line in the context of semistable models.

\begin{dfn}
    \label{MinusLines2}
    If  $\CC$ is a model, $V\in \Irred(\CC_s)$, and $\CC$ is semistable at the points of $V$, then $V$ is said to be a (-1)-line (resp.\ a (-2)-line) if it is a line (i.e. $V\cong \mathbb{P}^1_k$) and the number of nodes of $\CC_s$ lying on it is equal to 1 (resp.\ 2).
\end{dfn}

\begin{rmk}
    It is possible to show that, if $\CC$ is regular model that is semistable at the points of a component $V\in \Irred(\CC_s)$, then the definition above is consistent with the definition involving self-intersection numbers commonly given in the more general context of regular models: this follows, for example, from the formula for self-intersection numbers given in \cite[Proposition 9.1.21(b)]{liu2002algebraic}.
\end{rmk}

\subsubsection{Contracting components of special fibers} \label{sec semistable preliminaries contracting}

When $\CC$ and $\CC'$ are two models such that $\CC\le \CC'$, the image of a component $V'$ of $\CC'_s$ through the birational morphism $\CC'\to \CC$ is either a component $V$ of $\CC_s$ or a single point $P$ of $\CC_s$; in this second case, we say that the birational morphism $\CC' \to \CC$ \emph{contracts} $V'$. The rule $V'\mapsto V$ defines a one-to-one correspondence between the irreducible components of $\CC'_s$ that are not contracted by $\CC'\to \CC$ and the irreducible components of $\CC_s$; we say that $V$ is the \emph{image} of $V'$ in $\CC_s$, and $V'$ the \emph{strict transform} of $V$ in $\CC'_s$.  The birational morphism $\CC'\to \CC$ is an isomorphism precisely over the open subscheme $\CC\setminus \lbrace P_1, \ldots, P_n \rbrace$, where the $P_i$'s are the points of the special fiber of $\CC$ to which some $V'\in \Irred(\CC'_s)\setminus \Irred(\CC_s)$ contracts.

Suppose that $\CC$ and $\CC'$ are two models of $C$. We can compare them by looking at the components of their special fibers. To this aim, let us make the auxiliary choice of a model $\CC''$  dominating them both, so that we can think of $\Irred(\CC_s)$ and $\Irred(\CC'_s)$ as two subsets of a common larger set, namely $\Irred(\CC''_s)$. We denote by $\Ctr(\CC,\CC')\subseteq \CC_s(k)$ the set of points $P\in \CC_s$ such that there exists an irreducible component $V''\in \Irred(\CC''_s)$ that is the strict transform of some component of $V'\in \Irred(\CC'_s)$ and contracts to $P$.
It is clear that the formation of $\Ctr(\CC,\CC')$ does not depend on the choice of $\CC''$.
Roughly speaking, this is the way that one should think of $\Ctr(\CC,\CC')$: any given component $V\in \Irred(\CC'_s)$ is either also present in the special fiber of $\CC$ (i.e., $V\in \Irred(\CC_s)$) or it is not, in which case it is \emph{contracted} to some point $P_V\in \CC_s(k)$. The set $\Ctr(\CC,\CC')$ is simply the set of all such $P_V$'s, as $V$ varies in $\Irred(\CC'_s)\setminus \Irred(\CC_s)$. 
We clearly have that $\Ctr(\CC,\CC')=\varnothing$ (i.e., all the irreducible components of $\CC'_s$ are also present in $\CC_s$) if and only if $\CC$ dominates $\CC'$.

Given a model $\CC'$ of $C$ and any proper subset $\{ V_1, \ldots, V_n\}\subsetneq \Irred(\CC'_s)$, it is always possible to form a model $\CC$ of $C$ dominated by $\CC'$ such that the birational morphism $\CC'\to \CC$ contracts precisely the components $V_1, \ldots, V_n\in \Irred(\CC'_s)$; as a consequence, we have  $\Irred(\CC_s)=\Irred(\CC'_s)\setminus \{ V_1, \ldots, V_n\}$.

Given a finite number of models $\CC_1, \ldots, \CC_n$, one can form a minimal model $\CC$ dominating them all: it is enough to take any model $\CC'$ dominating them all, and then contract each $V\in \Irred(\CC'_s)$ that is not the strict transform of an irreducible component of $\SF{\CC_i}$ for some $i$.  It is clear that $\Irred(\CC_s)$ coincides with the (not necessarily disjoint) union $\bigcup_i \Irred((\CC_i)_s)$.

Contracting (-1)- and (-2)-lines does not ever disrupt semistability: more precisely, if $\CC'$ is a model that is semistable at the points of some components $V_1, \ldots, V_n$ of $\CC_s$, and the $V_i$'s happen to all be (-1) and (-2)-lines, then the model $\CC$ that is obtained from $\CC$ by contracting all the $V_i$'s is semistable at the points where the $V_i$'s contract. Desingularizing is also an operation that preserves semistability: if $\CC$ is semistable at a point $P\in \CC_s$, and $\CC'$ is its minimal desingularization (i.e. the regular model dominating $\CC$ which is minimal with respect to dominance), then $\CC'$ is still semistable at all points lying above $P$, and moreover, if $P$ is a node of thickness $t$, the inverse image of $\CC'_s$ at $P$ consist of a chain of $t$ nodes of thickness 1, joined by $t-1$ (-2)-lines (see \cite[Corollary 10.3.25]{liu2002algebraic}).  More generally, if $\CC$ is a model that is semistable at a point $P$ and if $\CC'$ is any model dominating $\CC$ but dominated by its minimal desingularization, then $\CC'$ is semistable at the points above $P$.

If a curve $C$ has positive genus, then, among all of its regular models, there is a minimum one (with respect to dominance).  This model is called the \emph{minimal regular model}, and we denote it by $\Cmin$; it can be characterized as the unique regular model of $C$ whose special fiber does not contain (-1)-lines.

If $C$ has semistable reduction and positive genus, its minimal regular model is semistable (by \cite[Theorem 10.3.34]{liu2002algebraic}). If $C$ has semistable reduction and genus at least $2$, then the set of its semistable models has a minimum (with respect to dominance), which is called the \emph{stable model} of $C$; it is denoted by $\Cst$ and can be characterized as the unique semistable model of $C$ whose special fiber contains neither (-1)-lines nor (-2)-lines. The stable model $\Cst$ can be obtained from $\Cmin$ by contracting all the (-2)-lines appearing in its special fiber.

\subsubsection{Invariants attached to a component of the special fiber}
\label{sec semistable preliminaries invariants}

In this section we look more closely at the special fibers of models (over $R$) of a smooth projective geometrically connected $K$-curve $C$. In \S\ref{sec semistable preliminaries invariants}, in particular, we define a number of invariants attached to each component of the special fiber of a model, while in \S\ref{sec special fibers criterion}, we use them to state a criterion that allows us to identify those models of $C$ that are part of the minimal regular model when $C$ has semistable reduction.

Given a model $\CC$ of $C$ and a component $V\in \Irred(\CC_s)$, we consider several invariants attached to $V$, listed as follows:
\begin{enumerate}
    \item $m(V)$ denotes the multiplicity of $V$ in $\CC_s$;
    \item $a(V)$ denotes the \emph{abelian rank} of $V$, i.e.\ the genus of the normalization $\widetilde{V}$ of $V$;
    \item $w(V)$ is defined only when $m(V)=1$, and it denotes the number of singular points of $\CC_s$ that belong to $V$, each one counted as many times as the number of branches of $V$ at that point; in other words, if $\tilde{V}$ is the normalization of $V$, then $w(V)$ is the number of points of $\tilde{V}$ that lie over $V\cap \Sing(\CC_s)$, where $\Sing(\CC_s)$ is the set of singular points of $\CC_s$.
\end{enumerate}

We now show that, under appropriate assumptions, the integers $m$, $a$, and $w$ are left invariant when the model is changed.
\begin{lemma}
    \label{lemma inv m a w}
    Let $\CC'$ be another model of $C$ which dominates $\CC$, and let $V'$ denote the strict transform of $V$ in $\CC'_s$. Then we have $m(V')=m(V)$ and $a(V') = a(V)$. Moreover, if $\CC'$ is dominated by the minimal desingularization of $\CC$, we also have $w(V')=w(V)$.
    \begin{proof}
        For $m$ and $a$, the lemma immediately follows from the consideration that $\CC'_s\to \CC_s$ is an isomorphism away from a finite set of points of $\CC_s$. We will now prove the result for $w$.
        
        Let $Q$ be a point of $V'$ which lies over some $P\in V$. We claim that $\CC'_s$ is smooth (resp.\ singular) at $Q$ if and only if $\CC_s$ is smooth (resp.\ singular) at $P$. This is obvious whenever $\CC'\to \CC$ is an isomorphism above $P$. If $\CC'\to \CC$ is not an isomorphism above $P$, the claim follows from the two following observations.  Firstly, since we are assuming that $\CC'$ is dominated by the minimal desingularization of $\CC$, it must be the case that $\CC$ is not regular at $P$ and consequently that $\CC_s$ is singular at $P$.  Secondly, the fiber of $\CC'\to \CC$ above $P$ is pure of dimension 1, and it consists of those components $E_i$ of $\CC'_s$ that contract to $P$; the point $Q$ will thus belong not only to $V'$, but also to one of the $E_i$'s, so that $\CC'_s$ will certainly be singular at $Q$. This completes the proof of the claim.
        
        Now, since $\CC'\to \CC$ restricts to a birational morphism $V'\to V$ of $k$-curves, the set of branches of $V$ at a point $P\in \CC_s$ equals the set of branches of $V'$ at the points of $\CC'_s$ lying above $P$. If we combine this consideration with the claim we have just proved, we have that $V'\to V$ induces a bijection between the set of the branches of $V$ at the singular points of $\CC_s$ and the set of the branches of $V'$ at the singular points of $\CC'_s$. The equality $w(V')=w(V)$ follows.
    \end{proof}
\end{lemma}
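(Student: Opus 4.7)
The plan is to exploit that, since $\CC$ is normal and $\CC' \to \CC$ is a projective birational morphism, it restricts to an isomorphism away from the finite set $\Ctr(\CC,\CC') \subseteq \CC_s(k)$. Consequently, the induced map $V' \to V$ on strict transforms is a birational morphism of projective $k$-curves that is a local isomorphism outside finitely many closed points of $V$. From this, both equalities $m(V') = m(V)$ and $a(V') = a(V)$ follow at once: multiplicities are lengths of local rings at generic points of components, and these are preserved by a local isomorphism at those generic points; the abelian ranks are the genera of the normalizations $\widetilde{V'}$ and $\widetilde{V}$, both of which are smooth projective $k$-curves birational to each other and therefore isomorphic through a common normalization.

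The substantive claim is that $w(V') = w(V)$ under the hypothesis that $\CC'$ is dominated by the minimal desingularization of $\CC$. I would reduce this to the following assertion: for every $Q \in V'$ mapping to $P \in V$, the special fiber $\CC'_s$ is singular at $Q$ if and only if $\CC_s$ is singular at $P$. Granted this assertion, the isomorphism $\widetilde{V'} \cong \widetilde{V}$ identifies branches of $V'$ lying over preimages of $P$ with branches of $V$ at $P$, so summing branch counts over corresponding singular points yields $w(V') = w(V)$.

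The reduction splits into two cases. If $\CC' \to \CC$ is an isomorphism at $P$, then the local rings of $\CC'_s$ at $Q$ and of $\CC_s$ at $P$ are identified, and the singularity statements are equivalent. Otherwise, on the $\CC'$-side, the fiber of $\CC' \to \CC$ above $P$ is pure of dimension one, so $Q$ lies both on $V'$ and on some exceptional component contracting to $P$, forcing $\CC'_s$ to be singular at $Q$. On the $\CC$-side, the minimality hypothesis forces $\CC$ to be non-regular at $P$: if $\CC$ were regular there, the minimal desingularization would be an isomorphism above $P$, and since $\CC'$ is dominated by that desingularization, $\CC' \to \CC$ would itself be an isomorphism above $P$, contrary to assumption.

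The main obstacle is the final step, passing from non-regularity of $\CC$ at $P$ to singularity of $\CC_s$ at $P$. This uses the standard local fact that, for $\CC$ normal and flat over $R$, smoothness of $\CC_s$ at $P$ would allow one to lift a single generator of the maximal ideal of $\OO_{\CC_s,P}$ and combine it with a uniformizer of $R$ to obtain a two-element generating set of the maximal ideal of $\OO_{\CC,P}$, making $\CC$ regular at $P$; non-regularity thus forces $\CC_s$ to be singular there. Once this relationship between regularity of the total space and smoothness of the special fiber is in hand, the singular-point correspondence follows and the lemma is complete.
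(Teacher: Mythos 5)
Your proof is correct and follows essentially the same route as the paper's: the $m$ and $a$ cases via the isomorphism away from finitely many points, and the $w$ case via the pointwise claim that singularity of $\CC'_s$ at $Q$ is equivalent to singularity of $\CC_s$ at $P$, using the pure-dimension-one exceptional fiber on one side and the regularity/smoothness relationship on the other. You spell out in somewhat more detail two steps the paper merely asserts — namely that the minimality hypothesis forces non-regularity of $\CC$ at $P$, and that non-regularity of the normal flat model at $P$ forces $\CC_s$ to be singular there — but the underlying argument is the same.
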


We now describe how the invariants we have defined allow us to detect (-1)-lines and (-2)-lines.
\begin{lemma}
    \label{lemma12lines}
    Let $\CC$ be any model of $C$, and let $V$ be an irreducible component of $\CC_s$. Then,
    \begin{enumerate}
        \item[(a)] if $\CC$ is regular and $V$ is a (-1)-line of multiplicity 1, then $a(V)=0$ and $w(V)=1$;
        \item[(b)] if $\CC$ is regular and $V$ is a (-2)-line of multiplicity 1, then $a(V)=0$ and $w(V)\in \lbrace 1, 2\rbrace$;
        \item[(c)] if $\CC$ is semistable at the points of $V$, then $V$ is a (-1)-line if and only if $a(V)=0$ and $w(V)=1$; and 
        \item[(d)] if $\CC$ is semistable at the points of $V$, then $V$ is a (-2)-line if and only if $a(V)=0$ and $w(V)=2$, where the reverse implication only holds when $g(C)\geq 2$.
    \end{enumerate}
    \begin{proof}
        If $V$ is a component of multiplicity 1 in the special fiber $\CC_s$ of a regular model $\CC$, then it follows from the intersection theory of regular models (see \cite[Chapter 9]{liu2002algebraic}) that its self-intersection number of $V$ is equal to minus the number of points at which $V$ intersects the other components of $\CC_s$, each counted with a certain (positive) multiplicity. Once this has been observed, parts (a) and (b) follow immediately from the usual definition of (-1)-lines and (-2)-lines for regular models involving self-intersection numbers.
        
        Suppose now that $V$ is a component of the special fiber $\CC_s$ of a model $\CC$ that is semistable at the points of $V$ (which, in particular, implies that $m(V)=1$). From the definition of the invariant $w$ and the structure of semistable models, it is clear that $w(V)$ equals the sum $2w_{\text{self}}(V) + w_{\text{other}}(V)$, where $w_{\text{self}}(V)$ is the number of self-intersections of $V$, while $w_{\text{other}}(V)$ is the number of intersections of $V$ with other components of $\CC_s$; moreover, we have $w_{\text{self}}(V)=0$ if and only if $V$ is smooth.
        But the line $\mathbb{P}^1_k$ is the unique smooth $k$-curve with abelian rank 0, so the component $V$ is a line if and only if $a(V)=0$ and $w_{\text{self}}(V)=0$; according to \Cref{MinusLines2}, the component $V$ is thus a (-1)-line or a (-2)-line if and only if $a(V)=0$, $w_{\text{self}}(V)=0$, and $w_{\text{other}}(V)$ equals $1$ or 2 respectively.
        
        From the considerations above, both implications of (c), as well the forward implication of (d), immediately follow. To prove the reverse implication of (d), one has to exclude the possibility that $a(V)=0$, $w_{\text{self}}(V)=1$, and $w_{\text{other}}(V)=0$. But if this were the case, the unique irreducible component of $\CC_s$ would be $V$ (because $w_{\text{other}}(V)=0$, but $\CC_s$ is connected), and the special fiber $\CC_s$ would consequently be a reduced $k$-curve having arithmetic genus equal to that of $V$, which is $a(V)+w_{\text{self}}(V)=1$. Since the arithmetic genus of $\CC_s$ coincides with $g(C)$, we would get $g(C)=1$; we therefore get the reverse implication of (d) as long as $g(C) \neq 1$.
    \end{proof}
\end{lemma}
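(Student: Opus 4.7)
The plan splits along the two hypotheses: parts (a) and (b) rely only on regularity of $\CC$, so I would attack them via intersection theory on arithmetic surfaces, while parts (c) and (d) exploit the specific local structure of a node in a semistable model. The unifying technical tool will be a decomposition of $w(V)$ according to whether the branches of $V$ at a singular point of $\CC_s$ lie on $V$ itself or on another component.

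For (a) and (b), since $\CC_s$ is the Cartier divisor of a uniformizer of $R$, one has $V\cdot \CC_s = 0$; writing $\CC_s = V + \sum_{V_j\neq V} m(V_j)V_j$ (using $m(V)=1$), this rearranges to
\[
V^{2} \;=\; -\sum_{V_j\neq V} m(V_j)\,(V\cdot V_j),
\]
a negative combination of non-negative intersection numbers weighted by positive multiplicities. Being a $(-1)$- or $(-2)$-line forces this weighted sum to equal $1$ or $2$, respectively. The equality $a(V)=0$ is immediate since $V\cong\mathbb{P}^1_k$ is smooth of genus $0$. Because $V$ is smooth, each singular point of $\CC_s$ lying on $V$ contributes exactly one branch of $V$, so $w(V)$ equals the number of such singular points: forced to be $1$ for a $(-1)$-line, and equal to either $1$ or $2$ for a $(-2)$-line according to whether the total weighted intersection $2$ is concentrated at a single point (with multiplicity) or is split between two distinct points.

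For (c) and (d), the crucial input is that in a semistable model every singular point of $\CC_s$ is a node, with exactly two analytic branches. I would partition the singular points of $\CC_s$ lying on $V$ into \emph{self-nodes} (both branches on $V$) and \emph{external nodes} (one branch on $V$, one on a different component), writing $w_{\mathrm{self}}(V)$ and $w_{\mathrm{other}}(V)$ for their cardinalities. From the definition of $w$ one reads off the decomposition $w(V) = 2\,w_{\mathrm{self}}(V) + w_{\mathrm{other}}(V)$; furthermore the normalization $\widetilde{V}\to V$ separates the two branches at each self-node, so $V$ is smooth iff $w_{\mathrm{self}}(V) = 0$, which together with $g(\widetilde{V})=a(V)=0$ characterizes $V$ being a line. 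The total number of nodes lying on $V$ is then $w_{\mathrm{self}}(V) + w_{\mathrm{other}}(V)$, from which the forward implications of (c), (d) follow immediately.

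The reverse implications amount to solving the equation $w(V) = 2\,w_{\mathrm{self}}(V) + w_{\mathrm{other}}(V)$ in non-negative integers, under $a(V)=0$. For $w(V) = 1$ the only solution is $(w_{\mathrm{self}}, w_{\mathrm{other}}) = (0,1)$, giving a $(-1)$-line. For $w(V) = 2$ there are two candidates, $(0,2)$ and $(1,0)$; the first is a $(-2)$-line, and the main obstacle is to rule out the second. I would argue that $w_{\mathrm{other}}(V)=0$ together with connectedness of $\CC_s$ forces $V$ to be the only irreducible component, and then the arithmetic-genus formula for a nodal curve gives $g(C) = p_a(\CC_s) = p_a(V) = a(V) + w_{\mathrm{self}}(V) = 1$, contradicting $g(C)\geq 2$ and completing the argument.
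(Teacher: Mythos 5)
Your proposal is correct and follows essentially the same approach as the paper: for (a) and (b) you derive the self-intersection formula from $V\cdot\CC_s=0$ (the paper simply cites Liu, Chapter 9, for this), and for (c) and (d) you use the identical decomposition $w(V)=2w_{\mathrm{self}}(V)+w_{\mathrm{other}}(V)$ together with the arithmetic-genus count to exclude $(w_{\mathrm{self}},w_{\mathrm{other}})=(1,0)$. The only difference is cosmetic — you unpack the intersection-theoretic input rather than citing it.
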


Inspired by the above lemma, we make the following definition.
\begin{dfn} \label{dfn minus two curve}
    Given a model $\CC$ and an irreducible component $V$ of $\CC_s$ such that $\CC$ is semistable at the points of $V$, the component $V$ is said to be a \emph{(-2)-curve} of $\CC_s$ if $m(V)=1$, $a(V)=0$, and $w(V)=2$.
\end{dfn}
\begin{rmk}
    \label{Minus2LinesCurves}
    \Cref{lemma12lines} ensures that, if $V$ is a component of $\CC_s$ and $\CC$ is semistable at the points of $V$, then, when $V$ is (-2)-line, it is a (-2)-curve, and the converse also holds provided that $g(C)\neq 1$. If $g(C)=1$, the proof of \Cref{lemma12lines} shows that $V$ may be a (-2)-curve without being a (-2)-line, and this happens precisely when $V$ is the unique component of $\CC_s$ and it is a $k$-curve of abelian rank 0 intersecting itself once (which is to say, a projective line with two points identified).
\end{rmk}

\begin{rmk}
    It is easy to see from combining \Cref{lemma inv m a w} with \Cref{lemma12lines} that the properties of being a (-1)-line or a (-2)-curve are preserved and reflected under desingularization in the semistable case; this is the reason why the notion of a (-2)-curve (rather than a (-2)-line) will turn out to be more convenient for us.
\end{rmk}

\subsubsection{A criterion for being part of the minimal regular model}
\label{sec special fibers criterion}
As initial evidence of the usefulness of the invariants $a$, $m$, $w$ introduced before, we provide a criterion for a model $\CC$ to be part of the minimal regular model $\Cmin$ in the case that $C$ has semistable reduction.
\begin{prop}
    \label{prop part of min}
    Assume that $g(C)\ge 1$ and that $C$ has semistable reduction. Let $\CC$ be any model. Then $\CC\le \Cmin$ if and only if for each component $V$ of $\CC_s$, we have
    \begin{enumerate}[(i)]
        \item $m(V)=1$, and 
        \item $a(V)\ge 1$ or $w(V)\ge 2$.
    \end{enumerate}
    \begin{proof}
        First assume that we have $\CC\le \Cmin$.  Then the invariants $a$, $m$, and $w$ of a vertical component $V$ of $\CC$ must be equal to those of its strict transform $V^{\mathrm{min}}$ in $\Cmin$, thanks to \Cref{lemma inv m a w} (more generally, they remain the same in any model lying between $\CC\le \Cmin$). Since $\Cmin$ is semistable, its special fiber is reduced; thus, we get $m(V)=1$. Let us now assume that $a(V)=0$. If it were the case that $w(V)=0$, then $\CC_s=V$ would be a line; since the arithmetic genus of $\CC_s$ coincides with $g(C)$, this contradicts the condition that $g(C) \geq 1$. If we had $w(V)=1$, then, via \Cref{lemma12lines}(c), $V^{\mathrm{min}}$ would be a (-1)-line, which is impossible, since the minimal regular model does not contain (-1)-lines. Thus, the quantity $w(V)$ is necessarily $\geq 2$.
        
        Now assume that for each component $V$ of $\CC_s$, the conditions (i) and (ii) given in the statement hold.  Let $\CC'$ be the minimal desingularization of $\CC$. Assume by way of contradiction that $\CC'_s$ contains a (-1)-line. Since the desingularization $\CC'$ is minimal, such a (-1)-line must necessarily be the strict transform $V'$ of some component $V\in \Irred(\CC_s)$. By \Cref{lemma inv m a w}, the quantities $a(V')$, $m(V')$ and $w(V')$ are equal to $a(V)$, $m(V)$ and $w(V)$ respectively. Thus, from the condition $m(V)=1$, we deduce $m(V')=1$; since $V'$ is a (-1)-line of multiplicity 1,  Lemma \ref{lemma12lines}(a) ensures that $a(V') = 0$ and $w(V') = 1$, hence $a(V)=0$ and $w(V)=1$. But this contradicts our hypothesis, so we conclude that $\CC'_s$ cannot contain a (-1)-line.  It follows that $\CC'=\Cmin$, and we get $\CC\le \Cmin$ as desired.
    \end{proof}
\end{prop}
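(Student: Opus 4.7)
The plan is to prove both directions by leveraging Lemma \ref{lemma inv m a w} to transfer the invariants $m$, $a$, $w$ between $\CC$ and models dominating it (in particular the minimal desingularization of $\CC$ and $\Cmin$ itself), and then to invoke Lemma \ref{lemma12lines} to translate these invariants into statements about the presence or absence of $(-1)$-lines. Throughout, I use that $C$ has semistable reduction and $g(C)\ge 1$, which by the result cited from \cite[Theorem 10.3.34]{liu2002algebraic} guarantees that $\Cmin$ is itself semistable and is characterized as the unique regular model with no $(-1)$-lines.

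For the forward implication, suppose $\CC\le \Cmin$. Given $V\in \Irred(\CC_s)$, let $V^{\min}$ be its strict transform in $\Cmin$. Since $\Cmin$ lies between $\CC$ and its minimal desingularization (being regular and dominating $\CC$), Lemma \ref{lemma inv m a w} applies to give $m(V)=m(V^{\min})$, $a(V)=a(V^{\min})$, $w(V)=w(V^{\min})$. Because $\Cmin$ is semistable, its special fiber is reduced, forcing $m(V)=1$. For (ii), I would argue by contradiction: if $a(V)=0$ and $w(V)\le 1$, then either $w(V)=0$ (so $V$ meets no other component, hence $\CC_s=V$ is a smooth line of arithmetic genus $0$, contradicting $g(C)\ge 1$) or $w(V)=1$ (so $V^{\min}$ is a $(-1)$-line by Lemma \ref{lemma12lines}(c), contradicting that $\Cmin$ has no $(-1)$-lines).

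For the reverse implication, I assume (i) and (ii) hold for every component of $\CC_s$ and let $\CC'$ denote the minimal desingularization of $\CC$. It suffices to show that $\CC'_s$ contains no $(-1)$-line, because then $\CC'$ will be the minimal regular model $\Cmin$, giving $\CC\le \CC'=\Cmin$. Suppose for contradiction that $V'\subseteq \CC'_s$ is a $(-1)$-line. The minimality of $\CC'$ forces $V'$ to be the strict transform of some $V\in \Irred(\CC_s)$: otherwise $V'$ would be an exceptional component contractible within the desingularization process, and contracting it would yield a smaller regular model dominating $\CC$, violating minimality. Invoking Lemma \ref{lemma inv m a w} again, we get $m(V')=m(V)=1$, and Lemma \ref{lemma12lines}(a) applied to $V'$ gives $a(V)=a(V')=0$ and $w(V)=w(V')=1$, in direct contradiction with hypothesis (ii).

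The principal subtlety to watch for is the step in the reverse direction that asserts any $(-1)$-line in the minimal desingularization is the strict transform of an existing component of $\CC_s$. This is essentially a standard minimality argument for resolutions of surface singularities, but it is worth verifying carefully in our setting, since the minimal desingularization is only required to be minimal among regular models dominating $\CC$, and one must rule out the possibility that a $(-1)$-line appears purely as an exceptional divisor over a singular point of $\CC$. Once this is handled, the rest is a direct application of the two preceding lemmas.
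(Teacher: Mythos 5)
Your proof is correct and follows essentially the same route as the paper's: transfer of the invariants $m$, $a$, $w$ via Lemma \ref{lemma inv m a w}, translation into (non-)existence of $(-1)$-lines via Lemma \ref{lemma12lines}, with the minimal desingularization as the intermediary in the reverse direction. The extra justification you supply (that $\Cmin$ sits between $\CC$ and its minimal desingularization so $w$ is preserved, and that a $(-1)$-line in the minimal desingularization cannot be purely exceptional) is exactly the right thing to be careful about and matches what the paper leaves slightly implicit.
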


\subsection{Models of Galois covers} \label{sec semistable galois covers}

For the rest of this section, we specialize to the case in which the Galois cover $Y\to X$ that we considered in \S\ref{sec semistable galois covers} is the degree-$2$ map from a hyperelliptic curve $Y$ of genus $g\ge 1$ to the projective line $X := \proj_K^1$ (see \S\ref{sec notation he}).  Let $G:=\Aut_{X}(Y)$ denote the Galois group, which is the group of order $2$.  The ideas and results in the remainder of this section can be adapted much more generally to any Galois cover of smooth projective geometrically connected curves, but for our purposes it suffices to restrict ourselves to the particularly simple case of a hyperelliptic curve viewed as a $2$-cyclic cover of the projective line.

We say that a model $\YY$ of $Y$ \emph{comes from} a model $\XX$ of $X$ (or that $\YY$ is the model of $Y$ \emph{corresponding to} $\XX$) if $\YY$ is the normalization of $\XX$ in the function field $K(Y)$; in this case, the Galois group $G$ acts on $\YY$ and $\XX$ can be recovered as the quotient $\YY/G$, and moreover, the set of irreducible components $\Irred(\YY_s)$ is a $G$-set, and we have $\Irred(\XX_s)=\Irred(\YY_s)/G$.

The minimal regular model $\Ymini$ is always acted upon by $G$; we use the notation $\Xmini=\Ymini/G$ to denote the model of $X$ from which the minimal regular model $\Ymini$ comes.

\subsubsection{Vertical and horizontal (-2)-curves} \label{sec relatively stable -2-curves}

In our context of $Y \to X$ being a $2$-cover, we may distinguish two different kinds of (-2)-curves among components of $\Irred(\YY_s)$, keeping in mind that by definition a (-2)-curve has a total of exactly $2$ branches passing through singular points of $\YY_s$ (they may both pass through the same node of $\YY_s$ or each pass through a different node of $\YY_s$).

\begin{dfn} \label{defVerticalHorizontalMinusTwoLines}
    Given a (-2)-curve $V\in \Irred(\YY_s)$ such that $\YY$ is semistable at the points of $V$, we say that it is \emph{vertical} or \emph{horizontal} depending on whether the generator of the Galois group $G$ acts by transposing or fixing the $2$ branches of $V$ passing through singular points of $\YY_s$.
\end{dfn}

\begin{rmk} \label{rmkVerticalHorizontalMinusTwoLines}

The terminology \emph{vertical} and \emph{horizontal} in the above definition comes from considering that the definition immediately that, if a (-2)-curve $V$ meets the rest of $\YY_s$ at $2$ nodes, those nodes lie over the same (resp. distinct) nodes of $\XX_s$ if the curve $V$ is vertical (resp. horizontal).  This is illustrated in \Cref{fig VerticalHorizontalMinusTwoLines} below.

\end{rmk}

\begin{figure}[t]

\begin{subfigure}[t]{.4\textwidth}
\centering
\includegraphics[scale=.4]{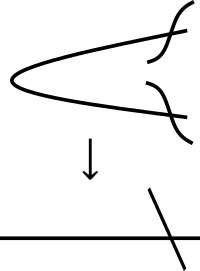}
\end{subfigure}
~
\begin{subfigure}[t]{.4\textwidth}
\centering
\includegraphics[scale=.36]{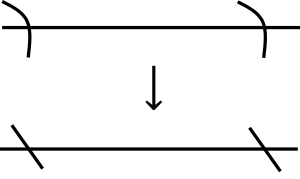}
\end{subfigure}

\caption{A component $V$ of $\YY_s$ and the other components of $\YY_s$ it intersects with, mapping to its image in $\XX_s$, where on the left (resp. right) $V$ is a vertical (resp. horizontal) (-2)-curve.}

\label{fig VerticalHorizontalMinusTwoLines}
\end{figure}

The property of being a horizontal or vertical (-2)-curve is preserved and reflected under desingularization of semistable models.
\begin{prop}
    \label{Minus12LinesStrictTransformRelative}
    Let $\YY'$ be another model acted upon by $G$ which dominates $\YY$ but is dominated by the minimal desingularization of $\YY$. Given a component $V$ of $\YY_s$ such that $\YY$ is semistable at the points of $V$, if $V'$ denotes the strict transform of $V$ in $\YY'_s$, we have that $\YY'$ is semistable at the points of $V'$; moreover, the transform $V'$ is a (-1)-line (resp.\ a horizontal (-2)-curve, resp.\ a vertical (-2)-curve) if and only if $V$ is.
    \begin{proof}
        The fact that $\YY'$ is semistable at the points of $V'$ has been discussed in \S\ref{sec semistable preliminaries contracting}.  Then the result follows from \Cref{lemma12lines}(c)(d) combined with \Cref{lemma inv m a w} and the fact that the birational map $\YY' \to \YY$ is $G$-equivariant, so that as we replace $\YY$ with $\YY'$ and $V$ with its strict transform, the set $\mathfrak{B}$ is preserved not only as a set (as in the proof of \Cref{lemma inv m a w}) but as a $G$-set.
    \end{proof}
\end{prop}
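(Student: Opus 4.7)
The plan is to reduce the proposition to the invariance statements for $m$, $a$, $w$, and $\underline{w}$ already established in \Cref{lemma inv m a w,lemma inv ww}, once semistability is propagated from $\YY$ to $\YY'$ at the points of $V'$. First, the semistability assertion is immediate from the discussion in \S\ref{sec semistable preliminaries contracting}: since $\YY$ is semistable at the points of $V$ and $\YY'$ lies between $\YY$ and its minimal desingularization, $\YY'$ is semistable at every point lying above a point of $V$, hence in particular at the points of $V'$.

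Next, I would translate the conditions of being a (-1)-line, a horizontal (-2)-curve, or a vertical (-2)-curve into conditions on the invariants. By \Cref{lemma12lines}(c),(d), given that $\YY$ (resp.\ $\YY'$) is semistable at the points of $V$ (resp.\ $V'$), the component $V$ (resp.\ $V'$) is a (-1)-line if and only if $m=1$, $a=0$, and $w=1$, and it is a (-2)-curve if and only if $m=1$, $a=0$, and $w=2$. The horizontal versus vertical distinction is then read off from $\underline{w}$ according to \Cref{defVerticalHorizontalMinusTwoLines}: horizontal corresponds to $\underline{w}=(1,1)$ and vertical to $\underline{w}=(2)$.

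Finally, I would invoke invariance: \Cref{lemma inv m a w} gives $m(V')=m(V)$, $a(V')=a(V)$, and $w(V')=w(V)$, while \Cref{lemma inv ww} gives $\underline{w}(V')=\underline{w}(V)$ (both under precisely the hypothesis that $\YY'$ is dominated by the minimal desingularization of $\YY$). Combining these equalities with the characterizations in the previous paragraph yields each of the three ``if and only if'' statements. I do not expect any genuine obstacle here: all the nontrivial content has been packaged into the preceding lemmas, and the proof amounts to assembling them. The only point requiring a moment of care is that \Cref{lemma12lines}(d) is stated with a genus hypothesis for one direction, but this does not affect the present proposition because the characterization we need is being applied symmetrically to $V$ and $V'$ on the same curve $Y$, so the reverse implication is either valid for both simultaneously or fails for both simultaneously.
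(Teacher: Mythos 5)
Your proof follows the same approach as the paper's: propagate semistability via \S\ref{sec semistable preliminaries contracting}, translate the three properties into conditions on the invariants $m$, $a$, $w$, $\underline{w}$, and then invoke \Cref{lemma inv m a w,lemma inv ww}. That is exactly what the paper does, so the argument is correct.

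One point worth correcting: the ``moment of care'' you flag about the genus hypothesis in \Cref{lemma12lines}(d) is a non-issue, and for a cleaner reason than the one you give. The proposition concerns \emph{(-2)-curves}, not (-2)-lines, and the characterization $V$ is a (-2)-curve $\iff m(V)=1$, $a(V)=0$, $w(V)=2$ is \Cref{dfn minus two curve} itself, unconditionally, with no genus hypothesis whatsoever. Indeed, as \Cref{Minus2LinesCurves} explains, the notion of (-2)-curve was introduced precisely because the passage between (-2)-lines and the numerical conditions in \Cref{lemma12lines}(d) is genus-dependent, whereas the (-2)-curve notion sidesteps this. So the correct citation for the (-2)-curve characterization is \Cref{dfn minus two curve}, not \Cref{lemma12lines}(d). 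Moreover, your ``applied symmetrically'' escape is not actually a valid logical step: if the reverse implication of \Cref{lemma12lines}(d) were to fail for both $V$ and $V'$ simultaneously (say $g(Y)=1$), that would not let you conclude the biconditional between them. You are saved only because the issue is vacuous once you use the right characterization.
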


\subsubsection{Vanishing and persistent nodes} \label{sec relatively stable vanishing persistent}

Given a model $\YY$ of $Y$ corresponding to some model $\XX$ of $X$, we can ask ourselves how the properties of $\XX$ and $\YY$ are related to each other.  Writing $f: \YY\to \XX=\YY/G$ for the hyperelliptic quotient map, we present an important result concerning the semistability of $\XX$ and $\YY$.
\begin{prop}
    \label{prop vanishing persistent}
    In the setting above, we have that $\XX$ is semistable at $f(Q)$ whenever $\YY$ is semistable at some $Q\in \YY_s$. More precisely, we have the following.
    \begin{enumerate}[(a)]
        \item If $Q$ is a smooth point of $\YY_s$, then $f(Q)$ is a smooth point of $\XX_s$;
        \item If $Q$ is a node of thickness $t$ of $\YY_s$, we have two possibilities:
        \begin{enumerate}[(i)]
            \item if the Galois group $G$ fixes $Q$ and flips the two branches of ${\YY}_s$ passing through $Q$, then $f(Q)$ is a smooth point of ${\XX}_s$;
            \item if the stabilizer $G_Q \le G$ of $Q$ does not flip the two branches of ${\YY}_s$ passing through $Q$, then $f(Q)$ is a node of $\XX_s$, and its thickness is $t|G_Q|$.
        \end{enumerate}
    \end{enumerate}
    In particular, if the model $\YY$ is semistable, then so is the model $\XX$.
    \begin{proof}
        The proof consists of an explicit local study of the quotient map $f: \YY\to \XX$, which can be found in \cite[Proposition 10.3.48]{liu2002algebraic}.
    \end{proof}
\end{prop}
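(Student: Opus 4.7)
My plan is to reduce the statement to a local computation on completed local rings. Since $\YY$ is normal and $f: \YY \to \XX = \YY/G$ is the quotient map, for any $Q\in\YY_s$ the completed local ring $\OOO_{\XX, f(Q)}$ identifies with the ring of $G_Q$-invariants of $\OOO_{\YY, Q}$; here one uses that $k$ is algebraically closed, so the decomposition group at $Q$ coincides with the stabilizer $G_Q$, and that $G$ acts transitively on the fiber $f^{-1}(f(Q))$. The three cases in the statement then reduce to explicit computations of this invariant ring in each local model, together with reading off thickness.

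For part (a), at a smooth point $Q$ of $\YY_s$ we have $\OOO_{\YY, Q}\cong R[[t]]$ with $t$ reducing to a uniformizer of $\YY_s$ at $Q$. I would construct a $G_Q$-invariant element $s\in R[[t]]$ whose reduction modulo $\pi$ is a uniformizer of $\XX_s$ at $f(Q)$, for example a suitable elementary symmetric function in the Galois translates $\{g(t)\,:\,g\in G_Q\}$, and then verify via Weierstrass preparation that $R[[t]]^{G_Q}=R[[s]]$. This exhibits $f(Q)$ as a smooth point of $\XX_s$.

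For part (b), at a node of thickness $t$ we have $\OOO_{\YY, Q}\cong R[[u,v]]/(uv-\pi^t)$, the two branches corresponding to $(u)$ and $(v)$. The action of $G_Q$ must permute these two minimal primes over $(\pi)$, yielding the dichotomy. In subcase (i), there is an index-$2$ subgroup $H\le G_Q$ fixing both branches; passing first to $H$-invariants (an instance of subcase (ii)) leaves an involution swapping the two remaining branch coordinates, and the resulting invariants are generated over $R$ by their sum, producing a regular two-dimensional local ring and hence a smooth point of $\XX_s$. In subcase (ii), each $g\in G_Q$ sends $u\mapsto \alpha_g u$ and $v\mapsto \beta_g v$ modulo higher-order corrections; setting $u'=\prod_{g\in G_Q}g(u)$ and $v'=\prod_{g\in G_Q}g(v)$ yields $G_Q$-invariant elements with $u'v'=\pi^{t|G_Q|}\cdot(\text{unit})$, and a dimension argument (using finiteness and normality of the invariant extension) identifies $\OOO_{\YY,Q}^{G_Q}$ with $R[[u',v']]/(u'v'-\pi^{t|G_Q|})$, giving a node of thickness $t|G_Q|$.

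The main technical obstacle is the wild-ramification case, where $\charact(k)=2$ divides $|G_Q|$. Because $k^\times$ has no $2$-torsion, the character of $G_Q$ on the cotangent space at $Q$ is trivial on any Sylow $2$-subgroup, so wild elements act as $t\mapsto t+(\textrm{higher order})$ and the action cannot be linearized by a naive change of coordinates; one must argue more carefully (through Weierstrass preparation or an Artin--Schreier-type normal form) that the symmetric functions producing $s$, $u'$, and $v'$ above do generate the full invariant subring over $R$. This is the delicate part of the argument carried out in \cite[Proposition 10.3.48]{liu2002algebraic}, whose treatment I would follow.
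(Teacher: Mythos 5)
Your proposal is correct and takes essentially the same route as the paper, whose proof consists solely of the citation to \cite[Proposition 10.3.48]{liu2002algebraic}: that reference carries out exactly the kind of completed-local-ring computation you outline, identifying $\OOO_{\XX,f(Q)}$ with $\OOO_{\YY,Q}^{G_Q}$ (using that $k$ is algebraically closed so the stabilizer is the decomposition group) and then analysing the invariant subring according to whether $G_Q$ swaps the two branches at a node, with the wild case handled by a careful normal-form argument. Your sketch is a reasonable outline of that argument, and your explicit deferral to Liu's treatment for the delicate wild-ramification step matches the paper's approach; if you wrote it out in full, you would want to tighten two small points, namely that in subcase (b)(ii) one has $g(u)=\alpha_g u$ and $g(v)=\beta_g v$ exactly with $\alpha_g\beta_g=1$ (so $u'v'=\pi^{t|G_Q|}$ on the nose, no unit), and that in part (a) the equality $R[[t]]^{G_Q}=R[[s]]$ should be established by showing $R[[s]]$ is normal with $R[[t]]$ finite over it of the right degree, rather than by appealing to a uniformizer of $\XX_s$ before smoothness of $\XX_s$ at $f(Q)$ is known.
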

 
\begin{dfn}
    \label{dfn vanishing persistent node}
    A node $Q$ of $\YY$ is said to be \emph{vanishing} or \emph{persistent} with respect to the hyperelliptic cover $Y\to X$ depending on whether it falls under case (i) or (ii) of \Cref{prop vanishing persistent}(b), i.e.\ depending on whether it lies above a smooth point or a node of $\XX_s$.
\end{dfn}

We have seen in \S\ref{sec semistable preliminaries contracting} that, if $Q$ is a node of thickness $t$ of $\YY$, its inverse image in the special fiber of the minimal desingularization of $\YY$ consists of a chain of $t$ nodes of thickness 1, connected by $t-1$ (-2)-curves. More generally, if $\YY'$ is any model acted upon by $G$ that dominates $\YY$ but is dominated by its minimal desingularization, then $\YY'$ is semistable at the points lying above $Q$, and the inverse image of $Q$ in $\YY'_s$ consists of a chain of $m$ nodes $Q_1, \ldots, Q_m$, whose thicknesses add up to $t$, and $m-1$ (-2)-curves $L_1, \ldots, L_{m-1}$ connecting them. It is an interesting question to ask whether the $Q_i$'s are persistent or vanishing, and whether the $L_i$'s are horizontal or vertical.
\begin{prop}
    \label{prop galois cover desingularizing}
    In the setting above, we have the following:
    \begin{enumerate}
            \item[(a)] if $Q\in \YY_s$ is a persistent node, then the $Q_i$'s also are, and the $L_i$'s are all horizontal;
             \item[(b)] if $Q\in \YY_s$ is vanishing and $m$ is odd, then $G_Q$ permutes $L_i$ and $L_{m-i}$; the $L_i$'s are all horizontal, while the $Q_i$'s are all persistent, apart from the middle one $Q_{(m+1)/2}$ which is vanishing; and 
            \item[(c)] if $Q\in \YY_s$ is vanishing and $m$ is even, then $G_Q$ permutes each $L_i$ with $L_{m-i}$; the $L_i$'s are all horizontal, apart from the middle one $L_{m/2}$ which is vertical, while the $Q_i$'s are all persistent.
    \end{enumerate}
    \begin{proof}
        Since the $Q_i$'s and the $L_i$'s have image $Q$ in $\YY_s$, if $Q$ is not fixed by $G$, then neither are the lines $L_i$ and the nodes $P_i$ (from which it follows from definitions that the $Q_i$'s are persistent nodes and the $L_i$'s are horizontal), whereas if $Q$ is fixed by $G$, then $G$ acts on the sets $\lbrace L_i\rbrace_{1 \leq i \leq m - 1}$ and $\lbrace Q_i\rbrace_{1 \leq i \leq m}$.  We denote by $\Lambda_-$ and $\Lambda_+$ the strict transforms in $\YY'_s$ of the two branches of $\YY_s$ passing through $Q$, so that the node $Q_1$ connects $\Lambda_-$ with $L_1$ and the node $Q_m$ connects $L_{m-1}$ with $\Lambda_+$.
           
        We may now assume that $G$ fixes $Q$.  Suppose first that $Q$ is persistent so that $G$ also fixes the branches $\Lambda_+$ and $\Lambda_-$. Since $Q_1$ is the unique point of $\Lambda_-$ lying above $Q$, the point $Q_1$ is also fixed by $G$; since $g$ fixes $Q_1$ and $\Lambda_-$, it must also fix the only other branch of $\YY_s$ passing through $Q_1$, and thus it fixes $L_1$. Since $G$ fixes $Q_1$ and $L_1$, it must also fix the only other node that lies on $L_1$, namely $Q_2$.  Iterating the argument, one gets that the Galois group $G$ fixes each of the $L_i$'s and the $Q_i$'s, which again implies that the $Q_i$'s are persistent and the $L_i$'s are horizontal, proving part (a). 
            
        Now suppose that $Q$ is vanishing so that $G$ flips $\Lambda_+$ and $\Lambda_-$.  Since $Q_1$ is a point of $\Lambda_-$, the nontrivial element $g \in G$ must send it a point of $\Lambda_+$ lying above $Q$, which then must be $Q_m$. Then as we have $g\cdot \Lambda_-=\Lambda_+$, one deduces that $g\cdot L_1 = L_{m-1}$, and so on.  From this kind of iterative argument, parts (b) and (c) follow (how it ends clearly depends on whether $m$ is even or odd).
    \end{proof}
\end{prop}

\subsection{Defining the relatively stable model} \label{sec semistable relatively stable}

We are now ready to define the \emph{relatively stable model} of the hyperelliptic curve $Y$ with respect to the Galois cover $Y\to X$.
\begin{dfn} \label{dfn relatively stable}
    A model of $\YY$ of $Y$ is said to be \emph{relatively stable} with respect to the Galois cover $Y \to X$  if it is semistable, it is acted upon by $G$, and its special fiber does not contain vanishing nodes, (-1)-lines, or horizontal (-2)-curves. If a relatively stable model exists, the curve $Y$ is said to have \emph{relatively stable reduction} with respect to the cover $Y\to X$.
\end{dfn}

\begin{rmk}
    If $\YY$ is relatively stable and $\XX=\YY/G$, since $\YY$ cannot contain vanishing nodes, we have $\Sing(\YY_s)=f^{-1}(\Sing(\XX_s))$, where $f: \YY\to \XX$ is the cover map and $\Sing(\XX_s)$ and $\Sing(\YY_s)$ are the respective sets of nodes of the semistable models $\XX$ and $\YY$. 
\end{rmk}

A relatively stable model $\Yrst$ can only exist if the curve $Y$ has semistable reduction; moreover, since $\Yrst$ is semistable and contains no (-1)-lines, it is clear that $\Yrst \leq \Ymini$. It is also clear from the definition that the property of being relatively stable is preserved and reflected under arbitrary extensions of $R$.

\begin{prop}
    The relatively stable model, if it exists, is unique.
    \begin{proof}
        Let $\YY_1$ and $\YY_2$ two relatively stable models, and let $\YY_3$ be the minimal model dominating them both; by minimality, each vertical component of $\SF{\YY_3}$ is the strict transform of a component of $\SF{\YY_1}$ or of a component of $\SF{\YY_2}$.
        Since $\YY_1$ and $\YY_2$ are $\le \Ymini$, we also have $\YY_3\le \Ymini$; moreover, the model $\YY_3$ is semistable since it lies between the semistable model $\YY_1$ and its minimal desingularization $\Ymini$. Since the special fibers of $\YY_1$ and $\YY_2$ only contain vertical (-2)-curves, the same is true for $\YY_3$, thanks to \Cref{Minus12LinesStrictTransformRelative}. Suppose by way of contradiction that $\YY_3 \gneq \YY_1$: this means that some node $Q$ of $\SF{\YY_1}$ is replaced, in $\SF{\YY_3}$, by a chain of $m$ nodes and $m-1$ (-2)-curves (with $m>1$). But since $\SF{\YY_3}$ does not contain horizontal (-2)-curves, \Cref{prop galois cover desingularizing} forces $Q$ to be vanishing of thickness 2, which is a contradiction, since $\SF{\YY_1}$ does not contain vanishing nodes.
        Hence, we have $\YY_1=\YY_3$, which is to say that $\YY_1\ge \YY_2$; now we get $\YY_1= \YY_2$ by symmetry.
    \end{proof}
\end{prop}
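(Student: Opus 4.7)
My plan is to show uniqueness by a symmetric argument: given two relatively stable models $\YY_1$ and $\YY_2$, I would construct a minimal common dominator $\YY_3$ and show $\YY_3 = \YY_1$, which by symmetry forces $\YY_1 = \YY_2$. The essential ingredients are that a relatively stable model is bounded above by $\Ymini$, together with the detailed description in \Cref{prop galois cover desingularizing} of how nodes resolve in models lying between a semistable model and its minimal desingularization.

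First I would observe that since $g(Y) \geq 1$ and $\YY_1, \YY_2$ are semistable with no (-1)-lines, each satisfies the hypotheses of \Cref{prop part of min} and is therefore dominated by $\Ymini$. Let $\YY_3$ be the minimal model dominating both $\YY_1$ and $\YY_2$ (which exists by the construction recalled in \S\ref{sec semistable preliminaries contracting}); then $\YY_3 \leq \Ymini$, and $\YY_3$ inherits a $G$-action from $\YY_1$ and $\YY_2$ by the universal property. Being squeezed between the semistable $\YY_1$ and its minimal desingularization, $\YY_3$ is semistable at every point of its special fiber. Moreover, by minimality, every component of $\SF{\YY_3}$ is the strict transform of a component of $\SF{\YY_1}$ or of $\SF{\YY_2}$, and \Cref{Minus12LinesStrictTransformRelative} shows that the vertical/horizontal character of a (-2)-curve is preserved under this passage; since neither $\YY_1$ nor $\YY_2$ has horizontal (-2)-curves, neither does $\YY_3$.

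Now I would argue by contradiction that $\YY_3 = \YY_1$. If instead $\YY_3 \gneq \YY_1$, then some node $Q$ of $\SF{\YY_1}$ gets partially resolved in $\SF{\YY_3}$ into a chain of $m \geq 2$ nodes joined by $m-1 \geq 1$ (-2)-curves $L_1, \ldots, L_{m-1}$. Applying \Cref{prop galois cover desingularizing} case by case: in case (a) ($Q$ persistent) and case (b) ($Q$ vanishing with $m$ odd), every $L_i$ is horizontal, contradicting the absence of horizontal (-2)-curves in $\SF{\YY_3}$; in case (c) ($Q$ vanishing with $m$ even), all $L_i$'s are horizontal except the middle one $L_{m/2}$, so the no-horizontal-(-2)-curves condition forces $m = 2$. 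But then $Q$ must itself be vanishing in $\YY_1$, which is impossible since $\YY_1$ is relatively stable. Hence $\YY_3 = \YY_1$, so $\YY_1 \geq \YY_2$, and $\YY_1 = \YY_2$ by symmetry.

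The main subtlety is confirming that every new component of $\SF{\YY_3}$ really arises as a link in a chain of the sort described by \Cref{prop galois cover desingularizing}, so that the three-case analysis applies cleanly; once this is in place, the case analysis itself is short and each case closes against one of the three defining prohibitions (vanishing nodes, (-1)-lines, horizontal (-2)-curves) of \Cref{dfn relatively stable}.
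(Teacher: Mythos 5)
Your proof is correct and follows essentially the same line as the paper's: form the minimal common dominator $\YY_3$, observe it lies between $\YY_1$ and $\Ymini$ and has no horizontal (-2)-curves by \Cref{Minus12LinesStrictTransformRelative}, then use \Cref{prop galois cover desingularizing} to show that $\YY_3 \gneq \YY_1$ would force a vanishing node in $\SF{\YY_1}$. The only difference is that you spell out the three-case analysis from \Cref{prop galois cover desingularizing} explicitly, where the paper compresses it to a single sentence.
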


\begin{prop} \label{prop rst exists}
    The relatively stable model exists if and only if $Y$ has semistable reduction and $\Ymini_s$ contains no vanishing nodes.  If it does exist, the relatively stable model can be formed by contracting all horizontal (-2)-curves of the minimal regular model $\Ymini$.
    
    \begin{proof}
        Suppose the curve $Y$ has semistable reduction and that $\Ymini$ contains no vanishing nodes.  As the projective line $X$ has genus $0$, the graph of components $\Gamma(\Xmini_s)$ is a tree and thus there exists a component $L$ of $\Xmini_s$ which meets the rest of $\Xmini_s$ at only $1$ node $P$.  Let $V$ be a component of $\Ymini_s$ lying over $L$.  If $a(V) = 0$, then the component $V$ must meet the rest of $\Ymini_s$ at more than one node as there are no (-1)-lines in $\Ymini$; let us name two of these nodes $Q_1$ and $Q_2$.  Since there are no vanishing nodes in $\Ymini_s$, the nodes $Q_1$ and $Q_2$ each lie over a node of $\Xmini_s$ which lies in $L$ and is thus necessarily the node $P$.  Then the Galois group must transpose the nodes $Q_1$ and $Q_2$ and thus also the branches of $V$ going through them, implying that $V$ is a vertical (-2)-curve.  From this we conclude that not all components of $\Ymini_s$ are horizontal (-2)-curves; we are consequently allowed to form a model, which we denote by $\Yrst$, by contracting all horizontal (-2)-curves of $\Ymini$, and this model will still be semistable.
        
        It is clear that $G$ acts on $\Yrst$. Suppose that  $\SF{\Yrst}$ contains a horizontal (-2)-curve.  Then its strict transform in $\SF{\Ymini}$ is still a horizontal (-2)-curve in light of  \Cref{Minus12LinesStrictTransformRelative}, which contradicts the fact that all (-2)-curves of $\SF{\Ymini}$, by construction, get contracted in $\SF{\Yrst}$.
        
        Suppose now that $\SF{\Yrst}$ contains a vanishing node.  Then, in light of \Cref{prop galois cover desingularizing}, its preimage in $\SF{\Ymini}$ must contain a vanishing node or a vertical (-2)-curve, which is impossible since $\SF{\Ymini}$ does not contain vanishing nodes by assumption, and its vertical (-2)-curves do not get contracted in $\SF{\Yrst}$ by construction. Finally, since we have $\Yrst\le \Ymini$, the special fiber $\SF{\Yrst}$ contains no (-1)-line. Henceforth, the model $\Yrst$ is actually relatively stable.
    
        Let us now prove the converse implication. Suppose that the relatively stable model $\Yrst$ exists. By definition, its special fiber $\SF{\Yrst}$ only contains persistent nodes; hence, by \Cref{prop galois cover desingularizing}, the special fiber $\SF{\Ymini}$ is obtained from $\SF{\Yrst}$ by replacing each of its node with an appropriate chain of horizontal (-2)-curves and persistent nodes and thus retains the property of not containing vanishing nodes.
    \end{proof}
\end{prop}

From now on, we use the symbol $\Yrst$ to denote the relatively stable model of $Y$ (whenever it exists) and write $\Xrst := \Yrst/G$ for the model of $X$ from which it comes in the sense of \S\ref{sec semistable galois covers}.

\Cref{prop rst exists} establishes a criterion to determine whether $Y$ has relatively stable reduction or not by looking at its minimal regular model. Hereafter we propose a refined version of such a criterion, which will be \Cref{BigCriterionJeffReduction} below.

\begin{lemma}
    \label{LemmaMinusOneLinesVanishing}
    Given a regular or a semistable model $\YY$ of $Y$, no (-1)-line of $\YY_s$ can pass through a vanishing node of $\YY_s$.
    \begin{proof}
        Suppose that $Q$ is a vanishing node, and let $L$ be a (-1)-line of $\YY_s$ passing through it. By definition, the Galois group $G$ must fix $Q$ and flip the two branches that pass through it, and thus the nontrivial Galois element will send $L$ to another (-1)-line of $\YY_s$ passing through $Q$.  One sees directly from \Cref{MinusLines2} that since the special fiber $\YY_s$ contains two intersecting (-1)-lines, it must consist only of those (-1)-lines, contradicting our assumption that $Y$ has positive genus.
    \end{proof}
\end{lemma}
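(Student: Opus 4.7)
The plan is to argue by contradiction: suppose $L \subset \YY_s$ is a $(-1)$-line passing through a vanishing node $Q$, and derive $g(Y)=0$, contradicting the standing assumption that $g(Y)\ge 1$.

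First I would use the definition of a $(-1)$-line (\Cref{MinusLines2}): $L\cong \proj_k^1$ is smooth, and $Q$ is the unique node of $\YY_s$ lying on $L$. In particular, the two branches of $\YY_s$ at $Q$ cannot both come from $L$ (otherwise $L$ would be singular at $Q$), so one branch comes from $L$ and the other from a distinct irreducible component $L'\in \Irred(\YY_s)$.

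Next, since $Q$ is vanishing, by \Cref{dfn vanishing persistent node} there exists $g\in G_Q$ interchanging the two branches of $\YY_s$ through $Q$. As $g$ is an automorphism of $\YY$ fixing $Q$ and swapping these branches, it must send the component $L$ to $L'$. Consequently $L'=gL$ is also a $(-1)$-line passing through $Q$, and $Q$ is the unique node of $\YY_s$ on $L'$ as well.

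The heart of the argument is then a connectedness/genus computation. Since $L$ and $L'$ each meet the rest of $\YY_s$ only through $Q$, and since $\YY_s$ is connected, we must have $\YY_s = L\cup L'$: the special fiber is a union of two copies of $\proj_k^1$ meeting transversally at the single node $Q$. Computing the arithmetic genus of such a configuration (two smooth rational curves joined at one node) gives $p_a(\YY_s)=0$, and since $g(Y)=p_a(\YY_s)$ by flatness and properness of $\YY\to \Spec R$, we obtain $g(Y)=0$, contradicting the hypothesis $g(Y)\ge 1$ in effect throughout \S\ref{sec semistable relatively stable}.

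The only subtle step is the second one, namely ruling out the possibility that both branches at $Q$ belong to $L$ itself; everything afterward is routine. This subtlety is handled immediately by the smoothness of $L\cong \proj_k^1$, which prevents $L$ from self-intersecting at $Q$, so the two branches at $Q$ must come from two distinct components and the $G_Q$-action has somewhere to send $L$.
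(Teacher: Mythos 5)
Your proof is correct and follows essentially the same contradiction argument as the paper's: use the vanishing of $Q$ to produce a second $(-1)$-line $gL$ through $Q$, then argue from connectedness of $\YY_s$ that the special fiber consists only of these two lines, forcing $g(Y)=0$. You add a welcome bit of detail that the paper leaves implicit, namely that smoothness of $L\cong\proj^1_k$ forces the two branches at $Q$ to lie on distinct components, which is exactly why $gL\neq L$.
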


\begin{prop}
    \label{BigCriterionJeffReduction}
    With the above set-up, the following are equivalent:
    \begin{itemize}
        \item[(a)] $Y$ has relatively stable reduction;
        \item[(b)] $Y$ has semistable reduction, and the vanishing nodes of all models $\YY$ of $Y$ acted upon by $G$ all have even thickness;
        \item[(c)] $Y$ has semistable reduction, and the vanishing nodes of some semistable model $\YY$ of $Y$ acted upon by $G$ all have even thickness;
        \item[(d)] $Y$ has semistable reduction, and some semistable model $\YY$ of $Y$ acted upon by $G$ does not contain any vanishing node.
    \end{itemize}
    \begin{proof}
        Let us prove (a)$\implies$(b). We will proceed by way of contradiction: we assume that there exists a model $\YY$ of $Y$ acted upon by $G$ which has a vanishing node $Q$ of odd thickness $t$; we need to prove that $\Ymini$ contains a vanishing node (see \Cref{prop rst exists}). We have that the minimal desingularization of $\YY$ is still semistable, and it also contains a vanishing node of odd thickness (see \Cref{prop galois cover desingularizing}), hence we lose no generality if we assume that $\YY$ is regular. Let us further assume, without loss of generality, that $\YY$ is minimal (with respect to dominance) among the regular semistable models of $Y$ acted upon by $G$ and carrying a vanishing node $Q$. If $\YY=\Ymini$, then we are done. If instead we have $\YY\gneq \Ymini$, then $\SF{\YY}$ contains a $G$-orbit of (-1)-lines $\lbrace gL: g\in G \rbrace$; let $\YY_1$ be the semistable model that is obtained by contracting it. Since neither of the $gL$ can pass through the vanishing node $Q$ by \Cref{LemmaMinusOneLinesVanishing}, the birational map $f: \YY\to \YY_1$ is an isomorphism above $Q_1:=f(Q)$; in particular, the node $Q_1$ is still a vanishing node of odd thickness of the model $\YY_1$, which, by construction, is again regular, semistable and acted upon by $G$; this violates the minimality of $\YY$.
        
        The implication (b)$\implies$(c) is obvious; let us therefore proceed to proving (c)$\implies$(d). Let $\YY$ be some semistable model of $Y$ whose vanishing nodes all have even thickness; by \Cref{prop galois cover desingularizing}, the minimal desingularization $\YY'$ of $\YY$ will not contain a vanishing node, whence (d) follows. 
        
        Let us finally prove (d)$\implies$(a). If $\YY$ is a model acted upon by $G$ which contains no vanishing nodes, its minimal desingularization has the same property by \Cref{prop galois cover desingularizing}; hence we can assume without losing generality that $\YY$ is regular, and that it is moreover minimal (with respect to dominance) in the set of the regular semistable models of $Y$ acted upon by $G$ that do not contain vanishing nodes. If $\YY=\Ymini$, we are done by \Cref{prop rst exists}. If instead we have $\YY\gneq \Ymini$, then the special fiber $\YY_s$ contains a $G$-orbit of (-1)-lines $\lbrace gL: g\in G \rbrace$; if $\YY_1$ is the model we obtain by contracting the lines in this orbit, the birational map $\YY\to \YY_1$ is clearly an isomorphism above all nodes of $\YY_1$.  The model $\YY_1$ will also not contain any vanishing nodes, and by construction it is still semistable and regular; this violates the minimality of $\YY$.
    \end{proof}
\end{prop}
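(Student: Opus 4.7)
The plan is to prove the four-way equivalence by establishing the cycle (a)$\Rightarrow$(b)$\Rightarrow$(c)$\Rightarrow$(d)$\Rightarrow$(a). The implication (b)$\Rightarrow$(c) is immediate since any semistable $G$-equivariant model provides a valid witness. For (c)$\Rightarrow$(d), given a semistable $G$-equivariant model $\YY$ all of whose vanishing nodes have even thickness, pass to the minimal desingularization $\YY'$: by \Cref{prop galois cover desingularizing}(c), each such vanishing node (with $m$ even) resolves into a chain of persistent nodes only, so $\YY'$ is semistable, $G$-equivariant, and free of vanishing nodes.

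For (d)$\Rightarrow$(a), by \Cref{prop rst exists} it suffices to show that $\Ymini$ contains no vanishing nodes. Starting from a semistable $G$-equivariant $\YY$ with no vanishing nodes, choose such a $\YY$ minimal among regular semistable $G$-equivariant models with no vanishing nodes (passing to the minimal desingularization if necessary, which by \Cref{prop galois cover desingularizing} preserves the absence of vanishing nodes). If $\YY=\Ymini$ we are done; otherwise $\YY\gneq \Ymini$, so $\SF{\YY}$ contains a $G$-orbit of $(-1)$-lines. \Cref{LemmaMinusOneLinesVanishing} guarantees that none of these $(-1)$-lines passes through any node at all (let alone a vanishing one), so contracting the orbit produces a strictly smaller regular semistable $G$-equivariant model still free of vanishing nodes, contradicting minimality.

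The most delicate implication is (a)$\Rightarrow$(b), which I expect to be the main obstacle. Assume $\Yrst$ exists but, for contradiction, some $G$-equivariant model $\YY$ has a vanishing node $Q$ of odd thickness $t$; I aim to derive that $\Ymini$ itself contains a vanishing node, contradicting \Cref{prop rst exists}. First reduce to the regular case: replacing $\YY$ by its minimal desingularization preserves $G$-equivariance and semistability, and by \Cref{prop galois cover desingularizing}(b) the odd-length chain replacing $Q$ has its middle node $Q_{(m+1)/2}$ still vanishing. So assume $\YY$ is regular. Now minimize $\YY$ with respect to dominance among regular semistable $G$-equivariant models carrying a vanishing node. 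If this minimum equals $\Ymini$ we are done; otherwise $\YY\gneq \Ymini$ forces $\SF{\YY}$ to contain a $G$-orbit of $(-1)$-lines, and contracting that orbit yields a strictly dominated model $\YY_1$. The key point, once again supplied by \Cref{LemmaMinusOneLinesVanishing}, is that no $(-1)$-line in the orbit can pass through the vanishing node $Q$, so the contraction is a local isomorphism at $Q$; hence the image of $Q$ in $\YY_1$ remains a vanishing node, and $\YY_1$ is still regular, semistable, and $G$-equivariant, contradicting minimality of $\YY$.

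The crux of the whole argument is thus the interplay between \Cref{LemmaMinusOneLinesVanishing} (which controls how contractions of $(-1)$-orbits interact with nodes) and \Cref{prop galois cover desingularizing} (which controls how desingularizations propagate persistence/vanishing through chains). Once these are in hand, each step of the cycle amounts to moving between an arbitrary $G$-equivariant model and the minimal regular model while preserving the relevant property about vanishing nodes.
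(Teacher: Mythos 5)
Your proof follows the same cycle (a)$\Rightarrow$(b)$\Rightarrow$(c)$\Rightarrow$(d)$\Rightarrow$(a) as the paper, with the same ingredients: reduce to the regular case via the minimal desingularization, track vanishing nodes through \Cref{prop galois cover desingularizing}, and descend to $\Ymini$ by contracting $G$-orbits of $(-1)$-lines.

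There is, however, one misstatement in your (d)$\Rightarrow$(a) step that needs correcting. You write that \Cref{LemmaMinusOneLinesVanishing} ``guarantees that none of these $(-1)$-lines passes through any node at all (let alone a vanishing one).'' That is not what the lemma says, and it cannot be true: by \Cref{MinusLines2} a $(-1)$-line passes through exactly one node. The lemma asserts only that no $(-1)$-line passes through a \emph{vanishing} node --- which is precisely what you use, correctly, in the (a)$\Rightarrow$(b) step, where it ensures the contraction is an isomorphism near the given vanishing node $Q$. For (d)$\Rightarrow$(a), that lemma is not the relevant tool (it is vacuous there, since $\YY$ has no vanishing nodes to begin with). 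What you actually need is the standard fact that the contraction morphism $\YY\to\YY_1$ is an isomorphism above every node of $\YY_1$: each $(-1)$-line in the orbit collapses to a smooth point, and the unique node lying on it simply disappears, so that every node of $\YY_1$ pulls back to a node of $\YY$. Since none of those is vanishing by hypothesis, neither is any node of $\YY_1$. That is what the paper argues at this point. With that one replacement, the rest of your argument --- the reduction to the regular case, the applications of parts (a), (b), (c) of \Cref{prop galois cover desingularizing}, and the minimization step --- coincides with the paper's proof.
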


\begin{cor}
    The curve $Y$ always has relatively stable reduction over a large enough finite extension of $R$.
    \begin{proof}
        After possibly extending $R$, we may assume that $Y$ has semistable reduction over $R$ by \Cref{thm semistable reduction}. Let $\YY$ be any semistable model of $Y$ acted upon by $G$. If the model $\YY$ does not contain a vanishing node of odd thickness, then the curve $Y$ has relatively stable reduction over $R$ by \Cref{BigCriterionJeffReduction}. Otherwise, let $R'$ be any extension of $R$ with even ramification index $e$. If we base-change $\YY$ to $R'$, we still have a semistable model, and the thicknesses of the nodes all get multiplied by $e$. Hence, all nodes of $\YY_{R'}$ have even thickness, and $Y$ has relatively stable reduction over $R'$ thanks to \Cref{BigCriterionJeffReduction}.
    \end{proof}
\end{cor}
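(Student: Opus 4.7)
The plan is to combine the semistable reduction theorem (Theorem \ref{thm semistable reduction}) with the equivalence (a)$\iff$(c) of \Cref{BigCriterionJeffReduction}: to obtain relatively stable reduction over some extension, it suffices to produce \emph{any} semistable $G$-model of $Y$ whose vanishing nodes all have even thickness, and this can be forced by enlarging the base field so that all thicknesses get multiplied by an even integer.

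First, I would invoke \Cref{thm semistable reduction} to pass to a finite extension $R_1/R$ over which $Y$ acquires semistable reduction; let $\YY_1$ be a semistable model of $Y$ over $R_1$. The next step is to produce a semistable model acted on by $G$: the minimal regular model $\Ymini$ over $R_1$ (which exists since $g(Y)\ge 1$) is canonically acted upon by $G$, and since $Y$ has semistable reduction over $R_1$, the minimal regular model is itself semistable (as recalled in \S\ref{sec semistable preliminaries contracting}). So we may take $\YY_1 = \Ymini$ over $R_1$, which is both semistable and $G$-stable.

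Now examine the vanishing nodes of $\YY_1$, which form a finite set with thicknesses $t_1,\ldots, t_n\in \mathbb{Z}_{\ge 1}$. If all $t_i$ are even, then by \Cref{BigCriterionJeffReduction}(c)$\implies$(a) the curve $Y$ already has relatively stable reduction over $R_1$, and we are done. Otherwise, choose any finite extension $R'/R_1$ of even ramification index $e$ (for instance any totally ramified quadratic extension). Base-changing $\YY_1$ to $R'$ yields a model $\YY_1\otimes_{R_1} R'$ which remains semistable (because semistability is preserved under base change, as noted in \S\ref{sec semistable preliminaries semistable models}), is still acted upon by $G$, and whose node thicknesses are now $et_1,\ldots, et_n$. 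Since $e$ is even, all of these thicknesses are even, so in particular all vanishing nodes have even thickness.

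Applying \Cref{BigCriterionJeffReduction}(c)$\implies$(a) to $\YY_1\otimes_{R_1} R'$ yields that $Y$ has relatively stable reduction over $R'$. There is no real obstacle: the only thing one has to be careful about is that semistability, the $G$-action, and the multiplicative behavior of thicknesses all survive base change, all of which has already been recorded in \S\ref{sec semistable preliminaries semistable models}, \S\ref{sec semistable preliminaries contracting}, and \S\ref{sec semistable galois covers}.
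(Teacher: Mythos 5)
Your proof is correct and follows essentially the same route as the paper: extend to acquire semistable reduction, then pass to a further extension of even ramification index so that all node thicknesses (in particular those of vanishing nodes) become even, and conclude via the implication (c)$\implies$(a) of \Cref{BigCriterionJeffReduction}. The one small thing you make explicit that the paper leaves implicit is the production of a $G$-stable semistable model, namely $\Ymini$, which is a nice addition but does not change the structure of the argument.
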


The following result is the analog of \Cref{prop part of min} for the relatively stable model (instead of the minimal regular one).

\begin{prop} \label{prop part of rst}
    With the above set-up, assume that $Y$ has relatively stable reduction; let $\XX$ be any model of $X$; and let $\YY$ be the corresponding model of $Y$. Then, 
    $\XX\le \Xrst$ if and only if, for all components $V$ of $\YY_s$, we have $m(V)=1$ and one of the following holds:
        \begin{enumerate}[(i)]
            \item $a(V)\ge 1$;
            \item $a(V)=0$ and $w(V)\ge 3$; or
            \item $V$ is a vertical (-2)-curve.
        \end{enumerate}
        \begin{proof}
            Suppose first that we have $\XX\le \Xrst$.  Given a component $V$ of $\YY_s$, its strict transform $V^{\mathrm{rst}}$ in $\Yrst$ will have the same the same invariants $m$, $a$, and $w$ as $V$ by \Cref{lemma inv m a w} and will be a vertical (-2)-curve if and only if $V$ is by \Cref{Minus12LinesStrictTransformRelative}. Now, since $\SF{\Yrst}$ is reduced, we have $m(V)=1$. Since $\SF{\Yrst}$ does not contain (-1)-lines or horizontal (-2)-curves, we deduce from \Cref{lemma12lines}(c) and \Cref{dfn minus two curve} (applied to the model $\Yrst$) that one of the three conditions (i), (ii) and (iii) above must occur.
            
            Now assume that $m(V) = 1$ and that either (i), (ii), or (iii) holds.  By \Cref{prop part of min}, we deduce that $\YY\le \Ymini$. Let $V$ be any component of $\CC_s$, and let $V^{\mathrm{min}}$ be the strict transform of $V$ in $\SF{\Ymini}$, which has the same invariants $m$, $a$, and $w$ as $V$ by \Cref{lemma inv m a w} and is a vertical (resp. horizontal) (-2)-curve if and only if $V$ is a vertical (resp. horizontal) curve by \Cref{Minus12LinesStrictTransformRelative}.  In particular, the transform $V^{\mathrm{min}}$ is not a horizontal (-2)-curve, so all horizontal (-2)-curves of $\SF{\Ymini}$ get contracted in $\YY_s$, which, in light of \Cref{prop rst exists}, is equivalent to saying that $\YY\le \Yrst$.
        \end{proof}
\end{prop}

We end this subsection by pointing out some simple properties of the relatively stable model.
\begin{prop}
    \label{prop smoothness components rst}
    Suppose that $Y$ has relatively stable reduction. Then all of the irreducible components of $\SF{\Yrst}$ are smooth $k$-curves.
    \begin{proof}
        Choose a component $L \in \Irred(\SF{\Xrst})$, which is necessarily a line as $X$ is the projective line (see the well-known results of \S\ref{sec models hyperelliptic line} below).  Let $Q$ be a node of $\SF{\Yrst}$ lying over a point $P\in L$. Since $\Yrst$ does not contain vanishing nodes by definition, we have that $P$ is a node of $\Xrst$; moreover, since the line $L$ is smooth at $P$, we have that $P$ must connect $L$ with another line $L'\in \Irred(\SF{\Xrst})$. We deduce from this that $Q$ connects two distinct components $V$ and $V'$ of $\SF{\Yrst}$, one lying over $L$, and the other lying over $L'$. We conclude that, if $V_1$ and $V_2$ are two (possibly coinciding) irreducible components of $\SF{\Yrst}$ lying over $L$, they cannot be connected by a node; hence, the inverse image of $L$ in $\SF{\Yrst}$ is a smooth $k$-curve.
    \end{proof}
\end{prop}

\begin{rmk} \label{rmk rst isomorphism failure}
    It is possible (but not necessary for the results in this paper) to demonstrate the following useful property of the relatively stable model.  Assume that $Y$ has relatively stable reduction, let $\XX$ be any model of $X$, and let $\YY$ be the corresponding model of $Y$. Assume that $\YY$ has reduced special fiber. Then we have $\Ctr(\XX,\Xrst)=f(\Sing(\YY_s))$, where $f: \YY\to \XX$ is the $G$-covering map and $\Sing(\YY_s)$ is the set of all singular points of $\YY_s$.  In other words, when $\XX$ is a smooth model of $X$, the (other) components of $\Xrst$ are contracted precisely to the points of $\XX_s$ above which $\YY_s$ is singular.
\end{rmk}

\section{Models of hyperelliptic curves} \label{sec models hyperelliptic}

Our main aim is now to more explicitly understand the relatively stable model $\Yrst$ of the hyperelliptic curve $Y$. After recalling some basic general facts about hyperelliptic curves in \S\ref{sec models hyperelliptic equations}, we describe the semistable models of the line $X$ in \S\ref{sec models hyperelliptic line}. After introducing the notion of a \emph{part-square decomposition} of a polynomial in \S\ref{sec models hyperelliptic part-square}, we exploit it in \S\ref{sec models hyperelliptic forming} to describe the model of the hyperelliptic curve $Y$ corresponding to a given smooth model of the line $X$ (i.e.\ to a given disc $D$). The special fiber of such models of $Y$ will be more thoroughly studied in \S\ref{sec models hyperelliptic separable}, in which we provide a criterion (given in \Cref{thm part of rst separable}) to decide whether a disc $D$ belongs to the collection $\mathfrak{D}$ that defines the semistable model of the line $\Xrst$ from which $\Yrst$ comes.

\subsection{Equations for hyperelliptic curves} \label{sec models hyperelliptic equations}

We let $F$ be any field and write $X := \proj_F^1$ for the projective line over $F$.  In this subsection, we review basic facts and definitions relating to hyperelliptic curves over $F$ which can be found in \cite[\S7.4.3]{liu2002algebraic}.

\begin{dfn}
	A \emph{hyperelliptic curve} over $F$ is a smooth curve $Y / F$ of genus $g \geq 1$ along with a separable (branched) covering morphism $Y \to X$ of degree 2, which we call the \emph{hyperelliptic map}.
\end{dfn}

It is possible through repeated applications of the Riemann-Roch Theorem to show the well-known fact that the affine chart $x\neq \infty$ of any hyperelliptic curve $Y / F$ can be described by an equation of the form 
\begin{equation} \label{eq hyperelliptic general affine1}
y^2 + q(x)y = r(x),
\end{equation}
where $\deg(q) \leq g + 1$ and $\deg(r) \leq 2g + 2$ and the hyperelliptic map is given by the coordinate $x : Y \to X$.  If $F$ has characteristic different from $2$, a suitable change of the coordinate $y$ allows us to convert this equation into the simpler form $y^2 = r(x) + \frac{1}{4}q^2(x)$, from which it is clear that the smoothness condition implies that the polynomial $f(x) := r(x) + \frac{1}{4}q^2(x)$ must be separable.
Over the complementary affine chart of $\proj_F^1$ where $x \neq 0$, the hyperelliptic curve $Y$ can be described by the equation
\begin{equation} \label{eq hyperelliptic general affine2}
    \check{y}^2 + \check{q}(\check{x})\check{y} = \check{r}(\check{x}),
\end{equation}
where $\check{x} = x^{-1}$, $\check{y} = x^{-(g+1)}y$, $\check{q}(\check{x}) = x^{-(g+1)}q(x)$, and $\check{r}(\check{x}) = x^{-(2g+2)}r(x)$.  Note that the polynomial $\check{q}(z) \in F[z]$ (resp.\ $\check{r}(z) \in F[z]$) differs from the polynomial $q(z) \in F[z]$ (resp.\ $r(z) \in F[z]$) only in that each power $z^i$ which appears in the polynomial is replaced by $z^{g+1-i}$ (resp.\ $z^{2g+2-i}$) while the coefficients remain the same.

If $F$ has characteristic different from $2$ (i.e.\ if we consider \emph{tame} hyperelliptic curves), the Riemann-Hurwitz formula ensures that the ramification locus of $Y_{\bar{F}}\to X_{\bar{F}}$ consists of $2g+2$ points of $Y_{\bar{F}}$, lying over $2g+2$ distinct branch points of $X_{\overline{F}}$.  In fact, the branch locus determines a hyperelliptic curve almost completely, as we see from the following well-known result.

\begin{prop} \label{prop hyperelliptic}
	Given a field $F$ of characteristic different from $2$, and letting $X$ be the projective line $\mathbb{P}^1_F$, the following data are equivalent:
	\begin{enumerate}[(i),nolistsep]
		\item a hyperelliptic curve $Y$ of genus $g$ having rational branch locus, endowed with a distinguished hyperelliptic map $Y \to X$; 
		\item a separable polynomial $f(x) \in F[x]$ of degree $2g + 1$ or $2g + 2$ all of whose roots lie in $F$, modulo multiplication by a scalar in $(F^{\times})^2$; and 
		\item a cardinality-$(2g+2)$ subset $\mathcal{B} \subset X(F)$ together with an element $c \in F^\times/(F^\times)^2$.
	\end{enumerate}
	Moreover, in (ii) above, the polynomial $f$ will have degree $2g + 1$ (resp.\ $2g + 2$) if in the context of (iii) above the coordinate of $X$ is chosen such that $\infty$ is (resp.\ is not) an element of $\mathcal{B}$.
	
\end{prop}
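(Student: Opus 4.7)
The plan is to establish the equivalences (i)$\Leftrightarrow$(ii)$\Leftrightarrow$(iii) by building a dictionary between the geometric object $Y\to X$, the polynomial $f$ up to squares, and the combinatorial data $(\mathcal{B},c)$.

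First, for (i)$\Rightarrow$(ii), I would start from a hyperelliptic curve $Y/F$ of genus $g$ with distinguished hyperelliptic map $\pi\colon Y\to X$ whose branch locus is $F$-rational. After applying an automorphism of $X$ to ensure that $\infty\in X(F)$ is either a branch point or not (as one wishes), I apply the Riemann--Roch machinery recalled in \S\ref{sec models hyperelliptic equations} (following \cite[\S7.4.3]{liu2002algebraic}) to produce an affine chart of the form $y^2+q(x)y=r(x)$ with $\deg q\le g+1$ and $\deg r\le 2g+2$. Since $\charact(F)\neq 2$, the substitution $y\mapsto y-\tfrac{1}{2}q(x)$ converts the equation into $y^2=f(x)$ with $f(x):=r(x)+\tfrac{1}{4}q(x)^2\in F[x]$. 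Smoothness of $Y$ forces $f$ to be separable, and the Riemann--Hurwitz formula forces $\deg f\in\{2g+1,2g+2\}$. Rationality of the branch locus implies that $\infty$ and the roots of $f$ all lie in $F$, so $f$ splits completely over $F$. The remaining freedom is the rescaling $y\mapsto\lambda y$ for $\lambda\in F^\times$, which multiplies $f$ by $\lambda^2$; this gives a well-defined class in (ii).

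For (ii)$\Rightarrow$(i), I would take a separable $f(x)\in F[x]$ splitting over $F$ of degree $2g+1$ or $2g+2$, and construct $Y$ by gluing the affine charts $y^2=f(x)$ and $\check y^2=\check r(\check x)$ as in equation \eqref{eq hyperelliptic general affine2}, setting $\check x=x^{-1}$ and $\check y=x^{-(g+1)}y$. Separability of $f$ ensures smoothness on the affine chart $x\neq\infty$, while an explicit computation on the chart $x\neq 0$ (using that $\deg f\in\{2g+1,2g+2\}$) gives smoothness there as well. Riemann--Hurwitz applied to the degree-$2$ map $x\colon Y\to X$ then confirms that $Y$ has the correct genus $g$. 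Replacing $f$ by $\lambda^2 f$ yields an isomorphic cover via $y\mapsto\lambda y$, so the construction descends to the quotient by $(F^\times)^2$.

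Finally, the equivalence (ii)$\Leftrightarrow$(iii) is essentially bookkeeping. Given $f$, its set of roots together with $\infty$ (if and only if $\deg f=2g+1$) yields a cardinality-$(2g+2)$ subset $\mathcal{B}\subset X(F)$, and its leading coefficient gives a class $c\in F^\times/(F^\times)^2$; conversely, given $(\mathcal{B},c)$ and the convention on whether $\infty\in\mathcal{B}$, choose a representative $c\in F^\times$ and set $f(x):=c\prod_{a\in\mathcal{B}\smallsetminus\{\infty\}}(x-a)$, which depends on the choice of representative of $c$ only up to a square. The ``moreover'' clause follows directly from how $\infty$ is handled under the standard coordinate change $\check x=x^{-1}$: a ramification point of $\pi$ lies above $\infty$ precisely when the polynomial $\check r$ has a root at $0$, which happens exactly when $\deg f=2g+1$.

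The only mildly subtle point is verifying that the class of $f$ in (ii) really is independent of all the choices made (in particular, that every automorphism of $Y$ over $X$ acts as $y\mapsto\pm y$ in characteristic $\neq 2$, so no further identifications are needed). This is standard and follows from uniqueness of the hyperelliptic involution, but it is the one step worth checking carefully.
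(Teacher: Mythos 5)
Your proof is correct and follows essentially the same route as the paper's (commented-out) proof: extract a degree-$(2g{+}1)$ or $(2g{+}2)$ separable polynomial $f$ from the double cover, pass to its roots and leading coefficient modulo squares, and reconstruct $Y$ from $(\mathcal{B},c)$. The one cosmetic difference is that you obtain $f$ by completing the square in the Riemann--Roch chart $y^2+q(x)y=r(x)$, whereas the paper works directly with the quadratic function-field extension $F(Y)\supset F(x)$ and multiplies a generator $y$ by a suitable polynomial to clear denominators; these are two dressings of the same argument. One small inaccuracy worth fixing in your closing remark: the well-definedness of the class of $f$ in $F[x]/(F^\times)^2$ is not really a statement about automorphisms of $Y$ over $X$, but about the uniqueness, up to multiplication by a constant in $F^\times$, of a generator $y\in F(Y)$ satisfying $y^2=f$ with $f$ separable of degree $2g+1$ or $2g+2$; that uniqueness follows because any other such generator $y'=\lambda y$ with $\lambda\in F(x)^\times$ must have $\lambda^2 f$ again a squarefree polynomial of degree at most $2g+2$, which forces $\lambda$ to be a nonzero constant.
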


\begin{rmk} \label{rmk assumptions}
    Suppose that in the context of the above proposition, none of branch points of $Y \to X$ is $\infty$.  We may then find an isomorphic hyperelliptic curve over $F$ whose branch points over $X$ include the point $\infty \in X(F)$ by applying an automorphism of the projective line $X$ which moves one of the branch points to $\infty$ (and thus whose defining polynomial $f$, as in statement (ii) of the proposition, has degree $2g + 1$).
\end{rmk}

From now on, we assume that $F$ is the discretely-valued field $K$ satisfying the conditions given in \S\ref{sec introduction notation}.  In light of the remark above, up to possibly replacing $K$ with a finite extension (so that at least one of the branch points of the cover $Y\to X$ is rational), we can and will make the following assumption throughout the rest of the paper.

\begin{hyp} \label{hyp properties of f}
The hyperelliptic curve $Y$ is defined over $K$ by the equation $y^2 = f(x)$, where $x$ is the standard coordinate of $X=\mathbb{P}^1_K$, and $f(x) \in K[x]$ is a polynomial of (odd) degree $2g+1$, where $g$ is the genus of $Y$.
\end{hyp}

Proposition \ref{prop hyperelliptic} allows us to treat the hyperelliptic curve $Y$ essentially as a marked line.  This is peculiar to the hyperelliptic case: if we were to deal with a tame Galois covering of the line of degree greater than $2$, the same branch locus would in general be shared by multiple branched coverings corresponding to various possible monodromy actions.

Our aim will be constructing semistable models of a given hyperelliptic curve $Y \to X$ by normalizing some carefully chosen semistable models of the line $X$ in the quadratic extension $K(X)\subseteq K(Y)$. We will start by analyzing smooth and semistable models of the line $X$ in the next subsection; in the subsequent ones, we will turn our attention to the corresponding models of $Y$.

\subsection{Models of the projective line} \label{sec models hyperelliptic line}

Throughout the rest of this paper, we write $D_{\alpha, b}$ for the disc $\{x \in K^\alg \ | \ v(x - \alpha) \geq b\}$ of center $\alpha \in K^\alg$ and depth $b \in \qq$; given a finite subset $\mathfrak{s} \subset K^\alg$ and some $b \in \qq$ such that $v(\zeta - \zeta') \geq b$ for all $\zeta, \zeta' \in \mathfrak{s}$, we write $D_{\mathfrak{s}, b} = D_{\alpha, b}$ for some (any) $\alpha \in \mathfrak{s}$.

As before, let $X:=\mathbb{P}^1_K$ be the projective line, and let $x$ denote its standard coordinate. Given $\alpha\in K^\alg$ and  $\beta\in (K^\alg)^\times$, one can define a smooth model $\XX_{\alpha,\beta}$ of $X$ over the ring of integers $R'$ of $K':=K(\alpha,\beta)\subseteq K^\alg$) by declaring $\XX_{\alpha,\beta}:=\mathbb{P}^1_{R'}$, with coordinate $x_{\alpha,\beta} := \beta^{-1}(x - \alpha)$, as an $R'$-scheme, and identifying the generic fiber $\XX_\eta$ with $X$ via the linear transformation $x_{\alpha,\beta}=\beta^{-1}(x-\alpha)$. If $(\alpha_1,\beta_1)$ and $(\alpha_2,\beta_2)$ are such that $v(\alpha_1-\alpha_2)\ge v(\beta_1)=v(\beta_2)$, then $\XX_{\alpha_1,\beta_1}$ and $\XX_{\alpha_2,\beta_2}$ are isomorphic as models of $X$, the isomorphism being given by the change of variable $x_{\alpha_2,\beta_2}=u x_{\alpha_1,\beta_1}+ \delta$, where $u$ is the unit $\beta_1(\beta_2)^{-1}$ and $\delta$ is the integral element $\beta_2^{-1}(\alpha_1-\alpha_2)$. In other words, the model $\XX_{\alpha,\beta}$ only depends, up to isomorphism, on the disc $D := D_{\alpha,v(\beta)}$; for this reason, we will often denote $\XX_{\alpha,\beta}$ by $\XX_D$.

If $\mathfrak{D}=\{D_1,\ldots, D_n\}$ is a non-empty, finite collection of discs of $K^\alg$, one can form a corresponding model $\XX_{\mathfrak{D}}$, which is defined as the minimal model dominating all the smooth models $\lbrace \XX_D: D\in \mathfrak{D}\rbrace$. Its special fiber is a reduced $k$-curve of arithmetic genus equal to $0$ i.e.\ it consists of $n$ lines $L_1, \ldots, L_n$ corresponding to the discs $D_i$'s meeting each other at ordinary multiple points, without forming loops (i.e.\ its toric rank is $0$).

The following is a standard fact about models of the projective line.

\begin{prop}
    \label{prop smooth models line discs}
    The construction $D\mapsto \XX_D$ described above defines a bijection between the discs of $K^\alg$ and the smooth models of $X$ defined over finite extensions of $R$ considered up to isomorphism (two models $\XX_1/R'_1$ and $\XX_2/R'_2$ are considered isomorphic if they become so over some common finite extension $R''\supseteq R'_1, R'_2$).

    More generally speaking, the construction $\mathfrak{D}\mapsto \XX_\mathfrak{D}$ described above defines a bijection between the finite non-empty collection of discs $\mathfrak{D}$ of $K^\alg$ and the models of $X$ having reduced special fiber defined over finite extensions of $R$, considered up to isomorphism..

\end{prop}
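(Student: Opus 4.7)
The plan is to handle each of the two claimed bijections separately by checking well-definedness, injectivity, and surjectivity. The well-definedness of the map $D\mapsto \XX_D$ has already been established in the paragraph preceding the proposition. For injectivity in the smooth case, I would use the reduction map $X(\bar K)=\proj^1(\bar K)\to (\XX_D)_s(\bar k)=\proj^1(\bar k)$: writing everything in the coordinate $x_{\alpha,\beta}=\beta^{-1}(x-\alpha)$ shows that $D$ is exactly the preimage of $\aff^1(\bar k)\subset \proj^1(\bar k)$ under this reduction, so $D$ is intrinsically recoverable from the model. Since an isomorphism of models of $X$ is required to respect the identification of the generic fiber with $X$, it is automatically compatible with reduction maps, and so two isomorphic models yield the same disc.

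For surjectivity in the smooth case, let $\XX/R'$ be a smooth proper model of $X_{K'}$; since $R'$ is a complete DVR with algebraically closed residue field, $\XX$ is isomorphic as an $R'$-scheme to $\proj^1_{R'}$. The $K'$-point $\infty\in X(K')$ extends uniquely by properness to a section $R'\to \XX$, and since $\mathrm{PGL}_2(R')$ acts transitively on the sections of $\proj^1_{R'}$, we may choose the isomorphism $\XX\cong \proj^1_{R'}$ so that this section is the standard $\infty$-section. The resulting coordinate $t$ on $\XX$ is then related to $x$ on the generic fiber by a M\"obius transformation fixing $\infty$, hence by an affine one: $t=\beta^{-1}(x-\alpha)$ for some $\alpha\in K'$ and $\beta\in (K')^\times$. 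Therefore $\XX\cong \XX_{\alpha,\beta}$, which comes from the disc $D_{\alpha,v(\beta)}$.

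For the second bijection, well-definedness of $\XX_\mathfrak{D}$ follows from the construction of minimal dominating models recalled in \S\ref{sec semistable preliminaries contracting}. Injectivity is immediate since the irreducible components of $(\XX_\mathfrak{D})_s$ correspond bijectively to the elements of $\mathfrak{D}$, and this correspondence can be read off $\XX_\mathfrak{D}$: for instance, $D\in \mathfrak{D}$ iff $\XX_D$ is dominated by $\XX_\mathfrak{D}$ but not by the model coming from any proper subcollection $\mathfrak{D}'\subsetneq \mathfrak{D}$. For surjectivity, I would start from a model $\XX/R'$ with reduced special fiber and, using that the arithmetic genus of $\XX_s$ equals $g(X)=0$, deduce that each irreducible component $L_i$ of $\XX_s$ is isomorphic to $\proj^1_k$ and that the components meet in a tree of ordinary multiple points. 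For each $L_i$, contracting the connected subtrees of $\XX_s\setminus L_i$ produces a smooth model $\XX_i$, which by the first part corresponds to some disc $D_i$. Setting $\mathfrak{D}:=\{D_1,\ldots,D_n\}$, one has $\XX_\mathfrak{D}\le \XX$ by the dominance relations, while on the other hand $\XX_\mathfrak{D}$ and $\XX$ both have exactly $n$ components in their special fibers, which forces $\XX\cong \XX_\mathfrak{D}$.

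The hardest step is the surjectivity in the second part, specifically verifying that simultaneously contracting all components except one really yields a \emph{smooth} model; the key inputs are the Castelnuovo--Artin contractibility of each connected subtree of $\XX_s\setminus L_i$, together with the arithmetic-genus computation that ensures the smoothness of each $L_i$ and the transversality of their intersections.
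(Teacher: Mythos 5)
The paper itself treats \Cref{prop smooth models line discs} as ``a standard fact'' and gives no proof in the published text (a terse, commented-out sketch handles only the first bijection, citing Liu's Exercise 8.3.5). Your argument therefore cannot be compared against a paper proof step-by-step; it should be judged on its own. Your treatment of the first bijection is correct and matches what the paper's commented-out sketch intends: the injectivity argument via the reduction map recovering $D$ is clean and intrinsic, and the surjectivity argument via $\XX\cong\proj^1_{R'}$, normalizing the $\infty$-section and reading off an affine change of coordinate, is exactly the right route.

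For the second bijection there is, however, a genuine gap in the surjectivity argument, and it sits precisely where you flag the ``hardest step.'' You propose to deduce smoothness of the contracted model $\XX_i$ from Castelnuovo--Artin contractibility of the subtree $\XX_s\smallsetminus L_i$ together with the arithmetic-genus analysis of $\XX_s$ (smoothness of each $L_i$, transversal intersections). But Castelnuovo's criterion, in the form that tells you the contraction is \emph{smooth}, requires the ambient model $\XX$ to be \emph{regular}, and regularity is not part of the hypotheses: a normal model of $\proj^1$ with reduced special fiber need not be regular (e.g.\ $R'[x,y]/(xy-\pi^2)$). So iteratively contracting leaf $(-1)$-lines is not available to you directly. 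The cleaner route is to bypass the structure of $\XX_s$ entirely: first invoke the general contraction statement from \S\ref{sec semistable preliminaries contracting} (which produces a \emph{normal} model $\XX_i$ with a single component in its special fiber); then argue that $(\XX_i)_s$ is reduced because it is a Cartier divisor on the Cohen--Macaulay (indeed normal) scheme $\XX_i$, hence has no embedded points, while the multiplicity of its unique component is $1$ by \Cref{lemma inv m a w}; then note that a reduced irreducible projective $k$-curve of arithmetic genus $0$ is $\proj^1_k$, so $(\XX_i)_s$ is smooth; and finally observe that normality of $\XX_i$ together with smoothness and reducedness of its special fiber forces $\XX_i$ to be regular (lift a local parameter of the special fiber to get a regular system of parameters together with $\pi$) and the structure morphism to be smooth. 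Once $\XX_i$ is known to be a smooth model, your finish — $\XX$ dominates every $\XX_{D_i}$, hence dominates $\XX_{\mathfrak{D}}$, and a component count forces equality — is fine.
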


Given two discs $D=D_{\alpha,b}$ and $D'=D_{\alpha',b'}$, we want to compare the smooth models $\XX_{D}$ and $\XX_{D'}$: to this end, one may verify the following proposition by an immediate computation.

\begin{prop}
    \label{prop relative position smooth models line}
    With the notation above, assume $D\neq D'$, and let $P\in \SF{\XX_D}(k)$ and $P'\in \SF{\XX_{D'}}(k)$ be the points such that $\Ctr(\XX_D,\XX_{D'})=\{ P \}$ and $\Ctr(\XX_{D'},\XX_{D})=\{ P' \}$. Then there are the following three possibilities (illustrated in Figure \ref{fig two models line} below):
    \begin{enumerate}[(a)]
        \item when $D\subsetneq D'$, $P$ is the point $\overline{x_{\alpha,\beta}}=\infty$ and $P'$ is the point $\overline{x_{\alpha',\beta'}}=\overline{(\beta')^{-1}(\alpha-\alpha')}\neq \infty$;
        \item when $D'\subsetneq D$, $P$ is the point $\overline{x_{\alpha,\beta}}=\overline{\beta^{-1}(\alpha'-\alpha)}\neq \infty$, and $P'$ is the point $\overline{x_{\alpha',\beta'}}=\infty$; or 
        \item when $D\cap D'=\varnothing$, $P$ is the point $\overline{x_{\alpha,\beta}}=\infty$ and $P'$ is the point $\overline{x_{\alpha',\beta'}}=\infty$.
    \end{enumerate}
\end{prop}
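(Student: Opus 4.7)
The plan is to carry out a direct ultrametric computation using the change-of-coordinate formula
\[
x_{\alpha',\beta'} = (\beta')^{-1}\beta\, x_{\alpha,\beta} + (\beta')^{-1}(\alpha - \alpha'),
\]
which expresses the identification of the generic fibers of $\XX_D$ and $\XX_{D'}$. Since $D \neq D'$, \Cref{prop smooth models line discs} ensures that $\XX_D$ and $\XX_{D'}$ are non-isomorphic as models, so $\SF{\XX_{\{D, D'\}}}$ consists of two distinct components $L_D, L_{D'}$ meeting transversally at a single point. As discussed in \S\ref{sec semistable preliminaries contracting}, the contraction sets $\Ctr(\XX_D, \XX_{D'})$ and $\Ctr(\XX_{D'}, \XX_D)$ are then the singletons $\{P\}$ and $\{P'\}$ obtained by contracting $L_{D'}$ and $L_D$ respectively. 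The point $P$ (resp.\ $P'$) can accordingly be computed by picking any $\bar{K}$-point of $X$ reducing on $\XX_{D'}$ (resp.\ on $\XX_D$) to a point of $L_{D'}$ (resp.\ $L_D$) that is not the intersection point, and reading off its reduction on the other model.

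For case (a), where $D \subsetneq D'$, I first note that this forces $b > b'$ and $\alpha \in D'$, whence $v(\alpha - \alpha') \geq b'$. To find $P'$, I take an arbitrary $x \in D$; then $v((\beta')^{-1}(x - \alpha)) \geq b - b' > 0$, so the change-of-coordinate formula yields $\overline{x_{\alpha', \beta'}} = \overline{(\beta')^{-1}(\alpha - \alpha')}$, a finite element of $k$, as claimed. To find $P$, I take some $x$ with $v(x - \alpha') = b'$ chosen so that $\overline{(\beta')^{-1}(x - \alpha')} \neq \overline{(\beta')^{-1}(\alpha - \alpha')}$ in $k$ (possible since $k$ is infinite); the equality case of the ultrametric inequality then forces $v(x - \alpha) = b'$, so $v(x_{\alpha, \beta}) = b' - b < 0$ and $\overline{x_{\alpha, \beta}} = \infty$, as claimed. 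Case (b) follows at once by symmetry in the roles of $D$ and $D'$.

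For case (c), where $D \cap D' = \varnothing$, the standard ultrametric fact that disjoint discs satisfy $v(\alpha - \alpha') < \min(b, b')$ forces every $x \in D$ to satisfy $v(x - \alpha') = v(\alpha - \alpha') < b'$ (since $v(x - \alpha) \geq b > v(\alpha - \alpha')$), so $\overline{x_{\alpha',\beta'}} = \infty$, and symmetrically every $x \in D'$ has $\overline{x_{\alpha, \beta}} = \infty$, yielding $P = P' = \infty$.

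The main obstacle here is not computational but expository: one must be careful to check that the $\bar{K}$-points chosen really do reduce on one model to generic (as opposed to intersection) points of the relevant component of $\SF{\XX_{\{D, D'\}}}$, so that the reductions computed on the other model are genuinely the contraction images $P$ and $P'$ rather than arbitrary reductions. Once this geometric picture is clearly set up, the actual computations amount to one-line applications of the ultrametric inequality to the change-of-coordinate formula.
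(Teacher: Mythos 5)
Your proof is correct, and since the paper itself only asserts the result ``by an immediate computation'' without writing it out, your argument is precisely the computation the authors had in mind: translate between the two coordinates via $x_{\alpha',\beta'} = (\beta')^{-1}\beta\, x_{\alpha,\beta} + (\beta')^{-1}(\alpha - \alpha')$, and read off the reductions in each of the three configurations of the discs. Your own remark at the end correctly identifies the one point that deserves care — namely that the $\bar{K}$-point chosen must reduce on $\XX_{\{D,D'\}}$ to a point of the relevant line away from the intersection, so that its reduction on the other model is genuinely the contraction image — and your choices do satisfy this (for instance, in case (a) every $x\in D$ reduces to a finite point of $\SF{\XX_D}$, which cannot be the intersection point since that is $\infty$, so its reduction in $\XX_{D'}$ is indeed $P'$). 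No issues.
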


\afterpage{
\begin{figure}[t]
    \centering

\begin{tikzpicture}[x=0.75pt,y=0.75pt,yscale=-1,xscale=1]
\draw [color={rgb, 255:red, 0; green, 0; blue, 0 }  ,draw opacity=1 ]   (100,126) -- (37,201) ;
\draw    (19,120.5) -- (101,193) ;
\draw    (68.5,163.5) .. controls (66.79,161.87) and (66.75,160.21) .. (68.38,158.5) .. controls (70.01,156.79) and (69.97,155.13) .. (68.26,153.5) .. controls (66.55,151.87) and (66.51,150.21) .. (68.14,148.5) .. controls (69.76,146.79) and (69.72,145.13) .. (68.01,143.51) .. controls (66.3,141.88) and (66.26,140.22) .. (67.89,138.51) .. controls (69.52,136.8) and (69.48,135.14) .. (67.77,133.51) .. controls (66.06,131.88) and (66.02,130.22) .. (67.65,128.51) .. controls (69.28,126.8) and (69.24,125.14) .. (67.53,123.51) .. controls (65.82,121.88) and (65.78,120.22) .. (67.41,118.51) .. controls (69.04,116.8) and (69,115.14) .. (67.29,113.51) -- (67.24,111.66) -- (67.05,103.67) ;
\draw [shift={(67,101.67)}, rotate = 88.61] [color={rgb, 255:red, 0; green, 0; blue, 0 }  ][line width=0.75]    (10.93,-3.29) .. controls (6.95,-1.4) and (3.31,-0.3) .. (0,0) .. controls (3.31,0.3) and (6.95,1.4) .. (10.93,3.29)   ;
\draw [color={rgb, 255:red, 0; green, 0; blue, 0 }  ,draw opacity=1 ]   (311,132) -- (248,207) ;
\draw    (230,126.5) -- (312,199) ;
\draw    (279.5,169.5) .. controls (277.75,167.92) and (277.67,166.26) .. (279.25,164.51) .. controls (280.83,162.76) and (280.75,161.1) .. (279,159.51) .. controls (277.25,157.93) and (277.17,156.27) .. (278.75,154.52) .. controls (280.33,152.77) and (280.25,151.11) .. (278.5,149.53) .. controls (276.75,147.94) and (276.67,146.28) .. (278.25,144.53) .. controls (279.83,142.78) and (279.75,141.12) .. (278,139.54) .. controls (276.25,137.95) and (276.17,136.29) .. (277.75,134.54) .. controls (279.33,132.79) and (279.25,131.13) .. (277.5,129.55) .. controls (275.75,127.97) and (275.67,126.31) .. (277.25,124.56) .. controls (278.83,122.81) and (278.75,121.15) .. (277,119.56) .. controls (275.25,117.98) and (275.17,116.32) .. (276.75,114.57) -- (276.5,109.65) -- (276.1,101.66) ;
\draw [shift={(276,99.67)}, rotate = 87.13] [color={rgb, 255:red, 0; green, 0; blue, 0 }  ][line width=0.75]    (10.93,-3.29) .. controls (6.95,-1.4) and (3.31,-0.3) .. (0,0) .. controls (3.31,0.3) and (6.95,1.4) .. (10.93,3.29)   ;
\draw [color={rgb, 255:red, 0; green, 0; blue, 0 }  ,draw opacity=1 ]   (531,133) -- (468,208) ;
\draw    (450,127.5) -- (532,200) ;
\draw    (499.5,170.5) .. controls (497.8,168.87) and (497.76,167.2) .. (499.39,165.5) .. controls (501.02,163.8) and (500.99,162.13) .. (499.29,160.5) .. controls (497.59,158.87) and (497.55,157.2) .. (499.18,155.5) .. controls (500.81,153.8) and (500.78,152.13) .. (499.08,150.5) .. controls (497.38,148.87) and (497.34,147.21) .. (498.97,145.51) .. controls (500.6,143.81) and (500.56,142.14) .. (498.86,140.51) .. controls (497.16,138.88) and (497.13,137.21) .. (498.76,135.51) .. controls (500.39,133.81) and (500.35,132.14) .. (498.65,130.51) .. controls (496.95,128.88) and (496.92,127.21) .. (498.55,125.51) .. controls (500.18,123.81) and (500.14,122.14) .. (498.44,120.51) .. controls (496.74,118.88) and (496.71,117.21) .. (498.34,115.51) .. controls (499.97,113.81) and (499.93,112.14) .. (498.23,110.51) -- (498.21,109.66) -- (498.04,101.67) ;
\draw [shift={(498,99.67)}, rotate = 88.79] [color={rgb, 255:red, 0; green, 0; blue, 0 }  ][line width=0.75]    (10.93,-3.29) .. controls (6.95,-1.4) and (3.31,-0.3) .. (0,0) .. controls (3.31,0.3) and (6.95,1.4) .. (10.93,3.29)   ;
\draw    (31.5,132.5) .. controls (32.31,134.71) and (31.6,136.22) .. (29.39,137.03) .. controls (27.18,137.84) and (26.47,139.35) .. (27.27,141.56) -- (25.23,145.94) -- (21.85,153.19) ;
\draw [shift={(21,155)}, rotate = 295.02] [color={rgb, 255:red, 0; green, 0; blue, 0 }  ][line width=0.75]    (10.93,-3.29) .. controls (6.95,-1.4) and (3.31,-0.3) .. (0,0) .. controls (3.31,0.3) and (6.95,1.4) .. (10.93,3.29)   ;
\draw    (296.5,148.5) .. controls (298.81,148.99) and (299.71,150.39) .. (299.21,152.7) .. controls (298.71,155.01) and (299.61,156.41) .. (301.92,156.91) .. controls (304.23,157.4) and (305.13,158.8) .. (304.63,161.11) -- (305.58,162.59) -- (309.92,169.32) ;
\draw [shift={(311,171)}, rotate = 237.2] [color={rgb, 255:red, 0; green, 0; blue, 0 }  ][line width=0.75]    (10.93,-3.29) .. controls (6.95,-1.4) and (3.31,-0.3) .. (0,0) .. controls (3.31,0.3) and (6.95,1.4) .. (10.93,3.29)   ;
\draw    (499.5,170.5) .. controls (497.99,172.31) and (496.33,172.46) .. (494.52,170.95) .. controls (492.71,169.44) and (491.05,169.6) .. (489.54,171.41) .. controls (488.03,173.22) and (486.37,173.37) .. (484.56,171.86) -- (481.96,172.09) -- (473.99,172.82) ;
\draw [shift={(472,173)}, rotate = 354.81] [color={rgb, 255:red, 0; green, 0; blue, 0 }  ][line width=0.75]    (10.93,-3.29) .. controls (6.95,-1.4) and (3.31,-0.3) .. (0,0) .. controls (3.31,0.3) and (6.95,1.4) .. (10.93,3.29)   ;

\draw (122,202) node    {$L'$};
\draw (23,204) node  [color={rgb, 255:red, 0; green, 0; blue, 0 }  ,opacity=1 ]  {$L$};
\draw (12,159.4) node [anchor=north west][inner sep=0.75pt]    {$\infty $};
\draw (333,208) node    {$L'$};
\draw (234,209) node  [color={rgb, 255:red, 0; green, 0; blue, 0 }  ,opacity=1 ]  {$L$};
\draw (553,209) node    {$L'$};
\draw (454,211) node  [color={rgb, 255:red, 0; green, 0; blue, 0 }  ,opacity=1 ]  {$L$};
\draw (450,168.4) node [anchor=north west][inner sep=0.75pt]    {$\infty $};
\draw (31,234) node [anchor=north west][inner sep=0.75pt]   [align=left] {Case (a): $\displaystyle D\varsubsetneq D'$};
\draw (233,234) node [anchor=north west][inner sep=0.75pt]   [align=left] {Case (b): $\displaystyle D'\varsubsetneq D$};
\draw (450,234) node [anchor=north west][inner sep=0.75pt]   [align=left] {Case (c): $\displaystyle D\cap D'=\emptyset $};
\draw (313,170.4) node [anchor=north west][inner sep=0.75pt]    {$\infty $};
\draw (17,54.4) node [anchor=north west][inner sep=0.75pt]  [font=\normalsize]  {$\overline{x_{\alpha ,\beta }} =\infty $};
\draw (18,71.4) node [anchor=north west][inner sep=0.75pt]  [font=\normalsize]  {$\overline{x_{\alpha ',\beta '}} =\overline{(\beta ')^{-1} (\alpha -\alpha ')} \neq \infty $};
\draw (227,53.4) node [anchor=north west][inner sep=0.75pt]  [font=\normalsize]  {$\overline{x_{\alpha ,\beta }} =\overline{\beta ^{-1} (\alpha '-\alpha )} \neq \infty $};
\draw (228,72.4) node [anchor=north west][inner sep=0.75pt]  [font=\normalsize]  {$\overline{x_{\alpha ',\beta '}} =\infty $};
\draw (453,53.4) node [anchor=north west][inner sep=0.75pt]  [font=\normalsize]  {$\overline{x_{\alpha ,\beta }} =\infty $};
\draw (454,72.4) node [anchor=north west][inner sep=0.75pt]  [font=\normalsize]  {$\overline{x_{\alpha ',\beta '}} =\infty $};

\end{tikzpicture}
    \caption{The special fiber of the minimal model $\XX_{\{D,D'\}}$ dominating $\XX_D$ and $\XX_D'$. Here, $L$ and $L'$ are the lines corresponding to the discs $D$ and $D'$ respectively.} \label{fig two models line}
\end{figure}
}

\begin{prop} \label{prop thickness}
    Suppose that $\XX / R'$ is a semistable model of the line $X$ for some finite extension $R' / R$, such that there are discs $D_{\alpha,b} \subsetneq D_{\alpha',b'} \subset K^\alg$ corresponding to two intersecting components of $\SF{\XX}$.  Then the thickness of the node where they intersect is given by the formula $(b' - b) / v(\pi)$, where $\pi \in K^\alg$ is a uniformizer of $R'$.
\end{prop}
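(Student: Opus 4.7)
The plan is to reduce to an explicit local computation at the node. Since $\XX$ is assumed semistable at the node $Q$ where the components $L$ (from $D := D_{\alpha,b}$) and $L'$ (from $D' := D_{\alpha',b'}$) meet, the point $Q$ is an ordinary double point, so no third component of $\SF{\XX}$ passes through $Q$. Consequently, the completed local ring $\widehat{\OO}_{\XX,Q}$ depends only on how $L$ and $L'$ are glued, and coincides with the completed local ring at the corresponding node of the minimal model $\XX_{\{D,D'\}}$. I may therefore replace $\XX$ with $\XX_{\{D,D'\}}$ throughout.

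Next I would write explicit local coordinates on the two smooth charts and track the transition. Since $D \subsetneq D'$, we have $b > b'$. On $\XX_D$, the node lies at the point $\overline{x_{\alpha,\beta}}=\infty$ by \Cref{prop relative position smooth models line}(a), so I use the local parameter $u := x_{\alpha,\beta}^{-1}$. On $\XX_{D'}$, the node lies at the finite point $\overline{x_{\alpha',\beta'}} = \overline{(\alpha-\alpha')/\beta'}$; note $(\alpha-\alpha')/\beta' \in R'$ since $\alpha \in D'$ implies $v(\alpha - \alpha') \geq b' = v(\beta')$. I use the local parameter $t := x_{\alpha',\beta'} - (\alpha-\alpha')/\beta'$. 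The coordinate transformation $x_{\alpha',\beta'} = (\beta/\beta')\,x_{\alpha,\beta} + (\alpha-\alpha')/\beta'$ then gives
\[
t \;=\; \frac{\beta}{\beta'}\, x_{\alpha,\beta} \;=\; \frac{\beta/\beta'}{u},
\qquad\text{i.e.}\qquad tu \;=\; \frac{\beta}{\beta'}.
\]

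Now I would identify $\XX_{\{D,D'\}}$ locally near $Q$ with the Zariski closure of the diagonal of $X$ inside $\XX_D \times_{R'} \XX_{D'}$, restricted to the product of the two affine neighborhoods $\Spec R'[u]$ and $\Spec R'[t]$. The defining equation of this closure is precisely the relation computed above, so the chart is $\Spec R'[u,t]/(tu - \beta/\beta')$, whose completion at the maximal ideal $(u,t,\pi)$ yields
\[
\widehat{\OO}_{\XX_{\{D,D'\}},\,Q} \;\cong\; R'[\![u,t]\!]\big/\bigl(tu - \beta/\beta'\bigr).
\]
This is the standard semistable-node form with $a = \beta/\beta'$, which, combined with the hypothesis that $\XX$ is semistable at $Q$, confirms that $Q$ is indeed a node and identifies its thickness as $v(\beta/\beta')/v(\pi) = (b-b')/v(\pi)$.

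The only step that is not purely mechanical is the identification of the local chart of $\XX_{\{D,D'\}}$ with $\Spec R'[u,t]/(tu-\beta/\beta')$; this is the main obstacle, as one must argue that this closure really is the minimal model dominating the two smooth models and not some proper blow-up. I would justify this either by checking the universal property of the closure of the graph, or by observing that $R'[u,t]/(tu-\beta/\beta')$ is already normal (it has an isolated singularity of node type in its special fiber) and that its two projections are isomorphic to the respective affine charts of $\XX_D$ and $\XX_{D'}$, which characterizes it as the minimal dominating model by \Cref{prop smooth models line discs}.
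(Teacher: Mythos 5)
Your computation is correct and follows essentially the same route as the paper's own proof: reduce to the minimal model $\XX_{\{D,D'\}}$, write explicit local parameters at the node, read off the thickness from the valuation of the constant in the local equation. Two remarks, one in your favor and one where you should have paused.

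In your favor: you are actually more careful than the paper on the key step. With $D := D_{\alpha,b} \subsetneq D' := D_{\alpha',b'}$ (hence $b > b'$), the node lies at $x_{\alpha,\beta} = \infty$ on $\XX_D$ and at a finite point on $\XX_{D'}$, so the correct local parameters are $u = x_{\alpha,\beta}^{-1}$ and $t = x_{\alpha',\beta'} - (\alpha-\alpha')/\beta'$, exactly as you use, giving $ut = \beta/\beta'$ with $v(\beta/\beta') = b - b' > 0$. The paper's proof instead uses $x_{\alpha,\beta}$ and $x_{\alpha,\beta'}^{\vee}$ and writes $x_{\alpha,\beta}\, x_{\alpha,\beta'}^{\vee} = \beta'\beta^{-1}$; this is a correct algebraic identity but it is \emph{not} a local equation at the actual node (those functions are the uniformizers at the pair $(0,\infty)$, which is not a point of the special fiber). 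You also take the trouble to justify the identification of $\XX_{\{D,D'\}}$ locally with $\Spec R'[u,t]/(ut - \beta/\beta')$ by checking normality and matching it against \Cref{prop smooth models line discs}; the paper asserts the local form without comment, and your version is more convincing.

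The thing you should have flagged and did not: the formula you derive, $(b-b')/v(\pi)$, is not the formula the proposition asserts, which is $(b'-b)/v(\pi)$. These differ by a sign, and they are not interchangeable: since $D_{\alpha,b} \subsetneq D_{\alpha',b'}$ forces $b > b'$, the stated formula $(b'-b)/v(\pi)$ is \emph{negative}, which cannot be a thickness. Your formula $(b-b')/v(\pi)$ is the right one; the proposition (both its statement and its proof) contains a sign slip coming precisely from the parameter mix-up described above. (You can sanity-check this against how the result is later used: \Cref{prop viable thicknesses} applies it with the smaller disc $D_{\alpha,b_+}$ inside the larger one $D_{\alpha,b_-}$ and reports the thickness as $(b_+ - b_-)/v(\pi)$, i.e.\ ``inner depth minus outer depth,'' which matches your corrected version, not the literal statement of this proposition.) When your computed answer does not literally agree with the claimed one, you should say so explicitly rather than present it silently; here that mismatch is the signal that the source has a typo.
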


\begin{proof}
    We can clearly replace the center $\alpha'$ with $\alpha \in D_{\alpha,b} \subsetneq D_{\alpha',b'}$; now choosing $\beta, \beta' \in (K^\alg)^{\times}$ be scalars such that $v(\beta) = b$ and $v(\beta') = b'$.  Then, with the notation above, we have coordinates $x_{\alpha,\beta}$ and $x_{\alpha,\beta'}$ corresponding to each of these components of $\SF{\XX}$, and these coordinates are related by the equation $x_{\alpha,\beta} = \beta' \beta^{-1} x_{\alpha,\beta'}$.  Locally around the point of intersection, a defining equation is $x_{\alpha,\beta} x_{\alpha,\beta'}^\vee = \beta' \beta^{-1}$, where $x_{\alpha,\beta'}^{\vee} = x_{\alpha,\beta'}^{-1}$, and so the thickness by definition is equal to $v(\beta' \beta^{-1}) / v(\pi) = (b' - b) / v(\pi)$.
\end{proof}

\subsection{Part-square decompositions} \label{sec models hyperelliptic part-square}

We begin this subsection by defining a \textit{part-square decomposition}, and then we study part-square decompositions with certain properties.

\begin{dfn} \label{dfn qrho}
    Given a nonzero polynomial $h(x) \in K^\alg[z]$, a \emph{part-square decomposition} of $h$ is a way of writing $h = q^2 + \rho$ for some $q(x), \rho(x) \in K^\alg[x]$, with $\deg(q)\le \lceil\frac{1}{2}\deg(h)\rceil$.
\end{dfn}

\begin{rmk}
    \label{rmk degree of rho}
    The definition forces $\deg(\rho)\le \deg(h)$ when $h$ has even degree and $\deg(\rho)\le \deg(h)+1$ when $h$ has odd degree. The definition allows $q$ to be equal to zero.
\end{rmk}

Given a part-square decomposition $h = q^2 + \rho$, we define the rational number $t_{q, \rho}:=v(\rho)-v(h) \in \qqinfty$.

\begin{dfn} \label{dfn good totally odd}

We define the following properties of a part-square decomposition $h = q^2 + \rho$.

\begin{enumerate}[(a)]
    \item The decomposition is said to be \emph{good} either if we have $t_{q,\rho} \geq 2v(2)$ or if we have $t_{q,\rho} < 2v(2)$ and there is no decomposition $h = \tilde{q}^2 + \tilde{\rho}$ such that $t_{\tilde{q}, \tilde{\rho}} > t_{q, \rho}$.
    \item The decomposition is said to be \emph{totally odd} if $\rho$ only consists of odd-degree terms.
\end{enumerate}

\end{dfn}

\begin{rmk} \label{rmk good decompositions}
    The trivial part-square decomposition $h=0^2+h$ has $t_{0,h} = 0$; this immediately implies that all good decompositions $h = q^2 + \rho$ satisfy $t_{q,\rho} \ge 0$.
\end{rmk}

\begin{rmk} \label{rmk same t for good}
    If $h=q^2+\rho=(q')^2+\rho'$ are two good part-square decompositions for the same nonzero polynomial $h$, then \Cref{dfn good totally odd} directly implies the equality $t_{q,\rho} = t_{q',\rho'}$ if either is $< 2v(2)$, so we have $\truncate{t_{q,\rho}}=\truncate{t_{q',\rho'}}$.
\end{rmk}

\begin{prop} \label{prop good decomposition}
    Let $h = q^2 + \rho$ be a part-square decomposition satisfying $t_{q,\rho} < 2v(2)$.  Then we have the following.
    \begin{enumerate}[(a)]
        \item The decomposition $h = q^2 + \rho$ is good if and only if some (any) normalized reduction of $\rho$ is not the square of a polynomial with coefficients in $k$.
        \item Suppose that the decomposition $h = q^2 + \rho$ is good and that $h = \tilde{q}^2 + \tilde{\rho}$ is another good decomposition.  Then given any normalized reductions of $\rho$ and $\tilde{\rho}$ respectively, the odd degrees appearing among terms in these normalized reductions, as well as the derivatives of the normalized reductions, are equal up to scaling.
    \end{enumerate}
    
    \begin{proof}
        We begin by proving part (a).  If $t_{q,\rho}<0$, the decomposition is not good (see \Cref{rmk good decompositions}), and any normalized reduction of $\rho$ is a square, since it is a scalar multiple of a normalized reduction of $q^2$. We now have to prove the two implications when $t_{q,\rho}\ge 0$.
    
        Suppose that $h = q^2 + \rho$ satisfies $0\le t_{q,\rho} < 2v(2)$ but is not good, so that another decomposition $h = \tilde{q}^2(x) + \tilde{\rho}(x)$ with $t_{\tilde{q}, \tilde{\rho}} > t_{q, \rho}$ can be found.
        Let us now consider $q + \tilde{q}$ and $q - \tilde{q}$: their product has valuation $v(q^2-\tilde{q}^2) = v(\tilde{\rho}-\rho) = v(\rho)$, while their difference has valuation
        \begin{equation} \label{eq v(2 tilde q)}
            v(2\tilde{q})=v(2)+\frac{1}{2}v(h-\tilde{\rho}) \geq v(2)+\frac{1}{2}v(h) > \frac{1}{2}v(\rho).
        \end{equation}
        From this, it is immediate to deduce that they must both have valuation equal to $\frac{1}{2}v(\rho)$.  We may now write 
        \begin{equation} \label{eq good decomposition}
        	\rho = \tilde{\rho} + 2\tilde{q}(\tilde{q}-q) -(\tilde{q}-q)^2.
        \end{equation}
        But we observe that the first two summands both have valuation $>v(\rho)$. This implies that the normalized reduction of $\rho$ is a square.
        		
        Conversely, suppose that the decomposition satisfies $0\le t_{q,\rho} < 2v(2)$ and that the normalized reduction of $\rho$ is a square; this is clearly equivalent to saying that we can form a part-square decomposition $\rho = q_1^2 + \rho_1$ of the polynomial $\rho$ that satisfies $t_{q_1,\rho_1}>0$; hence, we have $v(q_1) = \frac{1}{2}v(\rho)$ and $v(\rho_1) > v(\rho)$.
        
        Let us now consider the part-square decomposition $h = \tilde{q}^2 + \tilde{\rho}$, where $\tilde{q}:=q + q_1$ and $\tilde{\rho}=\rho_1 - 2q q_1$. Notice that the assumption $t_{q,\rho}\ge 0$ implies that $v(q)\ge \frac{1}{2}v(h)$; we therefore have $v(2q q_1) \geq v(2) + \frac{1}{2} v(h) + \frac{1}{2}v(\rho) > v(\rho)$. We conclude that $v(\tilde{\rho})=v(\rho_1-2qq_1)>v(\rho)$, i.e.\  $t_{\tilde{q}, \tilde{\rho}}> t_{q, \rho}$; therefore, the original part-square decomposition $h = q^2 + \rho$ was not good.  Thus, both directions of part (a) are proved.
        
        We now turn to part (b) and assume that $h = q^2 + \rho = \tilde{q}^2 + \tilde{\rho}$ are both good decompositions.  Then both (\ref{eq v(2 tilde q)}) and (\ref{eq good decomposition}) are still valid, and the fact that $v(\rho) = v(\tilde{\rho})$ by \Cref{rmk same t for good} implies that $v(2\tilde{q}(\tilde{q}-q) -(q-\tilde{q})^2) \geq v(\rho)$.  Then if $v(q - \tilde{q}) < \frac{1}{2}v(\rho)$, from (\ref{eq good decomposition}) we must have $v(2\tilde{q}(\tilde{q} - q)) = v((q - \tilde{q})^2) < v(\rho)$, which contradicts (\ref{eq v(2 tilde q)}).  We therefore have $v(\tilde{q} - q) \geq \frac{1}{2}v(\rho)$, and (\ref{eq v(2 tilde q)}) now implies $v(2\tilde{q}(\tilde{q} - q)) > v(\rho)$.
        
        Let $\gamma \in K^\alg$ be a scalar with $v(\gamma) = v(\rho) = v(\tilde{\rho})$.  If $v(\tilde{q} - q) > \frac{1}{2}v(\rho)$, then (\ref{eq good decomposition}) shows that $v(\tilde{\rho} - \rho) > v(\rho)$ and so $\gamma^{-1}(\tilde{\rho} - \rho)$ has positive valuation; therefore, the reductions of $\gamma^{-1}\rho$ and $\gamma^{-1}\tilde{\rho}$ are equal, and we are done.  If $v(\tilde{q} - q) = \frac{1}{2}v(\rho)$, then $\gamma^{-1}(\tilde{\rho} - \rho)$ reduces to a square (namely a normalized reduction of $\tilde{q} - q$ squared); the square of a polynomial in $k[x]$ has only even-degree terms, and its derivative vanishes, which shows that the reductions of $\gamma^{-1}\rho$ and $\gamma^{-1}\tilde{\rho}$ have the same odd degrees appearing and have the same derivative.  Thus again we are done, and part (b) is proved.
    \end{proof}
\end{prop}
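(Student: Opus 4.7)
The plan is to exploit the algebraic identity relating any two part-square decompositions of $h$. If $h = q^2 + \rho = \tilde{q}^2 + \tilde{\rho}$, then
\[
\rho - \tilde{\rho} \;=\; \tilde{q}^2 - q^2 \;=\; (\tilde{q} - q)(\tilde{q} + q) \;=\; 2\tilde{q}(\tilde{q}-q) - (\tilde{q}-q)^2,
\]
where the last equality uses $\tilde{q} + q = 2\tilde{q} - (\tilde{q}-q)$. The hypothesis $t_{q,\rho} < 2v(2)$, which is equivalent to $v(\rho) < v(h) + 2v(2)$, controls the relative valuations of the two summands on the right and will drive both parts of the argument.

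For part (a), I prove both implications by contraposition. First, suppose a normalized reduction of $\rho$ is a square; lifting this square to $\bar{K}[x]$ produces a part-square decomposition $\rho = q_1^2 + \rho_1$ with $v(q_1) = \tfrac{1}{2}v(\rho)$ and $v(\rho_1) > v(\rho)$. Setting $\tilde{q} := q + q_1$ and $\tilde{\rho} := \rho_1 - 2q q_1$ gives a new decomposition $h = \tilde{q}^2 + \tilde{\rho}$. Since $t_{q,\rho} \geq 0$ forces $v(q) \geq \tfrac{1}{2}v(h)$, the cross term satisfies $v(2qq_1) \geq v(2) + \tfrac{1}{2}v(h) + \tfrac{1}{2}v(\rho)$, and this strictly exceeds $v(\rho)$ precisely because $t_{q,\rho} < 2v(2)$. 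Hence $v(\tilde{\rho}) > v(\rho)$, so $t_{\tilde{q},\tilde{\rho}} > t_{q,\rho}$ and the original decomposition is not good. Conversely, if the decomposition is not good, I pick a strictly better pair $(\tilde{q}, \tilde{\rho})$ with $v(\tilde{\rho}) > v(\rho)$ and apply the main identity. The relation $(\tilde{q}+q) - (\tilde{q}-q) = 2q$ combined with $v(2q) \geq v(2) + \tfrac{1}{2}v(h) > \tfrac{1}{2}v(\rho)$ and $v(\tilde{q}-q) + v(\tilde{q}+q) = v(\rho - \tilde{\rho}) = v(\rho)$ forces both factors to have valuation $\tfrac{1}{2}v(\rho)$; writing $(\tilde{q}-q)(\tilde{q}+q) = (\tilde{q}-q)^2 + 2q(\tilde{q}-q)$ and noting that the cross term has valuation strictly greater than $v(\rho)$, a normalized reduction of $\rho$ coincides with one of $(\tilde{q}-q)^2$, which is a square.

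For part (b), I invoke \Cref{rmk same t for good} to deduce $v(\rho) = v(\tilde{\rho})$ and apply the same identity. The summand $2\tilde{q}(\tilde{q}-q)$ has valuation at least $v(2) + \tfrac{1}{2}v(h) + v(\tilde{q}-q) > \tfrac{1}{2}v(\rho) + v(\tilde{q}-q)$, while $(\tilde{q}-q)^2$ has valuation $2v(\tilde{q}-q)$; combined with $v(\rho - \tilde{\rho}) \geq v(\rho)$, this forces $v(\tilde{q}-q) \geq \tfrac{1}{2}v(\rho)$. If the inequality is strict, then $\rho$ and $\tilde{\rho}$ have equal normalized reductions. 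If equality holds, then the normalized reduction of $\rho - \tilde{\rho}$ equals the square of a normalized reduction of $\tilde{q}-q$; in characteristic $2$ a square $\bigl(\sum_i a_i z^i\bigr)^2 = \sum_i a_i^2 z^{2i}$ contains only even-degree terms and has vanishing derivative, so adding a square alters neither the odd-degree part nor the derivative of the normalized reduction of $\rho$.

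The main obstacle is keeping the chains of valuation inequalities transparent and pinpointing where the hypothesis $t_{q,\rho} < 2v(2)$ is essential. Conceptually, this hypothesis is exactly what prevents the ``wild'' term $2\tilde{q}(\tilde{q}-q)$ from contributing to the leading valuation of $\rho - \tilde{\rho}$, so that the problem reduces to the characteristic-$2$ fact that squares are detected (and are exactly killed) by the datum of odd-degree coefficients together with the derivative.
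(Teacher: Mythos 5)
Your argument is essentially the same as the paper's: the same algebraic identity $\rho - \tilde{\rho} = (\tilde{q}-q)(\tilde{q}+q)$, the same valuation estimates driven by the hypothesis $t_{q,\rho} < 2v(2)$, and the same case split on whether $v(\tilde{q}-q)$ equals or exceeds $\tfrac12 v(\rho)$ for part~(b). The only cosmetic difference is that you bound $v(2q)$ where the paper bounds $v(2\tilde{q})$, and you expand $\tilde{q}+q$ as $(\tilde{q}-q)+2q$ rather than as $2\tilde{q}-(\tilde{q}-q)$; these are interchangeable here because $q$ and $\tilde{q}$ both satisfy $v(\,\cdot\,) \geq \tfrac12 v(h)$ in the regime being considered.

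One small caveat you should patch: in part~(a) you tacitly assume $t_{q,\rho} \geq 0$ (you write ``Since $t_{q,\rho}\geq 0$ forces $v(q)\geq\tfrac12 v(h)$\ldots''), but the hypothesis of the proposition is only $t_{q,\rho} < 2v(2)$. You need to dispose of the case $t_{q,\rho} < 0$ separately, which is easy: the decomposition is automatically not good (the trivial decomposition $h = 0^2 + h$ already has $t = 0$, so by \Cref{rmk good decompositions} goodness requires $t_{q,\rho}\geq 0$), and any normalized reduction of $\rho$ agrees with one of $q^2$ up to scalar, hence is a square. Without this remark the chain $v(2q)\geq v(2)+\tfrac12 v(h)$ is unjustified, since $t_{q,\rho}<0$ would give $v(q)<\tfrac12 v(h)$.
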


\begin{cor} \label{cor totally odd is good}
Every totally odd part-square decomposition of a polynomial is good.
    \begin{proof}
    Suppose that the decomposition $h = q^2 + \rho$ is totally odd.  If $t_{q, \rho} \geq 2v(2)$, then we are already done, so assume that $t_{q, \rho} < 2v(2)$.  Then since $\rho$ consists only of odd-degree terms, the same is true of any normalized reduction of $\rho$, which consequently cannot be the square of any polynomial in $k[z]$. Thus \Cref{prop good decomposition} implies that the decomposition is good.
    \end{proof}
\end{cor}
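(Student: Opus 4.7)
The plan is to split on the value of $t_{q,\rho}$. If $t_{q,\rho} \geq 2v(2)$, the decomposition is automatically good by the first clause of \Cref{dfn good totally odd}(a), so there is nothing to prove. Otherwise we are in the regime $t_{q,\rho} < 2v(2)$, where I would appeal to \Cref{prop good decomposition}(a): it reduces goodness of $h = q^2 + \rho$ to checking that some (equivalently every) normalized reduction of $\rho$ is not the square of a polynomial in $k[z]$.

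The crucial ingredient is then the fact that the residue field $k$ has characteristic $2$, so that Frobenius is a ring endomorphism of $k[z]$. Consequently, every square in $k[z]$ has the form $\bigl(\sum_i c_i z^i\bigr)^2 = \sum_i c_i^2 z^{2i}$, which involves only even-degree monomials. Since $\rho$ is totally odd by hypothesis, it consists only of odd-degree terms, and this property transfers to any normalized reduction: scaling $\rho$ by an element of $\bar{K}^\times$ and reducing modulo the maximal ideal preserves the set of degrees of its nonzero terms, which is essentially the content of \Cref{rmk normalized reduction}. Hence any normalized reduction still carries an odd-degree monomial and therefore cannot be a square, completing the argument.

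I do not anticipate any serious obstacle here; the proof is a one-line verification once \Cref{prop good decomposition}(a) is in hand. The only conceptual point worth flagging is that ``totally odd'' is a reduction-stable property precisely because characteristic $2$ kills the cross terms $2ab$ that could otherwise produce odd-degree monomials in a square, and so the dichotomy ``totally odd versus square'' remains clean after reducing to $k[z]$.
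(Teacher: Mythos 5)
Your proof is correct and follows the same route as the paper's: reduce to the case $t_{q,\rho} < 2v(2)$, observe that a normalized reduction of a totally odd $\rho$ remains totally odd, note that squares in $k[z]$ (characteristic $2$) have only even-degree terms, and conclude via \Cref{prop good decomposition}(a). The extra remark about Frobenius being a ring endomorphism is just a slightly more explicit justification of the same fact the paper uses implicitly.
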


We now want to show that a good part-square decomposition of a polynomial always exists, for which, thanks to Corollary \ref{cor totally odd is good}, it suffices to show that a polynomial always has a totally odd part-square decomposition.

\begin{prop} \label{prop totally odd existence}

Given a nonzero polynomial $h(z) \in K^\alg[z]$, there always exists a totally odd part-square decomposition $h = q^2 + \rho$ with $q(z), \rho(z) \in K^\alg[z]$.

\end{prop}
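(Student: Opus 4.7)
The plan is to construct $q$ by decomposing $h$ into its even- and odd-degree parts and then reducing the existence of a totally odd part-square decomposition to a factorization problem over $\bar{K}$.

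First I would write $h = h_e + h_o$, where $h_e$ (resp.\ $h_o$) collects the even-degree (resp.\ odd-degree) terms of $h$, and similarly write any candidate $q = q_e + q_o$. Observing that $q_e^2 + q_o^2$ has only even-degree terms while $2q_e q_o$ has only odd-degree terms, the condition that $\rho = h - q^2$ be totally odd reduces to the single equation
\begin{equation*}
    q_e^2 + q_o^2 = h_e.
\end{equation*}
The trivial case $h_e = 0$ is handled by setting $q = 0$, $\rho = h$, so I assume $h_e \neq 0$ henceforth.

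Next I would exploit the algebraic closedness of $\bar{K}$ to produce a polynomial $p \in \bar{K}[z]$ with $p(z)p(-z) = h_e(z)$. Writing $h_e(z) = \tilde{h}(z^2)$ and factoring $\tilde{h}(w) = \tilde{B} \prod_{i=1}^N (w - \alpha_i^2)$ in $\bar{K}[w]$, I set $p(z) := B \prod_{i=1}^N (z - \alpha_i)$ with $B \in \bar{K}$ chosen so that $B^2 = \tilde{B}(-1)^N$; a direct expansion using $\prod_i(z-\alpha_i)(z+\alpha_i) = \prod_i(z^2 - \alpha_i^2)$ then verifies the identity $p(z)p(-z) = h_e(z)$.

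Finally, decomposing $p = p_e + p_o$ into even and odd parts, one has $p(-z) = p_e - p_o$ and hence $h_e = p(z)p(-z) = p_e^2 - p_o^2$. Since $\mathrm{char}(\bar{K}) = 0$, there exists $i \in \bar{K}$ with $i^2 = -1$; setting $q_e := p_e$ and $q_o := i p_o$ turns the difference of squares into the desired sum of squares $q_e^2 + q_o^2 = p_e^2 - p_o^2 = h_e$. Then $q := q_e + q_o \in \bar{K}[z]$ yields $h - q^2 = h_o - 2 q_e q_o$, which is totally odd as required. The degree bound follows from $\deg q \leq \deg p = \tfrac{1}{2}\deg h_e \leq \lceil \tfrac{1}{2}\deg h \rceil$, noting that $\deg h_e$ is necessarily even and at most $\deg h$.

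The main subtlety in this argument is the factorization step: it is essential both that $\bar{K}$ be closed under taking square roots (to produce the $\alpha_i$ and the constant $B$) and that it contain $\sqrt{-1}$, without which the equation $q_e^2 + q_o^2 = h_e$ would in general be unsolvable. Algebraic closedness of $\bar{K}$, together with its characteristic being $0$, supplies both.
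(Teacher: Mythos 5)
Your proof is correct and takes essentially the same approach as the paper's: both reduce the problem to finding $q$ whose even-degree part of $q^2$ reproduces $h_e$, achieve this via a symmetric factorization $h_e(z) = p(z)p(-z)$ built from square roots of the roots of $\widehat{h}(w) := h_e(\sqrt{w})$, and then twist the odd-degree part of $p$ by $\sqrt{-1}$ to convert a difference of squares into a sum of squares. Your write-up is somewhat more explicit about the sign normalization (the choice of $B$) and the degree bound, but the underlying construction is the same.
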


\begin{proof}
    Let $h_e \in K^\alg[z]$ be the sum of the even-degree terms of $h$, whose roots are easily seen to come in pairs $\pm\sqrt{\alpha_i}$ for some elements $\alpha_i \in K^\alg$.  Writing $c_0 z^m + c_1 z^{m - 1} + \dots + c_m = \sqrt{c}\prod_i (z + \sqrt{\alpha_i})$, where $\sqrt{c}$ is a square root of the leading coefficient $c$ of $h_e$, one verifies straightforwardly that setting $q$ to be the polynomial whose $i$th coefficient is $c_i$ for even $i$ and $\sqrt{-1} c_i$ for odd $i$ (given some fixed square root of $-1$) produces a totally odd decomposition of $h$.
\end{proof}

We note that, if a nonzero polynomial $h\in K^\alg[z]$ is written as a product of factors $h=\prod_{i=1}^N h_i$ with $h_i\in K^\alg[z]$, then, given part-square decompositions $h_i=q_i^2+\rho_i$, with $q_i, \rho_i\in K^\alg[z]$, one can use them to form a part-square decomposition $h = q^2+\rho$, where $q = \prod_{i=1}^N q_i$ and $\rho = h-q^2$.  The following proposition dealing with this situation will be useful for our computations in \S\ref{sec depths}.

\begin{prop}
    \label{prop product part-square}
    In the setting above, for $1 \leq i \leq N$, let $t_i =t_{q_i,\rho_i}$ and $t =t_{q,\rho}$, and assume that we have $t_i \geq 0$.  Then we have $t \geq \min\{t_1,\dots, t_N\}$.  Moreover, we have the following.
    \begin{enumerate}[(a)]
        \item If there is a unique index $i_0$ such that the minimum $\min\{t_1,\ldots t_N\}$ is achieved by $t_{i_0}$, then we have $t= \min\{t_1,\ldots t_N\}$.  Moreover, in this case, the part-square decomposition of $h$ is good if and only if that of $h_{i_0}$ is.
        \item Suppose that $N=2$ and that, for all roots $s_1$ in $K^\alg$ of $h_1$ and for all roots $s_2$ of $h_2$, we have $v(s_1)>0$ but $v(s_2)<0$; assume, moreover, that both decompositions $h_i = q_i^2 + \rho_i$ are good. Then, if $\min\{t_1,t_2\}<2v(2)$, we have $t=\min\{t_1,t_2\}$, and the corresponding decomposition of $h$ is also good.
    \end{enumerate}

    \begin{proof}

    It clearly suffices to prove part (a) for $N = 2$, so let us assume that $N = 2$.  We have 
    \begin{equation} \label{eq product part-square}
            \rho = h-q^2 = h_1h_2 - (q_1 q_2)^2 = (q_1^2+\rho_1)(q_2^2+\rho_2)-(q_1 q_2)^2 = \rho_1 q_2^2 + \rho_2 q_1^2 + \rho_1 \rho_2.
    \end{equation}
    \Cref{rmk good decompositions} tells us that $t_1, t_2 \geq 0$, and therefore we have have $v(\rho_i)\ge v(h_i)$ (with strict inequality when $t_i > 0$) and thus $v(q_i^2)=v(h_i-\rho_i)\ge v(h_i)$ (with equality when $t_i > 0$) for $i = 1, 2$.  We meanwhile have $v(\rho_i) = v(h_i) + t_i$ by definition of the $t_i$'s for $i = 1, 2$, which enables us to estimate the valuations of the three terms on the right-hand side of (\ref{eq product part-square}): we compute $v(\rho_1 q_2^2) \geq v(h_1) + t_1 + v(h_2) = v(h) + t_1$ (with equalities when $t_2 > 0$); symmetrically, we get $v(\rho_2 q_1^2) \geq v(h) + t_2$ (with equalities when $t_1 > 0$); and finally, we get $v(\rho_1\rho_2) = v(h_1) + t_1 + v(h_2) + t_2 = v(h) + t_1 + t_2$.  In particular, each summand on the right-hand side of (\ref{eq product part-square}) has valuation $\geq v(h) + \min\{t_1, t_2\}$ and so we have $v(\rho) \geq v(h) + \min\{t_1, t_2\}$, and we get $t = v(\rho) - v(h) \geq \min\{t_1, t_2\}$, proving the first claim of the proposition.

    Suppose that $t_2 > t_1$.  Then in particular, we have $t_2 > 0$, from which the relations $v(\rho_1 q_2^2) = v(h) + t_1$ and $v(\rho_2 q_1^2), v(\rho_1 \rho_2) > v(h) + t_1$ now follow from our previous computations.  Thus, since by (\ref{eq product part-square}) the element $\rho$ is the sum of $\rho_1 q_2^2$ plus higher-valuation terms, we have $v(\rho) = v(h) + t_1$ and thus $t = t_1$, proving part (a) in this case.  Moreover, we see that any normalized reduction of $\rho$ is also a normalized reduction of $\rho_1 q_2^2$.  Since the decomposition $h_1 = \rho_1 + q_1^2$ is assumed to be good, by \Cref{prop good decomposition}(a), any normalized reduction of $\rho_1$ is not a square and thus include odd-degree terms; the same is then true of any normalized reduction of $\rho_1 q_2^2$ or of $\rho$ itself.  It follows from \Cref{prop good decomposition}(a) that the decomposition $h = \rho + q^2$ is good, and part (b) is proved in this case.  Symmetrically, the claims of part (a) are also proved when $t_1 > t_2$.

    Now suppose that $t_1 = t_2$; we set out to prove part (b) and thus assume the hypothesis that all roots of $h_1$ have positive valuation and all roots of $h_2$ have negative valuation.  This directly implies that any normalized reduction of $h_1$ (resp. of $h_2$) is a polynomial of the form $c_1 z^m \in k[z]$ (resp. $c_2 \in k[z]$) for some $c_1, c_2 \in k^\times$.  Since a normalized reduction of $h_2$ is a square, it follows from \Cref{prop good decomposition} combined with \Cref{rmk good decompositions} that we have $t_2 > 0$, and so we have $0 < t_1 = t_2 < 2v(2)$; moreover, this means that the polynomial $c_1 z^m$ must also be a square and so the power $m$ is even.  Meanwhile, in this situation, our above computations of the valuations of the right-hand terms in (\ref{eq product part-square}) give us $v(\rho_1 q_2^2) = v(\rho_2 q_1^2) = v(h) + t_1$ and $v(\rho_1 \rho_2) = v(h) + 2t_1 > v(h) + t_1$.  Choosing $\gamma, \gamma_1, \gamma_2 \in (K^\alg)^\times$ to be appropriate elements of valuation $t_1$, $v(h_1)$, and $v(h_2)$ respectively, we get normalized reductions $\overline{\gamma_1^{-1} h_1}(z) = c_1 z^m$ and $\overline{\gamma_2^{-1} h_2}(z) = c_2$ of $h_1$ and $h_2$ respectively, and we get that the reductions $\overline{r_i}$ of $r_i := \gamma^{-1} \gamma_1^{-1} \rho_i$ are normalized reductions of $\rho_i$ for $i = 1, 2$.  Multiplying both sides of the equation in (\ref{eq product part-square}) by $\gamma_1^{-1} \gamma_2^{-1} \gamma^{-1}$ and reducing, we get 
    \begin{equation}
        \overline{\gamma_1^{-1}\gamma_2^{-1}\gamma^{-1}\rho} =  (\overline{\gamma_2^{-1}h_2}) \overline{r_1} + (\overline{\gamma_1^{-1}h_1}) \overline{r_2} = c_2 \overline{r_1} + c_1 z^{2m} \overline{r_2}.
    \end{equation}
    We remark that the polynomial $\overline{r_1}(z) \in k[z]$ has degree $\deg(\overline{r}_1)\le 2m$; hence, if an odd-degree term of degree $s$ appears in $\overline{r_1}$ (resp.\ in $\overline{r_2}$), then an odd-degree term of degree $s$ (resp.\ $s+2m$) will also show up in $\overline{\gamma_1^{-1}\gamma_2^{-1}\gamma^{-1}\rho}$: roughly speaking, in the expression for $\overline{\gamma_1^{-1}\gamma_2^{-1}\gamma^{-1}\rho}$ no cancellation occurs between the odd-degree monomials of $\overline{r_1}$ and those of $\overline{r_2}$. We conclude that, since $\overline{r_1}$ and $\overline{r_2}$ are not squares by \Cref{prop good decomposition}, the reduced polynomial $\overline{\gamma_1^{-1}\gamma_2^{-1}\gamma^{-1}\rho}$ is not a square, so that we have $v(\rho)=t_1+v(h)$ (i.e.\ $t=t_1=t_2$), and the decomposition of $h$ is good by \Cref{prop good decomposition}(a).
    \end{proof}
\end{prop}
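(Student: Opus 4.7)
The plan is to reduce immediately to the case $N = 2$ (the general case follows by an easy induction on $N$, grouping $h_1 \cdots h_{N-1}$ as a single factor) and then work directly with the algebraic identity
\begin{equation*}
    \rho = h - q^2 = h_1 h_2 - (q_1 q_2)^2 = \rho_1 q_2^2 + \rho_2 q_1^2 + \rho_1 \rho_2,
\end{equation*}
obtained by substituting $h_i = q_i^2 + \rho_i$ and expanding. The hypothesis $t_i \geq 0$ combined with \Cref{rmk good decompositions} gives $v(\rho_i) \geq v(h_i)$ and $v(q_i^2) \geq v(h_i)$, so each of the three summands has valuation $\geq v(h) + \min\{t_1, t_2\}$. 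This immediately yields the basic inequality $t \geq \min\{t_1, t_2\}$.

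For part (a), suppose without loss of generality that $t_1 < t_2$. First I would refine the valuation estimates: now $v(q_2^2) = v(h_2)$ exactly (since $t_2 > 0$ forces $v(\rho_2) > v(h_2)$, hence $v(q_2^2) = v(h_2 - \rho_2) = v(h_2)$), so $v(\rho_1 q_2^2) = v(h) + t_1$ exactly, while the other two summands have strictly larger valuation. It follows that $v(\rho) = v(h) + t_1$, giving $t = t_1$. Moreover, any normalized reduction of $\rho$ agrees with a normalized reduction of $\rho_1 q_2^2$, which is (a unit scalar times a square times) a normalized reduction of $\rho_1$. Since squaring preserves the property of being a square and being a non-square, \Cref{prop good decomposition}(a) gives the equivalence of goodness of the two decompositions.

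For part (b), the only outstanding case is $t_1 = t_2$, which is the main obstacle. Here I need to prevent any possible cancellation that would push $v(\rho)$ strictly above $v(h) + t_1$. The strategy is to use the sign hypothesis on the root valuations: since every root of $h_1$ has positive valuation, a normalized reduction of $h_1$ is of the shape $c_1 z^m$ for some $c_1 \in k^\times$ and $m = \deg(h_1)$; since every root of $h_2$ has negative valuation, a normalized reduction of $h_2$ is a nonzero constant $c_2 \in k^\times$. Note also that $t_2 > 0$ is forced, because a normalized reduction of $h_2$ is a square (namely $c_2$), so by \Cref{prop good decomposition}(a) combined with goodness we cannot have $t_2 = 0$; hence $m$ must be even, since then a normalized reduction of $h_1$ is also a square. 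Now, choosing $\gamma, \gamma_1, \gamma_2$ with appropriate valuations and setting $r_i := \gamma^{-1}\gamma_i^{-1} \rho_i$, I compute
\begin{equation*}
    \overline{\gamma_1^{-1}\gamma_2^{-1}\gamma^{-1}\rho} \,=\, c_2\,\overline{r_1} \,+\, c_1 z^{2m}\,\overline{r_2},
\end{equation*}
the $\rho_1 \rho_2$ term being killed since it has higher valuation. The crucial observation is that $\deg(\overline{r_1}) \leq 2m$, so the odd-degree monomials appearing in $c_2 \overline{r_1}$ sit in degrees $\leq 2m$, while those appearing in $c_1 z^{2m} \overline{r_2}$ sit in degrees $\geq 2m + 1$ (shifted by $2m$, and odd). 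Hence no cancellation between odd-degree terms of the two pieces can occur. Since $\overline{r_1}$ and $\overline{r_2}$ are both non-squares by \Cref{prop good decomposition}(a) applied to the good decompositions of $h_1, h_2$, each contributes at least one odd-degree term, so the full normalized reduction is a non-square. By \Cref{prop good decomposition}(a), this simultaneously shows $v(\rho) = v(h) + t_1$ (so $t = t_1 = t_2$) and that the resulting decomposition of $h$ is good, completing the proof.
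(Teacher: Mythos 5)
Your proposal is correct and follows essentially the same route as the paper's own proof: expand $\rho = \rho_1 q_2^2 + \rho_2 q_1^2 + \rho_1\rho_2$, estimate valuations term-by-term to get the base inequality, split into the cases $t_1 \neq t_2$ and $t_1 = t_2$, and in the latter case use the sign hypothesis on root valuations to pin down the normalized reductions of $h_1$ and $h_2$ as $c_1 z^m$ (with $m$ even) and $c_2$, so that the degree bound $\deg(\overline{r_1}) \leq 2m$ forbids cancellation between the odd-degree monomials of $c_2\overline{r_1}$ and those of $c_1 z^{2m}\overline{r_2}$. You are slightly more explicit than the paper about the induction grouping $h_1\cdots h_{N-1}$ for general $N$, and about the fact that multiplying by a nonzero square preserves the square/non-square dichotomy, which cleanly gives the full \emph{if and only if} in part (a); these are minor presentational refinements rather than a different argument.
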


\subsection{Forming models of \texorpdfstring{$Y$}{Y} using part-square decompositions} \label{sec models hyperelliptic forming}
In this subsection, we compute the model of the hyperelliptic curve $Y: y^2=f(x)$ corresponding to any given smooth model of the projective line $X$, in the sense of \S\ref{sec semistable galois covers}.  In doing so, we recover \cite[Proposition 1]{lehr2001reduction} in the hyperelliptic case (which may directly be compared to our \Cref{prop normalization model} below) as the models we compute follow similar computations to those done by Lehr to prove his proposition.

More precisely, choose elements $\alpha \in K^\alg$ and $\beta \in (K^\alg)^\times$.  Given any polynomial $h(x) \in K^\alg[x]$, define the translated and scaled coordinate $x_{\alpha,\beta} = \beta^{-1}(x - \alpha)$ (as defined in \S\ref{sec models hyperelliptic line}), and let $h_{\alpha,\beta}$ be the polynomial such that $h_{\alpha,\beta}(x_{\alpha,\beta}) = h(x)$.  Let $D := D_{\alpha,b}$ be a disc in $K^\alg$ with $\alpha\in K^\alg$ and $b=v(\beta)$ for some $\beta\in (K^\alg)^\times$. To this disc we can attach (see \S\ref{sec models hyperelliptic line}) a smooth model $\XX_D$ of the line $X$, defined over some extension of $R$. We will show that, after possibly replacing this extension with a further extension $R'$, which in particular will be large enough so that $f_{\alpha,\beta}$ admits a good part-square decomposition $f_{\alpha,\beta}=q_{\alpha,\beta}^2+\rho_{\alpha,\beta}$ over the fraction field $K'$ of $\Frac(R')$, the model of $Y$ corresponding to $\XX_D/R'$ has reduced special fiber, and its equation can explicitly be written using $q_{\alpha,\beta}$ and $\rho_{\alpha,\beta}$; we will denote this model by $\YY_{D}$.

The strategy will be the following one: after a suitable change of the coordinate $y$, we will rewrite the equation $y^2=f_{\alpha,\beta}(x_{\alpha,\beta})$ of the hyperelliptic curve $Y$ in the form
\begin{equation}
    \label{equation front chart YD}
    y^2 + q_0(x_{\alpha,\beta})y - \rho_0(x_{\alpha,\beta})=0,\quad\text{with }\deg(q_0)\le g+1,\ \deg(\rho_0)\le 2g+2,
\end{equation}
such that the following conditions are satisfied:
\begin{enumerate}[(A)]
    \item $\rho_0$ and $q_0$ have integral coefficients (i.e., we have $\rho_0(x_{\alpha,\beta}), q_0(x_{\alpha,\beta}) \in R'[x_{\alpha,\beta}]$);
    \item the $k$-curve given by the reduction of the equation in (\ref{equation front chart YD}) is reduced.
\end{enumerate}

Then, the model $\YY_D$ is constructed as follows.  The equation in (\ref{equation front chart YD}) above defines a scheme $W$ over $R'$ whose generic fiber is isomorphic to the affine chart $x_{\alpha,\beta}\neq \infty$ of the hyperelliptic curve $Y$. The coordinate $x_{\alpha,\beta}$ defines a map $W\to \XX_D$, whose image is the affine chart $x_{\alpha,\beta}\neq \infty$ of $\XX_D$. Over the affine chart $x_{\alpha,\beta}\neq 0$ of $\XX_D$, we can correspondingly form the $R'$-scheme $W^\vee$ defined by the equation \begin{equation}
\begin{split}
    \label{equation rear chart YD}
    \check{y}^2+q_0^\vee(\check{x}_{\alpha,\beta})\check{y}-\rho_0^\vee(\check{x}_{\alpha,\beta})&=0, \qquad\text{where }\  \check{x}_{\alpha,\beta}=x_{\alpha,\beta}^{-1},\  \check{y}=x_{\alpha,\beta}^{-(g+1)}y, \\
    q_0^\vee(\check{x}_{\alpha,\beta})&={x_{\alpha,\beta}}^{-(g+1)} q_0(x_{\alpha,\beta}), \ \text{and} \ \rho_0^\vee(\check{x}_{\alpha,\beta})={x_{\alpha,\beta}}^{-(2g+2)} \rho_0(x_{\alpha,\beta}).
\end{split}
\end{equation}

We can now \emph{define} $\YY_D$ to be the scheme obtained by gluing the affine charts $W$ and $W^\vee$ together in the obvious way: it is endowed with a degree-$2$ covering map $\YY_D\to \XX_D$, and its generic fiber is identified with the hyperelliptic curve $Y\to X$.

\begin{prop}
    The scheme $\YY_D$ constructed above, which is defined over an appropriate extension $R'$ of $R$, coincides with the normalization of $\XX_D/R'$ in the function field of the hyperelliptic curve $Y$, and it is a model of $Y$ whose special fiber is reduced.
    \begin{proof}
        We have to show that the scheme $\YY_D$ we have constructed is normal. The $R'$-schemes $W$ and $W^\vee$ are complete intersections, and hence they are Cohen-Macaulay; as a consequence, to check that $\YY_D$ is normal, it is enough to prove that it is regular at its codimension-1 points. Since the generic fiber of $\YY_D$ coincides with $Y$, it is certainly regular; hence, all that is left is to check that $\YY_D$ is regular at the generic point $\eta_{V_i}$ of each irreducible component $V_i$ of the special fiber $\SF{\YY_D}$. Since we are assuming that the $k$-curve $W_s$ is reduced, the $k$-curve $\SF{\YY_D}$ is also clearly reduced, which implies that $\YY_D$ is certainly regular at the points $\eta_{V_i}$.  Thus, the scheme $\YY_D$ is actually normal, and it is clear that $\YY_D$ is the model of $Y$ obtained by normalizing $\XX_D/R'$ in the function field of $Y$.
    \end{proof}
\end{prop}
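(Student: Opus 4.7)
The plan is to apply Serre's normality criterion ($R_1 + S_2$) to $\YY_D$ and then recover the identification with the normalization formally from its universal property.

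First I would reduce the question to a purely local problem on the two affine charts $W$ and $W^\vee$, noting that each chart is the zero locus of a single equation in an affine scheme of one dimension higher (namely the affine line over the corresponding affine chart of $\XX_D$, which is itself regular of dimension $2$). Hence $W$ and $W^\vee$ are hypersurfaces in a regular ambient scheme, and in particular they are Cohen--Macaulay; this immediately gives condition $S_2$, so the only real content is verifying condition $R_1$ --- that is, regularity at every codimension-$1$ point of $\YY_D$.

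Codimension-$1$ points of $\YY_D$ come in two kinds. Those lying in the generic fiber pose no problem, since the generic fiber is identified with the smooth $K'$-curve $Y$ and therefore is regular at every point. The remaining codimension-$1$ points are the generic points $\eta_{V_i}$ of the irreducible components $V_i$ of the special fiber $\SF{\YY_D}$. Here the key observation is that condition (B) above was \emph{designed} precisely to handle this case: reducedness of the special fiber means that at each $\eta_{V_i}$ the image of a uniformizer $\pi$ of $R'$ generates the maximal ideal of $\OO_{\YY_D,\eta_{V_i}}$, so this local ring is a one-dimensional local ring with a principal maximal ideal, hence a DVR, hence regular. Thus $R_1$ holds, and together with $S_2$ we conclude that $\YY_D$ is normal.

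Once normality is established, the identification $\YY_D = \text{normalization of }\XX_D\text{ in }K(Y)$ is essentially formal. The covering map $\YY_D \to \XX_D$ is finite (on each chart it is given by a monic integral equation of degree $2$ in the $y$-coordinate) with generic fiber $Y \to X$, so the universal property of normalization produces a canonical $\XX_D$-morphism from the normalization of $\XX_D$ in $K(Y)$ into $\YY_D$; since $\YY_D$ is itself finite and normal over $\XX_D$ with the same generic fiber, this morphism must be an isomorphism. The final assertion that $\SF{\YY_D}$ is reduced is then just a restatement of condition (B). The only step requiring any subtlety is the verification of regularity at the $\eta_{V_i}$; the rest is formal.
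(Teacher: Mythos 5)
Your proof is correct and follows essentially the same route as the paper: Serre's criterion via Cohen--Macaulayness of the complete-intersection charts for $S_2$, and regularity at the codimension-$1$ points (generic fiber plus generic points $\eta_{V_i}$ of the special fiber) for $R_1$. You spell out two steps the paper leaves terse --- that reducedness of $\SF{\YY_D}$ forces $\OO_{\YY_D,\eta_{V_i}}$ to be a DVR because the uniformizer of $R'$ generates the maximal ideal, and that the identification with the normalization follows from finiteness plus the universal property --- but these are just filling in the same argument.
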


All we have to do now is determine a change of the coordinate $y$ such that the conditions (A) and (B) above are satisfied. To do this, suppose that we are given a good part-square decomposition $f_{\alpha, \beta} = q_{\alpha,\beta}^2 + \rho_{\alpha,\beta}$ (which certainly exists over some extension of $K$, thanks to \Cref{prop totally odd existence} and \Cref{cor totally odd is good}).  Let $\gamma\in (K^\alg)^\times$ be an element whose valuation is $v(\gamma)=\truncate{t}+v(f_{\alpha,\beta})$, where $t=t_{q_{\alpha,\beta},\rho_{\alpha,\beta}}=v(\rho_{\alpha,\beta})-v(f_{\alpha,\beta})$. We remark that we necessarily have $t\ge 0$, since the part-square decomposition is assumed to be good (see \Cref{rmk good decompositions}). The change of variable we perform is $y \mapsto \gamma^{1/2} y + q_{\alpha,\beta}(x_{\alpha,\beta})$, and it leads to an equation of the form (\ref{equation front chart YD}) with 
\begin{equation}
    \label{equation q_0 rho_0 formulas YD}
        q_0 = 2\gamma^{-1/2}q_{\alpha,\beta} \qquad \mathrm{and} \qquad
        \rho_0 = \gamma^{-1}\rho_{\alpha,\beta}.
\end{equation}

The valuations of $q_0$ and $\rho_0$ can be computed as follows.
\begin{enumerate}
    \item For $q_0$, we have $2v(q_0)= 2v(2)-\truncate{t}+2v(q_{\alpha,\beta})-v(f_{\alpha,\beta})$. Let us remark that, since $t\ge 0$, we have $2v(q_{\alpha,\beta})\ge v(f_{\alpha,\beta})$, and moreover equality holds whenever $t>0$. We deduce that:
    \begin{enumerate}[(i)]
        \item $v(q_0)\ge 2v(2)-\truncate{t}$ for all $t$, so that $q_0$ is consequently always integral;
        \item $v(q_0)=2v(2)-\truncate{t}$ whenever $t>0$;
        \item $v(q_0)>0$ if $0\le t< 2v(2)$; and 
        \item $v(q_0)=0$ if $t\ge 2v(2)$.
    \end{enumerate}
    \item For $\rho_0$, we have $v(\rho_0) = t - \truncate{t}$; in particular,
    \begin{enumerate}[(i)]
        \item $\rho_0$ is always integral;
        \item $v(\rho_0)=0$ if $0\le t\le 2v(2)$; and 
        \item $v(\rho_0)>0$ if $t>2v(2)$.
    \end{enumerate}
\end{enumerate}
These computations guarantee that the condition (A) above is satisfied. We now verify that also the condition (B) is satisfied.
\begin{lemma}
    In the context above, the condition (B) above is also satisfied, i.e.\ the reduction of equation (\ref{equation front chart YD}) defines a reduced $k$-curve. Moreover, this curve is a separable (resp.\ inseparable) quadratic cover of the $k$-line of coordinate $x_{\alpha,\beta}$ if and only if $t\ge 2v(2)$ (resp.\ $0\le t<2v(2)$).
    \begin{proof}
        Suppose by way of contradiction that the $k$-curve defined by the reduction of  (\ref{equation front chart YD}) is non-reduced. This is clearly equivalent to saying that the polynomial $g(x_{\alpha,\beta},y)\in k[x_{\alpha,\beta},y]$ given by the reduction of (\ref{equation front chart YD}) (i.e.\  $g(x_{\alpha,\beta},y):=y^2+\overline{q_0(x_{\alpha,\beta})}y - \overline{\rho_0(x_{\alpha,\beta})}$) is a square. If we treat $g(x_{\alpha,\beta},y)$ as a monic quadratic polynomial in the variable $y$, we can say that it is a square if and only if its constant term $\overline{\rho_0(x_{\alpha,\beta})}\in k[x_{\alpha,\beta}]$ is a square, and its discriminant $\Delta=\overline{{q}_0}^2+4\overline{\rho_0}=\overline{4\gamma^{-1} f_{\alpha,\beta}}\in k[x_{\alpha,\beta}]$ is zero. However, when $t\ge 2v(2)$, we have $v(\gamma)=v(4f_{\alpha,\beta})$ and therefore $\Delta\neq 0$; when $0\le t<2v(2)$, the reduced polynomial $\overline{\rho_0}$ is a normalized reduction of $\rho_{\alpha,\beta}$, which is not a square by \Cref{prop good decomposition}. We conclude that the $k$-curve $g(x_{\alpha,\beta},y)=0$ is always reduced.
        
        Now the coordinate $x_{\alpha,\beta}$ defines a quadratic cover from the $k$-curve $g(x_{\alpha,\beta},y)=0$ to the affine $k$-line, and it is immediate to realize that this cover is inseparable only when the linear term $\overline{q_0(x_{\alpha,\beta})}y$ vanishes, which happens if and only if $0<t\le 2v(2)$.
    \end{proof}
\end{lemma}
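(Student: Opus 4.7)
My plan is to analyze when the reduction $\bar{g}(x_{\alpha,\beta}, y) := y^2 + \overline{q_0(x_{\alpha,\beta})}\, y - \overline{\rho_0(x_{\alpha,\beta})} \in k[x_{\alpha,\beta}, y]$ of the defining equation (\ref{equation front chart YD}) defines a reduced $k$-curve, which is precisely the condition that $\bar{g}$ is not a square in $k[x_{\alpha,\beta}, y]$. Since $k$ has characteristic $2$, the identity $(y+C)^2 = y^2 + C^2$ shows that a monic quadratic polynomial $y^2 + Ay + B$ is a square in $k[x_{\alpha,\beta}, y]$ if and only if $A = 0$ \emph{and} $B$ is itself a square in $k[x_{\alpha,\beta}]$. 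Separability of the induced quadratic cover of the $x_{\alpha,\beta}$-line, meanwhile, amounts to nonvanishing of the linear $y$-coefficient, i.e.\ to $\overline{q_0} \neq 0$.

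The case analysis then runs on $t$, using the valuation bounds for $q_0$ and $\rho_0$ already worked out immediately before the lemma (coming from $q_0 = 2\gamma^{-1/2}q_{\alpha,\beta}$, $\rho_0 = \gamma^{-1}\rho_{\alpha,\beta}$, and $v(\gamma) = \truncate{t} + v(f_{\alpha,\beta})$). When $t \geq 2v(2)$, the formulas give $v(q_0) = 0$, so $\overline{q_0} \neq 0$; the criterion above immediately yields that $\bar{g}$ is not a square and simultaneously that the cover is separable. When $0 \leq t < 2v(2)$, the formulas instead give $v(q_0) > 0$, so $\overline{q_0} = 0$; the reduced equation becomes $y^2 = \overline{\rho_0}$, the cover is forced to be inseparable, and reducedness comes down to showing that $\overline{\rho_0}$ is not a square in $k[x_{\alpha,\beta}]$.

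This last point is where the ``goodness'' of the part-square decomposition gets used, and it is the only substantive ingredient in the argument. In this regime we have $v(\gamma) = v(\rho_{\alpha,\beta})$, so $\overline{\rho_0}$ is (up to nonzero scalar) a normalized reduction of $\rho_{\alpha,\beta}$ in the sense of Definition \ref{dfn normalized reduction}; Proposition \ref{prop good decomposition}(a) asserts exactly that, when $t = t_{q_{\alpha,\beta}, \rho_{\alpha,\beta}} < 2v(2)$ and the decomposition is good, such a normalized reduction cannot be a square in $k[x_{\alpha,\beta}]$. Therefore $\bar{g}$ is not a square, the $k$-curve is reduced, and the cover is inseparable as claimed. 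Everything else is routine valuation bookkeeping, and I do not anticipate any serious obstacle.
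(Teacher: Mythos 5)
Your proposal is correct and follows the same route as the paper's proof: both reduce to the characterization that $y^2+Ay+B$ is a square over $k$ (char $2$) iff $A=0$ and $B$ is a square, then split on $t\ge 2v(2)$ versus $0\le t<2v(2)$ using the valuation computations for $q_0,\rho_0$ and, in the second case, invoke Proposition~\ref{prop good decomposition}(a) to conclude that the normalized reduction $\overline{\rho_0}$ of $\rho_{\alpha,\beta}$ is not a square. The only cosmetic difference is that the paper packages the condition $\overline{q_0}\neq 0$ via the discriminant $\overline{q_0}^2+4\overline{\rho_0}=\overline{4\gamma^{-1}f_{\alpha,\beta}}$ (useful since it identifies this reduction as a normalized reduction of $f_{\alpha,\beta}$), whereas you read off $\overline{q_0}\neq 0$ directly from $v(q_0)=0$; these are equivalent in characteristic $2$.
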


The following proposition summarizes the results we have obtained.
\begin{prop}
	\label{prop normalization model}
	
	Let $\XX_D$ be the smooth model of the line corresponding to the disc $D:=D_{\alpha,v(\beta)}$, with $\alpha \in K^\alg$ and $\beta \in (K^\alg)^\times$.  Then, after replacing $K$ with an appropriate finite extension, the normalization $\YY_D$ of $\XX_D$ in $K(Y)$ has reduced special fiber. Given a good part-square decomposition $f_{\alpha, \beta} = q_{\alpha,\beta}^2 + \rho_{\alpha,\beta}$, and letting $t=t_{q_{\alpha,\beta}, \rho_{\alpha,\beta}}$, the model $\YY_D$ falls under (exactly) one of the following two cases:
	\begin{enumerate}[(i)]
		\item $t\geq 2v(2)$; in this case, $\SF{\YY_D}$ is a separable degree-2 cover of $\SF{\XX_D}$; and 
		\item $0\le t < 2v(2)$; in this case, $\SF{\YY_D}$ is an inseparable degree-2 cover of $\SF{\XX_D}$.
	\end{enumerate}
	The equations describing the affine charts $x_{\alpha,\beta}\neq \infty$ and $x_{\alpha,\beta}\neq 0$ of the model $\YY_D$ have the form (\ref{equation front chart YD}) and (\ref{equation rear chart YD}) respectively, and they can be explicitly computed from $q_{\alpha,\beta}$ and $\rho_{\alpha,\beta}$ using the formulas in (\ref{equation q_0 rho_0 formulas YD}).
\end{prop}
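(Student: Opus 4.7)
The plan is to recognize that this statement is essentially a synthesis of the development carried out in the preceding pages: the construction of $\YY_D$, the change of variable producing equations in the form (\ref{equation front chart YD}) and (\ref{equation rear chart YD}), the valuation computations showing integrality, and the two displayed results just above the proposition already contain almost everything needed. The proof therefore amounts to assembling these ingredients in the right order and checking that nothing is left unsaid.

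First I would take care of the field extension. A good part-square decomposition $f_{\alpha,\beta} = q_{\alpha,\beta}^2 + \rho_{\alpha,\beta}$ need not exist over $K$ itself, but \Cref{prop totally odd existence} produces a totally odd (and hence, by \Cref{cor totally odd is good}, good) part-square decomposition over $\bar{K}$. Replacing $K$ by a finite extension $K'$ that contains $\alpha$, $\beta$, all the coefficients of $q_{\alpha,\beta}$ and $\rho_{\alpha,\beta}$, and a square root $\gamma^{1/2}$ of the scalar $\gamma\in \bar{K}^\times$ of valuation $\truncate{t}+v(f_{\alpha,\beta})$, we may work entirely over $R' = \OO_{K'}$.

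Next I would carry out the change of variable $y \mapsto \gamma^{1/2} y + q_{\alpha,\beta}(x_{\alpha,\beta})$ on the equation $y^2 = f_{\alpha,\beta}(x_{\alpha,\beta})$, which produces an equation of the form (\ref{equation front chart YD}) with $q_0, \rho_0$ given by the formulas in (\ref{equation q_0 rho_0 formulas YD}). The valuation estimates performed just above the proposition — relying crucially on $t \ge 0$, which in turn follows from \Cref{rmk good decompositions} — show that $q_0(x_{\alpha,\beta}), \rho_0(x_{\alpha,\beta}) \in R'[x_{\alpha,\beta}]$, establishing condition (A). The analogous substitution $\check{y} = x_{\alpha,\beta}^{-(g+1)} y$ over the complementary chart yields the equation in (\ref{equation rear chart YD}); here one uses that $\deg(q_{\alpha,\beta}) \le g+1$ from \Cref{dfn qrho} to see that $q_0^\vee$ really is a polynomial.

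Finally I would invoke the lemma immediately preceding the proposition to obtain condition (B) — namely, that the reduction of (\ref{equation front chart YD}) defines a reduced $k$-curve — together with the characterization of when the induced degree-$2$ cover of the residual $k$-line is separable versus inseparable, which gives exactly the dichotomy between cases (i) and (ii). The preceding (unnamed) proposition then lets me conclude that $\YY_D$, obtained by gluing the two affine charts, is normal, is the normalization of $\XX_D/R'$ in $K(Y)$, and has reduced special fiber. There is no serious obstacle, since every needed ingredient has already been proved; the only minor bookkeeping point to verify is the compatibility of the change of variable on the two charts, which is a routine computation using the degree bounds in \Cref{dfn qrho} and \Cref{rmk degree of rho}.
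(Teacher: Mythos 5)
Your proposal is correct and mirrors the paper's own treatment: the paper explicitly presents this proposition as a summary of the preceding development in \S\ref{sec models hyperelliptic forming}, and your assembly — passing to a finite extension containing the needed coefficients and $\gamma^{1/2}$, performing the shift $y \mapsto \gamma^{1/2}y + q_{\alpha,\beta}(x_{\alpha,\beta})$, citing the integrality computations for (A), the unnamed lemma for (B) and the separable/inseparable dichotomy, and the unnamed proposition for normality — is exactly the intended chain of references. Nothing is missing.
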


\begin{rmk} \label{rmk inseparable is inessential}
One can show through an elementary but tedious combinatorial argument that, roughly speaking, the role of the inseparable components in $\SF{\Yrst}$ is inessential: they are just lines that only get added whenever it is necessary to create room between three or more separable components that would otherwise intersect at the same point and violate semistability.  Moreover, inseparable components cannot even occur when the genus of $Y$ is $1$ or $2$.  It is in light of this that we focus only on separable components in the next subsection and eventually define \emph{valid discs} (\Cref{dfn valid disc} below) to correspond only to separable components.
\end{rmk}

\subsection{The special fiber \texorpdfstring{$\SF{\YY_D}$}{(YD)s} in the separable case}
\label{sec models hyperelliptic separable}
We will now study the special fiber of the model $\YY_D$ associated to a given disc $D:=D_{\alpha,v(\beta)}$ (for some $a\in K^\alg$ and $\beta \in (K^\alg)^\times$) which was computed in the previous subsection.  This subsection will consider the case in which $\SF{\YY_D}\to \SF{\XX_D}$ is separable: this means that it is possible to find a part-square decomposition $f_{\alpha,\beta}=q_{\alpha,\beta}^2+\rho_{\alpha,\beta}$ satisfying $t:=t_{q_{\alpha,\beta}, \rho_{\alpha,\beta}} \geq 2v(2)$, and the equation of $\SF{\YY_D}$ has the form $y^2 + \overline{q_0(x_{\alpha,\beta})}y = \overline{\rho_0(x_{\alpha,\beta})}$ with $q_0=2\gamma^{-1/2}q_{\alpha,\beta}$ and $\rho_0=\gamma^{-1}\rho_{\alpha,\beta}$, where $\gamma \in (K^\alg)^{\times}$ is an element of valuation $v(f_{\alpha,\beta})+2v(2)$.

We remark that the separable quadratic cover $\SF{\YY_D}\to \SF{\XX_D}$ is branched precisely above the points $P_1, \ldots, P_N$ of $\SF{\XX_D}$ at which the branch locus $\Rinfty$ reduces and is étale elsewhere: this can be seen directly from the equation of $\SF{\YY_D}$, or can be deduced from the fact the branch locus of $\YY_D\to \XX_D$ has pure dimension 1 by the Zariski–Nagata purity theorem. In order to state and prove the results in this subsection, we partition the branch locus $R=\lbrace P_1, \ldots, P_N\rbrace\subseteq \SF{\XX_D}(k)$ in three subsets as $R=R_0\sqcup R_1\sqcup R_2$, in the following way.
\begin{equation*}
    \begin{split}
    R_0&=\lbrace P\in \SF{\XX_D}: \text{$\SF{\YY_D}$ exhibits a unique smooth point $Q$ above $P$}\rbrace;\\
    R_1&=\lbrace P\in \SF{\XX_D}: \text{$\SF{\YY_D}$ has a (unique) singular point $Q$ above $P$ and has one branch at $Q$}\rbrace;\\
    R_2&=\lbrace P\in \SF{\XX_D}: \text{$\SF{\YY_D}$ has a (unique) singular point $Q$ above $P$ and has two branches at $Q$}\rbrace.
    \end{split}
\end{equation*}
We denote the cardinality of each subset $R_i \subseteq R$ by $N_i$ for $i = 0, 1, 2$.
\begin{rmk} \label{rmk R_0 R_1 R_2}
    The following statements are clear from the definitions above.
    \begin{enumerate}[(a)]
        \item The set $R_0\cup R_1$ is precisely the branch locus of the quadratic cover $\NSF{\YY_D}\to \SF{\XX_D}$, where $\NSF{\YY_D}$ is the normalization of the $k$-curve $\SF{\YY_D}$.
        \item The curve $\SF{\YY_D}$ has exactly $N_1+N_2$ singular points, which lie over the $N_1+N_2$ points of $R_1\cup R_2$.
        \item The unique point $Q\in \SF{\YY_D}$ lying over some given $P\in R$ is fixed by the action of the hyperelliptic involution. If $P\in R_2$, the two branches of $\SF{\YY_D}$ passing through $Q$ get flipped by the hyperelliptic involution.
        \item The special fiber $\SF{\YY_D}$ either consists of two components flipped by the hyperelliptic involution, or it is irreducible. In the first case (which always occurs, for example, if $\overline{\rho_0(x_{\alpha,\beta})}$ is the zero polynomial, i.e.\ if $t>2v(2)$), the two components are necessarily two lines that trivially cover $\SF{\XX_D}$, while $\NSF{\YY_D}$ is their disjoint union, and we have $R_0\cup R_1 = \varnothing$.  If $\SF{\YY_D}$ is irreducible, however, the quadratic cover $\NSF{\YY_D}\to \SF{\XX_D}$ is necessarily ramified, because $\mathbb{P}^1_k$ does not have non-trivial finite étale connected covers: hence, and we have $R_0\cup R_1 \neq \varnothing$.
    \end{enumerate}
\end{rmk}

We want to better understand the ramification behaviour of $\NSF{\YY_D}\to \SF{\XX_D}$ above the points of $R_0\cup R_1$; to this aim, we can measure, above each point, the length of the module of relative K\"{a}hler differentials of the cover.
\begin{dfn}
    \label{dfn ell ramification index}
    Given $P\in \SF{\XX_D}(k)$, we set \begin{equation*}\ell(\XX_D,P) = \length_{\OO_{\SF{\XX_D},P}}\left(\Omega_{\NSF{\YY_D}/\SF{\XX_D}}\otimes \OO_{\SF{\XX_D},P}\right).\end{equation*}
\end{dfn}

\begin{rmk}
    \label{rmk ell}
    For any $P\in \SF{\XX_D}(k)$, the integer $\ell(\XX_D,P)$ satisfies the following properties.
    \begin{enumerate}[(a)]
    \item If $P\not\in R_0\cup R_1$, then $\NSF{\YY_D}\to \SF{\XX_D}$ is unramified over $P$, and we thus have $\ell(\XX_D,P)=0$.
    \item If $P\in R_0\cup R_1$, and we denote by $Q$ its unique preimage $\NSF{\YY_D}$, the ramification index of the cover $\NSF{\YY_D}\to \SF{\XX_D}$ at $Q$ is $e_Q=2$, and \cite[Proposition 7.4.13]{liu2002algebraic} ensures that $\ell(\XX_D,P)\ge e_Q-1$, with equality if and only if the cover is tame. As we are in a wild setting, this means that we have $\ell(\XX_D,P)\ge 2$.
    \end{enumerate}
\end{rmk}

The knowledge of $\ell(\XX_D,P)$ at the points $P \in \SF{\XX_D}$ gives us information about the abelian rank of $\SF{\YY_D}$.

\begin{prop}
    \label{prop riemann hurwitz}
    The genus of $\NSF{\YY_D}$ is given by
    \begin{equation}\label{equation rh}
        g\left(\NSF{\YY_D}\right)=-1+\frac{1}{2}\sum_{P\in \SF{\XX_D}(k)} \ell(\XX_D,P),
    \end{equation}
    with the convention that the genus of the disjoint union of two lines is $-1$.  In our setting, this formula implies the inequality
    \begin{equation}
        \label{equation rh p=2}
       g\left(\NSF{\YY_D}\right)\ge -1+(N_0+N_1).
    \end{equation}
    \begin{proof}
        The equation in (\ref{equation rh})  is just the Riemann-Hurwitz formula (see, for example, \cite[Theorem 7.4.16]{liu2002algebraic}), while (\ref{equation rh p=2}) follows from (\ref{equation rh}) via \Cref{rmk ell}.
    \end{proof}
\end{prop}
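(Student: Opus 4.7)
The plan is to apply the classical Riemann-Hurwitz formula to the separable degree-$2$ cover $\pi: \NSF{\YY_D} \to \SF{\XX_D}$. By \Cref{rmk R_0 R_1 R_2}(d), there are two cases to handle: either $\NSF{\YY_D}$ is an integral smooth projective $k$-curve (being the normalization of an irreducible curve), or $\NSF{\YY_D}$ is the disjoint union of two copies of $\mathbb{P}^1_k$ mapping trivially to $\SF{\XX_D}$.

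In the first case, I would invoke the Riemann-Hurwitz formula from \cite[Theorem 7.4.16]{liu2002algebraic} for the finite separable morphism $\pi$ between smooth projective integral $k$-curves. Since $\SF{\XX_D} \cong \mathbb{P}^1_k$ has genus $0$ and $\pi$ has degree $2$, this yields
\begin{equation*}
    2g\left(\NSF{\YY_D}\right) - 2 = 2(2\cdot 0 - 2) + \deg(R_\pi),
\end{equation*}
where $R_\pi$ is the ramification divisor of $\pi$. The remaining content is the identification $\deg(R_\pi) = \sum_{P \in \SF{\XX_D}(k)} \ell(\XX_D, P)$, which is essentially tautological from \Cref{dfn ell ramification index}: at each point $P \in R_0 \cup R_1$ there is a unique preimage $Q \in \NSF{\YY_D}$, and the local length at $Q$ of $\Omega_{\NSF{\YY_D}/\SF{\XX_D}}$ (which is what enters the ramification divisor) pushes forward to precisely $\ell(\XX_D, P)$, while at all other points of $\SF{\XX_D}$ both quantities vanish. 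Dividing by $2$ and rearranging yields (\ref{equation rh}).

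In the disconnected case, $\NSF{\YY_D}$ has (arithmetic) genus $-1$ by the stated convention, and the cover $\pi$ is everywhere étale since its two components each map isomorphically to $\SF{\XX_D}$. Hence $\Omega_{\NSF{\YY_D}/\SF{\XX_D}}$ vanishes identically, every $\ell(\XX_D, P)$ is $0$, and both sides of (\ref{equation rh}) reduce to $-1$, confirming the formula in this degenerate case.

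For the inequality (\ref{equation rh p=2}), I would simply combine (\ref{equation rh}) with \Cref{rmk ell}: the points $P \notin R_0 \cup R_1$ contribute $0$ to the sum by \Cref{rmk ell}(a), while each of the $N_0 + N_1$ points of $R_0 \cup R_1$ contributes at least $2$ by \Cref{rmk ell}(b). Here the wildness of residue characteristic $2$ is essential, as it rules out the tame estimate $\ell = e_Q - 1 = 1$ and forces the strict inequality $\ell \geq 2$. Summing gives $\sum_P \ell(\XX_D, P) \geq 2(N_0 + N_1)$, and substituting into (\ref{equation rh}) yields the bound. No step presents a genuine obstacle; the only conceptual care is matching the definition of $\ell(\XX_D, P)$ with the local contribution to the Riemann-Hurwitz ramification divisor.
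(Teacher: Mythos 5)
Your proof is correct and follows the same route as the paper's: invoke Riemann--Hurwitz for the degree-$2$ cover (with the degenerate split case handled by the genus $-1$ convention), identify the ramification divisor degree with $\sum_P \ell(\XX_D,P)$, and then derive the inequality from \Cref{rmk ell}. The paper states this in one line and leaves the details implicit; you have simply spelled them out.
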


We now see how to compute $\ell(\XX_D,P)$ for a given point $P\in \SF{\XX_D}$ from the good part-square decomposition $f_{\alpha,\beta}=q_{\alpha,\beta}^2+\rho_{\alpha,\beta}$ given.

\begin{lemma}
    \label{lemma computation ell ramification}
    Choose $P\in \SF{\XX_D}(k)$. Let us denote by $n_q(P):=\ord_P(\overline{q_0})$ and $n_\rho(P):=\ord_P(\overline{\rho_0})$ the respective orders of vanishing at the point $P$ of the reductions of the polynomials $q_0$ and $\rho_0$ defined in (\ref{equation q_0 rho_0 formulas YD}), with the convention that the zero polynomial has vanishing order $\infty$, and that, if $P=\infty$, the vanishing orders of $\overline{\rho_0}$ and $\overline{q_0}$ at $\infty$ are respectively those of $\overline{\rho_0^\vee}$ and $\overline{q_0^\vee}$ at $0$, i.e.\  $n_q=g+1 - \deg(\overline{q_0})$ and  $n_\rho=2g+2 - \deg(\overline{\rho_0})$.
    \begin{enumerate}[(a)]
        \item If $2n_q(P)\le n_\rho(P)$, then we have $\ell(\XX_D,P)=0$.
        \item If $2n_q(P)>n_\rho(P)$ and $n_\rho(P)$ is odd, then we have $\ell(\XX_D,P)=2n_q(P)-n_\rho(P)+1$.
    \end{enumerate}
    \begin{proof}
        We lose no generality in assuming $P$ has coordinate $\overline{x_{\alpha,\beta}}=0$.  For brevity we write $z$ for the variable $x_{\alpha,\beta}$. We may write the equation of $\SF{\YY_D}$ as $y^2 + \overline{q_0}(z)y = \overline{\rho_0}(z)$, with $\overline{q_0}(z) = z^{n_q} q_1(z)$ and $\overline{\rho_0}(z) = z^{n_\rho} r_1(z)$, where $q_1(z), \rho_1(z) \in k[z]$ do not vanish at 0.  We proceed by desingularizing $\SF{\YY_D}$ above $P$ by means of a sequence of blowups.
        
        Assume that $2n_q \le n_\rho$. Then, after $n_q$ blowups at $(0,0)$, we obtain $y^2 + q_1(z)y = z^{n_\rho-2n_q} \rho_1(z)$. Since $q_1(0)\neq 0$, there are exactly $2$ solutions for $y$ at $z=0$, which means that the blown-up curve is étale above $P$, implying that $\ell(\XX_D,P) = 0$.  We have thus proved part (a).
        
        Assume that $2n_q > n_\rho$ and that $n_\rho$ is odd. Then, after $\frac{1}{2}(n_\rho-1)$ blowups at $(0,0)$, we obtain the equation 
        \begin{equation} \label{eq 2n_q > n_rho}
            y^2 + z^{n_q-(n_\rho-1)/2}q_1(z)y = z \rho_1(z).
        \end{equation}
        The curve given by (\ref{eq 2n_q > n_rho}) has a unique point $(0,0)$ above $z=0$ and it is non-singular at that point; this is enough to guarantee that $\ell(\XX_D,P) > 0$.  Let $B := k[z,y]_{(z)}/(\text{equation in }(\ref{eq 2n_q > n_rho}))$ be the local ring of functions on the blown-up curve at $(0,0)$, which is a free $k[z]_{(z)}$-algebra of rank 2. Then $\ell(\XX_D,P)$ equals the length of the $k[z]_{(z)}$-module $\Omega_{B/k[z]_{(z)}}$, or, equivalently, the dimension over $k$ of $\Omega_{B/k[z]_{(z)}}$. We have an isomorphism of $k[z]_{(z)}$-modules 
        \begin{equation}
            \Omega_{B/k[z]_{(z)}} = B dy / (z^{n_q-(n_\rho-1)/2}q_1(z) dy) \stackrel{\sim}{\to} (k[z]_{(z)})[y] / (y^2 - zr_1(z), z^{n_q-(n_\rho-1)/2}q_1(z)),
        \end{equation}
        where the isomorphism is given by sending $dy$ to 1.  The latter $k[z]_{(z)}$-module, however, is a free algebra of rank 2 over the ring
        $$k[z]_{(z)}/(z^{n_q-(n_\rho-1)/2}q_1(z)) \cong k[z]/(z^{n_q-(n_\rho-1)/2}),$$
        which clearly has dimension $n_q-\frac{1}{2}(n_\rho-1)$ over $k$.  From this, part (b) follows.
    \end{proof}
\end{lemma}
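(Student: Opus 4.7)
The plan is to make the problem entirely local at $P$: I would assume $P$ has coordinate $z := x_{\alpha,\beta} = 0$ on $\SF{\XX_D}$, so that the local defining equation of $\SF{\YY_D}$ takes the form $y^2 + z^{n_q} q_1(z) y = z^{n_\rho} r_1(z)$ with $q_1, r_1$ units at $z = 0$.  To reach the normalization $\NSF{\YY_D}$ near $P$, I would resolve the singularity via a sequence of blowups at the singular point, each step implemented by the substitution $y \mapsto zy$ followed by cancellation of a factor of $z^2$.  After $k$ such steps the equation takes the form $y^2 + z^{n_q - k} q_1(z) y = z^{n_\rho - 2k} r_1(z)$, and the game is to choose $k$ minimally so that the resulting equation is smooth at every point above $P$, and then read off $\ell(\XX_D, P)$ from that local model.

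For part (a) I would take $k = n_q$, giving $y^2 + q_1(z) y = z^{n_\rho - 2 n_q} r_1(z)$.  Reducing modulo $z$ yields $y^2 + q_1(0) y = \epsilon$ for some constant $\epsilon \in k$; since $q_1(0) \neq 0$ and we are in characteristic $2$, the $y$-derivative is the unit $q_1(0)$, so this polynomial is separable and has two distinct roots.  There are therefore two smooth points of the blown-up curve above $P$ at which the projection to $\SF{\XX_D}$ is étale, forcing $\Omega_{\NSF{\YY_D}/\SF{\XX_D}} = 0$ locally and hence $\ell(\XX_D, P) = 0$.

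For part (b) I would take $k = (n_\rho - 1)/2$ (permissible since $n_\rho$ is odd), giving $F := y^2 + z^e q_1(z) y - z r_1(z)$ with $e := n_q - (n_\rho - 1)/2 \ge 1$.  In characteristic $2$ one has $\partial F / \partial y = z^e q_1$, which vanishes at $(0, 0)$, but $\partial F / \partial z \equiv -r_1(0) \neq 0$ witnesses smoothness at that point, so there is exactly one point above $P$ (a quick check in the other blowup chart rules out additional points).  Writing $B$ for the resulting local ring, the relation $dF = 0$ in $\Omega_{B/k[z]_{(z)}}$, after killing $dz$, reduces to $z^e q_1 \, dy = 0$, yielding $\Omega_{B/k[z]_{(z)}} \cong B/(z^e q_1)$.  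Since $B$ is free of rank $2$ over $k[z]_{(z)}$ and $q_1$ is a unit, the length is $2e = 2 n_q - n_\rho + 1$.

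The most delicate step will be justifying that the prescribed number of blowups actually reaches the normalization.  In case (a) this is transparent from the appearance of two distinct tangent directions.  In case (b), however, the Jacobian criterion cannot use $\partial_y$ because it vanishes in characteristic $2$, and one must rely entirely on $\partial_z \neq 0$; this in turn relies on having halted the blowup process exactly when a single factor of $z$ remains on the right-hand side, which is precisely why the parity condition on $n_\rho$ is essential to the statement.  (In the even case the same strategy produces $y^2 + z^{e+1} q_1 y = z^2 r_1$, which is still singular and requires a separate analysis — hence it is appropriately left out of the claim.)
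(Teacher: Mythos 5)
Your argument is correct and follows essentially the same route as the paper: localizing at $P$, writing $\overline{q_0} = z^{n_q} q_1$ and $\overline{\rho_0} = z^{n_\rho} r_1$ with $q_1, r_1$ units, performing $n_q$ (resp.\ $(n_\rho - 1)/2$) elementary blowups $y \mapsto zy$, and then reading $\ell(\XX_D, P)$ off the local Kähler differential module, which in case (b) is $B/(z^e q_1) \cong B/(z^e)$ of $k$-dimension $2e = 2n_q - n_\rho + 1$. Your explicit Jacobian check via $\partial_z F = -r_1(0) \neq 0$ and the remark on why the parity hypothesis is essential are welcome additional detail, but the underlying computation is the same as the one in the paper.
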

\begin{rmk} \label{rmk ramification index}
        \Cref{lemma computation ell ramification} allows us to calculate $\ell(\XX_D,P)$ from a given good part-square decomposition of $f_{\alpha,\beta}$ only in certain cases: in fact, when $2n_q(P)> n_\rho(P)$ and $n_\rho(P)$ is even, the lemma is inconclusive. At the same time, we remark that if we choose a \emph{totally odd} part-square decomposition for $f_{\alpha,\beta}$ (which can always be done by \Cref{prop totally odd existence}), the polynomial $\overline{\rho_0}$ will certainly have a zero of odd multiplicity at the points $0$ and $\infty$ of $\SF{\XX_D}$; hence, we will certainly be able to compute $\ell(\XX_D,P)$ where $P$ is the point $\overline{x_{\alpha,\beta}}=0$ or the point $\overline{x_{\alpha,\beta}}=\infty$ via the lemma. In other words, given a point $P\in \SF{\XX_D}$, by appropriately choosing the center $\alpha$ of the disc $D$ and constructing a totally odd decomposition for $f_{\alpha,\beta}$, \Cref{lemma computation ell ramification} allows us to compute $\ell(\XX_D,P)$ at the point, and the result it produces is a non-negative even integer.
\end{rmk}

We now give a criterion to determine whether $\XX_D\le \Xrst$ (which is equivalent to saying that $\YY_D\le \Yrst$).
\begin{thm}
    \label{thm part of rst separable}
    Assume that $D$ is a disc such that $\SF{\YY_D}\to \SF{\XX_D}$ is separable, and let $N$ denote the number of points of $\SF{\XX_D}$ to which the roots $\Rinfty$ reduce. We have $\XX_D\le \Xrst$ if and only if one of the following conditions holds:
    \begin{enumerate}[(i)]
            \item $N\ge 3$;
            \item $N \geq 2$ and $\SF{\YY_D}$ is irreducible; or 
            \item $N=1$ and $\SF{\YY_D}$ is irreducible of positive abelian rank.
    \end{enumerate}
    Moreover, whenever $\XX_D\le \Xrst$, the strict transform of the $k$-curve $\SF{\YY_D}$ in $\SF{\Yrst}$ is smooth, and it consequently coincides with its normalization $\NSF{\YY_D}$.
    \begin{proof}
        The result essentially follows from a combinatorial argument that directly makes use of the description we have given of $\SF{\YY_D}$ in this subsection, by applying the criterion we have presented in \Cref{prop part of rst}.  Let us write $N=N_0+N_1+N_2$ as we did at the beginning of this subsection.
        
        Suppose that $\SF{\YY_D}$ is not irreducible.  As we have seen in \Cref{rmk R_0 R_1 R_2}(d), this is equivalent to saying that $N_0=N_1=0$, and the curve $\SF{\YY_D}$ consists, in this case, of two lines $L_1$ and $L_2$ meeting each other above the $N=N_2$ points of $\SF{\XX_D}$; the number of singular points of $\SF{\YY_D}$ is $N$, and through each singular point one branch of $L_1$ and one branch of $L_2$ pass, flipped by the hyperlliptic involution. We have $m(L_i)=1$, $a(L_i)=0$, $w(L_i)=N$, and that the stabilizer of $L_i$ in the Galois group $G$ is trivial for $i = 1, 2$; hence, \Cref{prop part of rst} ensures that $\XX_D\le \Xrst$ if and only if $N\ge 3$.
        
        Suppose now that $\SF{\YY_D}$ is irreducible, which is to say that $N_0+N_1 \geq 1$, and let $V=\SF{\YY_D}$ denote the unique irreducible component of $\SF{\YY_D}$. We have the following: 
        \begin{itemize}
            \item $w(V)=N_1+2N_2$;
            \item if $N_2 \geq 1$, then there are $2$ branches of $V$ passing through a singular point which are flipped by $G$; and 
            \item $a(V)\ge -1+N_0+N_1$ by \Cref{prop riemann hurwitz}.
        \end{itemize}
        Suppose that $N = 1$.  If $a(V) \geq 1$ then by Proposition \ref{prop part of rst} we have $\XX_D \leq \Xrst$, while if $a(V) = 0$, then we have $w(V) \leq 1$ and so Proposition \ref{prop part of rst} says that we have $\XX_D \not\leq \Xrst$.
        
        Now suppose that $N \geq 2$.  If $N_2 \geq 1$, then we have $w(V) \geq 2$ and that $V$ has $2$ branches passing through a singular point which are flipped by $G$, and so we get $\XX_D \leq \Xrst$ by Proposition \ref{prop part of rst}.  If $N_2 = 0$, then we have $N_0 + N_1 \geq 2$ so that $a(V) \geq 1$, and so we again get $\XX_D \leq \Xrst$ by \Cref{prop part of rst}.
        
        The statement about the strict transform of $\SF{\YY_D}$ in $\SF{\Yrst}$ is an immediate consequence of \Cref{prop smoothness components rst}.
    \end{proof}
\end{thm}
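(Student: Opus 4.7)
The plan is to reduce everything to the componentwise criterion in \Cref{prop part of rst}, which is available because $X = \mathbb{P}^1_K$ satisfies $g(X) = 0$, and then to read off the relevant invariants from the explicit description of $\SF{\YY_D}$ obtained in the previous subsections. Since $\SF{\YY_D}$ is reduced by \Cref{prop normalization model}, every component $V$ automatically has $m(V) = 1$, so one only needs to verify the disjunction on $a(V)$, $w(V)$, and $\underline{w}(V)$ using the partition $R = R_0 \sqcup R_1 \sqcup R_2$ introduced at the start of \S\ref{sec models hyperelliptic separable}.

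I would split into the two geometric cases allowed by \Cref{rmk R_0 R_1 R_2}(d). In the reducible case, $\SF{\YY_D}$ consists of two lines $L_1, L_2$ interchanged by the hyperelliptic involution and meeting over the $N = N_2$ points of $R_2$; the stabilizer $G_{L_i}$ is trivial, so $a(L_i) = 0$, $w(L_i) = N$, and $\underline{w}(L_i) = (1,\ldots,1)$, and the criterion collapses to $N \ge 3$. In the irreducible case $V = \SF{\YY_D}$ satisfies $G_V = G$; \Cref{rmk R_0 R_1 R_2}(c) then shows that the involution fixes the single branch above each point of $R_1$ and swaps the two branches above each point of $R_2$, so $w(V) = N_1 + 2N_2$ and $\underline{w}(V) = (\underbrace{1,\ldots,1}_{N_1}, \underbrace{2,\ldots,2}_{N_2})$. \Cref{prop riemann hurwitz} supplies $a(V) \ge N_0 + N_1 - 1$, and \Cref{rmk R_0 R_1 R_2}(d) forces $N_0 + N_1 \ge 1$.

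A short case analysis on $N$ then closes the argument. For $N \ge 3$, either $N_0 + N_1 \ge 2$ (so $a(V) \ge 1$) or $N_0 + N_1 = 1$ and $N_2 \ge 2$ (so $w(V) \ge 4$); combined with the reducible subcase this settles (i). For $N = 2$ and $V$ irreducible, enumerating the triples $(N_0, N_1, N_2)$ with $N_0 + N_1 \ge 1$ shows that one of $a(V) \ge 1$, $w(V) \ge 3$, or $\underline{w}(V) = (2)$ always holds, giving (ii). For $N = 1$ and $V$ irreducible, only $(1,0,0)$ and $(0,1,0)$ arise; in both, $w(V) \le 1$ and $\underline{w}(V) \ne (2)$, so the criterion reduces to $a(V) \ge 1$, giving (iii). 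The same bookkeeping shows that every configuration excluded by (i)--(iii)---namely reducible with $N \le 2$, and irreducible with $N = 1$ and $a(V) = 0$---violates the criterion, yielding the reverse implication.

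For the smoothness claim, the strict transform of the smooth line $\SF{\XX_D}$ in $\SF{\Xrst}$ is again a copy of $\mathbb{P}^1_k$, so \Cref{prop smoothness components rst} immediately gives that its inverse image in $\SF{\Yrst}$ is smooth; being birational to $\SF{\YY_D}$, this inverse image must coincide with $\NSF{\YY_D}$. I do not foresee a serious obstacle: the whole proof is a bookkeeping exercise once the invariants are in hand, and the only moderately delicate input is the determination of $\underline{w}(V)$ through the action of the involution on branches, which is exactly what \Cref{rmk R_0 R_1 R_2}(c) records.
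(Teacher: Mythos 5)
Your proof is correct and follows essentially the same approach as the paper's: reduce to the criterion in Proposition \ref{prop part of rst}, split into the reducible/irreducible cases for $\SF{\YY_D}$ using Remark \ref{rmk R_0 R_1 R_2}(d), compute the invariants $a$, $w$, $\underline{w}$ from the partition $R = R_0 \sqcup R_1 \sqcup R_2$ together with the Riemann--Hurwitz bound of Proposition \ref{prop riemann hurwitz}, and finish the smoothness claim via Proposition \ref{prop smoothness components rst}. Your case analysis is, if anything, written out a bit more explicitly than the paper's, but the decomposition and the key inputs are identical; one cosmetic quibble is that $a(L_i) = 0$ in the reducible case follows from $L_i$ being a line, not from the triviality of its stabilizer, though this does not affect the argument.
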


\section{Clusters and valid discs} \label{sec clusters}

We begin this section by defining \emph{clusters (of roots)}, \emph{depths}, and \emph{relative depths} of clusters, and the \emph{cluster picture} associated to the odd-degree polynomial $f(x)$ defining the hyperelliptic curve $Y: y^2=f(x)$.  This notion of ``cluster" is equivalent to the one found in \cite[Definition 1.1]{dokchitser2022arithmetic}.  We then set up the notion of \emph{valid discs} associated to $f$, so that each one corresponds to a component of $\SF{\Xrst}$.

\begin{dfn}
    \label{dfn cluster}
    Given a non-empty subset $\mathfrak{s}\subseteq \RR$, we say that $\mathfrak{s}$ is a \emph{cluster} (of $\mathcal{R}$) if there exists a disc $D\subseteq K^\alg$ such that $D\cap \RR=\mathfrak{s}$.
    
    We define the \emph{depth} of a cluster $\mathfrak{s} \subseteq \RR$ to be the value $d_+(\mathfrak{s}) := \min_{\zeta, \zeta' \in \mathfrak{s}} v(\zeta - \zeta') \in \qqinfty$.
    
    The set of pairs $(\mathfrak{s},d_+(\mathfrak{s}))$ for all clusters $\mathfrak{s}$ of $\RR$, is called the \emph{cluster data} of $\RR$.

    For every cluster $\mathfrak{s}\subsetneq \RR$, the \emph{parent cluster} of $\mathfrak{s}$ is the smallest cluster $\mathfrak{s}'$ properly containing it; in this situation, we say that $\mathfrak{s}$ is a \emph{child cluster} of $\mathfrak{s}'$. Two distinct clusters having the same parent are said to be \emph{sibling clusters}.

    Given any cluster $\mathfrak{s} \subsetneq \mathcal{R}$, we define the value $d_-(\mathfrak{s})$ to be the depth of the parent cluster $\mathfrak{s}'$ of $\mathfrak{s}$, and we set $d_-(\mathcal{R}) = -\infty$.  We write $I(\mathfrak{s}) = [d_-(\mathfrak{s}), d_+(\mathfrak{s})]$.

    The \emph{relative depth} of a cluster $\mathfrak{s} \subseteq \RR$ is defined to be $\delta(\mathfrak{s}) := d_+(\mathfrak{s}) - d_-(\mathfrak{s}) \in \qqminusinfty$.
\end{dfn}

\begin{rmk} \label{rmk cluster}
    We note the following.
    \begin{enumerate}[(a)]
        \item We have that $\RR$ itself is always a cluster, with $d_-(\RR)=-\infty$, $d_+(\RR)$ finite, and $\delta(\RR)=+\infty$.
        \item For every $a \in \mathfrak{s}$, the singleton $\{a\}$ is always a cluster, with $d_-(\{a\})$ finite, $d_+(\{a\})=+\infty$ (as by convention the minimum taken over the empty set is $+\infty$), and $\delta(\{s\})=+\infty$.
    \end{enumerate}
\end{rmk}

We want now to introduce some further definitions for later use that relate the clusters to the discs that cut them out of $\mathcal{R}$. We begin with the following remark.

\begin{dfn}
    \label{dfn linked}
    Given a cluster $\mathfrak{s}\subseteq \RR$, we say that a disc $D\subseteq K^\alg$ is \emph{linked to} $\mathfrak{s}\subseteq \mathcal{R}$ if we have $D = D_{\mathfrak{s},b}$ for some depth $b\in I(\mathfrak{s})$.
\end{dfn}

\begin{rmk}
    A disc $D$ is linked to a cluster $\mathfrak{s}$ if and only if we have either $D\cap \RR=\mathfrak{s}$ or that $D$ is the minimal disc such that $D\cap \RR\supsetneq \mathfrak{s}$.  More precisely, if $D = D_{\mathfrak{s}, b}$ for some $b \in I(\mathfrak{s})$, we have $D\cap \RR=\mathfrak{s}$ whenever $b\in (d_-(\mathfrak{s}),d_+(\mathfrak{s})]$, whereas $D$ is the smallest disc such that $D\cap\RR\supsetneq \mathfrak{s}$ when $b=d_-(\mathfrak{s})$; moreover, in this case the subset $D\cap\RR \subseteq \RR$ is the parent cluster of $\mathfrak{s}$.
\end{rmk}

We observe that, given a disc $D$, the points of $\Rinfty$ reduce to distinct points $P_1, \ldots,\linebreak[0] P_{N-1},\linebreak[0] \infty \in \SF{\XX_D}(k)$ for some $N \geq 1$; we can accordingly write $\RR=\mathfrak{c}_1\sqcup \ldots\sqcup \mathfrak{c}_{N-1}\sqcup \mathfrak{c}_\infty$, where $\mathfrak{c}_i$ consists of the roots in $\RR$ reducing to $P_i$, and $\mathfrak{c}_\infty$ consists of the roots in $\RR$ reducing to $\infty$. We clearly have $D\cap \RR=\mathfrak{c}_1\sqcup \ldots \sqcup \mathfrak{c}_{N-1}=\RR\setminus \mathfrak{c}_\infty$. It is an easy combinatorial exercise to verify the following proposition.

\begin{prop}
    \label{lemma discs linked to clusters}
    With notation as above, we have the following.
    \begin{enumerate}[(a)]
        \item If $N=1$, which is equivalent to $D\cap \RR=\varnothing$, the disc $D$ is not linked to any cluster.
        \item If $N=2$, the disc $D$ is linked to exactly one cluster, namely $\mathfrak{c}_1=\mathcal{R}\setminus \mathfrak{c}_\infty=D\cap \mathcal{R}$.
        \item[(c)] If $N \ge 3$, the disc $D$ is linked to exactly $N$ clusters, namely $\mathfrak{c}_1, \ldots, \mathfrak{c}_{N-1}$, and $\mathfrak{s}:=D\cap \RR=\RR\setminus \mathfrak{c}_\infty = \mathfrak{c}_1\sqcup \ldots \sqcup \mathfrak{c}_{N-1}$.
    \end{enumerate}

    Moreover, case (c) occurs if and only if we have $D=D_{\mathfrak{s},d_+(\mathfrak{s})}$ for some non-singleton cluster $\mathfrak{s}\subseteq \mathcal{R}$, in which case the subsets $\mathfrak{c}_1, \ldots, \mathfrak{c}_{N-1}$ are precisely the children clusters of $\mathfrak{s}$, and we have $D=D_{\mathfrak{c}_i,d_-(\mathfrak{c}_i)}$ for $i=1,\ldots, N-1$.
\end{prop}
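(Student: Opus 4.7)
The plan is to exploit the reduction map. Writing $D = D_{\alpha, b_D}$, one observes that a root $a \in \RR$ reduces to $\infty$ iff $a \notin D$, and that two roots $a, a' \in D \cap \RR$ share a finite reduction point iff $v(a - a') > b_D$. Hence the classes $\mathfrak{c}_i$ are the maximal subsets of $D \cap \RR$ whose pairwise valuations strictly exceed $b_D$, while pairs of roots in distinct classes satisfy $v(a-a') = b_D$ exactly. I will use the criterion (read off from the remark preceding the proposition) that $D$ is linked to a cluster $\mathfrak{t}$ if and only if $\mathfrak{t} \subseteq D$ and $b_D \in I(\mathfrak{t})$. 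Part (a) is then immediate, since no nonempty cluster can sit inside $D \cap \RR = \varnothing$. Part (b) follows by noting that $\mathfrak{c}_1 = D \cap \RR$ is itself a cluster, that $N = 2$ forces $d_+(\mathfrak{c}_1) > b_D$, and that $b_D \geq d_-(\mathfrak{c}_1)$ (else $D$ would contain the parent cluster of $\mathfrak{c}_1$, contradicting $D \cap \RR = \mathfrak{c}_1$); uniqueness follows because any other linked $\mathfrak{t}$ is strictly contained in $\mathfrak{c}_1$, whence its parent $\mathfrak{t}'$ lies inside $\mathfrak{c}_1$ and $d_-(\mathfrak{t}) = d_+(\mathfrak{t}') \geq d_+(\mathfrak{c}_1) > b_D$ contradicts $b_D \in I(\mathfrak{t})$.

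For part (c), I would set $\mathfrak{s} := D \cap \RR$ and note that since the minimum pairwise valuation in $\mathfrak{s}$ is attained precisely by pairs of roots in distinct classes, $d_+(\mathfrak{s}) = b_D$; thus $D = D_{\mathfrak{s}, d_+(\mathfrak{s})}$ is linked to $\mathfrak{s}$. Each $\mathfrak{c}_i$ is also a cluster (cut out by a disc centered at any of its elements with depth marginally larger than $b_D$). To verify that $\mathfrak{s}$ is the parent of $\mathfrak{c}_i$, I would suppose for contradiction the existence of an intermediate cluster $\mathfrak{c}_i \subsetneq \mathfrak{u} \subsetneq \mathfrak{s}$: picking $\alpha \in \mathfrak{c}_i$, an element $a' \in \mathfrak{u} \setminus \mathfrak{c}_i$ (necessarily in some $\mathfrak{c}_j$ with $j \neq i$), and an element $a'' \in \mathfrak{s} \setminus \mathfrak{u}$, the cut-out disc $D_{\alpha, d_+(\mathfrak{u})}$ of $\mathfrak{u}$ must contain $a'$, forcing $d_+(\mathfrak{u}) \leq v(\alpha - a') = b_D$, and exclude $a''$, forcing $d_+(\mathfrak{u}) > v(\alpha - a'')$, which equals $b_D$ when $a'' \notin \mathfrak{c}_i$ and is strictly greater than $b_D$ otherwise. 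Either subcase contradicts $d_+(\mathfrak{u}) \leq b_D$. Thus $d_-(\mathfrak{c}_i) = d_+(\mathfrak{s}) = b_D$, and $D = D_{\mathfrak{c}_i, d_-(\mathfrak{c}_i)}$ is linked to $\mathfrak{c}_i$. For uniqueness, let $\mathfrak{t} \neq \mathfrak{s}$ be a cluster linked to $D$, so $\mathfrak{t} \subsetneq \mathfrak{s}$; if $\mathfrak{t}$ contained roots in two distinct classes $\mathfrak{c}_i, \mathfrak{c}_j$, its cut-out disc $D_{\alpha, c}$ with $\alpha \in \mathfrak{t} \cap \mathfrak{c}_i$ would satisfy $c \leq v(\alpha - a') = b_D$ for $a' \in \mathfrak{t} \cap \mathfrak{c}_j$, hence contain $D$ and force $\mathfrak{t} \supseteq \mathfrak{s}$, a contradiction; so $\mathfrak{t} \subseteq \mathfrak{c}_i$ for some $i$, and the parent-depth argument of part (b) applied inside $\mathfrak{c}_i$ forces $\mathfrak{t} = \mathfrak{c}_i$.

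The ``moreover'' claim is then read off from the analysis of case (c): if $N \geq 3$, then $\mathfrak{s} = D \cap \RR$ is a non-singleton cluster (having at least two children) with $D = D_{\mathfrak{s}, d_+(\mathfrak{s})}$ and children $\mathfrak{c}_1, \ldots, \mathfrak{c}_{N-1}$; conversely, if $D = D_{\mathfrak{s}, d_+(\mathfrak{s})}$ for a non-singleton cluster $\mathfrak{s}$, then the pair in $\mathfrak{s}$ attaining $d_+(\mathfrak{s})$ reduces to distinct finite points of $\SF{\XX_D}$, producing $N \geq 3$ (and one can check that this $\mathfrak{s}$ necessarily equals $D \cap \RR$, using that strictly nested clusters have strictly different depths). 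I expect the principal obstacle to be the uniqueness argument in part (c), particularly the step ruling out clusters that ``straddle'' multiple partition classes $\mathfrak{c}_i$: the delicate ultrametric bookkeeping showing that any such hypothetical cluster would actually contain all of $\mathfrak{s}$ requires a careful choice of center and depth for the cut-out disc, and is the place where one must be most attentive.
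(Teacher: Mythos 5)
The paper does not actually supply a proof of this proposition; the authors simply state that ``it is an easy combinatorial exercise to verify'' it, so there is no argument of theirs against which to compare yours. Your proposal supplies that omitted exercise in full, and it is correct. You correctly reformulate the ``linked'' condition of Definition~\ref{dfn linked} as $\mathfrak{t}\subseteq D$ together with $b_D\in I(\mathfrak{t})$; you correctly identify the blocks $\mathfrak{c}_i$ as the fibers of the reduction map, with $v(a-a')>b_D$ inside a block and $v(a-a')=b_D$ across finite blocks; the parent-depth argument for uniqueness in parts (b) and (c), and the nested-cluster contradiction showing that $\mathfrak{s}$ is the parent of each $\mathfrak{c}_i$, are sound; and the ``moreover'' claim is handled in both directions (the uniqueness of $\mathfrak{s}$ with $D=D_{\mathfrak{s},d_+(\mathfrak{s})}$ is actually automatic, since such an $\mathfrak{s}$ must equal $D\cap\RR$, so your parenthetical caution is more than is needed, but harmless). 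One small housekeeping point: in part (b) you should note the degenerate case $\mathfrak{c}_1=\RR$ separately, where $d_-(\mathfrak{c}_1)=-\infty$ and the ``parent cluster'' does not exist; there the inequality $b_D\ge d_-(\mathfrak{c}_1)$ holds vacuously, so nothing breaks, but the sentence as written presupposes a parent. Likewise the case where some $\mathfrak{c}_i$ is a singleton (so $d_+(\mathfrak{c}_i)=+\infty$) fits your argument without modification, but is worth flagging as the reason the strict inequality $d_+(\mathfrak{c}_i)>b_D$ is still true.
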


We now define a term which we will use throughout the rest of the paper in order to refer to components of the relatively stable model of a hyperelliptic curve.

\begin{dfn} \label{dfn valid disc}
     A disc $D \subseteq K^\alg$ is a \emph{valid disc} if it satisfies $\XX_D \leq \Xrst$ and if the quadratic cover $\SF{\YY_D}\to \SF{\XX_D}$ is separable.
\end{dfn}

We note that our notion of \emph{valid disc} differs from the one in \cite{dokchitser2022arithmetic}, although in both cases valid discs are used to build a particular semistable model of $Y$ with desired properties.

The cluster picture allows us to completely identify the valid discs in the tame setting.

\begin{rmk} \label{rmk cluster p odd}

If the residue characteristic is different from $2$, the cluster data alone fully characterizes the valid discs in a manner that is easy both to state and to prove.  In this setting, there is a one-to-one correspondence between non-singleton clusters and valid discs, which is given by $\mathfrak{s}\mapsto D_{\mathfrak{s},d_+(\mathfrak{s})}$. In other words, the valid discs are precisely those discs that minimally cut out the clusters in $\mathcal{R}$.  Moreover, the components of the relatively smooth model $\Yrst$ of the hyperelliptic curve $Y$ come \textit{entirely} from valid discs -- more precisely, all discs $D \subset K^\alg$ such that we have $\XX_D \leq \Xrst$ satisfy that the quadratic cover $\SF{\Yrst} \to \SF{\Xrst}$ is separable (so that the second condition in \Cref{dfn valid disc} is superfluous).  This is in fact what first led the authors to the definition of the relatively stable model.

\end{rmk}

We want some analog of \Cref{rmk cluster p odd} for working over residue characteristic $2$; however, in this setting, valid discs do not correspond in a one-on-one manner with clusters.  Instead, what we will show is that there are no valid discs linked to odd-cardinality clusters, while an even-cardinality cluster may have $0$, $1$, or $2$ valid discs linked to it, as in \Cref{thm summary depths valid discs} below.  In order to get this result, we need to set up a framework for considering the models $\YY_{D}$ corresponding to families of discs $D:=D_{\alpha,b}$ which all contain the same subset $\mathfrak{s}\subseteq \RR$ of roots.  This is the main goal of the next section.

\section{Finding valid discs with a given center} \label{sec depths}

In this section, given an even-cardinality cluster $\mathfrak{s} \subseteq \mathcal{R}$ and a center $\alpha\in \mathfrak{s}$, we investigate for which depths $b \in I(\mathfrak{s}) := [d_-(\mathfrak{s}), d_+(\mathfrak{s})]$ the disc $D_{\alpha,b}$ is valid for the hyperelliptic curve $Y: y^2 = f(x)$.  

The section is organized as follows. In \S\ref{sec depths piecewise-linear} we introduce the language of translated and scaled part-square decompositions, which will be useful for dealing with the problem. In \S\ref{sec depths construction valid discs} we identify the depths $b\in I$ for which $D_{\alpha,b}$ is a valid disc (if such depths exist) as the endpoints $b_{\pm}$ of a sub-interval $J(\mathfrak{s}) \subseteq I(\mathfrak{s})$ (see \Cref{thm summary depths valid discs}); in doing so, we prove \Cref{thm introduction main}(a),(b).  In \S\ref{sec depths separating}, we develop our strategy for determining $J(\mathfrak{s})$ provided that, for each of the two factors $f^{\mathfrak{s}}$ and $f^{\RR\setminus \mathfrak{s}}$ of $f$ corresponding to the roots lying in $\mathfrak{s}$ and $\RR\setminus\mathfrak{s}$ respectively, we know a part-square decomposition that is totally odd with respect to the center $\alpha$.  Then we apply these results in \S\ref{sec depths threshold} to describe conditions on the relative depth of an even-cardinality cluster which ensure that such a sub-interval $J(\mathfrak{s}) \subseteq I(\mathfrak{s})$ exists and prove \Cref{thm introduction main}(c),(d).

\subsection{Translated and scaled part-square decompositions} 
\label{sec depths piecewise-linear}

Given any polynomial $h(z) \in K^\alg[z]$ and any choice of elements $\alpha\in K^\alg, \beta\in(K^\alg)^\times$, we can compute the (Gauss) valuations of the translated and scaled polynomial $h_{\alpha,\beta}$ obtained from $h$ by translating by $\alpha$ and scaling by $\beta$ (as defined in \S\ref{sec models hyperelliptic forming}).  The following lemma will allow us to treat the Gauss valuation of a certain translation and scaling of $h$ as a function on discs.

\begin{lemma}
    \label{lemma vfun of disc}
    As we vary $\alpha \in K^\alg$ and $\beta \in (K^\alg)^\times$, the valuation $v(h_{\alpha,\beta})$ depends only on the disc $D:=D_{\alpha,v(\beta)}$.
    \begin{proof}
        This is simply the $p$-adic Maximum Modulus Principle, which says that 
        \begin{equation} \label{eq maximum modulus}
            v(h_{\alpha, \beta}) = \inf_{z_{\alpha,\beta} \in \mathcal{O}_{K^\alg}} v(h_{\alpha,\beta}(z_{\alpha,\beta})) = \inf_{z \in D} v(h(z)).
        \end{equation}
    \end{proof}
\end{lemma}

Given a disc $D$, we will consequently denote $\vfun_h(D)$ the valuation of $h_{\alpha,\beta}$ for any $\alpha$ and $\beta$ such that $D=D_{\alpha,v(\beta)}$; note from (\ref{eq maximum modulus}) that we have $\vfun_h(D) = \inf_{z \in D} v(h(z))$.  
When a center $\alpha\in K^\alg$ is fixed, we may consider the function $b \mapsto \vfun_h(D_{\alpha,b})\in\qqinfty$ defined for all $b \in \qq$.

\begin{lemma} \label{lemma mathfrakh}
    Suppose a center $\alpha\in K^\alg$ is fixed. With the above set-up, we have the following.
    \begin{enumerate}[(a)]
        \item The function $b \mapsto \vfun_h(D_{\alpha,b})\in\qqinfty$ satisfies the property of being a continuous, non-decreasing piecewise linear function with integer slopes and whose slopes decrease as the input increases.
        \item For any $b \in \mathbb{Q}$ and $\beta \in (K^\alg)^{\times}$ such that $v(\beta) = b$, the left (resp.\ right) derivative of the function $c \mapsto \vfun_h(D_{\alpha,c})\in\qqinfty$ at $c = b$ coincides with the highest (resp.\ lowest) degree of the variable $x_{\alpha,\beta}$ appearing in the normalized reduction of $h_{\alpha,\beta}$, i.e.\ with the number of roots $\zeta$ of $h$ in $K^\alg$ (counted with multiplicity) such that $v(\zeta-\alpha)\ge b$ (resp.\ $v(\zeta-\alpha)>b$).
    \end{enumerate}
\end{lemma}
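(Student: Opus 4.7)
The strategy is to derive an explicit closed-form expression for $\vfun_h(D_{\alpha,b})$ in terms of the valuations of the roots of $h$, from which both parts of the lemma will fall out. I will factor $h(x) = c\prod_i (x-\zeta_i)$ over $\bar{K}$, where $c$ is the leading coefficient and the $\zeta_i$ list the roots with multiplicity. Then
\[
h_{\alpha,\beta}(w) \;=\; h(\alpha+\beta w) \;=\; c\,\beta^{\deg h}\prod_i\bigl(w-\beta^{-1}(\zeta_i-\alpha)\bigr),
\]
and the multiplicativity of the Gauss valuation together with the elementary identity $v(w-a)=\min(0,v(a))$ for $a\in\bar{K}$ will yield
\[
\vfun_h(D_{\alpha,b}) \;=\; v(c) \;+\; \sum_i \min\bigl(b,\,v(\zeta_i-\alpha)\bigr).
\]

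Part (a) then falls out immediately: each summand $b \mapsto \min(b,v(\zeta_i-\alpha))$ is continuous, non-decreasing, and piecewise linear with integer slopes (slope $1$ on $(-\infty,v(\zeta_i-\alpha)]$ and slope $0$ on $[v(\zeta_i-\alpha),+\infty)$), and a finite sum of such functions inherits the same properties. The slope at a given $b$ equals the number of indices $i$ for which the corresponding breakpoint lies to the right of $b$, which manifestly cannot increase as $b$ grows.

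For part (b), the same closed formula shows that the left and right derivatives of $c\mapsto \vfun_h(D_{\alpha,c})$ at $c=b$ equal, respectively, $|\{i:v(\zeta_i-\alpha)\ge b\}|$ and $|\{i:v(\zeta_i-\alpha)> b\}|$, which already gives one of the two identifications asserted in the statement. It remains to match these counts with the highest and lowest degrees appearing in the normalized reduction of $h_{\alpha,\beta}$. The roots of $h_{\alpha,\beta}$ in $\bar{K}$ are $a_i := \beta^{-1}(\zeta_i-\alpha)$, of valuation $v(\zeta_i-\alpha)-b$. A standard Newton polygon analysis of $h_{\alpha,\beta}$ (whose slopes from left to right are the negatives of the valuations of the $a_i$, listed in decreasing order) identifies the set of indices $d$ at which the coefficient of $w^d$ in $h_{\alpha,\beta}$ attains the Gauss valuation as a contiguous interval $[k_{\min},k_{\max}]$, where $k_{\min}$ and $k_{\max}$ are the number of $a_i$ of strictly positive and of non-negative valuation, respectively. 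Translating back via $a_i = \beta^{-1}(\zeta_i-\alpha)$ recovers exactly the two quantities computed above, and since $k_{\max}$ and $k_{\min}$ are by construction the highest and lowest degrees surviving in the normalized reduction, part (b) follows.

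The only mildly subtle point will be keeping the conventions straight in the Newton polygon step, since the valuations invert as we pass from the $\zeta_i$ to the $a_i$ and one must be careful about which end of the polygon corresponds to which end of the reduction; beyond that, everything reduces to a direct computation from the multiplicative formula above.
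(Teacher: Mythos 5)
Your proof is correct, and it takes a genuinely different route from the paper's. The paper defines $H_i$ to be the coefficients of $h_{\alpha,1}$ and reads off the defining formula
\[
\vfun_h(D_{\alpha,b}) \;=\; \min_{0 \le i \le \deg(h)}\bigl(v(H_i) + ib\bigr),
\]
so that $\vfun_h(D_{\alpha,\cdot})$ is visibly the pointwise minimum of a finite family of linear functions with integer slopes; concavity, the integer slopes, and the identification of left/right derivatives with the highest/lowest degree in the normalized reduction all follow directly from that ``tropical'' expression (the root-counting rephrasing is left implicit, presumably as a standard Newton-polygon fact). You instead factor $h$ over $\bar{K}$ and use multiplicativity of the Gauss valuation to obtain the dual formula
\[
\vfun_h(D_{\alpha,b}) \;=\; v(c) + \sum_i \min\bigl(b,\, v(\zeta_i - \alpha)\bigr),
\]
from which the root counts for the one-sided derivatives are immediate, and you then reconcile these counts with the degrees surviving in the normalized reduction by a Newton polygon argument. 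The two approaches are exactly the coefficient-side and the root-side descriptions of the same Newton polygon; the paper's is more economical to state, while yours makes the dependence on the roots explicit (which is the form most convenient elsewhere in the cluster-picture literature) and actually carries out the ``i.e.'' identification with the root count rather than taking it as read. One small point worth making explicit: multiplicativity of the Gauss valuation, which you invoke, holds because the residue field $k$ is a domain (here algebraically closed), so that the reduction map is multiplicative on polynomials; this is exactly what makes the passage from the product formula for $h_{\alpha,\beta}$ to the sum formula for its valuation legitimate.
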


\begin{proof}

Write $H_i$ for the $z^i$-coefficient of $h_{\alpha,1}$, and note that $\beta^i H_i$ is the $z^i$-coefficient of $h_{\alpha,\beta}$ for any scalar $\beta$.  Now given any $b \in \qq$ and $\beta \in (K^\alg)^{\times}$ with $v(\beta) = b$, by definition we have 
\begin{equation} \label{eq mathfrakv}
\vfun_h(D_{\alpha,b}) = \min_{0 \leq i \leq \deg(h)} \{v(\beta^i H_i)\} = \min_{0 \leq i \leq \deg(h)} \{v(H_i) + ib\}.
\end{equation}
All the properties of the function $b\mapsto \vfun_h(D_{\alpha,b})$ stated in the lemma immediately follow from the explicit expression given above.
\end{proof}

Given a part-square decomposition $h = q^2 + \rho$ of a nonzero polynomial $h$, by translating and scaling we can clearly form the part-square decompositions $h_{\alpha,\beta}=q_{\alpha,\beta}^2+\rho_{\alpha,\beta}$ for all $\alpha\in K^\alg, \beta\in (K^\alg)^\times$.

\begin{lemma}
    \label{prop translated part-square decompositions}
    Let $h=q^2+\rho$ be a part-square decomposition.
    \begin{enumerate}[(a)]
        \item The property of the induced part-square decomposition $h_{\alpha,\beta} = q_{\alpha,\beta}^2 + \rho_{\alpha,\beta}$ being good or not only depends on the disc $D_{\alpha,v(\beta)}$ and not on the particular choices of $\alpha$ and $\beta$.
        \item The property of the induced part-square decomposition $h_{\alpha,\beta} = q_{\alpha,\beta}^2 + \rho_{\alpha,\beta}$ being totally odd or not does not depend on $\beta$.
    \end{enumerate}
    
    \begin{proof}
        Part (a)  is an immediate consequence of \Cref{lemma vfun of disc}, while part (b) is immediate.
    \end{proof}
\end{lemma}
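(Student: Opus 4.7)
The plan for part (a) is to reduce the question to whether a normalized reduction of $\rho_{\alpha,\beta}$ is a square in $k[z]$, via \Cref{prop good decomposition}(a). First, by applying \Cref{lemma vfun of disc} to both $h$ and $\rho$, I would note that $t_{q_{\alpha,\beta},\rho_{\alpha,\beta}} = v(\rho_{\alpha,\beta}) - v(h_{\alpha,\beta})$ depends only on the disc $D := D_{\alpha,v(\beta)}$; in particular whether this quantity is $\geq 2v(2)$ or $< 2v(2)$ is determined by $D$ alone. The case $t_{q_{\alpha,\beta},\rho_{\alpha,\beta}}\geq 2v(2)$ is then settled automatically by definition of ``good''.

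For the remaining case $t_{q_{\alpha,\beta},\rho_{\alpha,\beta}}< 2v(2)$, suppose $(\alpha',\beta')$ defines the same disc as $(\alpha,\beta)$. Setting $u := \beta/\beta'$ and $\delta := (\alpha-\alpha')/\beta'$, the hypothesis $D_{\alpha,v(\beta)} = D_{\alpha',v(\beta')}$ gives $v(u)=0$ and $v(\delta)\geq 0$, so $u\in R^\times$ and $\delta \in R$, and one has the relation $\rho_{\alpha,\beta}(z) = \rho_{\alpha',\beta'}(uz+\delta)$. Choosing $\gamma\in\bar{K}^\times$ with $v(\gamma) = v(\rho_{\alpha,\beta}) = v(\rho_{\alpha',\beta'})$ (these valuations agreeing by the first step) and reducing both sides modulo $\gamma$, a normalized reduction of $\rho_{\alpha,\beta}$ is obtained from one of $\rho_{\alpha',\beta'}$ via the substitution $z\mapsto\bar{u}z+\bar{\delta}$, which is a $k$-linear automorphism of $k[z]$. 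Since such an automorphism sends squares to squares and non-squares to non-squares, \Cref{prop good decomposition}(a) yields the claimed invariance.

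For part (b), I would simply compute: expanding $\rho(\beta z + \alpha) = \sum_i R_i \beta^i z^i$, where $R_i$ denotes the $(x-\alpha)^i$-coefficient of $\rho$, one sees that the $z^i$-coefficient of $\rho_{\alpha,\beta}$ is $\beta^i R_i$, which vanishes precisely when $R_i$ vanishes. Hence the set of degrees of nonzero terms in $\rho_{\alpha,\beta}$ is independent of $\beta$, and so the property of being totally odd depends only on the center $\alpha$.

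The only step that requires any genuine care is the verification in part (a) that the coincidence of discs forces $u\in R^\times$ and $\delta\in R$; this is a direct computation from the definition of $D_{\alpha,v(\beta)}$ but is essential in order for the reduction modulo $\gamma$ to produce a well-defined $k$-linear automorphism of $k[z]$ to which the criterion of \Cref{prop good decomposition}(a) can be applied.
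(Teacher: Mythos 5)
Your proof is correct. Part (b) is essentially the same trivial computation the paper has in mind (the substitution $z\mapsto\beta z$ rescales each coefficient by a nonzero factor $\beta^i$, so it doesn't change which degrees appear).

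For part (a), you take a genuinely different route. The paper's (very terse) proof intends the following direct argument: by \Cref{lemma vfun of disc}, for \emph{every} part-square decomposition $h_{\alpha,\beta}=\tilde q^2+\tilde\rho$, the quantity $t_{\tilde q,\tilde\rho}=v(\tilde\rho)-v(h_{\alpha,\beta})$ is invariant under replacing $(\alpha,\beta)$ by $(\alpha',\beta')$ defining the same disc (via the natural bijection of decompositions induced by the substitution $z\mapsto uz+\delta$). Hence the clause in \Cref{dfn good totally odd}(a) --- ``no decomposition has strictly larger $t$'' --- is invariant, without ever invoking normalized reductions. You instead first observe that $t$ itself is disc-invariant (disposing of the case $t\geq 2v(2)$), and then, in the case $t<2v(2)$, appeal to the criterion of \Cref{prop good decomposition}(a) (goodness $\Leftrightarrow$ normalized reduction of $\rho$ is not a square), verifying that this criterion is preserved because the normalized reductions differ by a $k$-algebra automorphism $z\mapsto\bar u z+\bar\delta$, which preserves the property of being a square. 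Your route is less direct but has the mild advantage of reducing the claim to a single already-established concrete criterion rather than quantifying over all decompositions; both arguments are sound, and the key computation (that disc equality forces $u$ a unit and $\delta$ integral) is the same in either.
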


We can consequently define the following, which are the variants of those given in \Cref{dfn good totally odd} relative to the choice of a disc.
\begin{dfn}
    \label{dfn translated scaled good totally odd}
    Let $h=q^2+\rho$ be a part-square decomposition. We make the following definitions:
    \begin{enumerate}[(a)]
        \item the decomposition is \emph{good} at a disc $D$ whenever $h_{\alpha,\beta} = q_{\alpha,\beta}^2 + \rho_{\alpha,\beta}$ is a good part-square decomposition for some (any) $\alpha\in K^\alg$ and $\beta\in (K^\alg)^\times$ such that $D=D_{\alpha,v(\beta)}$; and 
        \item the decomposition is \emph{totally odd} with respect to a center $\alpha\in K^\alg$ if $h_{\alpha,\beta}=q_{\alpha,\beta}^2+\rho_{\alpha,\beta}$ is a totally odd part-square decomposition for some (any) $\beta\in (K^\alg)^\times$.
    \end{enumerate}
\end{dfn}

\begin{rmk}
    \label{rmk totally odd good}
    If $h=q^2+\rho$ is a totally odd part-square decomposition with respect to a center $\alpha$, then, by \Cref{cor totally odd is good} combined with \Cref{prop translated part-square decompositions}(b), it is good at the discs $D_{\alpha,b}$ for all $b\in \qq$. 
\end{rmk}

Recalling the number $t_{q,\rho} := v(\rho) - v(h)\in \qqinfty$ from \S\ref{sec models hyperelliptic part-square}, we define the related function
$$\tfun_{q, \rho} := \vfun_\rho - \vfun_f$$
so that $\tfun_{q,\rho}(D) = t_{q_{\alpha,\beta}, \rho_{\alpha,\beta}}$ for any $\alpha\in K^\alg$, $\beta \in (K^\alg)^\times$ such that $D=D_{\alpha,v(\beta)}$.  When a center $\alpha\in K^\alg$ is fixed, we can study the function $b \mapsto \tfun_{q,\rho}(D_{\alpha,b}) : \qq \to \qq \cup \{+\infty\}$, which is the difference between two continuous piecewise-linear functions and so is itself a continuous piecewise-linear function. Taking into account \Cref{rmk same t for good}, we can give the following definition.
\begin{dfn}
    \label{dfn t fun}
    Given a set of elements $\mathfrak{s}\subseteq K^\alg$ and a disc $D$, we define $\tbest{\mathfrak{s}}{D}\in \zerotwo$ to be $\truncate{\tfun_{q,\rho}(D)}$ for any part-square decomposition $h=q^2+\rho$ which is good at the disc $D$, where $h(z)\in K^\alg[z]$ is any polynomial whose set of roots is $\mathfrak{s}$.
\end{dfn}
\begin{rmk}
    \label{rmk t fun computing}
    Fix a center $\alpha$. If $h\in K^\alg[z]$ is a nonzero polynomial and $\mathfrak{s}$ is its set of roots, the knowledge of a part-square decomposition $h=q^2+\rho$ that is totally odd with respect to the center $\alpha$ makes it possible to compute $\tbest{\mathfrak{s}}{D_{\alpha,b}}\in \zerotwo$ for all depths $b\in \qq$: this follows immediately from \Cref{dfn t fun} together with \Cref{rmk totally odd good}.
\end{rmk}

\begin{prop}
    \label{prop mathfrak t minimum}
    Suppose that we have a disjoint union $\mathfrak{s}=\mathfrak{s}_1 \sqcup \cdots \sqcup \mathfrak{s}_N$ of clusters.  Then we have $\mathfrak{t}^{\mathfrak{s}}(D) \geq \min\{\mathfrak{t}^{\mathfrak{s}_1}(D), \dots, \mathfrak{t}^{\mathfrak{s}_N}(D)\}$.  Moreover, this is an equality in the following cases:
        \begin{enumerate}[(i)]
            \item whenever the minimum is attained by a unique $\tbest{\mathfrak{s}_i}{D}$; and 
            \item if $N=2$, $D\cap \mathfrak{s}_2=\varnothing$, and there exists a disc $D'\subsetneq D$ such that $\mathfrak{s}_1\subseteq D'$.
        \end{enumerate}
    
    \begin{proof}
        For $1 \leq i \leq N$, choose polynomials $h_i$ having $\mathfrak{s}_i$ as their sets of roots, and let $h_i=q_i^2+\rho_i$ be part-square decompositions that are good at the disc $D$. Then, by setting $q = \prod_i q_i$, we obtain a part-square decomposition for $h:=\prod_i h_i$ satisfying $\tfun_{q,\rho}(D)\ge \min_i \{\tfun_{q_i,\rho_i}(D)\}$ by \Cref{prop product part-square}. From this the first claim follows. Similarly, equality under the hypotheses of (i) and (ii) follows from parts (a) and (b) of \Cref{prop product part-square} respectively (one checks straightforwardly that the hypotheses in (ii) precisely correspond to the hypotheses of \Cref{prop product part-square}(b)).
    \end{proof}
\end{prop}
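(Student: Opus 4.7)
The strategy is to reduce everything to Proposition~\ref{prop product part-square} by constructing a specific (not necessarily good) part-square decomposition of a polynomial associated to $\mathfrak{s}$ from good decompositions associated to the $\mathfrak{s}_i$'s. For each $i$, choose a polynomial $h_i(z) \in \bar{K}[z]$ whose multiset of roots is $\mathfrak{s}_i$, and fix a good part-square decomposition $h_i = q_i^2 + \rho_i$ at the disc $D$ (which exists by combining Proposition~\ref{prop totally odd existence} with Corollary~\ref{cor totally odd is good}). Set $h = \prod_i h_i$, $q = \prod_i q_i$, and $\rho = h - q^2$, so that $h = q^2 + \rho$ is a part-square decomposition of a polynomial whose multiset of roots is $\mathfrak{s}$. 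Since each $h_i = q_i^2 + \rho_i$ is good, Remark~\ref{rmk same t for good} gives $\min(t_{q_i,\rho_i}, 2v(2)) = \tbest{\mathfrak{s}_i}{D}$.

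For the main inequality, the first assertion of Proposition~\ref{prop product part-square} gives $t_{q,\rho} \geq \min_i t_{q_i, \rho_i}$. Passing to a good decomposition of $h$ only increases $t$, so $\tbest{\mathfrak{s}}{D} \geq \min(\min_i t_{q_i, \rho_i},\, 2v(2)) = \min_i \tbest{\mathfrak{s}_i}{D}$, as required.

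For case~(i), when the minimum is attained by a unique $\tbest{\mathfrak{s}_{i_0}}{D}$, Proposition~\ref{prop product part-square}(a) upgrades the above inequality to an equality $t_{q,\rho} = t_{q_{i_0},\rho_{i_0}}$ and moreover asserts that $h = q^2 + \rho$ is itself a good decomposition (since $h_{i_0} = q_{i_0}^2 + \rho_{i_0}$ is). Truncating at $2v(2)$ yields $\tbest{\mathfrak{s}}{D} = \tbest{\mathfrak{s}_{i_0}}{D} = \min_i \tbest{\mathfrak{s}_i}{D}$.

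For case~(ii), the only real task is to translate the hypothesis on the discs into the valuation hypothesis needed to invoke Proposition~\ref{prop product part-square}(b). Since $\mathfrak{s}_1 \subseteq D' \subsetneq D$, we may pick a center $\alpha$ of $D$ lying in $\mathfrak{s}_1$ (this describes the same disc $D$), and then choose $\beta \in \bar{K}^\times$ with $v(\beta) = v(D)$. Every root of $(h_1)_{\alpha,\beta}$ has positive valuation, because the corresponding root of $h_1$ lies in $D' \subsetneq D$, while every root of $(h_2)_{\alpha,\beta}$ has negative valuation, because $\mathfrak{s}_2 \cap D = \varnothing$. If $\min\{t_1, t_2\} < 2v(2)$, Proposition~\ref{prop product part-square}(b) provides both $t_{q_{\alpha,\beta}, \rho_{\alpha,\beta}} = \min\{t_1, t_2\}$ and goodness at $D$, yielding $\tbest{\mathfrak{s}}{D} = \min\{\tbest{\mathfrak{s}_1}{D}, \tbest{\mathfrak{s}_2}{D}\}$; if instead $\min\{t_1, t_2\} \geq 2v(2)$, both $\tbest{\mathfrak{s}_i}{D}$ equal $2v(2)$, and equality follows from the already established main inequality together with the universal bound $\tbest{\mathfrak{s}}{D} \leq 2v(2)$. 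There is no serious obstacle in this proof beyond bookkeeping the truncation at $2v(2)$; the substantive work has already been done in Proposition~\ref{prop product part-square}.
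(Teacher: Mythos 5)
Your proof is correct and takes essentially the same approach as the paper: reduce to Proposition~\ref{prop product part-square} by multiplying good part-square decompositions of the factors $h_i$, and translate the geometric hypothesis in case~(ii) into the valuation hypothesis of part~(b). You supply the bookkeeping details (truncation at $2v(2)$, the boundary case $\min\{t_1,t_2\}\ge 2v(2)$, and the choice of center $\alpha\in\mathfrak{s}_1$) that the paper leaves as an unexamined ``one checks straightforwardly,'' and they are all handled correctly.
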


\subsection{Identifying the valid discs}
\label{sec depths construction valid discs}

Given a cluster $\mathfrak{s} \subseteq \RR$ and a choice of center $\alpha \in \mathfrak{s}$, our aim is to establish for which $b\in I(\mathfrak{s})$ the disc $D_{\alpha,b}$ is a valid disc.  To this end, we will introduce a (possibly empty) closed sub-interval $J(\mathfrak{s})$, whose endpoints (if they exist) will correspond to the depths $b\in I(\mathfrak{s})$ for which $D_{\alpha,b}$ is a valid disc; the precise statement is given in \Cref{thm summary depths valid discs}.

Let us begin by studying the function $I(\mathfrak{s}) \ni b \mapsto \tbest{\RR}{D_{\alpha,b}}\in \zerotwo$, which enjoys the following properties.

\begin{lemma} \label{lemma slopes of t}
    The function $ I(\mathfrak{s}) \ni b \mapsto \tbest{\RR}{D_{\alpha,b}}$ is a continuous piecewise-linear function with decreasing slopes. It is identically zero if $|\mathfrak{s}|$ is odd.  On the other hand, when $|\mathfrak{s}|$ is even, its slopes are odd integers ranging from $1-|\mathfrak{s}|$ to $2g+1-|\mathfrak{s}|$, except over the subset of $I(\mathfrak{s})$ where $\tbest{\RR}{D_{\alpha,b}}=2v(2)$, over which the slope is zero (if this subset contains an open interval).
    \begin{proof}
        Choose an interior point $b \in I(\mathfrak{s})$, i.e.\ $b\in (d_-(\mathfrak{s}),d_+(\mathfrak{s}))$, and choose $\beta\in (K^\alg)^\times$ such that $v(\beta)=b$. As $b$ lies in the open interval $(d_-(\mathfrak{s}),d_+(\mathfrak{s}))$, any normalized reduction of $f_{\alpha,\beta}$ is a scalar times $x_{\alpha,\beta}^{|\mathfrak{s}|}$. We deduce from \Cref{prop good decomposition} that, if $|\mathfrak{s}|$ is odd, the part-square decomposition $f_{\alpha,\beta}=0^2+f_{\alpha,\beta}$ is good and $\tbest{\RR}{D_{\alpha,b}}=0$, whereas, if $|\mathfrak{s}|$ is even, this decomposition is not good, and we therefore have $\tbest{\RR}{D_{\alpha,b}}>0$. In the latter case, let us take a part-square decomposition $f=q^2+\rho$ which is totally odd with respect to the center $\alpha$, so that $\tbest{\RR}{D_{\alpha,b}}=\truncate{\tfun_{q,\rho}(D_{\alpha,b})}$. Since $\deg(f)=2g+1$, by \Cref{dfn qrho} the odd-degree polynomial $\rho$ has degree at most $2g+1$. Now, $b \mapsto \tfun_{q,\rho}(D_{\alpha,b})$ is, by definition, the difference between the functions $b \mapsto \vfun_{\rho}(D_{\alpha,b})$ and $b \mapsto \vfun_{f}(D_{\alpha,b})$; by \Cref{lemma vfun of disc}, the former is a piecewise linear function with decreasing odd integer slopes between $1$ and $2g+1$, while the latter is linear with slope $|\mathfrak{s}|$ over $I(\mathfrak{s})$.
    \end{proof}
\end{lemma}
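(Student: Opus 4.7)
The plan is to reduce everything to a single, carefully chosen part-square decomposition of $f$. By \Cref{prop totally odd existence} (applied to the translated polynomial $f_{\alpha,1}$) there exists a part-square decomposition $f = q^2 + \rho$ that is totally odd with respect to the center $\alpha$, and by \Cref{rmk totally odd good} this decomposition is good at every disc $D_{\alpha,b}$. Consequently, \Cref{dfn t fun} collapses to
$$\tbest{\RR}{D_{\alpha,b}} \;=\; \truncate{\,\vfun_\rho(D_{\alpha,b}) - \vfun_f(D_{\alpha,b})\,}$$
for every $b \in \qq$, so the proof reduces to controlling the two piecewise linear functions $\vfun_\rho$ and $\vfun_f$ on $I(\mathfrak{s})$.

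First I would study $\vfun_\rho$ via \Cref{lemma mathfrakh}: it is continuous, non-decreasing and piecewise linear with decreasing integer slopes, whose one-sided derivatives at any $b$ record the highest and lowest degrees of $x_{\alpha,\beta}$ appearing in the normalized reduction of $\rho_{\alpha,\beta}$. Total-oddness with respect to $\alpha$ forces these degrees to be odd, and since $\deg\rho \leq 2g+1$ they lie in $\{1,3,\dots,2g+1\}$. Next, I would compute $\vfun_f$ on $I(\mathfrak{s})$ directly from the product formula $v(h) = v(\text{leading coefficient}) + \sum_i \min(v(\xi_i),0)$ for a polynomial with roots $\xi_i$, applied to $f_{\alpha,\beta}$ whose roots are $(a_i - \alpha)/\beta$. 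For $b$ in the interior of $I(\mathfrak{s})$ every $a_i \in \mathfrak{s}$ contributes nothing (since $v(a_i-\alpha) \geq d_+(\mathfrak{s}) > b$) while every $a_i \notin \mathfrak{s}$ contributes $v(a_i - \alpha) - b$ (since $v(a_i - \alpha) \leq d_-(\mathfrak{s}) < b$), and summing yields a linear function of slope exactly $|\mathfrak{s}|$; this extends to the closed interval by the continuity from \Cref{lemma mathfrakh}(a).

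Combining these, $\tfun_{q,\rho}$ is a continuous piecewise linear function on $I(\mathfrak{s})$ with decreasing slopes of the form (odd integer in $[1,2g+1]$) $-|\mathfrak{s}|$. When $|\mathfrak{s}|$ is even these are exactly the odd integers in $[1-|\mathfrak{s}|,\,2g+1-|\mathfrak{s}|]$ as claimed; truncating at $2v(2)$ preserves continuity and piecewise linearity and introduces at most one open sub-interval of slope $0$ where the untruncated function would exceed $2v(2)$. The odd-$|\mathfrak{s}|$ case I would dispose of separately by a faster argument: in the interior of $I(\mathfrak{s})$ the normalized reduction of $f_{\alpha,\beta}$ is a scalar times the odd-degree monomial $x_{\alpha,\beta}^{|\mathfrak{s}|}$, which is not a square, so \Cref{prop good decomposition}(a) makes the trivial decomposition $f_{\alpha,\beta} = 0^2 + f_{\alpha,\beta}$ good at such $b$; by \Cref{rmk same t for good} this forces $\tbest{\RR}{D_{\alpha,b}} = 0$ on the interior, and continuity extends it to all of $I(\mathfrak{s})$.

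The step I expect to be the main obstacle is pinning down the slope of $\vfun_f$ on $I(\mathfrak{s})$: concretely, verifying that the constant slope $|\mathfrak{s}|$ does not jump at the endpoints $d_\pm(\mathfrak{s})$, where the configuration of roots relative to $D_{\alpha,b}$ changes as sibling clusters enter or leave the disc, and checking that total-oddness of $\rho_{\alpha,\beta}$ really does produce odd slopes for $\vfun_\rho$ (rather than being lost after translation).
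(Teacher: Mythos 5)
Your proposal follows essentially the same route as the paper's proof: both reduce to a single totally odd decomposition (good at every disc by \Cref{rmk totally odd good}), so that $\tbest{\RR}{D_{\alpha,b}}$ is the truncation of $\vfun_\rho - \vfun_f$, read off the odd slopes of $\vfun_\rho$ from oddness of $\rho_{\alpha,\beta}$ via \Cref{lemma mathfrakh}(b), compute the slope $|\mathfrak{s}|$ of $\vfun_f$ on $I(\mathfrak{s})$, and dispatch the odd-$|\mathfrak{s}|$ case by observing that the trivial decomposition is already good there. The two obstacles you flag at the end dissolve on inspection: \Cref{lemma mathfrakh}(b) shows that the right derivative of $\vfun_f(D_{\alpha,\cdot})$ at $d_-(\mathfrak{s})$ and the left derivative at $d_+(\mathfrak{s})$ each count exactly the roots lying in $\mathfrak{s}$ and hence equal $|\mathfrak{s}|$, so no jump occurs from inside $I(\mathfrak{s})$; and the persistence of total oddness under translation to the center $\alpha$ (for every scaling $\beta$) is built directly into \Cref{dfn translated scaled good totally odd}(b) together with \Cref{prop translated part-square decompositions}(b).
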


In light of the above lemma, either $b \mapsto \tbest{\RR}{D_{\alpha,b}}$ is always $<2v(2)$ over $I(\mathfrak{s})$, or else it attains the output $2v(2)$ over some closed sub-interval of $I(\mathfrak{s})$ and is $<2v(2)$ elsewhere. Let $J(\mathfrak{s})$ denote the sub-interval of $I(\mathfrak{s})=[d_-(\mathfrak{s}),d_+(\mathfrak{s})]$ over which the output of $b \mapsto \tbest{\RR}{D_{\alpha,b}}$ equals $2v(2)$; in the former case just mentioned, we have $J(\mathfrak{s})=\varnothing$, while in the latter case, the interval will have the form $J(\mathfrak{s})=[b_-(\mathfrak{s}),b_+(\mathfrak{s})]$ for some endpoints $b_\pm(\mathfrak{s})$.

\begin{rmk} \label{rmk structure of J}
    If the cluster $\mathfrak{s}$ has odd cardinality, then we have $\tbest{\RR}{D_{\alpha,b}} = 0$ for all $b\in I(\mathfrak{s})$, and therefore we have $J(\mathfrak{s})=\varnothing$.
\end{rmk}

By \Cref{prop normalization model}, it is clear that, given $D=D_{\alpha,b}$ with $b\in I(\mathfrak{s})$, the cover $\SF{\YY_D}\to \SF{\XX_D}$ is separable if and only if $b\in J(\mathfrak{s})$; in particular, for $b\in I(\mathfrak{s})$, the disc $D:=D_{\alpha,b}$ can only be valid if $b\in J(\mathfrak{s})$. To establish for which $b\in J(\mathfrak{s})$ the disc $D$ is valid, we need the following general lemma, which will allow us to compute the ramification of the cover $\SF{\YY_{D}}\to \SF{\XX_{D}}$ above $0$ and $\infty$.
\begin{lemma} \label{lemma ell and t function}
    Fix a center $\alpha\in K^\alg$, and choose $b\in \qq$ such that $\tbest{\RR}{D_{\alpha,b}}=2v(2)$, and consider the model $\YY_D$ corresponding to the disc $D:=D_{\alpha,b}$.  Let $\ell(\XX_D,P)$ be the integer defined in \Cref{dfn ell ramification index} for any point $P$ of $\SF{\XX_D}$.  Write $\partial^+\mathfrak{t}^\RR$ (resp.\ $\partial^-\mathfrak{t}^\RR$) for the right (resp.\ left) derivative of the function $c \mapsto \tbest{\RR}{D_{\alpha,c}}$.  Then we have the following.
    \begin{enumerate}[(a)]
        \item If $\partial^+\tbest{\RR}{b}\ge 0$, then we have $\ell(\XX_D,0)=0$.
        \item If $\partial^+\tbest{\RR}{b}$ is odd and negative, then we have $\ell(\XX_D,0)=1-\partial^+\tbest{\RR}{b}$.
        \item If $\partial^-\tbest{\RR}{b} \leq 0$, then we have $\ell(\XX_D,\infty)=0$.
        \item If $\partial^-\tbest{\RR}{b}$ is odd and positive, then we have $\ell(\XX_D,\infty)=1+\partial^-\tbest{\RR}{b}$.
    \end{enumerate}
     
    \begin{proof}
        This is just a rephrasing of  \Cref{lemma computation ell ramification} using the language introduced in \S\ref{sec depths piecewise-linear}.
        To see this, let us fix a part-square decomposition $f=q^2+\rho$ that is totally odd with respect to the center $\alpha$, and let ${q_0}$ and ${\rho_0}$ be the polynomials involved in the statement of \Cref{lemma computation ell ramification}: they are defined as appropriate scalings of $q_{\alpha,\beta}$ and $\rho_{\alpha,\beta}$ for some chosen $\beta\in (K^\alg)^\times$ such that $v(\beta)=b$.
        
        Now the polynomial $\overline{q_0}$ is a normalized reduction of $q_{\alpha,\beta}$, and either the polynomial $\overline{\rho_0}$ is $0$ (when $\tfun_{q,\rho}(D_{\alpha,\beta})>2v(2)$), or it is a normalized reduction of $\rho_{\alpha,\beta}$ (when $\tfun_{q,\rho}(D_{\alpha,\beta})=2v(2)$).
        Now we have $\mathfrak{t}(D_{\alpha,c}) = \truncate{\tfun(D_{\alpha,c})}$ for all $c \in \qq$ (see \Cref{rmk t fun computing}); moreover, whenever $\tfun_{q,\rho}(D_{\alpha,c})>0$ (and hence, in particular, for all $c$ in a neighborhood of $b$), we can write $\tfun_{q,\rho}(D_{\alpha,c})=\vfun_\rho(D_{\alpha,c})-2\vfun_q(D_{\alpha,c})$, where, in light of \Cref{lemma mathfrakh}, the first summand only has odd slopes, while the second summand only has even slopes.
        Let $n_\rho$ and $n_q$ denote the orders of vanishing of $\overline{\rho_0}$ and $\overline{q_0}$ at $x_{\alpha,\beta}=0$. The assumption in (a) means that either we have $\tfun_{q,\rho}(D_{\alpha,b})>2v(2)$, or we have $\tfun_{q,\rho}(D_{\alpha,b})=2v(2)$ with $\partial^+ \tfun_{q,\rho}(b)\ge 0$; thanks to \Cref{lemma mathfrakh}, this can be translated into saying that $n_\rho\ge 2n_q$, and it is now evident that the conclusion of part (a) follows from \Cref{lemma computation ell ramification}. A similar reasoning can be followed to prove parts (b)--(d).
    \end{proof}
\end{lemma}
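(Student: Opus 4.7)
The plan is to derive this directly from \Cref{lemma computation ell ramification} by translating the hypotheses there, which are phrased in terms of the vanishing orders $n_q(P)$ and $n_\rho(P)$ of the reductions $\overline{q_0}$ and $\overline{\rho_0}$ at $P = 0$ or $P = \infty$, into hypotheses on the one-sided derivatives of the piecewise-linear function $c \mapsto \tbest{\RR}{D_{\alpha,c}}$.  To set this up, I would first choose a part-square decomposition $f = q^2 + \rho$ that is totally odd with respect to $\alpha$, which exists by \Cref{prop totally odd existence} and is good at every disc $D_{\alpha,c}$ by \Cref{rmk totally odd good}.  Fix $\beta \in \bar{K}^\times$ with $v(\beta) = b$, and let $q_0, \rho_0$ be as in~(\ref{equation q_0 rho_0 formulas YD}), with $v(\gamma) = v(f_{\alpha,\beta}) + 2v(2)$.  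The hypothesis $\tbest{\RR}{D_{\alpha,b}} = 2v(2)$ means $\tfun_{q,\rho}(D_{\alpha,b}) \geq 2v(2) > 0$; this strict positivity persists in a neighborhood of $b$, so on that neighborhood one has the identity $\tfun_{q,\rho}(D_{\alpha,c}) = \vfun_\rho(D_{\alpha,c}) - 2\vfun_q(D_{\alpha,c})$ (since $v(q^2) < v(\rho)$ forces $v(f) = 2v(q)$).

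Next, I would invoke \Cref{lemma mathfrakh}(b) to identify $n_q(0)$ and $n_\rho(0)$ with right derivatives at $b$: the quantity $n_q(0)$ is the lowest degree appearing in a normalized reduction of $q_{\alpha,\beta}$ (which differs from $q_0$ only by a unit scalar), and this equals $\partial^+ \vfun_q(b)$; similarly $n_\rho(0) = \partial^+ \vfun_\rho(b)$ whenever $\overline{\rho_0} \neq 0$, while $n_\rho(0) = +\infty$ when $\overline{\rho_0} = 0$.  In the generic case this gives $n_\rho(0) - 2n_q(0) = \partial^+ \tfun_{q,\rho}(b)$.  A short case analysis, using $\tbest{\RR}{D_{\alpha,c}} = \min\{2v(2), \tfun_{q,\rho}(D_{\alpha,c})\}$, then yields the key dictionary $\partial^+ \tbest{\RR}{b} = \min\{0, \partial^+ \tfun_{q,\rho}(b)\}$; in particular, $\partial^+ \tbest{\RR}{b} < 0$ forces both $\tfun_{q,\rho}(D_{\alpha,b}) = 2v(2)$ exactly and $\partial^+ \tbest{\RR}{b} = \partial^+ \tfun_{q,\rho}(b)$.

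Parts (a) and (b) then drop out by matching cases with \Cref{lemma computation ell ramification}.  For (a), the hypothesis $\partial^+ \tbest{\RR}{b} \geq 0$ forces $2n_q(0) \leq n_\rho(0)$ (whether $\overline{\rho_0}$ vanishes or not), so part (a) of \Cref{lemma computation ell ramification} gives $\ell(\XX_D,0) = 0$.  For (b), the hypothesis that $\partial^+ \tbest{\RR}{b}$ is odd and negative forces $n_\rho(0) = 2n_q(0) + \partial^+ \tbest{\RR}{b}$ to be odd and strictly less than $2n_q(0)$, so part (b) of \Cref{lemma computation ell ramification} applies and gives $\ell(\XX_D,0) = 2n_q(0) - n_\rho(0) + 1 = 1 - \partial^+ \tbest{\RR}{b}$.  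Parts (c) and (d) follow symmetrically at $P = \infty$, using that the left derivative of $\vfun_h$ at $b$ equals the highest degree in a normalized reduction of $h_{\alpha,\beta}$ (\Cref{lemma mathfrakh}(b)); by the degree-complement convention adopted for $P = \infty$ in \Cref{lemma computation ell ramification}, this translates into the relation $n_\rho(\infty) - 2n_q(\infty) = -\partial^- \tfun_{q,\rho}(b)$, after which the same dichotomy between $\tfun_{q,\rho}(D_{\alpha,b}) = 2v(2)$ and $\tfun_{q,\rho}(D_{\alpha,b}) > 2v(2)$ closes the argument.

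The main obstacle will be the bookkeeping in this dictionary at $P = \infty$, where the roles of lowest and highest degree flip and there is the extra complementary shift by $g+1$ (for $q$) or $2g+2$ (for $\rho$); getting the sign conventions on $\partial^-$ versus $\partial^+$ correct and confirming that the truncation at $2v(2)$ interacts the expected way with the min on the right and the max on the left is where errors would most likely creep in.  A secondary subtlety is the uniform handling of the degenerate case $\overline{\rho_0} = 0$ (arising when $\tfun_{q,\rho}(D_{\alpha,b}) > 2v(2)$ strictly), but this falls automatically within case (a) or (c) once one observes that then $\mathfrak{t}^{\RR}$ is locally constant so both one-sided derivatives vanish.
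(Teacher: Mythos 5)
Your proof is correct and follows essentially the same strategy as the paper's: fix a totally odd part-square decomposition so that it is good at every disc centered at $\alpha$, translate the vanishing orders $n_q(P)$ and $n_\rho(P)$ into one-sided derivatives of the valuation functions via \Cref{lemma mathfrakh}(b), and then match cases with \Cref{lemma computation ell ramification}. The only substantive difference is that you spell out the dictionary $\partial^\pm\tbest{\RR}{b} = \pm\min\{0,\pm\partial^\pm\tfun_{q,\rho}(b)\}$ and the degenerate $\overline{\rho_0}=0$ case explicitly where the paper just says ``similar reasoning'' for parts (b)--(d); that extra care is correct and makes the argument more readable.
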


As a first application of the lemma above, we will show that a necessary condition for $D_{\alpha,b}$ to be a valid disc when $b\in I(\mathfrak{s})$ is that $b$ is an endpoint of the sub-interval $J(\mathfrak{s})\subseteq I(\mathfrak{s})$.
\begin{lemma}
    \label{lemma not a valid disc}
    Given $b\in I(\mathfrak{s})$ and letting $D = D_{\alpha,b}$, we have the following.
    \begin{itemize}
        \item[(a)] If $b\not\in J(\mathfrak{s})$, the cover $\SF{\YY_D}\to \SF{\XX_D}$ is inseparable; hence, we have in particular that $D$ is not a valid disc.
        \item[(b)] If $b$ is an interior point of $J(\mathfrak{s})$, then we have $\XX_D\not\le \Xrst$, and so $D$ is not a valid disc.
    \end{itemize}
    
    \begin{proof}
        The statement of (a) is a direct result of \Cref{prop normalization model}, as we have already discussed.  We therefore set out to prove the statement of (b); we assume that $J(\mathfrak{s})\neq \varnothing$ and $b_-(\mathfrak{s})< b< b_+(\mathfrak{s})$ and let $D = D_{\alpha,b}$. The number $N$ of distinct points of $\SF{\XX_D}$ to which the roots in $\mathcal{R}\cup \{\infty\}$ reduce is $\leq 2$; this is because, since $b$ does not coincide with an endpoint of the interval $I(\mathfrak{s})$, we have that the $2g+2$ roots $\Rinfty$ each reduce either to $0$ or to $\infty$ in $\SF{\XX_D}$. Moreover, since $b$ is an interior point of $J(\mathfrak{s})$, we have $\tbest{\RR}{D}=2v(2)$ and that the left and right derivatives of $b'\mapsto\tbest{\RR}{D_{\alpha,b'}}$ at $b'=b$ are both equal to $0$; by \Cref{lemma ell and t function}, this implies that $\SF{\YY_D}$ has two branches above $0\in \SF{\XX_D}$ and two branches above $\infty\in \SF{\XX_D}$, and the special fiber $\SF{\YY_D}$ consequently consists of two rational components (see \Cref{rmk R_0 R_1 R_2}).
        
        Now \Cref{thm part of rst separable} implies that $\XX_D\not\le \Xrst$; this is because we know that $N\le 2$ and that the special fiber $\SF{\YY_D}$ is not irreducible.
    \end{proof}
\end{lemma}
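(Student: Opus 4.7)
Part (a) should be essentially immediate. By the definition of $J(\mathfrak{s})$, the condition $b \notin J(\mathfrak{s})$ with $b \in I(\mathfrak{s})$ forces $\tbest{\RR}{D_{\alpha,b}} < 2v(2)$. Reading this back through Definition \ref{dfn t fun}, any good part-square decomposition of $f_{\alpha,\beta}$ satisfies $t < 2v(2)$, so we are in case (ii) of Proposition \ref{prop normalization model}, and $\SF{\YY_D} \to \SF{\XX_D}$ is inseparable; \Cref{dfn valid disc} then rules $D$ out as a valid disc. No computation needed beyond quoting definitions.

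The heart of the lemma is part (b). Suppose $b_-(\mathfrak{s}) < b < b_+(\mathfrak{s})$. On the open interval where $b \mapsto \tbest{\RR}{D_{\alpha,b}}$ is constantly equal to $2v(2)$, both one-sided derivatives of this function at $b$ vanish. Applying Lemma \ref{lemma ell and t function} at the two ends $P = 0$ and $P = \infty$ (cases (a) and (c), with $\partial^+\mathfrak{t}^{\RR}(b) = \partial^-\mathfrak{t}^{\RR}(b) = 0$), one obtains $\ell(\XX_D, 0) = \ell(\XX_D, \infty) = 0$. That is, the normalized cover $\NSF{\YY_D} \to \SF{\XX_D}$ is unramified at $0$ and at $\infty$.

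Next I would observe that, since $b$ is interior to $I(\mathfrak{s})$ as well (because $J(\mathfrak{s}) \subseteq I(\mathfrak{s})$), the $2g+2$ points of $\Rinfty$ all reduce to one of just two points of $\SF{\XX_D}$: those of $\mathfrak{s}$ reduce to $\overline{x_{\alpha,\beta}} = 0$, and those of $\Rinfty \setminus \mathfrak{s}$ reduce to $\overline{x_{\alpha,\beta}} = \infty$ (and at least one point reduces to each, since $\mathfrak{s}$ is a non-empty cluster and $\infty$ is always present). Hence the integer $N$ of Theorem \ref{thm part of rst separable} is exactly $2$. Since the cover $\NSF{\YY_D} \to \SF{\XX_D}$ is unramified above the entire branch locus $\{0,\infty\}$, it must be étale, hence trivial (as $\SF{\XX_D} \cong \mathbb{P}^1_k$ has no non-trivial connected étale covers); in particular $\SF{\YY_D}$ splits into two components and fails to be irreducible.

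With $N = 2$ and $\SF{\YY_D}$ reducible, none of the three alternatives (i)--(iii) in Theorem \ref{thm part of rst separable} is satisfied, so $\XX_D \not\le \Xrst$, and $D$ is therefore not a valid disc. I do not foresee a real obstacle: the whole argument is a bookkeeping exercise once the piecewise-linearity lemma for $\mathfrak{t}^{\RR}$ and the Kähler-differential computation \Cref{lemma ell and t function} are in hand. The only point deserving care is justifying that both one-sided derivatives at an interior point of the plateau really are zero, which follows from the monotonicity of slopes established in \Cref{lemma slopes of t}.
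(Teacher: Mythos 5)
Your proposal is correct and follows essentially the same route as the paper's proof: both parts hinge on Proposition \ref{prop normalization model} for (a), and on the vanishing of both one-sided derivatives of $\mathfrak{t}^{\mathcal{R}}$ at an interior plateau point, fed into Lemma \ref{lemma ell and t function} and Theorem \ref{thm part of rst separable}, for (b). The only cosmetic difference is that you re-derive the reducibility of $\SF{\YY_D}$ from the \'etale-cover/simple-connectedness argument, whereas the paper simply cites Remark \ref{rmk R_0 R_1 R_2}(d), which packages exactly that reasoning.
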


Let us write $\lambda_-(\mathfrak{s}) = \partial^-\tbest{\RR}{b_-}$ and $\lambda_+(\mathfrak{s}) = -\partial^+\tbest{\RR}{b_+}$ (where $\partial^\pm\mathfrak{t}^\RR$ is defined as in \Cref{lemma ell and t function}).  The integer $\lambda_-(\mathfrak{s})$ (resp.\ $\lambda_+(\mathfrak{s})$) is only defined if $J(\mathfrak{s})\neq \varnothing$ and its endpoint $b_-(\mathfrak{s})$ (resp.\ $b_+(\mathfrak{s})$) does not coincide with $d_-(\mathfrak{s})$ (resp.\ $d_+(\mathfrak{s})$). In particular, $\lambda_+(\mathfrak{s})$ and $\lambda_-(\mathfrak{s})$ can only be defined if $\mathfrak{s}$ has even cardinality (by \Cref{rmk structure of J}).  When defined, both integers $\lambda_\pm(\mathfrak{s})$ are odd (by \Cref{lemma slopes of t}); more precisely, we have $\lambda_-(\mathfrak{s})\in \{1, 3, \ldots, 2g+1-|\mathfrak{s}|\}$ and $\lambda_+(\mathfrak{s})\in \{1, 3, \ldots|\mathfrak{s}|-1\}$.

\begin{lemma}
    \label{prop lambda plus minus and ell}
    With the above notation, suppose that $J(\mathfrak{s}) \neq \varnothing$, and write $D_\pm = D_{\alpha,b_\pm(\mathfrak{s})}$. Then we have the following.
    \begin{enumerate}
        \item[(a)] Assume that $b_-(\mathfrak{s})<b_+(\mathfrak{s})$ and $|\mathfrak{s}|$ is even. Then we have $\ell(\XX_{D_-},0)=\ell(\XX_{D_+};\infty)=0$.
        \item[(b)] Assume that $b_-(\mathfrak{s})<b_+(\mathfrak{s})$ and $|\mathfrak{s}|$ is odd. Then we have $\ell(\XX_{D_-},0)=\ell(\XX_{D_+};\infty)=1$.
        \item[(c)] Assume that $b_-(\mathfrak{s})>d_-(\mathfrak{s})$. Then we have $\ell(\XX_{D_-},\infty)=1+\lambda_-(\mathfrak{s})$.
        \item[(d)] Assume that $b_+(\mathfrak{s})<d_+(\mathfrak{s})$. Then we have $\ell(\XX_{D_+},0)=1+\lambda_+(\mathfrak{s})$.
    \end{enumerate}
    \begin{proof}
        This follows immediately from \Cref{lemma ell and t function}, taking into account the properties that we have already discussed of the piecewise-linear function $I(\mathfrak{s}) \ni b \mapsto\tbest{\RR}{D_{\alpha,b}}$ and of the linear function $I \ni b \mapsto \vfun_f(D_{\alpha,b})$ in our setting.
    \end{proof}
\end{lemma}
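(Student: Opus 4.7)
The plan is to deduce all four assertions as immediate applications of \Cref{lemma ell and t function}, by computing the appropriate one-sided derivatives of the piecewise-linear function $b \mapsto \tbest{\mathcal{R}}{D_{\alpha,b}}$ at the endpoints $b_\pm(\mathfrak{s})$ of $J(\mathfrak{s})$ and feeding them into the formulas in \Cref{lemma ell and t function}. By the definition of $J(\mathfrak{s})$, the function equals its maximal value $2v(2)$ on $J(\mathfrak{s})$ and is strictly smaller on $I(\mathfrak{s}) \smallsetminus J(\mathfrak{s})$; combined with the piecewise linearity and decreasing-slope property from \Cref{lemma slopes of t}, this completely determines the slopes of $\mathfrak{t}^{\mathcal{R}}$ on both sides of $b_\pm(\mathfrak{s})$.

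First I would handle (a). Under the assumption $b_-(\mathfrak{s}) < b_+(\mathfrak{s})$, the function $b \mapsto \tbest{\mathcal{R}}{D_{\alpha,b}}$ is constantly equal to $2v(2)$ on an interval extending immediately to the right of $b_-(\mathfrak{s})$, so its right one-sided derivative at $b_-(\mathfrak{s})$ is $0$; \Cref{lemma ell and t function}(a) applied at $D = D_-$ then yields $\ell(\XX_{D_-},0)=0$. Symmetrically, the left one-sided derivative at $b_+(\mathfrak{s})$ is $0$, and \Cref{lemma ell and t function}(c) gives $\ell(\XX_{D_+},\infty)=0$.

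For (c) and (d), the hypothesis $b_-(\mathfrak{s}) > d_-(\mathfrak{s})$ (respectively $b_+(\mathfrak{s}) < d_+(\mathfrak{s})$) guarantees that $b_-(\mathfrak{s})$ (respectively $b_+(\mathfrak{s})$) lies in the interior of $I(\mathfrak{s})$, so the one-sided derivative $\lambda_-(\mathfrak{s}) = \partial^- \mathfrak{t}^{\mathcal{R}}(b_-)$ (respectively $-\lambda_+(\mathfrak{s}) = \partial^+ \mathfrak{t}^{\mathcal{R}}(b_+)$) is well-defined. By \Cref{lemma slopes of t}, $\lambda_-(\mathfrak{s})$ is an odd positive integer (the function strictly increases from below toward its maximum at $b_-(\mathfrak{s})$) and $-\lambda_+(\mathfrak{s})$ is an odd negative integer. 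Applying \Cref{lemma ell and t function}(d) at $D_-$ gives $\ell(\XX_{D_-},\infty) = 1 + \lambda_-(\mathfrak{s})$, while \Cref{lemma ell and t function}(b) at $D_+$ gives $\ell(\XX_{D_+},0) = 1-(-\lambda_+(\mathfrak{s})) = 1 + \lambda_+(\mathfrak{s})$.

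Finally, (b) is handled by the same bookkeeping at $b_\pm(\mathfrak{s})$, reading off the one-sided derivatives there from \Cref{lemma slopes of t} under the parity hypothesis on $|\mathfrak{s}|$ and applying the relevant items of \Cref{lemma ell and t function}; the numerical value $1$ in place of $0$ appears precisely because the parity change forces the relevant one-sided slope to be the smallest odd integer of the appropriate sign rather than zero. The main obstacle, uniform across all four items, is to be certain that the slopes produced by \Cref{lemma slopes of t} satisfy the exact parity conditions required by \Cref{lemma ell and t function} so that the latter applies unambiguously; once the piecewise-linear shape of $\mathfrak{t}^{\mathcal{R}}$ near $b_\pm(\mathfrak{s})$ is recorded, each assertion is a one-line substitution.
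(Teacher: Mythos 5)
Your treatment of parts (a), (c), and (d) is correct and is exactly the paper's argument: read off the one-sided derivatives of the concave piecewise-linear function $b \mapsto \tbest{\RR}{D_{\alpha,b}}$ at the endpoints of $J(\mathfrak{s})$ --- constantly equal to $2v(2)$ on $J(\mathfrak{s})$, strictly smaller elsewhere on $I(\mathfrak{s})$, with odd-integer slopes off the plateau by \Cref{lemma slopes of t} --- and substitute into \Cref{lemma ell and t function}.

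However, your sketch for part (b) is not correct, and it fails for two reasons. First, the hypotheses of (b) are incompatible with the standing assumption $J(\mathfrak{s}) \neq \varnothing$: by \Cref{rmk structure of J} (the odd-cardinality case of \Cref{lemma slopes of t}), if $|\mathfrak{s}|$ is odd then $\tbest{\RR}{D_{\alpha,b}}$ is identically zero on $I(\mathfrak{s})$, so $J(\mathfrak{s}) = \varnothing$; part (b) is therefore vacuously true, and there are no slopes to read off at all. Second, your heuristic (``the parity change forces the relevant one-sided slope to be the smallest odd integer of the appropriate sign'') cannot produce the value $\ell = 1$ that (b) asserts: \Cref{lemma ell and t function} only ever outputs $0$ or $1 \mp \partial^{\pm}$ with $\partial^{\pm}$ an odd integer, so its outputs are always even, in agreement with \Cref{rmk ell}(b) and \Cref{rmk ramification index}, which record that $\ell$ is a non-negative even integer. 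The impossibility of $\ell = 1$ is itself the signal that (b) must be dispatched by observing the vacuity rather than by slope bookkeeping.
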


We are now ready to state a necessary and sufficient condition for $D_{\alpha,b}$ to be a valid disc when $b\in I(\mathfrak{s})$ in the following theorem, which immediately implies \Cref{thm introduction main}(b).

\begin{thm}
    \label{thm summary depths valid discs}
    Let $\mathfrak{s}$ be a cluster of $\RR$, and fix a center $\alpha \in \mathfrak{s}$.  Let $D = D_{\alpha,b}$ for some $b\in I(\mathfrak{s})=[d_-(\mathfrak{s}),d_+(\mathfrak{s})]$ (in other words, let $D$ be any disc linked to $\mathfrak{s}$).  Then the disc $D$ is valid if and only if $b$ is an endpoint of $J(\mathfrak{s})$. Hence, there exist two (possibly coinciding) valid discs $D_{\alpha,b_-(\mathfrak{s})}$ and $D_{\alpha,b_+(\mathfrak{s})}$ linked to $\mathfrak{s}$ when $J(\mathfrak{s}) \neq \varnothing$, and there does not exist a valid disc linked to $\mathfrak{s}$ when $J(\mathfrak{s})=\varnothing$.

    \begin{proof}
        The forward direction of the statement is already immediately implied by \Cref{lemma not a valid disc}.  We therefore assume that $J(\mathfrak{s})\neq\varnothing$ and that $b\in \{ b_-(\mathfrak{s}),b_+(\mathfrak{s})\}$ and set out to prove that the disc $D = D_{\alpha, b}$ is valid.  We remark that, since $\tbest{\RR}{D}=2v(2)$, the cover $\SF{\YY_D}\to \SF{\XX_D}$ is separable thanks to \Cref{prop normalization model}; to determine whether or not $D$ is a valid disc, we may therefore apply the criterion stated in \Cref{thm part of rst separable}. Let $N$ be the integer defined in that theorem.
        
        Assume that $b$ is an endpoint of $I(\mathfrak{s})$.  Then \Cref{lemma discs linked to clusters} implies that the roots $\Rinfty$ reduce to $\geq 3$ distinct points of $\SF{\XX_D}$ (i.e., $N\ge 3$), and $D$ is certainly a valid disc by \Cref{thm part of rst separable}.
        
        Now assume instead that the rational number $b$ is an interior point of $I(\mathfrak{s})$.  Then the roots of $\mathfrak{s}$ reduce to $0\in \SF{\XX_D}$, while those of $\mathcal{R}\setminus \mathfrak{s}$, together with $\infty$, reduce to $\infty\in \SF{\XX_D}$. Let us assume that $b=b_-(\mathfrak{s})$: the $b=b_+(\mathfrak{s})$ case is analogous and will thus be omitted. We know from \Cref{prop lambda plus minus and ell}(b) that $\ell(\XX_-,\infty)=1+\lambda_-(\mathfrak{s})\ge 2$; in particular, $\SF{\YY_D}$ has only one branch above $\infty\in \SF{\XX_D}$ and is consequently irreducible.  We have $N=2$, and thus the criterion stated in \Cref{thm part of rst separable} ensures that $\XX_D\le \Xrst$, as desired.

    \end{proof}
\end{thm}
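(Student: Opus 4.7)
My plan is to establish the ``if and only if'' by treating the two directions separately, with the forward direction being essentially free from previously established results and the reverse direction being the one requiring real work. For the forward direction, \Cref{lemma not a valid disc}(a) already rules out validity when $b \notin J(\mathfrak{s})$ (since then $\SF{\YY_D} \to \SF{\XX_D}$ is inseparable), and \Cref{lemma not a valid disc}(b) rules out validity when $b$ is an interior point of $J(\mathfrak{s})$ (since then $\XX_D \not\le \Xrst$). Taken together, these immediately force $b \in \{b_-(\mathfrak{s}), b_+(\mathfrak{s})\}$ whenever $D_{\alpha,b}$ is valid.

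For the reverse direction, I assume $J(\mathfrak{s}) \neq \varnothing$ and $b \in \{b_-(\mathfrak{s}), b_+(\mathfrak{s})\}$, and I want to show $D := D_{\alpha, b}$ is valid. Since by construction $\tbest{\RR}{D} = 2v(2)$, \Cref{prop normalization model} guarantees that $\SF{\YY_D} \to \SF{\XX_D}$ is separable, so the second condition in \Cref{dfn valid disc} is satisfied, and it remains to verify $\XX_D \le \Xrst$. The tool for this is the criterion of \Cref{thm part of rst separable}, which demands that either $N \ge 3$, or $N \ge 2$ with $\SF{\YY_D}$ irreducible, or $N = 1$ with $\SF{\YY_D}$ irreducible of positive abelian rank, where $N$ is the number of points of $\SF{\XX_D}$ to which the branch locus $\Rinfty$ reduces.

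The natural split is according to whether $b$ coincides with an endpoint of $I(\mathfrak{s})$ or lies strictly in its interior. In the endpoint subcase, \Cref{lemma discs linked to clusters} applies directly: the disc $D$ is linked to three or more clusters (the children of $\mathfrak{s}$ together with $\mathfrak{s}$ itself, or the siblings of $\mathfrak{s}$ together with the parent), so $N \ge 3$ and \Cref{thm part of rst separable}(i) yields validity. In the interior subcase, the roots of $\mathfrak{s}$ all reduce to $0 \in \SF{\XX_D}$ while those of $(\RR \setminus \mathfrak{s}) \cup \{\infty\}$ reduce to $\infty \in \SF{\XX_D}$, so $N = 2$. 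The key point is to prove irreducibility of $\SF{\YY_D}$; this is where \Cref{prop lambda plus minus and ell} enters. Assuming $b = b_-(\mathfrak{s})$ (the case $b = b_+(\mathfrak{s})$ being symmetric), part (c) of that proposition gives $\ell(\XX_D, \infty) = 1 + \lambda_-(\mathfrak{s}) \ge 2$, which forces $\SF{\YY_D}$ to have a single branch above $\infty$ and hence to be irreducible (via \Cref{rmk R_0 R_1 R_2}(d)). Applying the second clause of \Cref{thm part of rst separable} then concludes that $\XX_D \le \Xrst$.

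The main potential obstacle I anticipate is keeping careful track of which geometric regime we are in: specifically, one must confirm that in the interior subcase the integer $\lambda_-(\mathfrak{s})$ is genuinely defined (i.e.\ that $b_-(\mathfrak{s}) > d_-(\mathfrak{s})$, which is precisely the hypothesis of \Cref{prop lambda plus minus and ell}(c)), and that the slope is odd and positive as asserted earlier in the subsection; otherwise the appeal to \Cref{prop lambda plus minus and ell} would fail. But this is guaranteed once we have placed ourselves in the interior regime. A symmetric argument handles $b = b_+(\mathfrak{s})$ using part (d) of that proposition instead of (c), completing the proof.
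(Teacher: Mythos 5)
Your proof is correct and follows essentially the same structure as the paper's own argument: forward direction via \Cref{lemma not a valid disc}, then for the converse establishing separability via $\tbest{\RR}{D}=2v(2)$ and applying the criterion of \Cref{thm part of rst separable}, splitting into the two cases according to whether $b$ is an endpoint or interior point of $I(\mathfrak{s})$. You also correctly cite part (c) of \Cref{prop lambda plus minus and ell} for the interior case (the paper's text cites part (b), which appears to be a typo) and explicitly verify that the hypothesis $b_-(\mathfrak{s}) > d_-(\mathfrak{s})$ needed for that part is satisfied in the interior regime.
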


As a corollary to the above theorem, we easily obtain \Cref{thm introduction main}(a) as well, using \Cref{rmk structure of J} (along with its implication that at least in the particular case that there are no even-cardinality clusters, every valid disc $D$ is linked to no cluster and thus satisfies $D \cap \mathcal{R} = \varnothing$).

\subsection{Separating the roots (for an even-cardinality cluster \texorpdfstring{$\mathfrak{s}$}{s})}
\label{sec depths separating}
Throughout this subsection, we let $\mathfrak{s}$ be an even-cardinality cluster of roots and fix an element $\alpha \in \mathfrak{s}$.

\subsubsection{Factoring $f$}
\label{sec depths separating factorizing}
We write the polynomial $f(x)$ as a product $f(x)=c f^{\mathfrak{s}}(x) f^{\RR\setminus \mathfrak{s}}(x)$, where $c$ is the leading coefficient of $f$, and we write 
\begin{equation} \label{eq factorization}
    f^{\mathfrak{s}}(x) = \prod_{a \in \mathfrak{s}} (x - a)
    \qquad \mathrm{and} \qquad
    f^{\RR\setminus\mathfrak{s}}(x) = \prod_{a \in \mathcal{R}\setminus\mathfrak{s}} (x- a).
\end{equation}

Now let us define $\mathfrak{t}_+^{\mathfrak{s}}$ and $\mathfrak{t}_-^{\mathfrak{s}}$ to be the functions on the domain $[0,+\infty)$ given by 
\begin{equation} \label{eq mathfrak t pm}
    \mathfrak{t}_+^{\mathfrak{s}}: b \mapsto \tbest{\mathfrak{s}}{D_{\alpha,d_+(\mathfrak{s})-b}}\qquad \text{and}\qquad
    \mathfrak{t}_-^{\mathfrak{s}}: b \mapsto \tbest{\RR\setminus\mathfrak{s}}{D_{\alpha,b+d_-(\mathfrak{s})}}.
\end{equation}

Essentially, the function $\mathfrak{t}_+^{\mathfrak{s}}$ is defined by evaluating $\tbestsimple{\mathfrak{s}}$ on discs that are enlargements of $D_{\alpha,d_+(\mathfrak{s})}$; all such discs contain $\mathfrak{s}$, and  $D_{\alpha,d_+(\mathfrak{s})}$ is the minimal disc centered at $\alpha$ with this property.  Symmetrically, the function $\mathfrak{t}_-^{\mathfrak{s}}$ is defined by evaluating $\tbestsimple{\RR\setminus\mathfrak{s}}$ at contractions of $D_{\alpha,d_-(\mathfrak{s})}$ around the center $\alpha$: all such discs are disjoint from $\RR\setminus\mathfrak{s}$, except the largest one (i.e.\ $D_{\alpha,d_-(\mathfrak{s})})$, which is the minimal disc centered at $\alpha$ that intersects $\RR\setminus\mathfrak{s}$.

\begin{rmk}
    \label{rmk tpm indepdendent of the center}
    The function $\mathfrak{t}_+^{\mathfrak{s}}$ does not depend on the choice of $\alpha\in \mathfrak{s}$ used to define it. On the other hand, the function  $\mathfrak{t}_-^{\mathfrak{s}}(b)$ is the same for all $\alpha \in \mathfrak{s}$ only when $b\in [0,\delta(\mathfrak{s})]\subseteq [0,+\infty)$ but may depend on the choice of $\alpha \in \mathfrak{s}$ at inputs beyond this interval.  We justify leaving $\alpha$ out of our notation for $\mathfrak{t}_-^{\mathfrak{s}}$ by the fact that they hypotheses in our results below (in particular, \Cref{prop formulas for b_pm}) will only depend on the values this function takes on the interval $[0, \delta(\mathfrak{s})]$.
\end{rmk}

\begin{prop}
    \label{prop properties t plus minus}
    Each of the functions $\mathfrak{t}_\pm^{\mathfrak{s}}$ is strictly increasing on its domains until it reaches $2v(2)$ and become constant.  Over the part of the domain where $\mathfrak{t}^{\mathfrak{s}}_+$ (resp.\ $\mathfrak{t}^{\mathfrak{s}}_-$) is not constant, its slopes are decreasing odd integers between 1 and $|\mathfrak{s}|-1$ (resp.\ between 1 and $2g+1-|\mathfrak{s}|$).
    \begin{proof}
        We will only prove the result for $\mathfrak{t}^{\mathfrak{s}}_{+}$, as the proof for $\mathfrak{t}^{\mathfrak{s}}_{-}$ is analogous. Choose a part-square decomposition for $f^{\mathfrak{s}}=(q^{\mathfrak{s}})^2+\rho^{\mathfrak{s}}$ that is totally odd with respect to the center $\alpha$, so that  $\mathfrak{t}^{\mathfrak{s}}_{+}(b)=\truncate{\tfun_{q^\mathfrak{s},\rho^\mathfrak{s}}(D_{\alpha,b_+(\mathfrak{s})-b})}$ for all $b\in [0,+\infty)$. Since $\mathfrak{s}\subset D_{\alpha,b_+(\mathfrak{s})-b}$, we deduce from \Cref{lemma mathfrakh} that $[0, +\infty) \ni b \mapsto \vfun_{f^\mathfrak{s}}(D_{\alpha,b_+(\mathfrak{s})-b})$ has slope 0; on the other hand, the function $[0, +\infty) \ni b \mapsto \vfun_{\rho^\mathfrak{s}}(D_{\alpha,b_+(\mathfrak{s})-b})$ has odd integer slopes between 1 and $|\mathfrak{s}|-1$. From this, recalling that $\tfun_{q^\mathfrak{s}, \rho^\mathfrak{s}}=\vfun_{\rho^\mathfrak{s}}-\vfun_{f^\mathfrak{s}}$ by definition, the proposition follows.
    \end{proof}
\end{prop}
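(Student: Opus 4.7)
The plan is to express both functions explicitly using totally odd part-square decompositions of the factors $f^{\mathfrak{s}}$ and $f^{\RR\setminus\mathfrak{s}}$ with respect to $\alpha$. Such decompositions $f^{\mathfrak{s}} = (q^{\mathfrak{s}})^2 + \rho^{\mathfrak{s}}$ and $f^{\RR\setminus\mathfrak{s}} = (q^{\RR\setminus\mathfrak{s}})^2 + \rho^{\RR\setminus\mathfrak{s}}$ exist by \Cref{prop totally odd existence} and, by \Cref{rmk totally odd good}, are good at every disc centered at $\alpha$. Together with \Cref{rmk t fun computing}, this lets us write
\begin{align*}
    \mathfrak{t}^{\mathfrak{s}}_+(b) &= \truncate{\vfun_{\rho^{\mathfrak{s}}}(D_{\alpha,d_+(\mathfrak{s})-b}) - \vfun_{f^{\mathfrak{s}}}(D_{\alpha,d_+(\mathfrak{s})-b})}, \\
    \mathfrak{t}^{\mathfrak{s}}_-(b) &= \truncate{\vfun_{\rho^{\RR\setminus\mathfrak{s}}}(D_{\alpha,b+d_-(\mathfrak{s})}) - \vfun_{f^{\RR\setminus\mathfrak{s}}}(D_{\alpha,b+d_-(\mathfrak{s})})},
\end{align*}
so the slope and monotonicity analysis reduces, via \Cref{lemma mathfrakh}, to examining the four valuation functions on the right-hand sides.

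For $\mathfrak{t}^{\mathfrak{s}}_+$, every root of $f^{\mathfrak{s}}$ lies in $\mathfrak{s}\subseteq D_{\alpha,d_+(\mathfrak{s})}\subseteq D_{\alpha,d_+(\mathfrak{s})-b}$ for all $b\geq 0$, so by \Cref{lemma mathfrakh}(b) the function $c\mapsto \vfun_{f^{\mathfrak{s}}}(D_{\alpha,c})$ is linear of slope $|\mathfrak{s}|$ on the region $c \leq d_+(\mathfrak{s})$. Since the decomposition is totally odd with respect to $\alpha$, the Taylor expansion $\rho^{\mathfrak{s}}(x)=\sum_i R_i(x-\alpha)^i$ has $R_i=0$ for all even $i$, and combined with $\deg(\rho^{\mathfrak{s}})\leq \deg(f^{\mathfrak{s}})=|\mathfrak{s}|$ (\Cref{rmk degree of rho}) and the parity of $|\mathfrak{s}|$, its nonzero terms occur only at odd degrees $i$ with $1\leq i \leq |\mathfrak{s}|-1$. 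Hence $c\mapsto \vfun_{\rho^{\mathfrak{s}}}(D_{\alpha,c})$ is piecewise linear and concave, with slopes ranging over odd integers in $[1,|\mathfrak{s}|-1]$. Forming the difference and composing with the order-reversing substitution $c = d_+(\mathfrak{s})-b$ produces a piecewise linear function of $b$ whose slopes equal $|\mathfrak{s}|-s$, where $s$ is an odd integer from $[1,|\mathfrak{s}|-1]$; these values themselves lie in the odd range $[1,|\mathfrak{s}|-1]$ and decrease as $b$ grows (since $c$ shrinks and concavity of $\vfun_{\rho^{\mathfrak{s}}}$ in $c$ forces $s$ to grow). Positivity of the slopes gives strict monotonicity, and truncation at $2v(2)$ delivers the full description of $\mathfrak{t}^{\mathfrak{s}}_+$.

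The argument for $\mathfrak{t}^{\mathfrak{s}}_-$ is symmetric, with the roles of the two valuation functions swapped. Every root of $f^{\RR\setminus\mathfrak{s}}$ satisfies $v(a-\alpha)\leq d_-(\mathfrak{s})$, while the relevant discs $D_{\alpha,b+d_-(\mathfrak{s})}$ have depth $\geq d_-(\mathfrak{s})$; by \Cref{lemma mathfrakh}(b) this forces $c\mapsto \vfun_{f^{\RR\setminus\mathfrak{s}}}(D_{\alpha,c})$ to be \emph{constant} on $c \geq d_-(\mathfrak{s})$, contributing $0$ to the $b$-slope. The same totally odd/Taylor-coefficient analysis applied to $\rho^{\RR\setminus\mathfrak{s}}$, combined with $\deg(f^{\RR\setminus\mathfrak{s}})=2g+1-|\mathfrak{s}|$ being odd and $\deg(\rho^{\RR\setminus\mathfrak{s}})\leq 2g+2-|\mathfrak{s}|$ (\Cref{rmk degree of rho}), restricts its nonzero Taylor coefficients to odd degrees in $[1,2g+1-|\mathfrak{s}|]$. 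Since $c=b+d_-(\mathfrak{s})$ is order-preserving in $b$, the untruncated $b$-slopes of $\mathfrak{t}^{\mathfrak{s}}_-$ agree with the odd $c$-slopes of $\vfun_{\rho^{\RR\setminus\mathfrak{s}}}$, lie in $[1,2g+1-|\mathfrak{s}|]$, and decrease with $b$ by concavity. The only real subtlety throughout is the sign-and-parity bookkeeping around the order-reversing substitution in the $\mathfrak{t}^{\mathfrak{s}}_+$ case, where one must verify that the combined slopes $|\mathfrak{s}|-s$ land on odd values in the stated range; since $|\mathfrak{s}|$ is even this is automatic, and the symmetric bookkeeping for $\mathfrak{t}^{\mathfrak{s}}_-$ is even simpler because the $\vfun_{f^{\RR\setminus\mathfrak{s}}}$-contribution vanishes outright.
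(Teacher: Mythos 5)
Your proof is correct and takes essentially the same route as the paper's: choose a totally odd part-square decomposition of each factor and read off the slopes via \Cref{lemma mathfrakh}. In fact, your bookkeeping of the order-reversing substitution $c = d_+(\mathfrak{s})-b$ is more careful than the printed argument: the paper's proof claims that $b\mapsto \underline{v}_{f^\mathfrak{s}}(D_{\alpha,d_+(\mathfrak{s})-b})$ has slope $0$, but since $\alpha\in\mathfrak{s}$ and $c\leq d_+(\mathfrak{s})$, the $c$-slope of $\underline{v}_{f^\mathfrak{s}}$ is $|\mathfrak{s}|$ and hence the $b$-slope is $-|\mathfrak{s}|$; the slope-$0$ reading is literally correct only after passing to the standard form $f_+^{\mathfrak{s},\alpha}$ and the reparametrization of \Cref{rmk tpm and fpm}(b), which the paper appears to have in mind but does not say. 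Your explicit computation of the combined $b$-slopes as $|\mathfrak{s}|-s$ with $s$ odd in $[1,|\mathfrak{s}|-1]$, together with the parity and concavity checks, lands exactly where it should, and the argument for $\mathfrak{t}^{\mathfrak{s}}_-$ (where no sign flip intervenes and the $\underline{v}_{f^{\RR\setminus\mathfrak{s}}}$-contribution is indeed constant for $c>d_-(\mathfrak{s})$) is likewise right.
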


We now claim that the function $I(\mathfrak{s}) \ni b \mapsto \tbest{\RR}{D_{\alpha,b}} \in \zerotwo$ we have studied in the previous subsection can be completely recovered from $\mathfrak{t}_\pm^{\mathfrak{s}}$. In fact, we have the following.
\begin{prop} \label{prop t^R is min of t^s and t^(R-s)}
    For all $b \in I(\mathfrak{s})$, we have 
    \begin{equation*}
        \tbest{\RR}{D_{\alpha,b}}=\min\{\tbest{\mathfrak{s}}{D_{\alpha,b}},\tbest{\RR\setminus\mathfrak{s}}{D_{\alpha,b}}\}=\min\{\mathfrak{t}_+^{\mathfrak{s}}(d_+(\mathfrak{s})-b),\mathfrak{t}_-^{\mathfrak{s}}(b-d_-(\mathfrak{s}))\}.
    \end{equation*}
    \begin{proof}
        It is clearly enough to prove the result for $b$ an interior point of $I(\mathfrak{s})$, which will extend by continuity to the endpoints of $I(\mathfrak{s})$. For such an input $b$, we note that the roots $s\in \mathfrak{s}$ satisfy $v(s-\alpha)<b$, while the roots $s\in \RR\setminus \mathfrak{s}$ satisfy $v(s-\alpha)>b$. As a consequence, the hypothesis in point (ii) of \Cref{prop mathfrak t minimum} holds and we get the desired equality.
    \end{proof}
\end{prop}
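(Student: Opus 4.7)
The plan is to derive the two equalities separately: the second is essentially a change of variables, while the first is an application of \Cref{prop mathfrak t minimum}(ii) combined with a continuity argument to handle the endpoints of $I(\mathfrak{s})$.

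For the second equality, I would simply unwind the definitions given in (\ref{eq mathfrak t pm}). Substituting the input $d_+(\mathfrak{s}) - b$ into $\mathfrak{t}_+^{\mathfrak{s}}$ yields
$$\mathfrak{t}_+^{\mathfrak{s}}(d_+(\mathfrak{s}) - b) = \tbest{\mathfrak{s}}{D_{\alpha, d_+(\mathfrak{s}) - (d_+(\mathfrak{s}) - b)}} = \tbest{\mathfrak{s}}{D_{\alpha, b}},$$
and symmetrically $\mathfrak{t}_-^{\mathfrak{s}}(b - d_-(\mathfrak{s})) = \tbest{\RR \setminus \mathfrak{s}}{D_{\alpha, b}}$, so the second equality is immediate and requires no further work.

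For the first equality, I would apply \Cref{prop mathfrak t minimum} to the disjoint union $\RR = \mathfrak{s} \sqcup (\RR \setminus \mathfrak{s})$: the general inequality $\tbest{\RR}{D_{\alpha,b}} \geq \min\{\tbest{\mathfrak{s}}{D_{\alpha,b}}, \tbest{\RR\setminus\mathfrak{s}}{D_{\alpha,b}}\}$ is the first assertion of that proposition, and the task reduces to proving equality. I would show equality for every $b$ in the interior of $I(\mathfrak{s})$ by verifying the hypothesis of part (ii). Set $D := D_{\alpha, b}$ with $d_-(\mathfrak{s}) < b < d_+(\mathfrak{s})$. The remark following \Cref{dfn linked} gives $D \cap \RR = \mathfrak{s}$, so $D \cap (\RR \setminus \mathfrak{s}) = \varnothing$. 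Moreover, the disc $D' := D_{\alpha, d_+(\mathfrak{s})}$ satisfies $D' \subsetneq D$ (since $d_+(\mathfrak{s}) > b$) and $\mathfrak{s} \subseteq D'$ (by the definition of $d_+(\mathfrak{s})$ together with $\alpha \in \mathfrak{s}$). Both requirements of \Cref{prop mathfrak t minimum}(ii) are thus satisfied, and the first equality holds on the open interval $(d_-(\mathfrak{s}), d_+(\mathfrak{s}))$.

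It remains to extend equality to the two endpoints $b = d_\pm(\mathfrak{s})$. Both sides of the first equality are continuous functions of $b$: fixing once and for all totally odd part-square decompositions of $f$, $f^{\mathfrak{s}}$, and $f^{\RR \setminus \mathfrak{s}}$ with respect to $\alpha$ (which exist by \Cref{prop totally odd existence}), \Cref{rmk totally odd good} guarantees that each is good at every disc $D_{\alpha, b}$; then \Cref{lemma mathfrakh} together with \Cref{dfn t fun} realizes each of $\tbest{\RR}{D_{\alpha, b}}$, $\tbest{\mathfrak{s}}{D_{\alpha, b}}$, and $\tbest{\RR \setminus \mathfrak{s}}{D_{\alpha, b}}$ as the truncation at $2v(2)$ of a continuous piecewise-linear function of $b$. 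Passing to the limit transfers the equality from the open interval to its closure $I(\mathfrak{s})$, finishing the proof. The only delicate point is correctly identifying the roots of the scaled factors $f^{\mathfrak{s}}_{\alpha,\beta}$ and $f^{\RR \setminus \mathfrak{s}}_{\alpha,\beta}$ as having positive and negative valuation respectively, so that the sign hypothesis of \Cref{prop product part-square}(b) (which underlies \Cref{prop mathfrak t minimum}(ii)) is satisfied; this is exactly what the cluster conditions $v(s - \alpha) \geq d_+(\mathfrak{s}) > b$ for $s \in \mathfrak{s}$ and $v(s - \alpha) \leq d_-(\mathfrak{s}) < b$ for $s \in \RR \setminus \mathfrak{s}$ provide.
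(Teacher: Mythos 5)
Your proof is correct and takes essentially the same approach as the paper: reduce to interior $b$, verify the hypotheses of \Cref{prop mathfrak t minimum}(ii), and extend to the endpoints by continuity via totally odd decompositions. Incidentally, your inequality directions $v(s - \alpha) \geq d_+(\mathfrak{s}) > b$ for $s \in \mathfrak{s}$ and $v(s - \alpha) \leq d_-(\mathfrak{s}) < b$ for $s \in \RR \setminus \mathfrak{s}$ are the correct ones; the paper's published proof states them with the signs reversed, which is a typo.
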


\subsubsection{A standard form for the two factors}
\label{sec depths separating std form}
Let us introduce the polynomials
\begin{equation} \label{eq standard form}
    f_+^{\mathfrak{s},\alpha}(z):=\prod_{a \in \mathfrak{s}} (1 - \beta_+^{-1}(a-\alpha) z)
    \qquad \mathrm{and} \qquad
    f_-^{\mathfrak{s},\alpha}(z):=\prod_{a \in \mathcal{R}\setminus\mathfrak{s}} (1 -  \beta_-(a-\alpha)^{-1}z),
\end{equation}
where the scalars $\beta_\pm \in (K^\alg)^{\times}$ are chosen to satisfy $v(\beta_\pm) = d_\pm(\mathfrak{s})$.  These are just transformed versions of $f^{\mathfrak{s}}$ and $f^{\RR\setminus\mathfrak{s}}$, normalized so that their constant terms are $1$ and all coefficients are integral.  More precisely, we have the conversion formulas 
\begin{equation*}
    f^{\mathfrak{s}} = \beta_+^{|\mathfrak{s}|}(f^{\mathfrak{s},\alpha}_+)^\vee(\beta_+^{-1} (z-\alpha)) \qquad f^{\mathcal{R} \smallsetminus \mathfrak{s}} = \big(\prod_{a \in \RR \smallsetminus \mathfrak{s}} (\alpha - a)\big) f^{\mathfrak{s},\alpha}_-(\beta_-^{-1} (z-\alpha)),
\end{equation*}
where $(f^{\mathfrak{s},\alpha}_+)^\vee(z) = z^{|\mathfrak{s}|}f^{\mathfrak{s},\alpha}_+(1/z)$. Given a part-square decomposition for $f^{\mathfrak{s},\alpha}_+$ and for $f^{\mathfrak{s},\alpha}_-$, there is an obvious way of producing one for $f^{\mathfrak{s}}$ and $f^{\mathcal{R} \smallsetminus \mathfrak{s}}$, which in turn induces one for $f = c f^{\mathfrak{s}} f^{\RR \smallsetminus \mathfrak{s}}$.  More precisely, given two part-square decompositions $f^{\mathfrak{s},\alpha}_+ = q_+^2 + \rho_+$ and $f^{\mathfrak{s},\alpha}_- = q_-^2 + \rho_-$, one obtains the decompositions
\begin{equation*}
    f^\mathfrak{s} = [q^{\mathfrak{s}}]^2 + \rho^{\mathfrak{s}}, \qquad
    f^{\RR \smallsetminus \mathfrak{s}} = [q^{\RR \smallsetminus \mathfrak{s}}]^2 + \rho^{\RR \smallsetminus \mathfrak{s}}, \qquad
    f = q^2 + \rho
\end{equation*}
by setting $q^{\mathfrak{s}} = \beta_+^{|\mathfrak{s}|/2} q_+^\vee(\beta_+^{-1}(z-\alpha))$, $q^{\RR \smallsetminus \mathfrak{s}} = \sqrt{\prod_{a \in \RR \smallsetminus \mathfrak{s}} (\alpha - a)}q_-(\beta_-^{-1}(z-\alpha))$, and $q = \sqrt{c} q^{\mathfrak{s}} q^{\RR \smallsetminus \mathfrak{s}}$ (after making appropriate choices of square roots), where $q_+^\vee(z) = z^{|\mathfrak{s}|/2} q_+(1/z)$.

\begin{rmk} \label{rmk tpm and fpm}
    We observe the following.
    \begin{enumerate}[(a)]
        \item By construction, we have 
        \begin{equation}
            \tfun_{q^{\mathfrak{s}},\rho^{\mathfrak{s}}}(D_{\alpha,b}) = \tfun_{q_+,\rho_+}(d_+(\mathfrak{s})-b) \ \ \mathrm{and} \ \ \tfun_{q^{\RR \smallsetminus \mathfrak{s}},\rho^{\RR \smallsetminus \mathfrak{s}}}(D_{\alpha,b})=\tfun_{q_-,\rho_-}(b-d_-(\mathfrak{s}))
        \end{equation}
        for all $b \in \mathbb{Q}$; moreover, the above decomposition of $f^{\mathfrak{s}}$ (resp.\ $f^{\RR \smallsetminus \mathfrak{s}}$) is good at $D_{\alpha,b}$ if and only if the above decomposition of $f^{\mathfrak{s}}_+$ (resp.\ $f^{\mathfrak{s}}_-$) is good at $d_+(\mathfrak{s})-b$ (resp.\ $b-d_-(\mathfrak{s})$).
        \item It follows from part (a) above that the introduction of $f^{\mathfrak{s}}_\pm$ allows us to reinterpret the function $\mathfrak{t}_\pm^{\mathfrak{s}}$ as $[0,+\infty) \ni b \mapsto \tbest{Z_\pm}{D_{0,b}}$, where $Z_\pm$ denotes the set of roots of $f_\pm^{\mathfrak{s}}$. We remark that the translation and homotheties that define $f_\pm^{\mathfrak{s}}$ are chosen so that all elements of $Z_\pm$ have valuation $\le 0$ and some element in each of $Z_+$ and $Z_-$ has valuation $0$.
        \item Part (b) above implies that the knowledge of a totally odd part-square decomposition for $f_\pm^{\mathfrak{s}}$ allows us to compute $\mathfrak{t}_\pm^{\mathfrak{s}}$: this is just \Cref{rmk t fun computing}. More precisely, if $f_\pm^{\mathfrak{s}}=q_\pm^2+\rho_\pm$ is a totally odd part-square decomposition, we have $\mathfrak{t}_\pm^{\mathfrak{s}}(b)=\truncate{\tfun_{q_\pm,\rho_\pm}(D_{0,b})}$ for all $b\in [0,+\infty)$.
    \end{enumerate}
\end{rmk}

\subsubsection{Reconstructing the invariants $b_\pm$} 
\label{sec depths separating reconstructing invariants}
Let $b_0(\mathfrak{t}_\pm^{\mathfrak{s}})$ be the least value of $b \in [0,+\infty)$ at which  $\mathfrak{t}_\pm^{\mathfrak{s}}: [0,+\infty)\to \zerotwo$ attains $2v(2)$, and let $\lambda(\mathfrak{t}_\pm^{\mathfrak{s}})$ denote the left derivative of $\mathfrak{t}_\pm^{\mathfrak{s}}$ at $b_0(\mathfrak{t}_\pm^{\mathfrak{s}})$, which is clearly only defined when $b_0(\mathfrak{t}_\pm^{\mathfrak{s}})>0$. These invariants are closely related to those introduced in the previous subsection; in particular, the following proposition can be used to provide alternate formulas for the invariants $b_\pm(\mathfrak{s})$.
\begin{prop} \label{prop formulas for b_pm}
     Suppose that we have the inequality $d_+(\mathfrak{s}) - b_0(\mathfrak{t}_+^{\mathfrak{s}}) < d_-(\mathfrak{s}) + b_0(\mathfrak{t}_-^{\mathfrak{s}})$ (or the equivalent inequality $\delta(\mathfrak{s}) < b_0(\mathfrak{t}_+^{\mathfrak{s}}) + b_0(\mathfrak{t}_-^{\mathfrak{s}})$).  Then we have $J(\mathfrak{s}) = \varnothing$.  Otherwise, we have $J(\mathfrak{s}) \neq \varnothing$ and the formulas 
\begin{equation}
    \label{eq bpm}
    b_\pm(\mathfrak{s})=d_\pm \mp b_0(\mathfrak{t}_\pm^{\mathfrak{s}})\qquad \text{and}\qquad \lambda_\pm(\mathfrak{s}) = \lambda(\mathfrak{t}_\pm^{\mathfrak{s}}).
\end{equation}
\end{prop}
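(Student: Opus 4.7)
The plan is to compute $J(\mathfrak{s})$ and its endpoints directly from \Cref{prop t^R is min of t^s and t^(R-s)}, interpreted through the monotonicity of $\mathfrak{t}_\pm^{\mathfrak{s}}$ given by \Cref{prop properties t plus minus}. Since $\mathfrak{t}_+^{\mathfrak{s}}$ (resp. $\mathfrak{t}_-^{\mathfrak{s}}$) is strictly increasing on $[0,+\infty)$ until it reaches $2v(2)$ and then remains constant equal to $2v(2)$, the values $b_0(\mathfrak{t}_\pm^{\mathfrak{s}})$ characterize the threshold: $\mathfrak{t}_+^{\mathfrak{s}}(d_+(\mathfrak{s})-b)=2v(2)$ if and only if $b\le d_+(\mathfrak{s})-b_0(\mathfrak{t}_+^{\mathfrak{s}})$, and analogously $\mathfrak{t}_-^{\mathfrak{s}}(b-d_-(\mathfrak{s}))=2v(2)$ if and only if $b\ge d_-(\mathfrak{s})+b_0(\mathfrak{t}_-^{\mathfrak{s}})$.

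Combining this with the formula $\tbest{\RR}{D_{\alpha,b}}=\min\{\mathfrak{t}_+^{\mathfrak{s}}(d_+(\mathfrak{s})-b),\,\mathfrak{t}_-^{\mathfrak{s}}(b-d_-(\mathfrak{s}))\}$ from \Cref{prop t^R is min of t^s and t^(R-s)}, and recalling that neither summand can exceed $2v(2)$, the minimum equals $2v(2)$ exactly when both summands do. Hence
\begin{equation*}
    J(\mathfrak{s})=\big\{b\in I(\mathfrak{s})\ \big|\ d_-(\mathfrak{s})+b_0(\mathfrak{t}_-^{\mathfrak{s}})\le b\le d_+(\mathfrak{s})-b_0(\mathfrak{t}_+^{\mathfrak{s}})\big\}.
\end{equation*}
This set is nonempty if and only if $d_-(\mathfrak{s})+b_0(\mathfrak{t}_-^{\mathfrak{s}})\le d_+(\mathfrak{s})-b_0(\mathfrak{t}_+^{\mathfrak{s}})$, which is exactly the stated inequality $b_0(\mathfrak{t}_+^{\mathfrak{s}})+b_0(\mathfrak{t}_-^{\mathfrak{s}})\le \delta(\mathfrak{s})$; under this hypothesis, the endpoints of $J(\mathfrak{s})$ are immediately read off as $b_\pm(\mathfrak{s})=d_\pm(\mathfrak{s})\mp b_0(\mathfrak{t}_\pm^{\mathfrak{s}})$, and one should check that the resulting interval is automatically contained in $I(\mathfrak{s})$ (which is clear from the definitions $b_0(\mathfrak{t}_\pm^{\mathfrak{s}})\ge 0$ and $\delta(\mathfrak{s})\ge 0$, noting that if one of the inequalities $b_\pm\in I(\mathfrak{s})$ were to fail, the endpoint simply coincides with $d_\mp$).

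For the slope identities, I would argue as follows under the assumption $J(\mathfrak{s})\neq \varnothing$. At the endpoint $b=b_-(\mathfrak{s})=d_-(\mathfrak{s})+b_0(\mathfrak{t}_-^{\mathfrak{s}})$, the argument $d_+(\mathfrak{s})-b$ equals $\delta(\mathfrak{s})-b_0(\mathfrak{t}_-^{\mathfrak{s}})\ge b_0(\mathfrak{t}_+^{\mathfrak{s}})$, so the first summand $\mathfrak{t}_+^{\mathfrak{s}}(d_+(\mathfrak{s})-b)$ is identically $2v(2)$ in a left neighborhood of $b_-$; the second summand is strictly smaller there by the strict monotonicity of $\mathfrak{t}_-^{\mathfrak{s}}$ below $b_0(\mathfrak{t}_-^{\mathfrak{s}})$. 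Hence $\tbest{\RR}{D_{\alpha,b}}$ agrees with $\mathfrak{t}_-^{\mathfrak{s}}(b-d_-(\mathfrak{s}))$ just to the left of $b_-(\mathfrak{s})$, and taking the left derivative yields $\lambda_-(\mathfrak{s})=\partial^-\tbest{\RR}{b_-}=\lambda(\mathfrak{t}_-^{\mathfrak{s}})$. The symmetric argument at the right endpoint $b_+(\mathfrak{s})$ gives $\lambda_+(\mathfrak{s})=-\partial^+\tbest{\RR}{b_+}=\lambda(\mathfrak{t}_+^{\mathfrak{s}})$.

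No real obstacle is anticipated; the only care needed is to verify that near each endpoint of $J(\mathfrak{s})$, one of the two functions $\mathfrak{t}_\pm^{\mathfrak{s}}$ has already saturated at $2v(2)$ while the other has not yet done so, which in turn rests only on the monotonicity statement of \Cref{prop properties t plus minus} and on the non-emptiness criterion already obtained.
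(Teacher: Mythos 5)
Your proposal is correct and is exactly the ``unwinding of the definitions'' that the paper's own terse proof refers to: it combines \Cref{prop t^R is min of t^s and t^(R-s)} and the monotonicity of $\mathfrak{t}_\pm^{\mathfrak{s}}$ from \Cref{prop properties t plus minus}, the same two ingredients the paper cites, and the endpoint and slope computations you carry out are the intended content.
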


\begin{proof}
    Taking into account that we have $\tbest{\RR}{D_{\alpha,b}}=\min\{\mathfrak{t}_+^{\mathfrak{s}}(d_+(\mathfrak{s}) - b), \mathfrak{t}_-^{\mathfrak{s}}(b - d_-(\mathfrak{s}))\}$ by \Cref{prop t^R is min of t^s and t^(R-s)} and that each of the functions $\mathfrak{t}_\pm^{\mathfrak{s}}$ is strictly increasing until it reaches $2v(2)$ by \Cref{prop properties t plus minus}, the proposition follows from an unwinding of the definitions of $b_0(\mathfrak{t}_\pm^{\mathfrak{s}})$ and $b_\pm(\mathfrak{s})$.
\end{proof}

\subsection{Estimating thresholds for depths of even-cardinality clusters}
\label{sec depths threshold}
The results that we have obtained in the above subsections show that given an even-cardinality cluster $\mathfrak{s}$ of roots associated to the hyperelliptic curve $Y : y^2 = f(x)$, there are $0$, $1$, or $2$ valid discs linked to it, and the results suggest how we may determine how many valid discs are linked to it via the knowledge of the rational numbers $b_\pm(\mathfrak{s})=d_{\pm}(\mathfrak{s})\mp b_0(\mathfrak{t}^{\mathfrak{s}}_\pm)$.  Roughly speaking, the results of \S\ref{sec depths construction valid discs} and \S\ref{sec depths separating} show that an even-cardinality cluster $\mathfrak{s}$ has $2$ (resp.\ $1$) valid discs linked to it if and only if its relative depth $\delta(\mathfrak{s}) = d_+(\mathfrak{s}) - d_-(\mathfrak{s})$ exceeds (resp.\ equals) some threshold depending on $\mathfrak{s}$, namely the rational number given by $b_0(\mathfrak{t}^{\mathfrak{s}}_+) + b_0(\mathfrak{t}^{\mathfrak{s}}_-)$.  The precise statement is the following rephrasing of \Cref{thm summary depths valid discs} combined with \Cref{prop formulas for b_pm}; in turn, together with \Cref{prop viable thicknesses} below, it proves \Cref{thm introduction main}(c).

\begin{prop} \label{prop depth threshold}
    Write $B_{f,\mathfrak{s}} = b_0(\mathfrak{t}^{\mathfrak{s}}_+) + b_0(\mathfrak{t}^{\mathfrak{s}}_-)$.  Given an even-cardinality cluster $\mathfrak{s} \subset \mathcal{R}$ of relative depth $\delta(\mathfrak{s})$, there are exactly $2$ (resp.\ $1$; resp.\ $0$) valid discs linked to $\mathfrak{s}$ if we have $\delta(\mathfrak{s}) > B_{f,\mathfrak{s}}$ (resp.\ $\delta(\mathfrak{s}) = B_{f,\mathfrak{s}}$; resp.\ $\delta(\mathfrak{s}) < B_{f,\mathfrak{s}}$).
\end{prop}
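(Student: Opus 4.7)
The plan is to deduce this statement directly by combining Theorem~\ref{thm summary depths valid discs} with Proposition~\ref{prop formulas for b_pm}, which together essentially already contain the desired information. The only work left is to translate the condition ``$J(\mathfrak{s})$ is empty / a single point / an interval of positive length'' into the trichotomy $\delta(\mathfrak{s})<B_{f,\mathfrak{s}}$ / $\delta(\mathfrak{s})=B_{f,\mathfrak{s}}$ / $\delta(\mathfrak{s})>B_{f,\mathfrak{s}}$.

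First I would fix a center $\alpha\in\mathfrak{s}$ (which is legitimate since $\mathfrak{s}$ is nonempty, and by Remark~\ref{rmk tpm indepdendent of the center} none of the relevant data depends on this choice on the interval $[0,\delta(\mathfrak{s})]$). By Theorem~\ref{thm summary depths valid discs}, the valid discs linked to $\mathfrak{s}$ are precisely the discs $D_{\alpha,b}$ with $b$ an endpoint of the subinterval $J(\mathfrak{s})\subseteq I(\mathfrak{s})$. Hence the number of valid discs linked to $\mathfrak{s}$ is $0$ if $J(\mathfrak{s})=\varnothing$, is $1$ if $J(\mathfrak{s})$ consists of a single point (i.e.\ $b_-(\mathfrak{s})=b_+(\mathfrak{s})$), and is $2$ if $J(\mathfrak{s})$ has positive length (i.e.\ $b_-(\mathfrak{s})<b_+(\mathfrak{s})$).

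Next I would apply Proposition~\ref{prop formulas for b_pm}, which tells us that $J(\mathfrak{s})=\varnothing$ exactly when $\delta(\mathfrak{s})<b_0(\mathfrak{t}_+^{\mathfrak{s}})+b_0(\mathfrak{t}_-^{\mathfrak{s}})=B_{f,\mathfrak{s}}$, and that otherwise the endpoints are given by $b_\pm(\mathfrak{s})=d_\pm(\mathfrak{s})\mp b_0(\mathfrak{t}_\pm^{\mathfrak{s}})$. A one-line computation then yields
\begin{equation*}
b_+(\mathfrak{s})-b_-(\mathfrak{s}) \;=\; \bigl(d_+(\mathfrak{s})-b_0(\mathfrak{t}_+^{\mathfrak{s}})\bigr)-\bigl(d_-(\mathfrak{s})+b_0(\mathfrak{t}_-^{\mathfrak{s}})\bigr) \;=\; \delta(\mathfrak{s})-B_{f,\mathfrak{s}}.
\end{equation*}
Thus, when $\delta(\mathfrak{s})\ge B_{f,\mathfrak{s}}$, the length of $J(\mathfrak{s})$ equals $\delta(\mathfrak{s})-B_{f,\mathfrak{s}}$, which is zero precisely when $\delta(\mathfrak{s})=B_{f,\mathfrak{s}}$ and positive precisely when $\delta(\mathfrak{s})>B_{f,\mathfrak{s}}$.

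Combining these observations gives the three cases of the proposition. There is no real obstacle here: the substantive work has already been done in establishing Theorem~\ref{thm summary depths valid discs} (which identifies valid discs linked to $\mathfrak{s}$ with endpoints of $J(\mathfrak{s})$) and in Proposition~\ref{prop formulas for b_pm} (which reconstructs these endpoints from the $b_0(\mathfrak{t}_\pm^{\mathfrak{s}})$). The proof of this proposition is therefore essentially a bookkeeping exercise assembling those pieces.
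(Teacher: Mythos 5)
Your proof is correct and follows essentially the same route as the paper, which itself treats \Cref{prop depth threshold} as a direct rephrasing of \Cref{thm summary depths valid discs} combined with \Cref{prop formulas for b_pm} rather than giving a separate argument. The one-line identity $b_+(\mathfrak{s})-b_-(\mathfrak{s})=\delta(\mathfrak{s})-B_{f,\mathfrak{s}}$ is exactly the bookkeeping needed to translate the trichotomy on $J(\mathfrak{s})$ into the trichotomy on $\delta(\mathfrak{s})$, so your writeup fills in the elided details faithfully.
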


\begin{rmk} \label{rmk B independence}
    We note that the rational number $B_{f,\mathfrak{s}}$ given in the above corollary does not depend on the depth $\delta(\mathfrak{s})$ in the following sense.  Given a center $\alpha \in \mathfrak{s}$, let $\mathfrak{s}_{[\lambda]} = \{\lambda(a - \alpha) + \alpha \ | \ a \in \mathfrak{s}\}$ for some $\lambda \in (K^\alg)^{\times}$ such that $v(\lambda) > -\delta(\mathfrak{s})$, so that $\mathfrak{s}_{[\lambda]}$ is a scaled version of $\mathfrak{s}$ and is a cluster in $\RR_{[\lambda]} := \mathfrak{s}_{[\lambda]} \sqcup (\mathcal{R} \smallsetminus \mathfrak{s})$ with relative depth $\delta(\mathfrak{s}_{[\lambda]}) = \delta(\mathfrak{s}) + v(\lambda)$. 
    Then it follows easily from the constructions in \S\ref{sec depths separating} that we have $\mathfrak{t}_+^\mathfrak{s} = \mathfrak{t}_+^{\mathfrak{s}_{[\lambda]}}$
    and $\mathfrak{t}_-^\mathfrak{s} = \mathfrak{t}_-^{\mathfrak{s}_{[\lambda]}}$, from which it follows that $B_{f_{[\lambda]},\mathfrak{s}_{[\lambda]}} = B_{f,\mathfrak{s}}$.
    In this sense, loosely speaking, we may view $B_{f,\mathfrak{s}}$ as a sort of ``threshold'' for the depth of $\mathfrak{s}$ at which we obtain $1$ valid disc linked to $\mathfrak{s}$ and above which we obtain $2$ valid discs linked to $\mathfrak{s}$.
\end{rmk}

In the rest of this subsection, we work towards obtaining estimates and exact formulas for the ``threshold depth" $B_{f,\mathfrak{s}}$ defined in \Cref{prop depth threshold} under various conditions on $\mathfrak{s} \subset \mathcal{R}$.

\begin{prop} \label{prop deep ubereven cluster}
    Suppose that $\mathfrak{s}$ is an even-cardinality cluster which itself is the disjoint union of even-cardinality child clusters $\mathfrak{c}_1, \ldots , \mathfrak{c}_N$ for some $N\ge 2$.  The minimum of the set 
    \begin{equation*}
        \{\mathfrak{t}_+^{\mathfrak{c}_i}(\delta(\mathfrak{c}_i))\}_{1 \leq i \leq N} \cup \{\mathfrak{t}_+^{\mathfrak{s}}(0)\}
    \end{equation*}
    of rational numbers is attained by more than one element. In particular, if we have $\mathfrak{t}_+^{\mathfrak{c}_i}(\delta(\mathfrak{c}_i)) = 2v(2)$ for $1 \leq i \leq N$, then we have $\mathfrak{t}_+^{\mathfrak{s}}(0) = 2v(2)$ also.
    
    \begin{proof}
        If the minimum of the set $\{\mathfrak{t}_+^{\mathfrak{c}_i}(\delta(\mathfrak{c}_i)\}_{1 \leq i \leq N}$ is attained by more than one element, then we are done, so let us assume that this minimum is attained by a unique element.  Then, if we apply \Cref{prop mathfrak t minimum} to the disc $D:=D_{\mathfrak{s},d_+(\mathfrak{s})}$, we obtain $\mathfrak{t}_+^{\mathfrak{s}}(0) = \min_{1 \leq i \leq N} \mathfrak{t}_+^{\mathfrak{c}_i}(\delta(\mathfrak{c}_i))$, and the claim is proved.
    \end{proof}
\end{prop}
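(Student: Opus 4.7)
The plan is to apply the general subadditivity/minimum result in \Cref{prop mathfrak t minimum} to the disc $D := D_{\mathfrak{s},d_+(\mathfrak{s})}$, after first identifying the various quantities appearing in the statement with values of $\tbestsimple{}$ at $D$.

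First I would fix an element $\alpha_i \in \mathfrak{c}_i$ for each $i$; since $\mathfrak{c}_i \subseteq \mathfrak{s}$, we have $D = D_{\alpha_i, d_+(\mathfrak{s})}$ regardless of which $i$ we chose. From the definition of $\mathfrak{t}_+^{\mathfrak{s}}$ in (\ref{eq mathfrak t pm}), $\mathfrak{t}_+^{\mathfrak{s}}(0) = \tbest{\mathfrak{s}}{D}$. Moreover, since $\mathfrak{c}_i$ is a child of $\mathfrak{s}$, we have $d_-(\mathfrak{c}_i) = d_+(\mathfrak{s})$, so that $d_+(\mathfrak{c}_i) - \delta(\mathfrak{c}_i) = d_-(\mathfrak{c}_i) = d_+(\mathfrak{s})$ and hence
\begin{equation*}
    \mathfrak{t}_+^{\mathfrak{c}_i}(\delta(\mathfrak{c}_i)) = \tbest{\mathfrak{c}_i}{D_{\alpha_i,\, d_+(\mathfrak{c}_i)-\delta(\mathfrak{c}_i)}} = \tbest{\mathfrak{c}_i}{D}.
\end{equation*}
Thus the set in the statement is $\{\tbest{\mathfrak{c}_i}{D}\}_{i=1}^N \cup \{\tbest{\mathfrak{s}}{D}\}$.

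Next I would invoke \Cref{prop mathfrak t minimum} applied to the disjoint union $\mathfrak{s} = \mathfrak{c}_1 \sqcup \cdots \sqcup \mathfrak{c}_N$ and the disc $D$: it gives the inequality $\tbest{\mathfrak{s}}{D} \geq \min_i \tbest{\mathfrak{c}_i}{D}$, with equality whenever the minimum on the right is attained by a unique index. I would then split into two cases. If the minimum of $\{\tbest{\mathfrak{c}_i}{D}\}_{i=1}^N$ is already attained by at least two indices, the conclusion is immediate. Otherwise, equality in the displayed inequality holds, which means $\tbest{\mathfrak{s}}{D}$ matches the (uniquely attained) minimum among the $\tbest{\mathfrak{c}_i}{D}$, so the minimum of the enlarged set is again attained at least twice (by that unique $\mathfrak{c}_i$ and by $\mathfrak{s}$ itself).

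For the last sentence, assuming $\mathfrak{t}_+^{\mathfrak{c}_i}(\delta(\mathfrak{c}_i)) = 2v(2)$ for all $i$, the first part of \Cref{prop mathfrak t minimum} gives $\mathfrak{t}_+^{\mathfrak{s}}(0) = \tbest{\mathfrak{s}}{D} \geq 2v(2)$; combining with the fact that by \Cref{dfn t fun} the quantity $\tbestsimple{}$ is bounded above by $2v(2)$, we get equality. I do not expect any genuine obstacle here: the whole argument is a direct bookkeeping exercise once the identifications in the first step are made, and the only subtle point is to verify that the hypothesis of the ``equality'' clause of \Cref{prop mathfrak t minimum}(i) is the right dichotomy to exploit.
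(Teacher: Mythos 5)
Your proposal is correct and follows essentially the same approach as the paper's proof: both hinge on identifying each $\mathfrak{t}_+^{\mathfrak{c}_i}(\delta(\mathfrak{c}_i))$ and $\mathfrak{t}_+^{\mathfrak{s}}(0)$ with values of $\tbestsimple{}$ at the single disc $D = D_{\mathfrak{s},d_+(\mathfrak{s})}$, then applying \Cref{prop mathfrak t minimum} and splitting on whether the minimum among the children is attained once or more than once. You spell out the bookkeeping identifications more explicitly than the paper, but the underlying argument is the same.
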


\begin{lemma} \label{lemma estimates of b_0 from slopes}
    Let $\mathfrak{s}$ be a cluster of even cardinality.  We have \begin{equation}
        b_0(\mathfrak{t}^{\mathfrak{s}}_+) \geq \frac{2v(2) - \mathfrak{t}^{\mathfrak{s}}_+(0)}{|\mathfrak{s}| - 1} \ \ \mathrm{and} \ \ b_0(\mathfrak{t}^{\mathfrak{s}}_-) \geq \frac{2v(2) - \mathfrak{t}^{\mathfrak{s}}_-(0)}{2g + 1 - |\mathfrak{s}|}.
    \end{equation}
    Moreover, if $\lambda_+(\mathfrak{s}) = |\mathfrak{s}| - 1$ (resp.\ $\lambda_-(\mathfrak{s}) = 2g + 1 - |\mathfrak{s}|$), then the first (resp.\ second) inequality above is an equality.
    \begin{proof}
        This follows immediately from the properties of $\mathfrak{t}^{\mathfrak{s}}_\pm$ presented in \Cref{prop properties t plus minus}.
    \end{proof}
\end{lemma}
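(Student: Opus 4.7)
The plan is to invoke Proposition \ref{prop properties t plus minus} directly: on the interval $[0, b_0(\mathfrak{t}^{\mathfrak{s}}_+)]$ the function $\mathfrak{t}^{\mathfrak{s}}_+$ is continuous and piecewise linear, and its slopes are odd positive integers that are weakly decreasing and bounded above by $|\mathfrak{s}|-1$. Because the function rises from $\mathfrak{t}^{\mathfrak{s}}_+(0)$ at $b=0$ to $2v(2)$ at $b=b_0(\mathfrak{t}^{\mathfrak{s}}_+)$, and the steepest slope available along the way is $|\mathfrak{s}|-1$, the mean value estimate
$$2v(2) - \mathfrak{t}^{\mathfrak{s}}_+(0) \leq (|\mathfrak{s}|-1)\cdot b_0(\mathfrak{t}^{\mathfrak{s}}_+)$$
holds and rearranges to the first inequality of the lemma. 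The argument for $\mathfrak{t}^{\mathfrak{s}}_-$ is identical after replacing the bound $|\mathfrak{s}|-1$ on the slopes by $2g+1-|\mathfrak{s}|$.

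For the equality assertion, recall from Proposition \ref{prop formulas for b_pm} that $\lambda_+(\mathfrak{s})$ equals $\lambda(\mathfrak{t}^{\mathfrak{s}}_+)$, the left derivative of $\mathfrak{t}^{\mathfrak{s}}_+$ at $b_0(\mathfrak{t}^{\mathfrak{s}}_+)$; this is the \emph{last} slope appearing on $[0, b_0(\mathfrak{t}^{\mathfrak{s}}_+)]$ and, by the monotonicity statement in Proposition \ref{prop properties t plus minus}, it is also the smallest one. If this minimum slope already equals the maximum allowed value $|\mathfrak{s}|-1$, then every slope on $[0, b_0(\mathfrak{t}^{\mathfrak{s}}_+)]$ must equal $|\mathfrak{s}|-1$, so $\mathfrak{t}^{\mathfrak{s}}_+$ is linear on this interval with constant slope $|\mathfrak{s}|-1$. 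The inequality above then becomes an equality. The case of $\mathfrak{t}^{\mathfrak{s}}_-$ is handled in exactly the same way.

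I do not expect any substantive obstacle here: the whole content of the lemma is already encoded in the shape of $\mathfrak{t}^{\mathfrak{s}}_\pm$ established in Proposition \ref{prop properties t plus minus}, and the remaining step is simply to translate the upper bound on slopes into a lower bound on the horizontal distance the graph must traverse in order to reach the cap at $2v(2)$.
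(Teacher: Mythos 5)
Your proof is correct and is essentially the same as the paper's (which is simply the remark that everything follows from \Cref{prop properties t plus minus}); you have just spelled out the mean-value estimate and the monotonicity argument that the paper leaves implicit.
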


\begin{lemma} \label{lemma tplus minus odd cardinality child}
    Let $\mathfrak{s}$ be a cluster of even cardinality. Then we have the following:
    \begin{enumerate}[(a)]
        \item if $\mathfrak{s}$ has an odd-cardinality child cluster, then we have $\mathfrak{t}^{\mathfrak{s}}_+(0)=0$; and 
        \item if $\mathfrak{s}$ has an odd-cardinality sibling cluster, then we have  $\mathfrak{t}^{\mathfrak{s}}_-(0)=0$.
    \end{enumerate}
    \begin{proof}
        It is immediate to see that, if $\mathfrak{s}$ has a child cluster $\mathfrak{c}$ of odd cardinality $2m+1$, then, letting $\alpha\in \mathfrak{c}$, any normalized reduction of $f^{\mathfrak{s}}_+$ has odd degree $2g+1-2m$ and thus, in particular, is not a square. This implies that $f^{\mathfrak{s}}_+=0^2+f^{\mathfrak{s}}_+$ is a good part-square decomposition (see \Cref{prop good decomposition}), and hence that $\mathfrak{t}^{\mathfrak{s}}_+(0)=0$. This proves (a); the proof of (b) is analogous.
    \end{proof}
\end{lemma}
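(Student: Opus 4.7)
The plan is to show that in each case the reduction of $f^{\mathfrak{s}}_{\pm}$ fails to be a square in $k[z]$. Once that is done, \Cref{prop good decomposition}(a) will yield that the trivial part-square decomposition $f^{\mathfrak{s}}_{\pm} = 0^2 + f^{\mathfrak{s}}_{\pm}$ is good at $D_{0,0}$ (noting that $t_{0, f^{\mathfrak{s}}_{\pm}} = 0 < 2v(2)$), and then \Cref{rmk tpm and fpm}(b) combined with \Cref{dfn t fun} will immediately give $\mathfrak{t}^{\mathfrak{s}}_{\pm}(0) = \truncate{0} = 0$.

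For part (a), I would denote the odd-cardinality child of $\mathfrak{s}$ by $\mathfrak{c}_1$ and partition $\mathfrak{s} = \mathfrak{c}_1 \sqcup \mathfrak{c}_2 \sqcup \cdots \sqcup \mathfrak{c}_N$ into its children, picking a center $\alpha \in \mathfrak{c}_1$. I then plan to analyze $f^{\mathfrak{s}}_+(z) = \prod_{a \in \mathfrak{s}}(1 - \beta_+^{-1}(a-\alpha)z)$ factor by factor: the factors with $a \in \mathfrak{c}_1$ should reduce to $1$, since $v(\beta_+^{-1}(a-\alpha)) > 0$; for each $i \geq 2$, all factors with $a \in \mathfrak{c}_i$ should reduce to the same $1 - \bar{c}_i z$ with $\bar{c}_i \in k^{\times}$, and the $\bar{c}_i$ should be distinct across distinct children (because elements in distinct children of $\mathfrak{s}$ differ by something of valuation exactly $d_+(\mathfrak{s})$). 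This will give $\overline{f^{\mathfrak{s}}_+}(z) = \prod_{i \geq 2}(1 - \bar{c}_i z)^{|\mathfrak{c}_i|}$, which by unique factorization in $k[z]$ is a square if and only if every $|\mathfrak{c}_i|$ with $i \geq 2$ is even. Since $|\mathfrak{s}|$ is even and $|\mathfrak{c}_1|$ is odd, some $|\mathfrak{c}_i|$ with $i \geq 2$ must be odd, and the reduction will not be a square.

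For part (b), I would take any $\alpha \in \mathfrak{s}$ and list the children of the parent cluster $\mathfrak{s}'$ other than $\mathfrak{s}$ as $\mathfrak{c}_1, \ldots, \mathfrak{c}_M$, at least one of which has odd cardinality by assumption. A parallel factor-by-factor analysis of $f^{\mathfrak{s}}_-(z) = \prod_{a \in \mathcal{R} \setminus \mathfrak{s}}(1 - \beta_-(a-\alpha)^{-1}z)$ will show that the factors coming from $\mathcal{R} \setminus \mathfrak{s}'$ reduce to $1$, while the factors coming from each $\mathfrak{c}_i$ reduce to $1 - \bar{c}_i z$ for a single $\bar{c}_i \in k^{\times}$, with distinct $\bar{c}_i$ for distinct children. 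Thus $\overline{f^{\mathfrak{s}}_-}(z) = \prod_{i=1}^M (1 - \bar{c}_i z)^{|\mathfrak{c}_i|}$ will again fail to be a square precisely because some $|\mathfrak{c}_i|$ is odd.

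The only subtlety, which a naive degree-parity count would miss in part (b), is that the total degree $\sum_i |\mathfrak{c}_i|$ need not be odd; I will need the full factorization structure (distinct linear factors, with odd multiplicity for at least one of them) rather than simply the parity of the degree of the reduction in order to conclude non-squareness.
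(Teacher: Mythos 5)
Your proof is correct and follows the paper's essential strategy: show the relevant normalized reduction is not a square, invoke \Cref{prop good decomposition}(a) to conclude the trivial decomposition is good, and deduce $\mathfrak{t}^{\mathfrak{s}}_\pm(0)=0$. Where you differ is in \emph{how} non-squareness is established. For part~(a), the paper observes merely that the normalized reduction of $f^{\mathfrak{s}}_+$ has odd degree (it equals $|\mathfrak{s}|-|\mathfrak{c}_1|$, which is even minus odd), so it cannot be a square; your full factorization $\prod_{i\ge 2}(1-\bar c_i z)^{|\mathfrak{c}_i|}$ with pairwise distinct $\bar c_i$ is more than is needed there. However, your closing observation is well taken: the degree-parity argument does \emph{not} transfer verbatim to part~(b), since the normalized reduction of $f^{\mathfrak{s}}_-$ has degree $|\mathfrak{s}'|-|\mathfrak{s}|$, which can be even even when $\mathfrak{s}$ has an odd-cardinality sibling (e.g.\ $|\mathfrak{s}|=2$ inside a parent $\mathfrak{s}'$ of size $4$ with two singleton siblings). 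The paper's remark that ``the proof of (b) is analogous'' is therefore terser than it should be; the argument part~(b) actually needs is the one you supply --- distinct reduced roots coming from distinct siblings, together with at least one odd multiplicity --- and it is natural to run that same factorization argument uniformly through both parts, as you do.
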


The following proposition directly implies \Cref{thm introduction main}(d).

\begin{prop} \label{prop estimating threshold}
    Let $\mathfrak{s}$ be a cluster of even cardinality, and let $\mathfrak{s}'$ be its parent cluster.  The rational number $B_{f,\mathfrak{s}}$ given in \Cref{prop depth threshold} satisfies the below inequalities.
    \begin{enumerate}[(a)]
        \item We have $B_{f,\mathfrak{s}} \leq 4v(2)$.  In particular, if $\delta(\mathfrak{s}) \ge 4v(2)$, then there exists a valid disc linked to $\mathfrak{s}$, and if $\delta(\mathfrak{s}) > 4v(2)$, then it is guaranteed that there are exactly $2$ valid discs linked to $\mathfrak{s}$.
        \item If $\mathfrak{s}$ has a child cluster (resp.\ a sibling cluster) of odd cardinality, then we have $B_{f,\mathfrak{s}} \geq \frac{2v(2)}{|\mathfrak{s}| - 1}$ (resp.\ $B_{f,\mathfrak{s}} \geq \frac{2v(2)}{2g + 1 - |\mathfrak{s}|}$). 
        In particular, if $\mathfrak{s}$ is a cardinality-$2$ or a cardinality-$2g$ cluster, there cannot be a valid disc linked to $\mathfrak{s}$ if $\delta(\mathfrak{s})<2v(2)$.
        \item If $\mathfrak{s}$ has both a child and a sibling cluster of odd cardinality, then we have 
        $$B_{f,\mathfrak{s}} \geq \left( \frac{2}{|\mathfrak{s}| - 1} + \frac{2}{2g + 1 - |\mathfrak{s}|} \right)v(2).$$
    \end{enumerate}

    \begin{proof}
        As the continuous piecewise-linear functions $\mathfrak{t}^{\mathfrak{s}}_\pm$ have positive integer slopes until reaching an output of $2v(2)$ by \Cref{prop properties t plus minus}, we must have $b_0(\mathfrak{t}^{\mathfrak{s}}_\pm) \leq 2v(2)$.  This implies that $B_{f,\mathfrak{s}} = b_0(\mathfrak{t}^{\mathfrak{s}}_+) + b_0(\mathfrak{t}^{\mathfrak{s}}_-) \leq 4v(2)$, proving part (a).
    
        Now if $\mathfrak{s}$ has a child cluster (resp.\ a sibling cluster) of odd cardinality, then we have $\mathfrak{t}^{\mathfrak{s}}_+(0) = 0$ (resp.\  $\mathfrak{t}^{\mathfrak{s}}_-(0) = 0$) by \Cref{lemma tplus minus odd cardinality child}.  By \Cref{lemma estimates of b_0 from slopes}, we then have $b_0(\mathfrak{t}^{\mathfrak{s}}_+) \geq \frac{2v(2)}{|\mathfrak{s}| - 1}$ (resp.\ $b_0(\mathfrak{t}^{\mathfrak{s}}_-) \geq \frac{2v(2)}{2g + 1 - |\mathfrak{s}|}$).  This proves (b) and (c) if we recall that $B_{f,\mathfrak{s}} = b_0(\mathfrak{t}^{\mathfrak{s}}_+) + b_0(\mathfrak{t}^{\mathfrak{s}}_-)$.
    \end{proof}
    
\end{prop}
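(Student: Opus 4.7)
The plan is to read off all three parts directly from the explicit formula $B_{f,\mathfrak{s}} = b_0(\mathfrak{t}^{\mathfrak{s}}_+) + b_0(\mathfrak{t}^{\mathfrak{s}}_-)$ supplied by \Cref{prop depth threshold}, combined with the structural description of the auxiliary functions $\mathfrak{t}^{\mathfrak{s}}_\pm$ from \Cref{prop properties t plus minus}. Recall that each of these is continuous and piecewise-linear on $[0,+\infty)$, strictly increasing with odd integer slopes until it hits $2v(2)$ and constant thereafter; the slopes before saturation lie between $1$ and $|\mathfrak{s}|-1$ for $\mathfrak{t}^{\mathfrak{s}}_+$ and between $1$ and $2g+1-|\mathfrak{s}|$ for $\mathfrak{t}^{\mathfrak{s}}_-$. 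Every estimate in the proposition will come from playing these minimum and maximum slopes against a lower bound on the starting value $\mathfrak{t}^{\mathfrak{s}}_\pm(0)$.

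For part (a), since $\mathfrak{t}^{\mathfrak{s}}_\pm(0) \geq 0$ (good decompositions always have non-negative $t$) and the slope is at least $1$ until the function reaches $2v(2)$, I would conclude that $b_0(\mathfrak{t}^{\mathfrak{s}}_\pm) \leq 2v(2)$, whence $B_{f,\mathfrak{s}} \leq 4v(2)$. The consequences for $\delta(\mathfrak{s})$ are immediate from the trichotomy in \Cref{prop depth threshold}. For part (b), the key input is \Cref{lemma tplus minus odd cardinality child}: an odd-cardinality child (resp.\ sibling) of $\mathfrak{s}$ forces $\mathfrak{t}^{\mathfrak{s}}_+(0) = 0$ (resp.\ $\mathfrak{t}^{\mathfrak{s}}_-(0) = 0$). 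Given such a zero starting value, the upper bound on the slope delivered by \Cref{lemma estimates of b_0 from slopes} yields the stated lower bound on $b_0(\mathfrak{t}^{\mathfrak{s}}_\pm)$, hence on $B_{f,\mathfrak{s}}$. Part (c) then follows simply by applying both halves of the argument in (b) and summing.

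To verify the ``in particular'' claim of (b), I would observe that for a cardinality-$2$ cluster both children are singletons, so the child bound gives $B_{f,\mathfrak{s}} \geq 2v(2)/(2-1) = 2v(2)$, and symmetrically for a cardinality-$2g$ cluster the complement in $\mathcal{R}$ has cardinality $1$, providing a singleton sibling and giving $B_{f,\mathfrak{s}} \geq 2v(2)/(2g+1-2g) = 2v(2)$; either way, $\delta(\mathfrak{s}) < 2v(2)$ rules out a valid disc via \Cref{prop depth threshold}. I do not foresee a genuine obstacle: the entire proof is an assembly of previously established facts about $\mathfrak{t}^{\mathfrak{s}}_\pm$, and the main conceptual work (establishing that the trivial part-square decomposition is good when a normalized reduction has odd degree, which is what underlies \Cref{lemma tplus minus odd cardinality child}) has already been carried out in \S\ref{sec models hyperelliptic part-square} and \S\ref{sec depths piecewise-linear}.
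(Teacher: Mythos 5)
Your proposal is correct and follows the paper's proof essentially verbatim: part (a) comes from the lower bound on slopes (and nonnegativity of the functions $\mathfrak{t}^{\mathfrak{s}}_\pm$), part (b) from combining \Cref{lemma tplus minus odd cardinality child} with the upper slope bound in \Cref{lemma estimates of b_0 from slopes}, and part (c) from summing the two halves of (b). The ``in particular'' clauses are handled just as the paper intends, with the correct observation that a cardinality-$2$ cluster has singleton children and a cardinality-$2g$ cluster has a singleton sibling.
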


\begin{rmk} \label{rmk estimating threshold}

Recalling that the definition of \emph{cluster} includes singleton subsets of roots, we see that the hypothesis of (c) is really a very mild one: it is equivalent to the condition that $\mathfrak{s}$ is not a union of $\geq 2$ even-cardinality clusters and that neither is $\mathfrak{s}'$.

\end{rmk}

\section{The toric rank of the special fiber} \label{sec toric rank}

Our purpose in this section is to use the framework we developed in \S\ref{sec depths} to glean information about the components of the special fiber of the relatively stable model of the hyperelliptic curve $Y$, based on knowledge of its valid discs which contain clusters.  After introducing the notions of a \emph{(semi-)viable} cluster and an \emph{(semi-)\"{u}bereven} cluster, we will prove the following theorem that allows to compute the toric rank of $\SF{\Yrst}$, and hence the toric rank of the special fiber of any semistable model of $Y$ defined over any extension of $R$.
\begin{thm}
    \label{thm toric rank}
    The toric rank of $\SF{\Yrst}$ equals the number of viable clusters which are not \"{u}bereven.
\end{thm}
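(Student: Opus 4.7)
The approach is to compute $t(\SF{\Yrst}) = \dim_k H^1(\Gamma(\SF{\Yrst})) = |E(\Gamma(\SF{\Yrst}))| - |V(\Gamma(\SF{\Yrst}))| + 1$, exploiting the fact that the dual graph $\Gamma(\SF{\Xrst})$ is a tree (since $X = \proj_K^1$ has toric rank $0$ in any semistable model). To this end, I will first formalize the definitions suggested at the start of the section: a cluster $\mathfrak{s}$ is \emph{viable} if it has even cardinality and satisfies $\delta(\mathfrak{s}) > B_{f,\mathfrak{s}}$ (equivalently, $J(\mathfrak{s})$ is a non-degenerate interval, so that exactly two distinct valid discs are linked to $\mathfrak{s}$), and \emph{semi-viable} in the boundary case $\delta(\mathfrak{s}) = B_{f,\mathfrak{s}}$ (when exactly one valid disc is linked to $\mathfrak{s}$ and no loop is created). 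A viable cluster $\mathfrak{s}$ is \emph{übereven} when every one of its child clusters is even-cardinality and viable, while \emph{semi-übereven} relaxes this to allow semi-viable children.

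The strategy is then to analyze the map of graphs $\Gamma(\SF{\Yrst}) \to \Gamma(\SF{\Xrst})$ induced by $\Yrst \to \Xrst$ fiberwise. Above each vertex of $\Gamma(\SF{\Xrst})$ (corresponding to a valid disc $D$), by \Cref{thm part of rst separable} combined with \Cref{rmk R_0 R_1 R_2}(d), there lies either a single vertex of $\Gamma(\SF{\Yrst})$ (when the cover $\SF{\YY_D} \to \SF{\XX_D}$ has at least one ramification point, i.e.\ $R_0 \cup R_1 \neq \varnothing$) or two non-intersecting vertices (when $R_0 \cup R_1 = \varnothing$). Using \Cref{prop lambda plus minus and ell} together with the computations of \S\ref{sec depths separating}, I would show that the latter case arises exactly when $D$ is the valid disc at an endpoint of $J(\mathfrak{s})$ at which the slope $\lambda_\pm(\mathfrak{s})$ vanishes, which by \Cref{prop formulas for b_pm} happens precisely in the context of an übereven configuration. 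Symmetrically, each persistent node of $\SF{\Xrst}$ lifts to either one or two nodes of $\SF{\Yrst}$ according to whether the hyperelliptic involution permutes or fixes the two branches of $\SF{\Yrst}$ meeting there, again detectable from the slope data.

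The main combinatorial computation then assigns a local contribution to $b_1 = |E| - |V| + 1$ to each viable cluster $\mathfrak{s}$, via its two valid discs $D_\pm = D_{\alpha, b_\pm(\mathfrak{s})}$. By Theorem \ref{thm introduction main}(c)(i), the corresponding subgraph of $\Gamma(\SF{\Yrst})$ consists of either two vertices joined by two edges (when both $D_\pm$ give irreducible covers) or three vertices $V_1, W, V_2$ with $V_1, V_2$ coming from a splitting of one disc and $W$ from the other, with $V_1, V_2$ each connected to the ``outside'' of $\mathfrak{s}$ through further components of $\SF{\Yrst}$ related by the hyperelliptic involution, thereby completing a cycle. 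In both cases, the net contribution to $b_1(\Gamma(\SF{\Yrst}))$ is exactly $+1$, unless $\mathfrak{s}$ is übereven, in which case \Cref{prop deep ubereven cluster} forces $\mathfrak{t}_+^{\mathfrak{s}}(0) = 2v(2)$, hence $b_0(\mathfrak{t}^{\mathfrak{s}}_+) = 0$ and $b_+(\mathfrak{s}) = d_+(\mathfrak{s})$; the splitting at $D_+$ becomes identified, via the glueing dictated by the hyperelliptic involution, with the splittings arising from the viable children of $\mathfrak{s}$, so the loop is ``absorbed'' by those of the children and does not contribute independently to $b_1$.

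The main obstacle will be the detailed verification of the 3-cycle case of Theorem \ref{thm introduction main}(c)(i), i.e., checking that $V_1$ and $V_2$ really do connect up through the outside in such a way as to form a single independent loop, together with the übereven bookkeeping: one must show that the loops ostensibly contributed by an übereven cluster and by its viable children overlap in precisely the right way to yield only one independent cycle per viable non-übereven cluster. Secondary complications include handling valid discs not linked to any cluster (which do not contribute loops, being part of tree-like configurations attached to the rest of $\Gamma(\SF{\Xrst})$) and the inseparable components introduced per \Cref{rmk inseparable is inessential} (which again do not produce loops); both of these reduce to local analyses around a single valid disc, for which the tools of \S\ref{sec depths} are well-suited.
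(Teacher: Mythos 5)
Your overall strategy---compute $t(\SF{\Yrst}) = |E| - |V| + 1$ for the dual graph, use that $\Gamma(\SF{\Xrst})$ is a tree, and reduce to understanding when a valid disc gives a reducible cover (which you correctly tie to the \"{u}bereven condition) versus when a node of $\SF{\Xrst}$ has two preimages (which corresponds to viability)---is the same as the paper's in its ingredients, but the paper organizes the bookkeeping differently and much more efficiently. Rather than assigning a local ``loop contribution'' to each viable cluster and then arguing that the loop of an \"{u}bereven cluster is absorbed by those of its children (the step you rightly flag as the main obstacle, and it is genuinely delicate: $b_1$ does not decompose additively over overlapping subgraphs, and an \"{u}bereven cluster $\mathfrak{s}$ shares the valid disc $D_{\mathfrak{s},d_+(\mathfrak{s})}$ with all of its children, so the candidate subgraphs overlap in both vertices and edges), the paper writes
\[
t(\SF{\Yrst}) = \big[\Nnodes(\SF{\Yrst}) - \Nnodes(\SF{\Xrst})\big] - \big[\Nirreducible(\SF{\Yrst}) - \Nirreducible(\SF{\Xrst})\big]
\]
(using $t(\SF{\Xrst}) = 0$), then establishes in \Cref{prop viable correspondence} a clean bijection between viable clusters and nodes of $\SF{\Xrst}$ over which the cover is unramified, so the first bracket is the number of viable clusters, and in \Cref{prop ubereven correspondence} a bijection between \"{u}bereven clusters and valid discs $D$ for which $\SF{\YY_D}$ is reducible, so the second bracket is the number of \"{u}bereven clusters. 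Each difference is counted by its own global bijection, so no overlap accounting is needed and the theorem follows by subtraction. Your version contains the same two key combinatorial facts, but completing the ``absorption'' argument rigorously would require selecting a spanning set of cycles and discarding dependencies, which is real work that the paper's reorganization into two global bijections sidesteps entirely.
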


Let us begin by defining viable and semi-viable clusters.
\begin{dfn} \label{dfn viable}
    We say that a cluster $\mathfrak{s}$ is \emph{viable} (resp. \emph{semi-viable}) if there exist $2$ distinct valid discs (resp. $\geq 1$ valid disc) linked to $\mathfrak{s}$.
\end{dfn}

\begin{rmk} \label{rmk viable clusters are even}
    We observe from \Cref{rmk structure of J} that semi-viable (and thus also viable) clusters have even cardinality.
\end{rmk}

\begin{prop} \label{prop viable correspondence}
    Viable clusters are in one-to-one correspondence with the nodes of $\SF{\Xrst}$ over which the cover $\SF{\Yrst} \to \SF{\Xrst}$ is unramified (i.e.\ the nodes of  $\SF{\Xrst}$ that lie under $2$ distinct nodes in $\SF{\Yrst}$).  More precisely, the correspondence is given by sending a viable cluster $\mathfrak{s}$ (to which valid discs $D_+ \subsetneq D_-$ are linked) to the node of $\SF{\Xrst}$ coming from the point in $\SF{\XX_{D_+}}$ (resp. $\SF{\XX_{D_-}}$) at which the set $\Rinfty \smallsetminus \mathfrak{s}$ (resp. the cluster $\mathfrak{s}$) reduces.
    \begin{proof}
        Suppose that $\mathfrak{s}$ is a viable cluster, and let $D_+\subsetneq D_-$ be the $2$ valid discs linked to it. It follows from \Cref{lemma not a valid disc} that we have $\XX_D\not\leq \Xrst$ for all discs $D$ satisfying $D_+\subsetneq D\subsetneq D_-$; hence (as one can deduce by applying \Cref{prop relative position smooth models line}) the corresponding lines (coming from $\SF{\XX_{D_+}}$ and $\SF{\XX_{D_-}}$) of $\SF{\Xrst}$ intersect at a node $P\in \SF{\Xrst}$.  Using \Cref{prop relative position smooth models line}(a)(b) and \Cref{lemma discs linked to clusters}, it is easy to verify the claim about the subsets of $\Rinfty$ which reduce to the corresponding point of $\SF{\XX_{D_+}}$ and that of $\SF{\XX_{D_-}}$.  We know moreover by \Cref{prop lambda plus minus and ell}(a) that the map $\SF{\Yrst}\to \SF{\Xrst}$ is unramified above $P$.
    
        Let us now prove the converse implication. Let $P$ be a node of $\SF{\Xrst}$ above which $\SF{\Yrst}$ is unramified; let $L_-$ and $L_+$ the two lines of $\SF{\Xrst}$ passing through $P$; and let $D_\pm$ be the corresponding discs. Since the cover $\SF{\Yrst} \to \SF{\Xrst}$ is unramified above $P$, no element of $\Rinfty$ reduces to $P\in \SF{\Xrst}$, which is equivalent to saying that no element of $\Rinfty$ reduces to the unique node of $\SF{\XX_{\{D_+,D_-\}}}$. In particular, $\infty$ lies on one and only one of the two lines $L_+$ and $L_-$ comprising the special fiber $\SF{\XX_{\{D_+,D_-\}}}$, say $\infty\in L_-\setminus L_+$; this implies, in particular, that we have $D_+\subsetneq D_-$ by \Cref{prop relative position smooth models line}. We can now write the decomposition $\mathcal{R}=\mathfrak{s}\sqcup (\mathcal{R}\setminus \mathfrak{s})$, where $\mathfrak{s}$ (resp.\ $\RR \smallsetminus \mathfrak{s}$) consists of the roots whose reductions in $\SF{\XX_{\{D_+,D_-\}}}$ lie on $L_+\smallsetminus L_-$ (resp.\ $L_-\smallsetminus L_+$). It is now clear that $\mathfrak{s}=D_+\cap \mathcal{R}$ is a cluster, to which the distinct valid discs $D_-$ and $D_+$ are linked.
    \end{proof}
\end{prop}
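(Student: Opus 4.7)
The plan is to establish a bijection by tracking explicitly how the points of the special fibers of the smooth models $\XX_{D}$ glue to form $\SF{\Xrst}$ and by exploiting the invariants $\ell(\XX_D, P)$ developed in \S\ref{sec models hyperelliptic separable}.

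For the forward direction, suppose $\mathfrak{s}$ is a viable cluster, so by Theorem \ref{thm summary depths valid discs} the interval $J(\mathfrak{s})$ is non-degenerate with endpoints $b_-(\mathfrak{s}) < b_+(\mathfrak{s})$, and the two valid discs linked to $\mathfrak{s}$ are $D_\pm := D_{\alpha, b_\pm(\mathfrak{s})}$ for any $\alpha \in \mathfrak{s}$. Let $L_\pm$ denote the corresponding components of $\SF{\Xrst}$. First I would rule out intermediate components: any disc $D$ with $D_+ \subsetneq D \subsetneq D_-$ corresponds (after choosing a suitable center $\alpha \in \mathfrak{s}$) to an interior point of $J(\mathfrak{s})$, and Lemma \ref{lemma not a valid disc}(b) then gives $\XX_D \not\le \Xrst$. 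Together with the relative position analysis in Proposition \ref{prop relative position smooth models line}, this forces $L_+$ and $L_-$ to meet at a node $P \in \SF{\Xrst}$, whose image in $\SF{\XX_{D_+}}$ is $\overline{x_{\alpha,\beta_+}} = \infty$ (to which $\Rinfty \smallsetminus \mathfrak{s}$ reduces by Lemma \ref{lemma discs linked to clusters}) and whose image in $\SF{\XX_{D_-}}$ is $\overline{x_{\alpha,\beta_-}} = 0$ (to which $\mathfrak{s}$ reduces). Unramification of $\SF{\Yrst} \to \SF{\Xrst}$ above $P$ then follows from Proposition \ref{prop lambda plus minus and ell}(a), which gives $\ell(\XX_{D_-}, 0) = \ell(\XX_{D_+}, \infty) = 0$ precisely under our hypothesis $b_-(\mathfrak{s}) < b_+(\mathfrak{s})$ with $|\mathfrak{s}|$ even (the parity being guaranteed by Remark \ref{rmk viable clusters are even}).

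For the reverse direction, let $P$ be a node of $\SF{\Xrst}$ over which the cover is unramified; let $L_\pm$ be the two components of $\SF{\Xrst}$ passing through $P$, corresponding to discs $D_\pm$. Unramification means no point of the branch locus $\Rinfty$ lies over $P$, so in $\SF{\XX_{\{D_+, D_-\}}}$ no element of $\Rinfty$ reduces to the unique node. In particular the point $\infty \in X(K)$ reduces to a point lying on exactly one of $L_+, L_-$, say $L_-$, and an application of Proposition \ref{prop relative position smooth models line} forces $D_+ \subsetneq D_-$. Setting $\mathfrak{s} := D_+ \cap \mathcal{R}$, the roots split as $\RR = \mathfrak{s} \sqcup (\RR \smallsetminus \mathfrak{s})$, where elements of $\mathfrak{s}$ reduce to points of $L_+ \smallsetminus L_-$ and elements of $\RR \smallsetminus \mathfrak{s}$ to points of $L_- \smallsetminus L_+$. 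Thus $\mathfrak{s}$ is a cluster, and both $D_+$ and $D_-$ are linked to it in the sense of Definition \ref{dfn linked}, making $\mathfrak{s}$ viable.

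Finally, bijectivity follows from the fact that the constructions in the two directions are mutually inverse: starting from a viable $\mathfrak{s}$ one obtains the pair $(D_+, D_-) = (D_{\alpha, b_+(\mathfrak{s})}, D_{\alpha, b_-(\mathfrak{s})})$ and hence a node $P$, from which one recovers $\mathfrak{s}$ as $D_+ \cap \RR$ by the decomposition above.

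I expect the main technical subtlety to lie in the forward direction in carefully verifying that $L_+$ and $L_-$ actually meet at a \emph{single} node of $\SF{\Xrst}$, rather than being separated by an additional component of $\Xrst$ whose associated disc $D$ is not linked to $\mathfrak{s}$ at all (i.e., a valid disc $D$ with $D \cap \RR = \varnothing$ but $D_+ \subsetneq D \subsetneq D_-$); ruling this out requires that Lemma \ref{lemma not a valid disc} covers \emph{all} intermediate discs, not merely those sharing the center $\alpha \in \mathfrak{s}$. This is not quite automatic, since the statement of Lemma \ref{lemma not a valid disc} is phrased relative to a chosen center; the clean way to handle it is to observe that any disc $D$ with $D_+ \subsetneq D \subsetneq D_-$ can be re-centered at the same $\alpha \in \mathfrak{s} \subseteq D_+$, so the lemma applies directly.
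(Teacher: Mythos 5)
Your proposal is correct and follows essentially the same route as the paper's own proof: ruling out intermediate components via Lemma \ref{lemma not a valid disc}(b) and Proposition \ref{prop relative position smooth models line}, identifying the reductions of $\mathfrak{s}$ and $\Rinfty \smallsetminus \mathfrak{s}$ via Lemma \ref{lemma discs linked to clusters}, getting unramification from Proposition \ref{prop lambda plus minus and ell}(a), and arguing the converse exactly as the paper does. Your closing remark about re-centering intermediate discs at $\alpha \in \mathfrak{s} \subseteq D_+$ is a correct and worthwhile clarification of a point the paper leaves implicit, but it does not change the substance of the argument.
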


\begin{prop} \label{prop viable thicknesses}
    If $\mathfrak{s}$ is a viable cluster corresponding to a node $P \in \SF{\Xrst}$ as in \Cref{prop viable correspondence}, then the thickness of each of the $2$ nodes lying above $P$ is equal to $(b_+(\mathfrak{s}) - b_-(\mathfrak{s})) / v(\pi)$ (with the notation of \S\ref{sec depths construction valid discs}).
\end{prop}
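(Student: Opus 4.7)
The plan is to combine Proposition \ref{prop thickness}, which computes the thickness of an intersection node on a semistable model of the line, with Proposition \ref{prop vanishing persistent}, which relates the thickness of a persistent node upstairs in a Galois quotient to that of its image downstairs.

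First, I would identify the two components of $\SF{\Xrst}$ meeting at the node $P$. By Proposition \ref{prop viable correspondence}, they are the two lines corresponding to the valid discs $D_+ \subsetneq D_-$ linked to $\mathfrak{s}$. After fixing a center $\alpha \in \mathfrak{s}$, Theorem \ref{thm summary depths valid discs} identifies these discs as $D_\pm = D_{\alpha, b_\pm(\mathfrak{s})}$, where $b_-(\mathfrak{s}) \le b_+(\mathfrak{s})$ are the endpoints of the sub-interval $J(\mathfrak{s}) \subseteq I(\mathfrak{s})$. Applying Proposition \ref{prop thickness} directly to the semistable model $\Xrst$ then yields that the thickness of $P$ equals $(b_+(\mathfrak{s}) - b_-(\mathfrak{s}))/v(\pi)$.

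Next, I would lift this computation to the two nodes $Q_1, Q_2$ of $\SF{\Yrst}$ above $P$, whose existence and status as genuine (persistent) nodes is guaranteed by Proposition \ref{prop viable correspondence} together with Definition \ref{dfn relatively stable}. Since $\SF{\Xrst} = \SF{\Yrst}/G$, the fiber $\{Q_1, Q_2\}$ is a single $G$-orbit; because $|G| = 2$ and the orbit has size $2$, the action on this orbit is free, so the stabilizer $G_{Q_i} \le G$ is trivial for $i = 1, 2$.

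Finally, I apply Proposition \ref{prop vanishing persistent}(b) at each $Q_i$. Case (i), in which $G_{Q_i}$ would flip the two branches through $Q_i$ and force the image to be smooth, is excluded because $P$ is a node; hence we are in case (ii), which asserts that the thickness of $P$ equals the thickness of $Q_i$ multiplied by $|G_{Q_i}| = 1$. Therefore each $Q_i$ has the same thickness as $P$, namely $(b_+(\mathfrak{s}) - b_-(\mathfrak{s}))/v(\pi)$, as claimed. I do not anticipate any real obstacle here: the argument is essentially an unwinding of the local descriptions of persistent nodes and smooth-model intersections that have already been established.
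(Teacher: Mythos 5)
Your proposal is correct and unwinds exactly the chain of citations the paper itself gives: Theorem \ref{thm summary depths valid discs} identifies the two valid discs as $D_{\alpha,b_\pm(\mathfrak{s})}$, Proposition \ref{prop thickness} computes the thickness of $P$ in $\SF{\Xrst}$, and Proposition \ref{prop vanishing persistent}(b)(ii) transfers this to the two nodes upstairs (your observation that $G_{Q_i}$ is trivial by orbit-stabilizer is the clean way to see the transfer is by a factor of $1$). The only thing worth noting is that a literal reading of Proposition \ref{prop thickness} as stated gives $(b_-(\mathfrak{s}) - b_+(\mathfrak{s}))/v(\pi)$, which has the wrong sign since $D_{\alpha,b_+}\subsetneq D_{\alpha,b_-}$ forces $b_+ > b_-$; the formula there should read $(b-b')/v(\pi)$, and you (correctly) wrote the positive quantity, silently correcting this.
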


\begin{proof}
    This is straightforward from applying Propositions \ref{prop thickness} and \ref{prop vanishing persistent}(b) to \Cref{thm summary depths valid discs}.
\end{proof}

We now give the other main definition of this section.

\begin{dfn} \label{dfn ubereven}
    We say that a cluster $\mathfrak{s}$ is \emph{\"{u}bereven} (resp. \emph{semi-\"{u}bereven}) if it is viable (resp. semi-viable) and if all of its children clusters are also viable (resp. semi-viable).
\end{dfn}

\begin{rmk}
    We have observed in \Cref{rmk viable clusters are even} that viable clusters have even cardinality; in fact, in the setting of residue characteristic $\neq 2$, the analog of a viable cluster for satisfying \Cref{prop viable correspondence} (and \Cref{prop viable thicknesses}) is simply an even-cardinality cluster.  It follows that the analog of our definition of an \emph{\"{u}bereven cluster} in that setting is the used in \cite{dokchitser2022arithmetic}, which is simply an even-cardinality cluster whose children all have even cardinality.
\end{rmk}

\begin{lemma}
    \label{lemma all but one}
    Let $\mathfrak{s}$ be a cluster, and let $\mathfrak{c}_1, \ldots, \mathfrak{c}_N$ be its children.  If each child $\mathfrak{c}_i$ is semi-viable, then the cluster $\mathfrak{s}$ is semi-\"{u}bereven and satisfies  $b_+(\mathfrak{s})=d_+(\mathfrak{s})$ (with the notation of \S\ref{sec depths construction valid discs}).
    \begin{proof}
        Since $\mathfrak{c}_i$ is semi-viable, by \Cref{prop depth threshold} we have $\delta(\mathfrak{c}_i) \geq B_{f,\mathfrak{s}}\ge b_0(\mathfrak{t}_+^{\mathfrak{c}_i})$, which implies that $\mathfrak{t}_+^{\mathfrak{c}_i}(\delta(\mathfrak{c}_i)) = 2v(2)$.  Now \Cref{prop deep ubereven cluster} says that we have $\mathfrak{t}_+^{\mathfrak{s}}(0) = 2v(2)$, which directly implies that $b_+(\mathfrak{s}) = d_+(\mathfrak{s})$; the disc $D_{\mathfrak{s}, d_+(\mathfrak{s})}$ is therefore valid, and thus the cluster $\mathfrak{s}$ is semi-\"{u}bereven.
    \end{proof}
\end{lemma}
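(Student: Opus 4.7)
The plan is to derive $b_0(\mathfrak{t}_+^{\mathfrak{s}}) = 0$ from the semi-viability of the children of $\mathfrak{s}$; this will immediately yield $b_+(\mathfrak{s}) = d_+(\mathfrak{s})$ via Proposition \ref{prop formulas for b_pm} and produce the valid disc $D_{\mathfrak{s},d_+(\mathfrak{s})}$ needed to upgrade $\mathfrak{s}$ from having all-semi-viable children to being semi-\"{u}bereven.

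First I would verify that $\mathfrak{s}$ is in the scope of Proposition \ref{prop deep ubereven cluster}. By Remark \ref{rmk viable clusters are even}, every semi-viable cluster has even cardinality, so each $\mathfrak{c}_i$ has even cardinality; consequently $|\mathfrak{s}| = \sum_i |\mathfrak{c}_i|$ is even as well. Moreover, a cluster cannot have exactly one child (otherwise that child would coincide with its parent, violating strict containment), so $N \geq 2$. Thus $\mathfrak{s}$ is an even-cardinality cluster that is the disjoint union of $N \geq 2$ even-cardinality children, which is exactly the hypothesis of Proposition \ref{prop deep ubereven cluster}.

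Next I would combine the semi-viability of each $\mathfrak{c}_i$ with Proposition \ref{prop depth threshold} to obtain
\begin{equation*}
    \delta(\mathfrak{c}_i) \;\geq\; B_{f,\mathfrak{c}_i} \;=\; b_0(\mathfrak{t}_+^{\mathfrak{c}_i}) + b_0(\mathfrak{t}_-^{\mathfrak{c}_i}) \;\geq\; b_0(\mathfrak{t}_+^{\mathfrak{c}_i}).
\end{equation*}
Since $\mathfrak{t}_+^{\mathfrak{c}_i}$ is non-decreasing and reaches the value $2v(2)$ at input $b_0(\mathfrak{t}_+^{\mathfrak{c}_i})$ (Proposition \ref{prop properties t plus minus}), this forces $\mathfrak{t}_+^{\mathfrak{c}_i}(\delta(\mathfrak{c}_i)) = 2v(2)$ for every $i$. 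The ``in particular'' clause of Proposition \ref{prop deep ubereven cluster} then delivers $\mathfrak{t}_+^{\mathfrak{s}}(0) = 2v(2)$, i.e.\ $b_0(\mathfrak{t}_+^{\mathfrak{s}}) = 0$, so that Proposition \ref{prop formulas for b_pm} gives $b_+(\mathfrak{s}) = d_+(\mathfrak{s})$. In particular $J(\mathfrak{s})$ contains the point $d_+(\mathfrak{s})$ and is thus non-empty, and Theorem \ref{thm summary depths valid discs} produces a valid disc (namely $D_{\mathfrak{s},d_+(\mathfrak{s})}$) linked to $\mathfrak{s}$. Hence $\mathfrak{s}$ is itself semi-viable, and together with the standing hypothesis on the children, semi-\"{u}bereven.

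The only subtlety I anticipate is checking that Proposition \ref{prop deep ubereven cluster} genuinely applies, that is, that all children are even-cardinality and that $N \geq 2$; both facts come essentially for free from semi-viability of the children as outlined above. Everything else is a one-line application of the previously established machinery.
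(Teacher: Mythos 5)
Your argument mirrors the paper's own proof and helpfully supplies detail the paper leaves implicit: the check that $\mathfrak{s}$ has even cardinality and at least two children (so that Proposition~\ref{prop deep ubereven cluster} is in scope), and the silent correction of the paper's typo $B_{f,\mathfrak{s}}$ to $B_{f,\mathfrak{c}_i}$ in the chain of inequalities. In that sense you and the paper take the same route.

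However, both you and the paper gloss over a real gap at the step ``\,$b_0(\mathfrak{t}_+^{\mathfrak{s}}) = 0$, so Proposition~\ref{prop formulas for b_pm} gives $b_+(\mathfrak{s}) = d_+(\mathfrak{s})$, so $J(\mathfrak{s})$ contains $d_+(\mathfrak{s})$ and is non-empty.'' The logic is circular: Proposition~\ref{prop formulas for b_pm} only yields $b_+(\mathfrak{s}) = d_+(\mathfrak{s}) - b_0(\mathfrak{t}_+^{\mathfrak{s}})$ under the hypothesis $J(\mathfrak{s}) \neq \varnothing$, which (given $b_0(\mathfrak{t}_+^{\mathfrak{s}}) = 0$) is the further inequality $\delta(\mathfrak{s}) \geq b_0(\mathfrak{t}_-^{\mathfrak{s}})$. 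Equivalently, by Proposition~\ref{prop t^R is min of t^s and t^(R-s)}, membership of $d_+(\mathfrak{s})$ in $J(\mathfrak{s})$ requires $\mathfrak{t}_-^{\mathfrak{s}}(\delta(\mathfrak{s})) = 2v(2)$ \emph{in addition} to $\mathfrak{t}_+^{\mathfrak{s}}(0) = 2v(2)$. Neither Proposition~\ref{prop deep ubereven cluster} nor the semi-viability of the children speaks to $\mathfrak{t}_-^{\mathfrak{s}}$, which is governed by the factor $f^{\RR\setminus\mathfrak{s}}$ carried by the roots \emph{outside} $D_{\mathfrak{s},d_+(\mathfrak{s})}$; the hypotheses you invoke only control what happens on discs inside $D_{\mathfrak{s},d_+(\mathfrak{s})}$. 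It seems genuinely possible for $\delta(\mathfrak{s})$ to be small compared to $b_0(\mathfrak{t}_-^{\mathfrak{s}})$ even when all $\mathfrak{c}_i$ are deeply nested and semi-viable, in which case $J(\mathfrak{s}) = \varnothing$ and the claimed conclusion fails. This is really an issue with the lemma as stated rather than with your execution; but note that at both points where the paper invokes the lemma (Proposition~\ref{prop ubereven correspondence} and Theorem~\ref{thm 2-rank}) the cluster $\mathfrak{s}$ is already assumed \"{u}bereven or semi-\"{u}bereven, hence semi-viable, so only the weaker conclusion $b_0(\mathfrak{t}_+^{\mathfrak{s}}) = 0$ is needed there. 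A correct proof should either justify $J(\mathfrak{s}) \neq \varnothing$ from the hypotheses or add the hypothesis that $\mathfrak{s}$ itself is semi-viable.
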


\begin{prop} \label{prop ubereven correspondence}
    The assignement $\mathfrak{s}\mapsto D_{\mathfrak{s},d_+(\mathfrak{s})}$ induces a one-to-one correspondence between the \"{u}bereven clusters and the valid discs $D$ such that the special fiber $\SF{\YY_D}$ is reducible (i.e.\ $\SF{\YY_D}$ consists of $2$ rational components).
    \begin{proof}
        Suppose first that $D$ is a valid disc such that $\SF{\YY_D}$ is reducible. Then, we know by \Cref{thm part of rst separable} that the elements of $\mathcal{R}\cup\lbrace \infty\rbrace$ reduce to $N \ge 3$ distinct points of $\SF{\XX_D}$; by \Cref{lemma discs linked to clusters} we consequently have $D=D_{\mathfrak{s}, d_+(\mathfrak{s})}$ for some cluster $\mathfrak{s}$ and that the $N$ points are the reductions $\overline{\mathfrak{c}_1}, \ldots, \overline{\mathfrak{c}_{N-1}}, \overline{\mathcal{R}\setminus \mathfrak{s}} = \infty$, where $\mathfrak{c}_1, \ldots, \mathfrak{c}_{N-1}$ are the child clusters of $\mathfrak{s}$. From the fact that $D$ is a valid disc we deduce that $d_+(\mathfrak{s})=d_-(\mathfrak{c}_i)$ is a common endpoint of the intervals $J(\mathfrak{s})$ and $J(\mathfrak{c}_i)$ for all $i$; in particular, these intervals are non-empty and we have $b_+(\mathfrak{s})=d_+(\mathfrak{s})=d_-(\mathfrak{c}_i)=b_-(\mathfrak{c}_i)$. The fact that $\SF{\YY_D}$ consists of $2$ components means that we have $\ell(\XX_D,\infty)=0$ and $\ell(\XX_D,\overline{\mathfrak{c}_i})=0$ for all $i$, but, according to \Cref{prop lambda plus minus and ell}(c),(d), this implies that $b_-(\mathfrak{c}_i)<b_+(\mathfrak{c}_i)$ for all $i$ as well as $b_-(\mathfrak{s})<b_+(\mathfrak{s})$.  We conclude that $\mathfrak{s}$ is a \"{u}bereven cluster.
        
        Let us now prove the converse implication. Assume that $\mathfrak{s}$ is a \"{u}bereven cluster, and let us denote by $\mathfrak{c}_1, \ldots, \mathfrak{c}_{N-1}$ its children (with $N\ge 3$); note that we have $d_+(\mathfrak{s})=d_-(\mathfrak{c}_i)$ for all $i$. Letting $D=D_{\mathfrak{s}, d_+({\mathfrak{s}})}$, we have that the elements of $\Rinfty$ reduce in $\SF{\XX_D}$ to the $N$ distinct points, $\overline{\mathfrak{c}_1}, \ldots, \overline{\mathfrak{c}_{N-1}}, \overline{\mathcal{R}\setminus \mathfrak{s}} = \infty$. Now, since all of the $\mathfrak{c}_i$'s are viable, we have $b_+(\mathfrak{s})=d_+(\mathfrak{s})$ by \Cref{lemma all but one}; meanwhile, since $\mathfrak{s}$ is also assumed to be viable, we have $b_-(\mathfrak{s})<b_+(\mathfrak{s})$. We conclude, in particular, that the disc $D$ is valid (see \Cref{thm summary depths valid discs}).
        
        As before, from the fact that $D=D_{\mathfrak{s},d_+(\mathfrak{s})}$ is a valid disc, we deduce that $b_+(\mathfrak{s})=d_+(\mathfrak{s})=b_-(\mathfrak{c}_i)=d_-(\mathfrak{c}_i)$. Since both $\mathfrak{s}$ and the $\mathfrak{c}_i$'s are viable, they have even cardinality and satisfy $b_-(\mathfrak{c}_i)<b_+(\mathfrak{c}_i)$ and $b_-(\mathfrak{s})<b_+(\mathfrak{s})$; hence, by \Cref{prop lambda plus minus and ell}(a) we have $\ell(\XX_D,\overline{\mathfrak{c}_i})=0$ for all $i$, and $\ell(\XX_D,\infty)=0$. But this means that $\NSF{\YY_D}\to \SF{\XX_D}$ is an étale double cover of the line, i.e.\ that $\SF{\YY_D}$ is reducible, as discussed in \S\ref{sec models hyperelliptic separable}.
    \end{proof}
\end{prop}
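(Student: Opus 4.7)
The plan is to establish both directions of the bijection by analyzing the ramification behavior of $\SF{\YY_D} \to \SF{\XX_D}$ at the points where elements of $\Rinfty$ reduce, using \Cref{prop lambda plus minus and ell} and \Cref{lemma ell and t function} to translate ``$\ell(\XX_D,P) = 0$'' into conditions on the endpoints $b_\pm$.

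\textbf{Forward direction.} Given an \"{u}bereven cluster $\mathfrak{s}$ with children $\mathfrak{c}_1, \ldots, \mathfrak{c}_{N-1}$, I will first invoke \Cref{lemma all but one}: since every child $\mathfrak{c}_i$ is semi-viable, this lemma gives $b_+(\mathfrak{s}) = d_+(\mathfrak{s})$, so the disc $D := D_{\mathfrak{s},d_+(\mathfrak{s})}$ is valid by \Cref{thm summary depths valid discs}. To show $\SF{\YY_D}$ is reducible, by \Cref{rmk R_0 R_1 R_2}(d) it suffices to verify $\ell(\XX_D,P) = 0$ at each of the $N$ points of $\SF{\XX_D}$ where $\Rinfty$ reduces (namely $\overline{\mathfrak{c}_1}, \ldots, \overline{\mathfrak{c}_{N-1}}, \infty$). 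For the point $\infty$, since $\mathfrak{s}$ is viable we have $b_-(\mathfrak{s}) < b_+(\mathfrak{s}) = d_+(\mathfrak{s})$, and \Cref{prop lambda plus minus and ell}(a) applied with center $\alpha \in \mathfrak{s}$ yields $\ell(\XX_D,\infty) = 0$. For each point $\overline{\mathfrak{c}_i}$, I choose a center $\alpha_i \in \mathfrak{c}_i$ and observe that $D = D_{\alpha_i, d_-(\mathfrak{c}_i)}$ is linked to $\mathfrak{c}_i$; since $D$ is valid and linked to $\mathfrak{c}_i$, \Cref{thm summary depths valid discs} forces $d_-(\mathfrak{c}_i) \in \{b_-(\mathfrak{c}_i), b_+(\mathfrak{c}_i)\}$, and since $d_-(\mathfrak{c}_i)$ is the left endpoint of $I(\mathfrak{c}_i)$ we must have $b_-(\mathfrak{c}_i) = d_-(\mathfrak{c}_i)$. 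Then \Cref{prop lambda plus minus and ell}(a) applied to $\mathfrak{c}_i$ (which is viable, so $b_-(\mathfrak{c}_i) < b_+(\mathfrak{c}_i)$) gives $\ell(\XX_D, \overline{\mathfrak{c}_i}) = 0$.

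\textbf{Reverse direction.} Given a valid disc $D$ with $\SF{\YY_D}$ reducible, \Cref{thm part of rst separable} shows that $\Rinfty$ reduces to $N \geq 3$ distinct points of $\SF{\XX_D}$. By \Cref{lemma discs linked to clusters}(c), we can write $D = D_{\mathfrak{s}, d_+(\mathfrak{s})}$ where $\mathfrak{s} := D \cap \mathcal{R}$, and the children clusters $\mathfrak{c}_1, \ldots, \mathfrak{c}_{N-1}$ of $\mathfrak{s}$ reduce to distinct points in $\SF{\XX_D}$. Using the same argument as above in reverse, the validity of $D$ forces $b_+(\mathfrak{s}) = d_+(\mathfrak{s})$ (and hence $|\mathfrak{s}|$ is even by \Cref{rmk structure of J}) and $b_-(\mathfrak{c}_i) = d_-(\mathfrak{c}_i) = d_+(\mathfrak{s})$ for each $i$.

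\textbf{The viability conclusions.} It remains to verify that the strict inequalities $b_-(\mathfrak{s}) < b_+(\mathfrak{s})$ and $b_-(\mathfrak{c}_i) < b_+(\mathfrak{c}_i)$ hold. Suppose by way of contradiction that $b_-(\mathfrak{s}) = b_+(\mathfrak{s}) = d_+(\mathfrak{s})$; then in a neighborhood of $d_+(\mathfrak{s})$ in $I(\mathfrak{s})$, the function $b \mapsto \mathfrak{t}^\mathcal{R}(D_{\alpha, b})$ attains $2v(2)$ only at the right endpoint, so its left derivative $\partial^-\mathfrak{t}^\mathcal{R}(d_+(\mathfrak{s}))$ is a strictly positive odd integer (by \Cref{lemma slopes of t}). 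But then \Cref{lemma ell and t function}(d) gives $\ell(\XX_D, \infty) \geq 2$, contradicting the reducibility of $\SF{\YY_D}$. A symmetric argument, working around each $\overline{\mathfrak{c}_i}$ with $\alpha_i \in \mathfrak{c}_i$ and using \Cref{lemma ell and t function}(b), rules out $b_-(\mathfrak{c}_i) = b_+(\mathfrak{c}_i)$. Hence $\mathfrak{s}$ and each $\mathfrak{c}_i$ are viable, so $\mathfrak{s}$ is \"{u}bereven.

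The main bookkeeping obstacle is the switching of center points between $\mathfrak{s}$ and its children $\mathfrak{c}_i$ when analyzing $\ell(\XX_D, P)$ at different points $P \in \SF{\XX_D}$: each application of \Cref{prop lambda plus minus and ell} or \Cref{lemma ell and t function} requires the center to lie in the cluster whose interval $J$ (or its endpoints) we are reading off. Once the identifications $b_+(\mathfrak{s}) = d_+(\mathfrak{s}) = d_-(\mathfrak{c}_i) = b_-(\mathfrak{c}_i)$ are secured from validity of $D$, the rest of the argument follows mechanically.
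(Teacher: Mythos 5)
Your proposal is correct and follows essentially the same approach as the paper's proof. You present the two implications in the opposite order and, in establishing the viability inequalities for the reverse direction, you invoke \Cref{lemma ell and t function} directly on the left/right derivatives of $\mathfrak{t}^{\mathcal{R}}$ rather than citing \Cref{prop lambda plus minus and ell}(c),(d) — but the latter is just a repackaging of the former, so there is no substantive difference; the other ingredients (\Cref{lemma all but one}, \Cref{thm summary depths valid discs}, \Cref{lemma discs linked to clusters}, \Cref{prop lambda plus minus and ell}(a), and the reducible-iff-unramified characterization from \Cref{rmk R_0 R_1 R_2}(d)) are used just as in the paper.
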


We are now able to prove the main result of this section.
\begin{proof}[Proof (of \Cref{thm toric rank})]
    The toric rank of a semistable $k$-curve is just the number of nodes (which we denote by $\Nnodes$) minus the number of irreducible components (which we denote by $\Nirreducible$) plus 1 (see \S\ref{sec semistable preliminaries semistable models}). Now, since we have $g(X)=0$, the toric rank of $\SF{\Xrst}$ is $0$ (see the discussion in \S\ref{sec models hyperelliptic line}). Hence, the toric rank of $\SF{\Yrst}$ can be computed as
    \begin{equation*}
       [\Nnodes(\SF{\Yrst})-\Nnodes(\SF{\Xrst})]-[\Nirreducible(\SF{\Yrst})-\Nirreducible(\SF{\Xrst})].
    \end{equation*}
    Now \Cref{prop viable correspondence} implies that the first difference equals the number of viable clusters; meanwhile, \Cref{prop ubereven correspondence} implies that the second difference equals the number of \"{u}bereven clusters.
\end{proof}

\section{The $2$-rank of the special fiber} \label{sec 2-rank}

We now further use the framework established in \S\ref{sec depths} to compute the $2$-rank of the special fiber of a semistable model of a hyperelliptic curve over $K$.  By the $2$-rank of the special fiber $\SF{\Yrst}$ of the relatively stable model $\Yrst$ of $Y$, we mean the rank of the $2$-torsion subgroup $\Pic^0(\SF{\Yrst})[2]$ of the (generalized) Jacobian $\Pic^0(\SF{\Yrst}) / k$, which is well known to be an elementary abelian group.  In fact, as discussed in \S\ref{sec semistable preliminaries semistable models}, the group variety $\Pic^0(\SF{\Yrst})$ is an extension of an abelian variety $A / k$ by a torus $T / k$; since a torus over the characteristic-$2$ field $k$ has trivial $2$-torsion, the rank of $\Pic^0(\SF{\Yrst})[2]$ as a $\zz/2\zz$-module must equal that of $A[2]$, which in turn is non-negative and at most the abelian rank (or $g$ minus the toric rank) of $\SF{\Yrst}$.  The purpose of this section is to prove the following result (whose statement is visibly analogous to that of \Cref{thm toric rank}).

\begin{thm} \label{thm 2-rank}

The $2$-rank of $\SF{\Yrst}$ equals the number of semi-viable clusters which are not semi-\"{u}bereven, minus the number of viable clusters which are not \"{u}bereven (or equivalently, minus the toric rank, thanks to \Cref{thm toric rank}).

\end{thm}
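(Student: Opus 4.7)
The plan is to compute $\sigma(\SF{\Yrst})$ component-by-component via an Artin--Schreier/Deuring--Shafarevich calculation, and then to reorganize the resulting sum into the cluster-theoretic count claimed in the theorem. First I would reduce to components: since $\Pic^0(\SF{\Yrst})$ is an extension of an abelian variety by a torus $T/k$ and $T$ has trivial $2$-torsion in characteristic $2$, we have $\sigma(\SF{\Yrst}) = \sum_{V \in \Irred(\SF{\Yrst})} \sigma(\tilde V)$. By \Cref{thm part of rst separable} combined with \Cref{prop ubereven correspondence} (and taking \Cref{rmk inseparable is inessential} into account for inseparable components), each such $V$ is either a projective line (and contributes $0$) or is of the form $\NSF{\YY_D}$ for a valid disc $D$ with $\SF{\YY_D}$ irreducible; in the latter case, $\NSF{\YY_D} \to \SF{\XX_D} \cong \mathbb{P}^1_k$ is a separable (Artin--Schreier) $\mathbb{Z}/2$-cover branched at precisely the $N_0(D) + N_1(D)$ points of $R_0(D) \cup R_1(D)$, and the Deuring--Shafarevich formula yields $\sigma(\NSF{\YY_D}) = N_0(D) + N_1(D) - 1$, so that
\begin{equation} \label{eq 2rank sum proposal}
    \sigma(\SF{\Yrst}) = \sum_{D} \bigl(N_0(D) + N_1(D) - 1\bigr),
\end{equation}
the sum running over valid discs $D$ with $\SF{\YY_D}$ irreducible.

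Next I would classify each such valid disc via \Cref{lemma discs linked to clusters}: it is either an \emph{interior} disc $D_{\mathfrak{s}, b}$ (with $b$ strictly in $(d_-(\mathfrak{s}), d_+(\mathfrak{s}))$, linked only to $\mathfrak{s}$), a \emph{shared} disc $D_{\mathfrak{s}, d_+(\mathfrak{s})}$, or a \emph{cluster-free} disc ($D \cap \mathcal{R} = \varnothing$). A shared disc $D_{\mathfrak{s}, d_+(\mathfrak{s})}$ is valid if and only if $\mathfrak{s}$ is semi-\"{u}bereven: the forward implication holds because, viewing this disc also as $D_{\mathfrak{c}_i, d_-(\mathfrak{c}_i)}$, its validity automatically makes each child $\mathfrak{c}_i$ semi-viable, while the reverse is essentially \Cref{lemma all but one}. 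Using \Cref{lemma ell and t function} to express each $\ell(\XX_D, P)$ in terms of one-sided derivatives of $\mathfrak{t}^{\mathcal{R}}$ together with the endpoint values $\lambda_\pm$ from \Cref{prop lambda plus minus and ell}, I would then establish: $N_0(D) + N_1(D) = 1$ for any interior disc at a viable cluster and for any cluster-free disc; $N_0(D) + N_1(D) = 2$ for an interior disc at a semi-viable-but-not-viable (henceforth ``sv-not-v'') cluster (at the degenerate point $b_-(\mathfrak{s}) = b_+(\mathfrak{s})$ both one-sided derivatives of $\mathfrak{t}^{\mathcal{R}}$ are nonzero, making both $\ell(\XX_D, 0)$ and $\ell(\XX_D, \infty)$ positive); and $N_0(D) + N_1(D) = k + [\mathfrak{s} \text{ is sv-not-v}]$ for a shared disc at a semi-\"{u}bereven-but-not-\"{u}bereven $\mathfrak{s}$, where $k$ is the number of sv-not-v children of $\mathfrak{s}$ and $[\,\cdot\,]$ denotes the Iverson bracket.

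Third, I would reorganize (\ref{eq 2rank sum proposal}) as a sum over clusters. Noting that the shared disc $D_{\mathfrak{s}, d_+(\mathfrak{s})}$ is valid if and only if $\mathfrak{s}$ is semi-\"{u}bereven (and analogously that $D_{\mathfrak{s}, d_-(\mathfrak{s})}$ is valid if and only if the parent of $\mathfrak{s}$ is semi-\"{u}bereven), the valid disc at a semi-viable cluster $\mathfrak{s}$ is of interior type if and only if neither $\mathfrak{s}$ nor its parent is semi-\"{u}bereven. The decisive observation is that no sv-not-v cluster $\mathfrak{s}$ can simultaneously be semi-\"{u}bereven and have a semi-\"{u}bereven parent, since this would force $d_-(\mathfrak{s}) = b_-(\mathfrak{s}) = b_+(\mathfrak{s}) = d_+(\mathfrak{s})$, contradicting $\delta(\mathfrak{s}) > 0$. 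Combining this with the per-disc contributions from the previous paragraph, each sv-not-v cluster contributes $+1$ to (\ref{eq 2rank sum proposal}) exactly once provided it is not semi-\"{u}bereven---either via its own interior disc or via its appearance as an sv-not-v child in its parent's shared disc; meanwhile, each shared disc's ``$-1$'' precisely accounts for a cluster that is viable, semi-\"{u}bereven, but not \"{u}bereven (in the case that $\mathfrak{s}$ is sv-not-v and semi-\"{u}bereven, the $-1$ is instead absorbed by the $[\mathfrak{s}\text{ is sv-not-v}] = 1$ term of the shared-disc contribution). A direct count then gives
$$\sigma(\SF{\Yrst}) = \#\{\mathfrak{s}: \text{sv-not-v and not semi-\"{u}bereven}\} - \#\{\mathfrak{s}: \text{viable, semi-\"{u}bereven, but not \"{u}bereven}\},$$
which a brief inclusion-exclusion (noting that ``viable'' implies ``semi-viable'' and ``\"{u}bereven'' implies ``semi-\"{u}bereven'') rewrites as the claimed $\#\{\text{semi-viable but not semi-\"{u}bereven}\} - \#\{\text{viable but not \"{u}bereven}\}$.

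The hardest part of this plan is the final combinatorial bookkeeping: a single shared disc at a semi-\"{u}bereven cluster $\mathfrak{s}$ pools contributions from $\mathfrak{s}$ itself and from each of its sv-not-v children simultaneously, so the sum over valid discs must be carefully attributed so that every cluster is credited with the right multiplicity. The ``no doubly semi-\"{u}bereven neighbors'' observation above is exactly what prevents the shared-disc contributions of adjacent clusters in the cluster tree from overlapping, making the accounting work out.
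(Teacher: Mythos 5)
Your proposal is essentially the same as the paper's proof: both reduce to summing $2$-ranks of components via the torus/abelian-variety decomposition of $\Pic^0(\SF{\Yrst})$, both invoke the Deuring--Shafarevich formula on each separable component $\NSF{\YY_D}$ to get the contribution $N_0(D)+N_1(D)-1$, and both compute this quantity by classifying valid discs according to the number $N$ of points of $\SF{\XX_D}$ hit by the branch locus and then invoking \Cref{prop lambda plus minus and ell} and \Cref{prop viable correspondence} (or the equivalent \Cref{lemma ell and t function}) to convert the ramification count into a statement about the viability of the linked clusters. Where you diverge is in the final combinatorial reorganization. The paper's bookkeeping is somewhat slicker: it uses the fact that each semi-viable-but-not-viable cluster is linked to exactly one valid disc and therefore contributes exactly one ``branch-point unit'' to the entire sum, so the total is $\#\{\text{sv-not-v}\}$ minus the number of shared discs with $M\geq 1$; then the bijection of \Cref{prop ubereven correspondence} (phrased contrapositively) identifies the latter with the semi-\"{u}bereven-but-not-\"{u}bereven clusters. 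You instead do a cluster-by-cluster attribution, which forces you to prove the extra lemma that no sv-not-v cluster can be simultaneously semi-\"{u}bereven and have a semi-\"{u}bereven parent (to guarantee each cluster is credited by exactly one disc). Your observation is correct --- if $\mathfrak{s}$ is sv-not-v with both $D_{\mathfrak{s},d_+(\mathfrak{s})}$ and $D_{\mathfrak{s},d_-(\mathfrak{s})}$ valid, then $b_-(\mathfrak{s})=b_+(\mathfrak{s})$ forces $\delta(\mathfrak{s})=0$ --- and your intermediate formula $\#\{\text{sv-not-v, not semi-\"{u}bereven}\}-\#\{\text{viable, semi-\"{u}bereven, not \"{u}bereven}\}$ is a straightforward rearrangement of the paper's $\#\{\text{sv-not-v}\}-\#\{\text{semi-\"{u}bereven, not \"{u}bereven}\}$, both of which equal the stated $\#\{\text{semi-viable, not semi-\"{u}bereven}\}-\#\{\text{viable, not \"{u}bereven}\}$ by inclusion--exclusion. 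So the proof is valid; the paper's route simply avoids the adjacency argument by reorganizing the sum at the level of discs rather than clusters.
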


Write $D_1, \dots, D_r$ for the set of valid discs.  For $1 \leq i \leq r$, let $\XX_i$ denote the corresponding smooth model of $X$; let $\YY_i$ denote the normalization of $\XX_i$ as in \S\ref{sec models hyperelliptic forming}; and let $M_i$ be the number of branch points of the quadratic cover $\NSF{\YY_i} \to \SF{\XX_i}$ (with notation as in \S\ref{sec models hyperelliptic separable}).  We set out to prove \Cref{thm 2-rank}, beginning by establishing a couple of lemmas.

\begin{lemma} \label{lemma deuring-shafarevich}
    With notation as above, the $2$-rank of $\SF{\Yrst}$ is given by $\sum_{i = 1}^r \max\{M_i - 1, 0\}$.
    \begin{proof}
        We observe from \cite[Example 9.2.8]{bosch2012neron} that the Jacobian $\Pic^0(\SF{\Xrst})$ is realized as an extension of an abelian group $A$ by a torus $T$ in the short exact sequence 
        \begin{equation} \label{eq ses from bosch}
            1 \to T \to \Pic^0(\SF{\Yrst}) \to \prod_{i = 1}^r \Pic^0(\NSF{\YY_i}) \to 1,
        \end{equation}
        where the second map to the right is induced by the morphisms $\NSF{\YY_i} \to \SF{\Yrst}$ (see \Cref{thm part of rst separable}); in other words, we may identify $A$ with the product of the Jacobians of each curve $\NSF{\YY_i} / k$.  (To fully verify that this follows from what is given in the above citation, we recall firstly that if $D$ is a non-valid disc such that $\XX_D \leq \Xrst$, then the corresponding inseparable component of $\SF{\Yrst}$ is a line and has trivial Jacobian by \Cref{rmk inseparable is inessential}, and secondly that each curve $\NSF{\YY_i}$ is the normalization of an irreducible component of $\SF{\Yrst}$, except in the case that $\NSF{\YY_i}$ is the disjoint union of $2$ lines and thus again has trivial Jacobian.)  Now the torus $T / k$ has trivial $2$-torsion, and we may therefore identify the $2$-torsion subgroup of $\Pic^0(\SF{\Xrst})$ with that of the product in (\ref{eq ses from bosch}), which in turn may be identified with the direct sum of the $2$-torsion subgroups of each $\Pic^0(\NSF{\YY_i})$.  The $2$-rank of $\SF{\Yrst}$ is therefore equal to the sum of the $2$-ranks of the curves $\NSF{\YY_i}$.

        Fix an index $i$.  If $M_i = 0$, then the curve $\NSF{\Yrst}$ consists of $2$ disjoint lines and therefore contributes $0$ to the sum.  If $M_i \geq 1$, we may apply the Deuring-Shafarevich formula given as \cite[Theorem 4.1 bis]{subrao1975p} (see also \cite[Theorem 1.1]{shiomi2011deuring}) to the $2$-cover $\NSF{\YY_i} \to \SF{\XX_i}$ to get that the $2$-rank of $\NSF{\YY_i}$ equals $M_i - 1$.  The desired formula for the $2$-rank of $\SF{\Yrst}$ follows.
    \end{proof}
\end{lemma}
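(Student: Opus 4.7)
The plan is to decompose $\Pic^0(\SF{\Yrst})$ using the well-known structure theory of the Picard scheme of a (split) semistable $k$-curve, as in \cite[Chapter 9]{bosch2012neron}. The generalized Jacobian fits into a short exact sequence of commutative group schemes over $k$,
\begin{equation*}
    1 \to T \to \Pic^0(\SF{\Yrst}) \to \prod_{V} \Pic^0(\widetilde{V}) \to 1,
\end{equation*}
where $V$ runs over the irreducible components of $\SF{\Yrst}$, $\widetilde{V}$ is the normalization of $V$, and $T$ is a torus whose character lattice is the first homology of the dual graph $\Gamma(\SF{\Yrst})$. Since $T$ is a torus over the characteristic-$2$ field $k$, it satisfies $T[2] = 0$; therefore applying $(\cdot)[2]$ to the sequence identifies $\Pic^0(\SF{\Yrst})[2]$ with the product of the groups $\Pic^0(\widetilde{V})[2]$, and hence reduces the problem to computing the $2$-rank of each $\widetilde{V}$ separately.

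Next I would classify the components $V$ of $\SF{\Yrst}$. By \Cref{rmk inseparable is inessential}, the inseparable components are lines (hence have trivial Jacobian and contribute nothing), so only the components arising from valid discs can matter. For each valid disc $D_i$, \Cref{thm part of rst separable} guarantees that the strict transform of $\SF{\YY_i}$ inside $\SF{\Yrst}$ is already smooth, so it coincides with $\NSF{\YY_i}$. Two sub-cases occur: if $\SF{\YY_i}$ is reducible, then by \Cref{rmk R_0 R_1 R_2}(d) we have $M_i = N_0 + N_1 = 0$ and $\NSF{\YY_i}$ is a disjoint union of two projective lines, contributing $0 = \max\{M_i - 1, 0\}$. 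Otherwise $\NSF{\YY_i}$ is a smooth connected curve admitting a (wild) degree-$2$ Galois cover $\NSF{\YY_i} \to \SF{\XX_i} \cong \mathbb{P}^1_k$ branched at exactly the $M_i$ points described in \S\ref{sec models hyperelliptic separable}.

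For this second sub-case I would invoke the Deuring--Shafarevich formula for $\mathbb{Z}/p\mathbb{Z}$-covers in characteristic $p$ (see \cite[Theorem 4.1 bis]{subrao1975p} or \cite[Theorem 1.1]{shiomi2011deuring}): for a finite Galois cover $\widetilde{Y} \to \widetilde{X}$ of smooth projective $k$-curves with group $\mathbb{Z}/p\mathbb{Z}$,
\begin{equation*}
    \sigma(\widetilde{Y}) - 1 = p\bigl(\sigma(\widetilde{X}) - 1\bigr) + \sum_{P \in \widetilde{X}} (e_P - 1),
\end{equation*}
where $\sigma(\cdot)$ denotes the $p$-rank. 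Substituting $p = 2$, $\widetilde{X} = \mathbb{P}^1_k$ (so $\sigma = 0$), and $e_P = 2$ at each of the $M_i$ branch points yields $\sigma(\NSF{\YY_i}) = M_i - 1$. Summing the contributions over all $i = 1, \dots, r$ gives the stated formula $\sum_{i=1}^r \max\{M_i - 1, 0\}$.

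The main obstacle I anticipate is not any step individually, which is essentially a citation, but rather the bookkeeping required to be sure that the factorization of $\prod_V \Pic^0(\widetilde{V})$ really does split cleanly across valid discs. Specifically, one must confirm that distinct valid discs give rise to distinct (normalized) components of $\SF{\Yrst}$, and that the remaining irreducible components — those coming from non-valid discs $D$ with $\XX_D \leq \Xrst$ — are all rational. Both of these facts are packaged into earlier results (\Cref{thm part of rst separable} and \Cref{rmk inseparable is inessential}), but making the identification precise is what turns the lemma from a one-line application of Deuring--Shafarevich into a genuine statement about the geometry of $\Yrst$.
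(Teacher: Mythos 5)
Your proposal is correct and follows essentially the same route as the paper's own proof: the exact sequence from \cite[Chapter 9]{bosch2012neron}, the reduction via $T[2]=0$ in characteristic $2$, the dismissal of inseparable (rational) components and of the reducible case $M_i=0$, and the Deuring--Shafarevich formula applied to each wildly ramified double cover $\NSF{\YY_i}\to\SF{\XX_i}$. The only cosmetic difference is that you index the ramification sum over points of the base rather than of the cover, which is harmless here since each branch point has a unique preimage.
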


Now let $D$ be a valid disc, and let $\XX_D$ be the corresponding smooth model of $X$ as in \S\ref{sec models hyperelliptic line}.  Let $\mathfrak{c}_1, \dots, \mathfrak{c}_{N-1}, \mathfrak{c}_\infty \subseteq \Rinfty$ be the subsets defined as in \S\ref{sec clusters} whose reductions $\overline{\mathfrak{c}_1}, \dots, \overline{\mathfrak{c}_{N-1}}, \overline{\mathfrak{c}_\infty} = \infty$ coincide with the points of $\SF{\XX_D}$ at which the branch locus reduces.  According to \Cref{lemma discs linked to clusters}, the disc $D$ is linked to the clusters $\mathfrak{c}_1, \dots, \mathfrak{c}_{N-1}$ as well as to the cluster $\mathfrak{s} := \mathcal{R} \smallsetminus \mathfrak{c}_\infty = \mathfrak{c}_1 \sqcup \dots \sqcup \mathfrak{c}_{N-1}$.

\begin{lemma} \label{lemma branch points from non-viable}
    Assume the above notation.  The number of branch points of the map $\NSF{\YY_D} \to \SF{\XX_D}$ is computed as follows.
    \begin{enumerate}[(a)]
        \item If $N = 1$, then there is exactly $1$ branch point.
        \item If $N = 2$, then the number of branch points is $1$ (resp. $2$) if $\mathfrak{c}_1 = \mathfrak{s}$ is (resp. is not) viable.
        \item If $N \geq 3$, then the number of branch points equals the number of non-viable clusters among $\mathfrak{c}_1, \dots, \mathfrak{c}_{N-1}, \mathfrak{s}$. 
    \end{enumerate}

    \begin{proof}
        In the situation of part (a) or (b), \Cref{thm part of rst separable} implies that $\NSF{\YY_D}$ is irreducible so that there is at least $1$ branch point.  If $N = 1$, then all of the branch locus $\Rinfty$ reduces to the single point $P_1$ of $\SF{\XX_D}$, so this is the unique branch point and part (a) is proved.
        
        Now assume that $N \geq 2$.  Any branch point of the map $\NSF{\YY_D} \to \SF{\XX_D}$ must be a point at which one of the subsets $\mathfrak{c}_1, \dots, \mathfrak{c}_{N-1}, \mathfrak{c}_\infty$ reduces and thus lies in $\{\overline{\mathfrak{c}_1},\dots, \overline{\mathfrak{c}_{N-1}}, \overline{\mathfrak{c}_\infty} = \infty\}$.  Suppose that $N = 2$ and $\mathfrak{c}_1$ is viable.  Then there is another disc $D'$ linked to $\mathfrak{c}_1$.  If $D' \subsetneq D$ (resp. $D \subsetneq D'$), then \Cref{prop viable correspondence} says that the component $\SF{\XX_{D'}}$ of $\SF{\Xrst}$ meets $\SF{\XX_D}$ at the point $\overline{\mathfrak{c}_1}$ (resp. $\infty$) and that the map is unramified above that point, so the other point $\infty$ (resp. $\overline{\mathfrak{c}_1}$) must be a branch point.  We therefore get that there is exactly $1$ branch point.  Suppose instead that $N = 2$ and that $\mathfrak{c}_1$ is not viable.  Then \Cref{prop viable correspondence} implies that the map ramifies over both points $\overline{\mathfrak{c}_1}$ and $\infty$, and so there are exactly $2$ branch points.  This proves part (b).
        Meanwhile, if $N \geq 3$,  \Cref{prop viable correspondence} implies that if $\mathfrak{s}$ (resp. $\mathfrak{c}_i$ for some index $i$) is viable, then the map $\NSF{\Yrst} \to \SF{\Xrst}$ is not branched over $\infty$ (resp. $\overline{\mathfrak{c}_i}$), and that the converse holds.
        Part (c) follows.
    \end{proof}
\end{lemma}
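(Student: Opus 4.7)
The plan is to identify the branch locus of the quadratic cover $\NSF{\YY_D}\to \SF{\XX_D}$ as a subset of the finite set $\{\overline{\mathfrak{c}_1},\dots,\overline{\mathfrak{c}_{N-1}},\infty\}\subseteq \SF{\XX_D}(k)$ of reductions of $\Rinfty$, and then at each such point to decide, via \Cref{prop viable correspondence}, whether the cover ramifies or is étale above it. Recall from \S\ref{sec models hyperelliptic separable} that a point $P\in \SF{\XX_D}$ is a branch point of $\NSF{\YY_D}\to \SF{\XX_D}$ precisely when $P\in R_0\cup R_1$, i.e.\ when $\ell(\XX_D,P)>0$, and that no point outside the reduction of $\Rinfty$ contributes. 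So in every case there are at most $N$ candidates, and counting reduces to deciding étaleness at each of them.

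First, for part (a), assume $N=1$, so that all of $\Rinfty$ reduces to a single point $P_1\in \SF{\XX_D}$. Since $D$ is valid, \Cref{thm part of rst separable} applied with $N=1$ forces $\NSF{\YY_D}$ to be irreducible of positive abelian rank; in particular $R_0\cup R_1\neq \varnothing$ by \Cref{rmk R_0 R_1 R_2}(d). The only candidate is $P_1$, which is therefore the unique branch point, giving the count $1$.

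For part (b), assume $N=2$; then $D$ is linked to the unique cluster $\mathfrak{s}=\mathfrak{c}_1=D\cap \mathcal{R}$, and the candidates are $\overline{\mathfrak{c}_1}$ and $\infty$. Irreducibility of $\NSF{\YY_D}$ (again via \Cref{thm part of rst separable}) ensures that at least one of them is a branch point. Now I apply \Cref{prop viable correspondence}: $\mathfrak{c}_1$ is viable if and only if there exists a second valid disc $D'$ linked to $\mathfrak{c}_1$, and the unique node of $\SF{\Xrst}$ arising from $D$ and $D'$ is unramified in $\SF{\Yrst}$. A case split on $D'\subsetneq D$ versus $D\subsetneq D'$ (using \Cref{prop relative position smooth models line} to decide whether the node corresponds to $\overline{\mathfrak{c}_1}$ or to $\infty$) shows that exactly one of $\overline{\mathfrak{c}_1},\infty$ is étale in the viable case and hence the other is the unique branch point, while in the non-viable case both remain ramified and contribute $2$ branch points.

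For part (c), assume $N\ge 3$. Here \Cref{lemma discs linked to clusters} gives $D=D_{\mathfrak{s},d_+(\mathfrak{s})}$ with $\mathfrak{s}=\mathfrak{c}_1\sqcup\cdots\sqcup \mathfrak{c}_{N-1}$, so the candidates are exactly the $N$ points $\overline{\mathfrak{c}_1},\dots,\overline{\mathfrak{c}_{N-1}},\infty$. For each $i$ I apply \Cref{prop viable correspondence} to the pair $\mathfrak{c}_i$ and the valid disc $D$: $\mathfrak{c}_i$ is viable iff there is a second valid disc $D_i\subsetneq D$ linked to $\mathfrak{c}_i$, and in that case the node of $\SF{\Xrst}$ at $\overline{\mathfrak{c}_i}$ is unramified, so $\overline{\mathfrak{c}_i}$ is not a branch point; otherwise, it is. Symmetrically, $\mathfrak{s}$ is viable iff there is a valid disc $D_\infty\supsetneq D$ linked to $\mathfrak{s}$, and in that case $\infty$ is not a branch point, while otherwise it is. Summing over the $N$ candidates yields the total as the number of non-viable clusters among $\mathfrak{c}_1,\dots,\mathfrak{c}_{N-1},\mathfrak{s}$.

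The main obstacle I anticipate is bookkeeping in part (c): one must make sure that \Cref{prop viable correspondence} is being applied consistently to link each candidate point $\overline{\mathfrak{c}_i}$ (respectively $\infty$) to the correct cluster ($\mathfrak{c}_i$, respectively $\mathfrak{s}$) and the correct ``side'' of the valid disc $D$ (inside versus containing), so that the ``unramified above this node'' conclusion translates into ``not a branch point at this particular candidate point.'' This is essentially a careful invocation of \Cref{prop relative position smooth models line} together with \Cref{prop viable correspondence}; no new computation beyond that should be needed.
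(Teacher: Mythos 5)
Your proposal is correct and follows essentially the same route as the paper's own proof: identify the candidate branch points as the reduction points of $\Rinfty$, use \Cref{thm part of rst separable} for irreducibility in cases (a)--(b), and decide ramification at each candidate via \Cref{prop viable correspondence} (together with \Cref{prop relative position smooth models line} to locate the relevant node inside versus outside $D$). The only cosmetic difference is your extra citation of \Cref{rmk R_0 R_1 R_2} and \Cref{lemma discs linked to clusters}, which the paper leaves implicit.
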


\begin{proof}[Proof (of \Cref{thm 2-rank})]
For $1 \leq i \leq r$, we retain the above notation for the disc $D_i \subset K^\alg$ and the number $M_i$ of branch points; we also let $N_i$ be the number of points of $\SF{\XX_i}$ at which the branch locus $\Rinfty$ reduces.  \Cref{lemma deuring-shafarevich} provides a formula for the $2$-rank of $\NSF{\Yrst}$ in terms of the integers $M_i$, so in order to prove the theorem, we need to compute these values $M_i$ in terms of the viability properties of various clusters.
    
Fix an index $i$.  If $N_i = 1$, then $D_i$ is linked to no cluster; we have $M_i = 1$ by \Cref{lemma branch points from non-viable}(a); and the contribution to the above sum is $M_i - 1 = 0$.  If $N_i = 2$, then $D_i$ is linked to $1$ cluster; \Cref{lemma branch points from non-viable}(b) says that if that cluster is semi-viable but not viable, we get $M_i = 2$ and the contribution to the sum is $M_i - 1 = 1$, but that otherwise, we get $M_i = 1$ and the contribution to the sum is $M_i - 1 = 0$.  Suppose now that $N_i \geq 3$.  By \Cref{lemma discs linked to clusters}(c), the disc $D_i$ is linked to the clusters $\mathfrak{c}_1, \dots, \mathfrak{c}_{N-1}, \mathfrak{s}$; these clusters are all therefore semi-viable.  Then \Cref{lemma branch points from non-viable}(c) implies that $M_i$ equals the number of clusters among them which are semi-viable but not viable.  The contribution to the sum is $0$ if $M_i = 0$ and is otherwise $1$ less than the number of clusters linked to $D_i$ which are semi-viable but not viable.
    
Putting all of this together (and taking into account that a cluster which is semi-viable but not viable is linked to exactly $1$ valid disc and therefore will not be counted multiple times), we get that the $2$-rank is equal to the number of clusters which are semi-viable but not viable minus the number of indices $i$ such that $N_i \geq 3$ and $M_i \geq 1$.  It therefore suffices to show that the assignment $\mathfrak{s} \mapsto D_{\alpha, d_+(\mathfrak{s})}$, where $\alpha$ is chosen to be any root in $\mathfrak{s}$, induces a one-to-one correspondence between clusters which are semi-\"{u}bereven but not \"{u}bereven and indices $i$ satisfying $N_i \geq 3$ and $M_i \geq 1$.  In fact, by \Cref{lemma discs linked to clusters}, the condition that $N_i \geq 3$ implies that $D_i$ is linked to a cluster $\mathfrak{s}$ along with its children $\mathfrak{c}_1, \dots, \mathfrak{c}_{N_i-1}$, and by definition, the cluster $\mathfrak{s}$ is semi-\"{u}bereven.  Conversely, given a semi-\"{u}bereven cluster $\mathfrak{s}$, we have $b_+(\mathfrak{s}) = d_+(\mathfrak{s})$ by \Cref{lemma all but one}, and so the disc $D_{\alpha, d_+(\mathfrak{s})}$ is valid.  Meanwhile, the condition that $M_i \neq 0$ is equivalent to the associated cluster $\mathfrak{s}$ not being \"{u}bereven by \Cref{prop ubereven correspondence}.  The desired correspondence follows.
    \end{proof}

\section{Examples and special cases} \label{sec examples}

In this section, we show how our results may be used to find the toric rank and the $2$-rank (and in some cases the full structure) of the special fiber of the relatively stable model of some particular families of hyperelliptic curves.

\begin{ex} \label{example1}
Letting $K$ be the maximal unramified extension of $\qq_2$, consider the family of hyperelliptic curves defined over $K$ by the equation 
    \begin{equation} \label{eq example1}
        Y_\lambda : y^2 = f_\lambda(x) := x(x - \lambda)(x^3 + x^2 - 4x + 1),
    \end{equation}
where $\lambda \in K \smallsetminus \{z \in R \ | \ z(z^3 + z^2 - 4z + 1) = 0\}$ is a parameter.  One sees easily (for instance from verifying that the polynomial factor $x^3 + x^2 - 4x + 1$ is irreducible modulo $2$) that the only possible even-cardinality cluster associated to $f_{g,\lambda}$ is $\mathfrak{s} := \{0, \lambda\}$, and that this subset is a cluster if and only if we have $v(\lambda) > 0$.  Let us fix $\alpha := 0 \in \mathfrak{s}$ as the center we will use in our calculations.  Using techniques from \S\ref{sec depths separating},\ref{sec depths threshold}, we compute the ``threshold depth" $B_{f_\lambda, \mathfrak{s}}$ as follows.

We first note that we have $\mathfrak{t}_\pm^{\mathfrak{s}}(0) = 0$ by \Cref{lemma tplus minus odd cardinality child}.  Now \Cref{prop properties t plus minus} implies that the function $\mathfrak{t}_+^{\mathfrak{s}}$ is simply a linear function of slope $1$, and so in particular we have $\lambda_+(\mathfrak{s}) = 1$.  \Cref{lemma estimates of b_0 from slopes} now gives us $b_0(\mathfrak{t}_+^{\mathfrak{s}})) = \frac{2v(2) - 0}{2 - 1} = 2v(2)$.

Meanwhile, choosing the scalar $\beta_-$ in (\ref{eq standard form}) to be $1$, one readily computes the formula $f_-^{\mathfrak{s}, \alpha} = f^{\mathcal{R} \smallsetminus \mathfrak{s}}(x) = x^3 + x^2 - 4x + 1$.  Then we may explicitly describe the function $\mathfrak{t}_-$ using a totally odd decomposition of $f_-^{\mathfrak{s}, \alpha}$.  A totally odd decomposition in this case (as for any cubic polynomial) is easy to come by: we may take $f_-^{\mathfrak{s}} = [q_-]^2 + \rho_- = [x + 1]^2 + [x^3 - 6x]$.  From our expression for $\rho_-$, we see that $\mathfrak{t}_-^{\mathfrak{s}}$ acts as $b \mapsto 3b$ for $b \in [0, \frac{1}{2}v(2)]$ and as $b \mapsto b + v(2)$ for $b \in [\frac{1}{2}, \infty)$.  It follows that we have $b_0(\mathfrak{t}_-^{\mathfrak{s}}) = v(2)$.

We therefore get $B_{f_\lambda, \mathfrak{s}} = b_0(\mathfrak{t}_+^{\mathfrak{s}}) + b_0(\mathfrak{t}_-^{\mathfrak{s}}) = 3v(2)$.  It now follows from \Cref{thm introduction main}(c) that there are $2$ (resp. $1$; resp. $0$) valid discs linked to $\mathfrak{s}$ if we have $v(\lambda) > 3v(2)$ (resp. $v(\lambda) = 3v(2)$; resp. $v(\lambda) < 3v(2)$); that in the first case, since the cluster $\mathfrak{s}$ is not \"{u}bereven, the $2$ guaranteed valid discs each give rise to a component of $\SF{\Yrst}$; and that these $2$ components meet at $2$ nodes each with thickness equal to $(v(\lambda) - 3v(2)) / v(\pi)$, where $\pi$ is a uniformizer of the extension $K' / K$ over which relatively stable reduction is obtained.  Since there are no other even-cardinality clusters, it follows from \Cref{thm toric rank} that the toric rank of the special fiber $\SF{\Yrst}$ equals $1$ (resp. $0$) if we are in the first case (resp. otherwise).

Applying \Cref{thm 2-rank}, we get that the $2$-rank of $\SF{\Yrst}$ is $1$ if we have $v(\lambda) = 3v(2)$ and is $0$ otherwise.
    
Supposing now that we have $v(\lambda) > 3v(2)$, we get the following further results.  \Cref{thm summary depths valid discs} combined with \Cref{prop formulas for b_pm} tells us in this case that the $2$ valid discs linked to $\mathfrak{s}$ are given by $D_{0, v(\lambda) - 2v(2)}$ and $D_{0, v(2)}$.  A part-square decomposition for $f_\lambda$ that is good at these discs can be obtained using the set-up and statement of \Cref{prop product part-square}(b).  Using this, one can now compute, using \Cref{prop riemann hurwitz} and \Cref{lemma computation ell ramification}, that the abelian rank of the components of $\SF{\Yrst}$ corresponding to both discs is $0$.  We know from the toric rank that that the abelian rank of the special fiber $\SF{\Yrst}$ is $2 - 1 = 1$, so there must be a further component of $\SF{\Yrst}$.  It is not too difficult to see, via \Cref{dfn relatively stable} and \Cref{prop viable correspondence}, that in fact there is only $1$ further component, which has genus $1$.  In this case, the fiber $\SF{\Yrst}$ is as in \Cref{fig example1} above.  This falls under Case B6 of the classification in \cite[\S5.2]{gehrunger2025reduction}.
\end{ex}

\begin{figure}

\includegraphics[scale=.4]{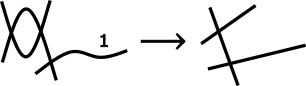}

\caption{The map $\Yrst \to \Xrst$ of special fibers in \Cref{example1}.}
\label{fig example1}
\end{figure}

\begin{ex} \label{example2}
For any $g \geq 1$ and over any field $K$ satisfying the hypotheses generally assumed in this paper, consider the family of genus-$g$ hyperelliptic curves over $K$ defined by the equation 
    \begin{equation}
        Y_{g,\lambda} : y^2 = f_{g,\lambda}(x) := x(x - \lambda)(x^{2g-1} - 1),
    \end{equation}
where $\lambda \in K \smallsetminus \{z \in R \ | \ z^{2g} - z = 0\}$ is a parameter.  (Note that in the special case of $g = 1$, this is just the Legendre family of elliptic curves.)  As the set of roots of $f_{g,\lambda}$ consists of $0$, $\lambda$, and all of the $(2g - 1)$th roots of unity, it is clear that, as in \Cref{example1}, the only possible even-cardinality cluster associated to $f_{g,\lambda}$ is $\mathfrak{s} := \{0, \lambda\}$, and that this subset is a cluster if and only if we have $v(\lambda) > 0$.  Let us again fix $\alpha := 0 \in \mathfrak{s}$ as the center we will use in our calculations.  Using techniques from \S\ref{sec depths separating},\ref{sec depths threshold}, we compute the ``threshold depth" $B_{f_{g,\lambda}, \mathfrak{s}}$ as follows.

We first of all have $b_0(\mathfrak{t}_+^{\mathfrak{s}})) = 2v(2)$ by the exact same argument as in \Cref{example1}.  Meanwhile, choosing the scalar $\beta_-$ in (\ref{eq standard form}) to be $1$, one readily computes the formula $f_-^{\mathfrak{s}, \alpha} = f^{\mathcal{R} \smallsetminus \mathfrak{s}}(x) = x^{2g-1} - 1$.  Then we may explicitly describe the function $\mathfrak{t}_-$ using a totally odd decomposition of $f_-^{\mathfrak{s}, \alpha}$.  A totally odd decomposition in this case is immediately visible: we may take $f_-^{\mathfrak{s}} = [q_-]^2 + \rho_- = [\sqrt{-1}]^2 + x^{2g-1}$.  Now it is clear that $\mathfrak{t}_-^{\mathfrak{s}}$ is simply the linear function $b \mapsto (2g - 1)b$.  It follows that we have $b_0(\mathfrak{t}_-^{\mathfrak{s}}) = \frac{2}{2g - 1}v(2)$.

We therefore get $B_{f_{g,\lambda}, \mathfrak{s}} = b_0(\mathfrak{t}_+^{\mathfrak{s}}) + b_0(\mathfrak{t}_-^{\mathfrak{s}}) = \frac{4g}{2g - 1}v(2)$; we have thus found a class of examples in which the ``threshold relative depth" is equal to the lower bound guaranteed by \Cref{thm introduction main}(d).  It now follows from \Cref{thm introduction main}(c) that there are $2$ (resp. $1$; resp. $0$) valid discs linked to $\mathfrak{s}$ if we have $v(\lambda) > \frac{4g}{2g - 1}v(2)$ (resp. $v(\lambda) = \frac{4g}{2g - 1}v(2)$; resp. $v(\lambda) < \frac{4g}{2g - 1}v(2)$); that in the first case, since the cluster $\mathfrak{s}$ is not \"{u}bereven, the $2$ guaranteed valid discs each give rise to a component of $\SF{\Yrst}$; and that these $2$ components meet at $2$ nodes each with thickness equal to $(v(\lambda) - \frac{4g}{2g - 1}v(2)) / v(\pi)$, where $\pi$ is a uniformizer of the extension $K' / K$ over which relatively stable reduction is obtained.  Since there are no other even-cardinality clusters, it follows from \Cref{thm toric rank} that the toric rank of the special fiber $\SF{\Yrst}$ equals $1$ (resp. $0$) if we are in the first case (resp. otherwise).

Applying \Cref{thm 2-rank}, we get that the $2$-rank of $\SF{\Yrst}$ is $1$ if we have $v(\lambda) = \frac{4g}{2g - 1}v(2)$ and is $0$ otherwise.

Supposing now that we have $v(\lambda) > \frac{4g}{2g - 1}v(2)$, we get the following further results.  \Cref{thm summary depths valid discs} combined with \Cref{prop formulas for b_pm} tells us in this case that the $2$ valid discs linked to $\mathfrak{s}$ are given by $D_+ := D_{0, v(\lambda) - 2v(2)}$ and $D_- := D_{0, \frac{4g}{2g - 1}v(2)}$.  A similar sort of computation as mentioned in \Cref{example1} shows that the abelian ranks of the components of $\SF{\Yrst}$ corresponding to the discs $D_+$ and $D_-$ are $0$ and $g - 1$ respectively.  We know from the toric rank that that the abelian rank of the special fiber $\SF{\Yrst}$ is $g - 1$, so any further components of $\SF{\Yrst}$ must be a line.  It is not too difficult to see, via \Cref{dfn relatively stable} and \Cref{prop viable correspondence}, that no such component can exist.  We have therefore fully constructed the relatively stable model $\Yrst$ and even find that it is defined over any extension of $K$ containing $\sqrt{-1}$ and an element of valuation equal to $\frac{1}{2g - 1}v(2)$.  In this case, the fiber $\SF{\Yrst}$ is as in \Cref{fig example2} above.  If $g = 2$, this falls under Case B7 of the classification in \cite[\S5.2]{gehrunger2025reduction}.
\end{ex}

\begin{figure}

\includegraphics[scale=.4]{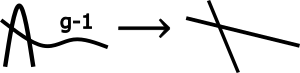}

\caption{The map $\Yrst \to \Xrst$ of special fibers in \Cref{example2}.}
\label{fig example2}
\end{figure}

\begin{ex} \label{example3}
The set-up of this example gives essentially the situation treated in \cite{dokchitser2023note}, and our computations in this example produce a variation on the results of that paper.
      
For any genus $g \geq 1$ and over any complete field $K$ satisfying the hypotheses generally assumed in this paper, let $Y : y^2 = f(x)$ be any hyperelliptic curve such that the $(2g + 1)$-element set of roots of $f$ is partitioned into cardinality-$2$ clusters $\mathfrak{s}_1, \dots, \mathfrak{s}_g$ and a singleton $\{a\}$, and such that there are no other even-cardinality clusters associated to $f$.  Choose centers $\alpha_i \in \mathfrak{s}$ for $1 \leq i \leq g$.  Using techniques from \S\ref{sec depths separating},\ref{sec depths threshold}, we compute the ``threshold depth" of each $B_{f, \mathfrak{s}_i}$ as follows.

For simplicity, suppose that $0 \in \mathfrak{s}_1$ and that we have chosen the center $0 =: \alpha_1$.  Exactly as in Examples \ref{example1} and \ref{example2}, we have $b_0(\mathfrak{t}_+^{\mathfrak{s}_1}) = 2v(2)$.  In order to compute $b_0(\mathfrak{t}_-^{\mathfrak{s}_1})$, choosing the scalar $\beta_-$ in (\ref{eq standard form}) to be $a$, we first observe that the polynomial $f_-^{\mathfrak{s}_1}(z)$ is the product of $1 - z$ and the square of a polynomial in $R[z]$.  Since the lowest-valuation terms of a square polynomial are its even-degree terms, one sees that the linear term of any part-square decomposition of $f_-^{\mathfrak{s}_1}(z)$ has valuation $0$.  It follows that $t_-^{\mathfrak{s}_1}$ is simply the linear function $b \mapsto b$, and therefore, we have $b_0(\mathfrak{t}_-^{\mathfrak{s}_1}) = 2v(2)$.

We therefore get $B_{f, \mathfrak{s}_1} = b_0(\mathfrak{t}_+^{\mathfrak{s}}) + b_0(\mathfrak{t}_-^{\mathfrak{s}}) = 4v(2)$ and indeed $B_{f, \mathfrak{s}_i} = 4v(2)$ for all $i$; we have thus found a class of examples in which the ``threshold relative depth" is equal to the upper bound guaranteed by \Cref{thm introduction main}(d).  It now follows from \Cref{thm introduction main}(c) that there are $2$ (resp. $1$; resp. $0$) valid discs linked to each $\mathfrak{s}_i$ if we have $\delta(\mathfrak{s}_i) > 4v(2)$ (resp. $v(\mathfrak{s}_i) = 4v(2)$; resp. $v(\mathfrak{s}_i) < 4v(2)$); that in the first case, since the cluster $\mathfrak{s}_i$ is not \"{u}bereven, the $2$ guaranteed valid discs each give rise to a component of $\SF{\Yrst}$; and that these $2$ components meet at $2$ nodes each with thickness equal to $(\delta(\mathfrak{s}_i) - 4v(2)) / v(\pi)$, where $\pi$ is a uniformizer of the extension $K' / K$ over which relatively stable reduction is obtained.

In the special case that we have $\delta(\mathfrak{s}_i) \geq 4v(2)$ for all $i$, all of the clusters $\mathfrak{s}_i$ are semi-viable, and \Cref{thm 2-rank} can be applied to show that the $2$-rank of $\SF{\Yrst}$ equals the number of indices $i$ such that $\delta(\mathfrak{s}_i) = 4v(2)$.  If we further assume that $\delta(\mathfrak{s}_i) > 4v(2)$ for all $i$, then all of the clusters $\mathfrak{s}_i$ are viable, so that the $2$-rank is $0$ but the special fiber $\SF{\Yrst}$ has the maximal possible toric rank $g$, and the $2$ valid discs linked to each $\mathfrak{s}_i$ are given by $D_{i,+} := D_{0, d_+(\mathfrak{s}_i) - 2v(2)}$ and $D_{i,-} := D_{0, d_-(\mathfrak{s}_i) + 2v(2)}$.  Since the toric rank is $g$, we know that the abelian rank is $0$, and so the abelian rank of each component of $\SF{\Yrst}$ is $0$.  In fact, one can show that if $g \geq 3$, the structure of $\SF{\Yrst}$ consists of $1$ inseparable component (of abelian rank $0$) intersecting the $g$ lines corresponding to the valid discs $D_{i,+}$, each of which intersects at $2$ points the line corresponding to the valid disc $D_{i,-}$, as in \Cref{fig example3} above.  If $g \in \{1, 2\}$, on the other hand, the only components of $\SF{\Yrst}$ are those corresopnding to the valid discs $D_{i,\pm}$, and when $g = 2$, the components corresponding to $D_{1,-}$ and $D_{2,-}$ intersect at $1$ point and this falls under Case C6 of the classification in \cite[\S5.2]{gehrunger2025reduction}.
\end{ex}

\begin{figure}

\begin{subfigure}[b]{.4\textwidth}
\centering
\includegraphics[scale=.4]{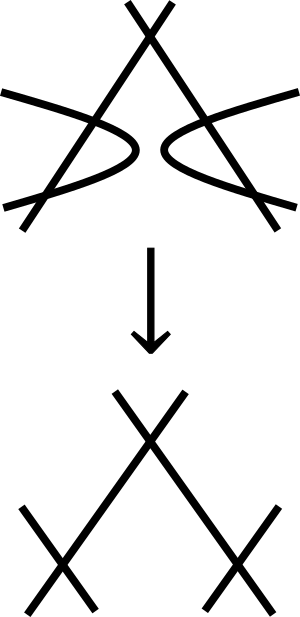}
\end{subfigure}
~
\begin{subfigure}[b]{.4\textwidth}
\centering
\includegraphics[scale=.4]{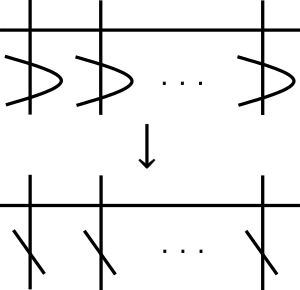}
\end{subfigure}

\caption{The map $\Yrst \to \Xrst$ of special fibers in \Cref{example3} for $g = 2$ (on the left) and for $g \geq 3$ (on the right).}
\label{fig example3}
\end{figure}

\bibliographystyle{plain}
\bibliography{bibfile}

\end{document}